\renewenvironment{proof}[1][\proofname]{{\bfseries #1.} }{\qed}
\def\Cov{{\rm Cov\,}}
\newcommand{\field}[1]{\mathbb{#1}}
\newcommand{\R}{\field{R}}
\newcommand{\Var}{{\rm Var}}
\newcommand{\Corr}{{\rm Corr}}
\def\authors#1{{ \begin{center} #1 \vspace{0pt} \end{center} } \smallskip}
\def\institution#1{{\sl \begin{center} #1 \vspace{0pt} \end{center} } }
\def\inst#1{\unskip $^{#1}$}
\def\title#1{{\huge\bf  \begin{center} #1 \vspace{0pt} \end{center}  } \smallskip}
\def\P{{\mathbb{P}}}
\newtheorem{theorem}{Theorem}[section]
\newtheorem{proposition}[theorem]{Proposition}
\newtheorem{condition}[theorem]{Condition}
\newtheorem{lemma}[theorem]{Lemma}
\newtheorem{corollary}[theorem]{Corollary}
\newtheorem{definition}[theorem]{Definition}
\newtheorem{remark}[theorem]{Remark}
\begin{document}

\date{July 2020}

\title{\sc Lipschitz-Killing Curvatures for\\ Arithmetic Random Waves}
\authors{\large Valentina Cammarota\inst{*}, Domenico Marinucci\inst{\diamond,}\footnote{Corresponding author (e-mail address: \texttt{marinucc@mat.uniroma2.it}).} and Maurizia Rossi\inst{\dag}}
\institution{\inst{*}Dipartimento di Scienze Statistiche, Universit\`a di Roma La Sapienza\\
\inst{\diamond}Dipartimento di Matematica, Università di Roma Tor Vergata\\
\inst{\dag}Dipartimento di Matematica e Applicazioni, Universit\`a di Milano-Bicocca}

\begin{abstract}

In this paper, we show that the Lipschitz-Killing Curvatures for the
excursion sets of Arithmetic Random Waves (toral Gaussian eigenfunctions) are dominated, in the
high-frequency regime, by a single chaotic component.  The latter can be written as a
simple explicit function of the threshold parameter times the centered
norm of these random fields; as a consequence, these geometric functionals
are fully correlated in the high-energy limit. The derived formulae show
a clear analogy with related results on the round unit sphere and suggest the
existence of a general formula for geometric functionals of random
eigenfunctions on Riemannian manifolds.

\smallskip

\noindent {\sc Keywords and Phrases:} Lipschitz-Killing Curvatures, Arithmetic Random Waves, Wiener chaos, Gaussian Kinematic Formula, Limit Theorems.

\smallskip

\noindent {\sc AMS Classification:} 60G60; 60D05, 60F05, 58J50, 35P20.
\end{abstract}

\section{Introduction and general framework}

\subsection{Toral eigenfunctions and Arithmetic Random Waves}

Arithmetic Random Waves (i.e., toral Gaussian eigenfunctions) were introduced nearly
a decade ago in \cite{oravecz, RudnickWigman} and have been investigated very
widely ever since, see for instance \cite{granville,KKW,MPRW2015} and more
recently \cite{BMW,Buckley,Cammarota2018,dalmao,Rudnick,RudnickYesha}; see
also \cite{angstdalmao, angstuniv, bally17} for related
results on random trigonometric polynomials. Interest in their investigation
is motivated both by mathematical physics applications, and by the rich
interplay of probability, geometry and even number theory that characterizes
the behaviour of geometric functionals of their excursion sets.

Let us start
recalling their definition; for an integer $d\ge 2$, let $f:\mathbb{T}^{d}:=\mathbb{R}^{d}/\mathbb{Z}%
^{d}\rightarrow \mathbb{R}$  be the real-valued functions
satisfying the eigenvalue equation
\begin{equation}
\Delta f+Ef=0,  \label{Schrodinger}
\end{equation}%
where $E > 0$ and $\Delta $ is the Laplace-Beltrami operator on $\mathbb{T}%
^{d}$; the spectrum of $\Delta $ is totally discrete. (For $d=2$ we will often write $\mathbb T$ in place of $\mathbb T^2.$) Indeed, the
eigenspaces of the Laplacian on the torus are related to the theory of
lattice points on $(d-1)$-dimensional spheres: let
\begin{equation*}
S:=\{n\in {\mathbb{Z}}:n=n_{1}^{2}+\cdots +n_{d}^{2},\text{ for some }
n_{1},\dots ,n_{d}\in \mathbb{Z}\}
\end{equation*}%
be the collection of all numbers expressible as a sum of $d$ squares. The
sequence of eigenvalues, or \textit{energy levels}, are all numbers of the
form $E_{n}=4\pi ^{2}n,$ $n\in S.$ In order to describe the Laplace
eigenspace corresponding to $E_{n}$, we introduce the set of frequencies $%
\Lambda _{n}$; for $n\in S_{n}$ let
\begin{equation*}
\Lambda _{n}=\{\lambda \in \mathbb{Z}^{d}:\;||\lambda ||^{2}=n\}.
\end{equation*}%
$\Lambda _{n}$ is the frequency set corresponding to $E_{n}$. Using the
notation $e(t):=\exp (2\pi it)$ for $t\in \mathbb R$, the $\mathbb{C}$-eigenspace $\mathcal{E}_{n}$
corresponding to $E_{n}$ is spanned by the $L^{2}$-orthonormal set of
functions $\{e(\langle \lambda ,\cdot \rangle )\}_{\lambda \in \Lambda _{n}}$%
. We denote the dimension of $\mathcal{E}_{n}$
\begin{equation*}
\mathcal{N}_{n}=\text{dim}\,\mathcal{E}_{n}=|\Lambda _{n}|,
\end{equation*}%
that is equal to the number of different ways $n$ may be expressed as a sum
of $d$ squares. In particular, for $d=2$, $\mathcal N_n$ is subject to large and erratic fluctuations; it grows \emph{on average} \cite{La} as $\sqrt{\log n}$, but could be as small as $8$ for an infinite sequence of prime numbers $p \equiv 1\, (\text{mod } 4)$, or as large as a power of $\log n$.

The frequency set $\Lambda _{n}$ can be identified with the set of lattice
points lying on a $(d-1)$-dimensional sphere with radius $\sqrt{n}$, the
sequence of spectral multiplicities $\{\mathcal{N}_{n}\}_{n\in S}$ is
unbounded. It is natural to consider properties of \textit{generic} or
\textit{random} eigenfunctions $f_{n}\in \mathcal{E}_{n}$, in the high-energy asymptotics regime. More precisely, let $f_{n}:\mathbb{T}%
^{d}\rightarrow \mathbb{R}$ be the Gaussian random field of (real valued) $%
\mathcal{E}_{n}$-functions with eigenvalue $E_{n}$, i.e. the random linear
combination
\begin{equation}
f_{n}(x) := \frac{1}{\sqrt{\mathcal{N}_{n}}}\sum_{\lambda \in \Lambda
_{n}}a_{\lambda }e(\langle \lambda ,x\rangle ), \quad x\in \mathbb T^d, \label{fn}
\end{equation}
where the coefficients $\{a_{\lambda }\}_{\lambda \in \Lambda_n, n\in S}$ are complex-Gaussian random
variables\footnote{defined on some probability space $(\Omega, \mathcal F, \mathbb P)$} verifying the following properties:
\begin{enumerate}
\item every $a_{\lambda }$ has the form $a_{\lambda }=\mathrm{Re}(a_{\lambda
})+i\,\mathrm{Im}(a_{\lambda })$ where $\mathrm{Re}(a_{\lambda })$ and $%
\mathrm{Im}(a_{\lambda })$ are two independent real-valued, centred,
Gaussian random variables with variance $1/2$, \label{i}
\item the $a_{\lambda }$'s are stochastically independent, save for the
relations $a_{-\lambda }=\overline{a}_{\lambda }$ in particular making $f_{n}$
real-valued. \label{iii}
\end{enumerate}
By definition, $f_{n}$ is stationary, i.e. the law
of $f_{n}$ is invariant under all translations
\begin{equation*}
f(\cdot )\rightarrow f(y+\cdot ),\hspace{1cm}y\in {\mathbb{T}}^{d};
\end{equation*}%
in fact $f_{n}$ is a centred Gaussian random field with covariance function
\begin{equation*}
\mathbb{E}[f_{n}(x)f_{n}(y)]=\frac{1}{\mathcal{N}_{n}}\sum_{\lambda \in
\Lambda _{n}}e(\langle \lambda ,x-y\rangle),\qquad x,y\in \mathbb T.
\end{equation*}%
Note that the normalizing factor in (\ref{fn}) is chosen so that $f_{n}$ has
unit-variance.

\subsection{Notation}\label{sec-not}

We will use $\lambda, \lambda_1, \lambda_2 \dots$ and in general $\lambda_i$, $i=1,2,\dots$ to denote elements of $\Lambda_n$, while $\lambda_{(\ell)}$ and
$\lambda_{i,(\ell)}$ with $\ell=1, \dots, d$, will denote the $\ell$-th component of
the vectors $\lambda$ and $\lambda_i \in \Lambda_n$ respectively. The
indices $j,\ell$ always run from $1$ to $d$.

For $\ell=1,\dots d$, we denote with $\partial _{\ell}f_{n}(x)$ the derivative of $%
f_{n}(x)$ with respect to $x_{\ell}$. A straightforward differentiation of (\ref{fn}) gives
\begin{equation}\label{der}
\partial _{\ell}f_{n}(x)=\frac{2\pi i}{\sqrt{\mathcal{N}_{n}}}\sum_{\lambda \in
\Lambda _{n}}a_{\lambda }\lambda _{(\ell)}e(\langle \lambda ,x\rangle ),
\end{equation}%
in view of \cite[Lemma 2.3]{Rudnick}, see formula \eqref{RW} below, the
random field $\partial _{\ell}f_{n}$ has variance
\begin{align*}
\text{Var}(\partial _{\ell}f_{n}(x))& =\frac{2^{2}\pi ^{2}(-1)}{\mathcal{N}_{n}}%
\sum_{\lambda _{1},\lambda _{2}\in \Lambda _{n}}\mathbb{E}[a_{\lambda
_{1}}a_{\lambda _{2}}]\,\lambda _{1,(\ell)}\,\lambda _{2,(\ell)}\,e(\langle
\lambda _{1},x\rangle )\,e(\langle \lambda _{2},x\rangle ) \\
& =\frac{2^{2}\pi ^{2}}{\mathcal{N}_{n}}\sum_{\lambda \in \Lambda
_{n}}\lambda _{(\ell)}^{2}=\frac{2^{2}\pi ^{2}n}{d} = \frac{E_n}{d},
\end{align*}%
%
we introduce then the normalized derivative $f_{n,\ell}(x)$ defined by
\begin{equation*}
f_{n,\ell}(x):=\frac{\partial _{\ell}f_{n}(x)}{2\pi \sqrt{\frac{n}{d}}}=i\sqrt{%
\frac{d}{n\mathcal{N}_{n}}}\sum_{\lambda \in \Lambda _{n}}\lambda
_{(\ell)}a_{\lambda }e(\langle \lambda ,x\rangle ).
\end{equation*}%
Note that $f_{n,\ell}(x)$ is real-valued since $f_{n,\ell}^{2}(x)=f_{n,\ell}(x)%
\overline{f_{n,\ell}}(x)$. Analogously, we denote with $\partial^2_{j\ell} f_n$ the second derivative of $f_n$ with respect to $x_\ell$ and $x_j$
\begin{equation}\label{der2}
\partial^2_{j\ell} f_n(x) = -\frac{4\pi^2}{\sqrt{\mathcal N_n}} \sum_{\lambda\in \Lambda_n} a_\lambda \lambda_{(\ell)} \lambda_{(j)} e(\langle \lambda, x\rangle).
\end{equation}
We note that conditions \ref{i}) and \ref{iii}) in %
\eqref{fn} immediately imply that
\begin{equation*}
\mathbb{E}[a_{\lambda }^{2}]=\mathbb{E}[(\mathrm{Re}(a_{\lambda }))^2]-\mathbb{E}[(\mathrm{Im}(a_{\lambda }))^2]=0,
\end{equation*}%
and that $2|a_{\lambda }|^{2}$ has a chi-squared distribution with $2$
degrees of freedom:
\begin{equation*}
\mathbb{E}[|a_{\lambda }|^{2}]=1,\hspace{1cm}\mathbb{E}[(|a_{\lambda
}|^{2}-1)^{2}]=\mathrm{Var}(|a_{\lambda }|^{2})=1,\hspace{1cm}\mathbb{E}%
[|a_{\lambda }|^{4}]=2.
\end{equation*}%
For some of the arguments to follow, we shall make a heavy use of results
and notation recently introduced in the number theory literature by \cite%
{KKW}. In particular, let $\mu _{n}$ be the probability measure on the
circle $\mathcal{S}^{1}:=\{z\in \mathbb{C}:\;||z||=1\}$ defined by
\begin{equation*}
\mu _{n} :=\frac{1}{\mathcal{N}_{n}}\sum_{\lambda \in \Lambda _{n}}\delta _{\lambda/\sqrt n}\text{ ,}
\end{equation*}%
that is to say, the empirical measure for the distribution of integers
corresponding to the eigenvalue $4\pi ^{2}n;$ an important role is going to
be played by the fourth Fourier coefficient of this distribution, i.e.
\begin{equation}
\hat{\mu}_{n}(4):=\int_{\mathcal{S}^{1}}z^{4}d\mu _{n}(z)=\frac{1}{n^{2}%
\mathcal{N}_{n}}\sum_{\lambda \in \Lambda _{n}}(\lambda _{1}+i\lambda
_{2})^{4}.  \label{muquattro}
\end{equation}
This coefficient will not appear in our statements, but inspection of the
proof reveals its important role.

\section{Main results}\label{sec-main}

The purpose of this paper is to provide a full characterization for the
asymptotic behaviour in the high-energy limit of Lipschitz-Killing Curvatures computed on the
excursion sets of two-dimensional Arithmetic Random Waves defined in (\ref{fn}) for $d=2$, and to compare the
results with those recently derived in the case of random spherical
eigenfunctions in \cite{CM2018}; see also \cite{adlertaylor, azaiswschebor, chengxiaob, estradeleon, feng, MRW, npr, PV20} and the references therein for background material and a
number of recent results on Lipschitz-Killing Curvatures in Euclidean
settings or on the unit round sphere.

It is well-known (see e.g., \cite{KKW}) that there exists a density-$1$ subsequence $\lbrace n_j\rbrace_j\subset S$ such that for every $x,y\in \mathbb T^2$
\begin{equation}\label{density1}
\mathbb E[f_n(x/\sqrt{n_j})f_n(y/\sqrt{n_j}) ] \to J_0(2\pi\| x-y\|),\qquad n_j\to +\infty,
\end{equation}
where $J_0$ denotes the Bessel function of the first kind of order zero. %
To fix notation, let us recall first that the excursion sets of $f_n$ are defined by
\begin{equation*}
A_{u}(f_{n};\mathbb{T}):=\{x\in \mathbb{T}:f_{n}(x)\geq u\},\qquad u\in \mathbb R.
\end{equation*}
In the two-dimensional case, it is well-known that the three Lipschitz-Killing
Curvatures $\mathcal{L}_{k}$, $k=0,1,2$ correspond to the area functional $%
k=2$, half the boundary length $k=1$ and the Euler-Poincar\'{e}
Characteristic $k=0$, i.e., the number of connected components minus
the number of ``holes". We are interested in these geometric functionals evaluated at excursion sets of Arithmetic Random Waves: for $u\in \mathbb R$,
\begin{equation}\label{es}
\mathcal L_k(n;u) := \mathcal{L}_{k}(A_{u}(f_{n};\mathbb{T})),\qquad k=0,1,2;
\end{equation}
in particular in their asymptotic behavior as $n\to +\infty$ such that $\mathcal N_n\to +\infty$. To the best of our knowledge, all results concerning $\mathcal L_0(n;u)$, i.e. the Euler-Poincar\'e characteristic, are new.

The expected values of (\ref{es}) are given in the following lemma (see Appendix \ref{Appendixexp}). As usual, we use $\phi$, $\Phi$ to denote
the standard Gaussian density and distribution function, respectively.
\begin{lemma}
\label{expval} The expected values for the Lipschitz-Killing Curvatures on
excursion sets of Arithmetic Random Waves are given by
\begin{equation}\label{mean1}
\mathbb E[\mathcal{L}_{k}(n;u)] = m_k(u) \left (\sqrt{\frac{E_n}{2}} \right )^{2-k},\qquad k=0,1,2
\end{equation}
for $n\in S$ and $u\in \mathbb R$, where
\begin{equation}\label{mk}
m_2(u) := 1-\Phi (u),\quad  m_1(u) := \sqrt{\frac{\pi}{8}}\phi (u),\quad  m_0(u) := \frac{1}{2\pi}u\phi (u).
\end{equation}
\end{lemma}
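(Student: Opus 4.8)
The plan is to handle the three curvatures separately by Kac--Rice / integral--geometric formulae, after recording the local covariance structure of $f_n$. By stationarity $f_n(x)\sim N(0,1)$, and the computation already made in the text gives $\mathbb E[\partial_j f_n(x)\,\partial_\ell f_n(x)] = \tfrac{E_n}{2}\delta_{j\ell}$: the diagonal entries equal $E_n/2$, while the off-diagonal $\mathbb E[\partial_1 f_n\,\partial_2 f_n]$ is proportional to $\sum_{\lambda\in\Lambda_n}\lambda_{(1)}\lambda_{(2)}$, which vanishes since $\Lambda_n$ is invariant under $\lambda\mapsto(\lambda_{(1)},-\lambda_{(2)})$. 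Hence $\nabla f_n(x)$ has i.i.d.\ $N(0,E_n/2)$ components. Differentiating the (even) covariance $r(\tau)=\mathbb E[f_n(x)f_n(x+\tau)]$ shows $\mathbb E[f_n\,\partial_\ell f_n]=0$ and $\mathbb E[\partial_\ell f_n\,\partial^2_{jk}f_n]=0$, so at a fixed point $\nabla f_n$ is independent of the pair $(f_n,\nabla^2 f_n)$, while $\mathbb E[f_n\,\partial^2_{jk}f_n]=-\tfrac{E_n}{2}\delta_{jk}$. These identities are all the computation needs.

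For $k=2$ the claim is immediate: by Fubini and stationarity $\mathbb E[\mathcal L_2(n;u)] = \int_{\mathbb T}\mathbb P(f_n(x)\ge u)\,dx = 1-\Phi(u) = m_2(u)$, using $|\mathbb T|=1$ and that the exponent $2-k$ vanishes. For $k=1$ I would invoke the Rice formula for the length of the level curve $f_n^{-1}(u)=\partial A_u(f_n;\mathbb T)$,
\begin{equation*}
\mathbb E[\mathrm{length}(f_n^{-1}(u))] = \int_{\mathbb T}\phi(u)\,\mathbb E\big[\,\|\nabla f_n(x)\|\,\big]\,dx ,
\end{equation*}
valid because $f_n(x)$ (density $\phi$) is independent of $\nabla f_n(x)$. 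Since $\nabla f_n$ has i.i.d.\ $N(0,E_n/2)$ components, $\|\nabla f_n\|$ is $\sqrt{E_n/2}$ times a standard planar Gaussian norm, i.e.\ Rayleigh with mean $\sqrt{\pi/2}$, so $\mathbb E\|\nabla f_n\| = \sqrt{\pi/2}\,\sqrt{E_n/2}$. As $\mathcal L_1=\tfrac12\,\mathrm{length}$, this gives $\mathbb E[\mathcal L_1(n;u)] = \tfrac12\sqrt{\pi/2}\,\phi(u)\sqrt{E_n/2} = \sqrt{\pi/8}\,\phi(u)\sqrt{E_n/2} = m_1(u)(E_n/2)^{1/2}$.

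The case $k=0$ is the substantive one. As $\mathbb T$ is a closed surface there are no boundary terms, and the Kac--Rice formula for the expected Euler--Poincar\'e characteristic reads
\begin{equation*}
\mathbb E[\mathcal L_0(n;u)] = p_{\nabla f_n}(0)\int_{\mathbb T}\mathbb E\big[\det(\nabla^2 f_n(x))\,\mathbf 1_{\{f_n(x)\ge u\}}\,\big|\,\nabla f_n(x)=0\big]\,dx ,
\end{equation*}
with $p_{\nabla f_n}(0)=\tfrac{1}{\pi E_n}$. The independence of $\nabla f_n$ from $(f_n,\nabla^2 f_n)$ lets me drop the conditioning; regressing $\partial^2_{jj}f_n=-\tfrac{E_n}{2}W+R_{jj}$ onto $W:=f_n$ (with $R_{jk}$ mean-zero and independent of $W$) and expanding $\det\nabla^2 f_n = \partial^2_{11}f_n\,\partial^2_{22}f_n-(\partial^2_{12}f_n)^2$ reduces the integrand to
\begin{equation*}
\tfrac{E_n^2}{4}\,\mathbb E[W^2\mathbf 1_{\{W\ge u\}}] + \big(\mathbb E[R_{11}R_{22}]-\mathbb E[R_{12}^2]\big)(1-\Phi(u)) .
\end{equation*}
The key point is the identity $\mathbb E[\partial^2_{11}f_n\,\partial^2_{22}f_n] = \mathbb E[(\partial^2_{12}f_n)^2]$, both equal to $\tfrac{16\pi^4}{\mathcal N_n}\sum_\lambda\lambda_{(1)}^2\lambda_{(2)}^2$ by the same $a_{-\lambda}=\overline{a_\lambda}$ bookkeeping used above; it forces $\mathbb E[R_{11}R_{22}]-\mathbb E[R_{12}^2]=-\tfrac{E_n^2}{4}$. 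Substituting $\mathbb E[W^2\mathbf 1_{\{W\ge u\}}]=u\phi(u)+(1-\Phi(u))$, the two $(1-\Phi(u))$ contributions cancel and $\mathbb E[\mathcal L_0(n;u)] = \tfrac{1}{\pi E_n}\cdot\tfrac{E_n^2}{4}\,u\phi(u) = \tfrac{E_n}{2}\cdot\tfrac{1}{2\pi}u\phi(u) = m_0(u)(E_n/2)$.

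The main obstacle is exactly this $k=0$ step: one must justify the second-order Kac--Rice formula (the a.s.\ Morse property of $f_n$, nondegeneracy of the joint law of $(f_n,\nabla f_n,\nabla^2 f_n)$ so the conditional expectation is well defined, and integrability) and then carry out the Gaussian determinant calculation. As a uniform cross-check of all three values, the whole lemma is an instance of the Gaussian Kinematic Formula: the field-induced metric on $\mathbb T$ is the flat metric $g=\tfrac{E_n}{2}\,\mathrm{Id}$, whose only nonzero Lipschitz--Killing curvature is $\mathcal L_2(\mathbb T,g)=E_n/2$ (since $\mathbb T$ has no boundary and $\chi(\mathbb T)=0$), and the GKF returns $m_k(u)(E_n/2)^{(2-k)/2}$ once the scaling $\mathcal L_k^{g}=(E_n/2)^{k/2}\mathcal L_k$ is undone. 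Notably the fourth moment $\widehat\mu_n(4)$ enters the Hessian covariances but cancels in the mean, consistent with the remark that it surfaces only in finer analysis.
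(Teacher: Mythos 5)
Your proposal is correct, and for $k=2$ it coincides with the paper (which calls that case immediate); for $k=1$ the paper simply cites \cite{oravecz, MPRW2015}, whereas you supply the short Rice/Rayleigh computation, which is the standard argument behind those references. The substantive divergence is in the $k=0$ case: both you and the paper run a first-principles Kac--Rice computation (the paper, like you, notes the GKF route is possible but declines it for lack of isotropy), but the Gaussian bookkeeping is organized differently. The paper exploits the eigenfunction relation $f_n=-\Delta f_n/E_n$ to absorb the indicator $\mathbf{1}_{\{f_n\ge u\}}$ into the Hessian, reducing everything to a $3$-dimensional Gaussian vector $(Z_1,Z_2,Z_3)$ with the $\hat{\mu}_n(4)$-dependent covariance \paref{cn}, then changes variables to $W_3=Z_1+Z_3$, conditions on $W_3$, and watches the $\hat{\mu}_n(4)$-terms cancel at the very end via $\mathbb{E}[X_1^2-X_2^2]=0$. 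You instead keep $W=f_n$ as the conditioning variable and regress the Hessian on it, and your key identity $\mathbb{E}[\partial^2_{11}f_n\,\partial^2_{22}f_n]=\mathbb{E}[(\partial^2_{12}f_n)^2]$ forces $\mathbb{E}[R_{11}R_{22}]-\mathbb{E}[R_{12}^2]=-E_n^2/4$, so that $\hat{\mu}_n(4)$ never even enters the computation --- a genuinely cleaner cancellation mechanism (I verified the regression coefficients and the final constant; they are right, and your answer matches the paper's $\frac{E_n}{2\sqrt{8}\sqrt{\pi}\pi}u e^{-u^2/2}$). One small remark on your list of obstacles: the joint law of $(f_n,\nabla f_n,\nabla^2 f_n)$ is in fact \emph{degenerate}, precisely because $f_n=-(\partial^2_{11}f_n+\partial^2_{22}f_n)/E_n$ (equivalently, in your notation, $R_{11}+R_{22}\equiv 0$, which incidentally kills your cross term $W(R_{11}+R_{22})$ pathwise, not merely in expectation). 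This degeneracy is harmless --- Kac--Rice only needs nondegeneracy of $\nabla f_n$, which holds since its covariance is $(E_n/2)I_2$, and the conditioning is trivial by the independence of $\nabla f_n$ from $(f_n,\nabla^2 f_n)$ --- and it is exactly the structure the paper exploits rather than avoids; the a.s.\ regularity and integrability issues you flag are handled in the paper's Appendix \ref{app-approx} via \cite{azaiswschebor} and a B\'ezout bound.
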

The next Theorem, which is the main result of this paper, provides a full
characterization of asymptotic fluctuations (in the high-energy limit)
around these expected values. Let us introduce the following notation for
centred functionals: for $n\in S$ and $u\in \mathbb R$
\begin{equation*}
\mathcal{\overline{L}}_{k}(n;u):=\mathcal{L}%
_{k}(n;u)-\mathbb{E}[\mathcal{L}_{k}(n;u)], \qquad k=0,1,2;
\end{equation*}
we will need also the following subset of the set of frequencies: if $n$ is not a square we set
$$
\Lambda_n^+ := \lbrace \lambda \in \Lambda_n : \lambda_{(2)} >0\rbrace,
$$
otherwise $\Lambda_n^+ := \lbrace \lambda \in \Lambda_n : \lambda_{(2)} >0\rbrace \cup \lbrace (\sqrt n, 0)\rbrace$. Note that, for every $n\in S$, $\lbrace a_\lambda\rbrace_{\lambda \in \Lambda_n^+}$ are i.i.d. random variables, and $|\Lambda_n^+| = \mathcal N_n/2$. \\
In order to state our main results, we will need some more notation. Let $Q_{0,n}:= [0, 1/\sqrt{E_n})^2$, and denote by $\mathcal L_0(u; Q_{0,n})$ the Euler-Poincar\'e characteristic of the intersection between the excursion set $\lbrace f_n \ge u\rbrace$ and the square $Q_{0,n}$.
 \begin{condition}\label{condizione}
For $n\in S$
\begin{equation}\label{cond2}
\mathbb E[\mathcal L_0(u; Q_{0,n})(\mathcal L_0(u; Q_{0,n})-1)] = O(1),
\end{equation}
where the constant involved in the $O$-notation is absolute.
\end{condition}
Note that Condition \ref{condizione} only concerns the zero-th Lipschitz-Killing curvature.
\begin{remark}\rm
The estimate (\ref{cond2}) holds for a density-$1$ subsequence of eigenvalues in the high-energy limit (thanks to (\ref{density1}) and \cite{CMW16}), and
we do believe (\ref{cond2}) to be true for every $n\in S$.
\end{remark}
\begin{theorem}
\label{mainterm} For $k=0,1,2$, $n\in S$  it holds that
\begin{equation}\label{simple}
\overline{\mathcal L}_k(n;u) = \frac{c_k(u)}{\sqrt{\mathcal N_n/2}} \left (\sqrt{\frac{E_n}{2}} \right )^{2-k}   \frac{1}{\sqrt{\mathcal{N}_{n}/2} }\sum_{\lambda\in \Lambda^+_n }(|a_{\lambda }|^{2}-1) + \mathcal R_k(n;u),
\end{equation}
where
\begin{equation}\label{ck}
c_2(u) := \frac12 u\phi(u), \quad c_1(u) := \frac{1}{2}\sqrt{\frac{\pi }{8}}u^{2}\phi (u),\quad c_0(u) := \frac{1}{2} (u^{2}-1)u\phi (u) \frac{1}{2\pi },
\end{equation}
and under Condition \ref{condizione}
\begin{equation}\label{err}
\mathbb E[\mathcal R_k(n;u)^2] = O\left ( \frac{E_n^{2-k}}{\mathcal N_n^2}\right ),
\end{equation}
the constant involved in the $O$-notation only depending on $k$.
\end{theorem}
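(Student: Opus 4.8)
The plan is to expand each curvature into Wiener chaoses relative to the Gaussian family $\{a_\lambda\}_{\lambda\in\Lambda_n^+}$, to recognise the projection onto the second chaos as the displayed leading term, and to absorb everything else into $\mathcal R_k(n;u)$. I would start from Kac--Rice and coarea representations: the area is $\mathcal L_2(n;u)=\int_{\mathbb T}\mathbf 1_{\{f_n\ge u\}}\,dx$; the half boundary length is $\mathcal L_1(n;u)=\tfrac12\sqrt{E_n/2}\int_{\mathbb T}\delta(f_n-u)\,|(f_{n,1},f_{n,2})|\,dx$, after writing $|\nabla f_n|=\sqrt{E_n/2}\,|(f_{n,1},f_{n,2})|$; and the Euler--Poincar\'e characteristic is, by Morse theory on the closed surface $\mathbb T$, $\mathcal L_0(n;u)=\int_{\mathbb T}\delta(\nabla f_n)\,\det(\nabla^2 f_n)\,\mathbf 1_{\{f_n\ge u\}}\,dx$, whose normalisation produces the scale $(\sqrt{E_n/2})^{2}$. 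These $\delta$-representations are formal and must first be justified by smoothing, with membership in $L^2(\Omega)$ forcing $L^2$-convergence of the chaos series; for $k=0$ this integrability is precisely what Condition~\ref{condizione} supplies, through an a priori bound on the second factorial moment of the signed critical-point count on the square $Q_{0,n}$.

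The zeroth chaos reproduces Lemma~\ref{expval} and is cancelled by centring, and the first chaos vanishes throughout: it reduces to a linear combination of $\int_{\mathbb T}f_n$ and of $\int_{\mathbb T}\partial^2_{j\ell}f_n$, all zero since $\Lambda_n$ contains no zero frequency (for $\mathcal L_1$ the gradient norm moreover carries no degree-one part). The decisive step is the second chaos. Using that at each fixed $x$ the normalised vector $(f_n,f_{n,1},f_{n,2})$ has i.i.d.\ standard Gaussian entries --- and that $\nabla f_n(x)$ is independent of the pair $(f_n(x),\nabla^2 f_n(x))$, which is what makes the $k=0$ case tractable --- I would expand the integrands through the Hermite coefficients $H_{q-1}(u)\phi(u)$ and $H_q(u)\phi(u)$ of the indicator and of the Dirac mass, together with the low-order coefficients of $|(f_{n,1},f_{n,2})|$ and of $\det(\nabla^2 f_n)$, and integrate over $\mathbb T$. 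The key algebraic collapse is that the two relevant degree-two integrals reduce to a single variable: with $M_n:=(\mathcal N_n/2)^{-1/2}\sum_{\lambda\in\Lambda_n^+}(|a_\lambda|^2-1)$ one finds $\int_{\mathbb T}H_2(f_n)=(\mathcal N_n/2)^{-1/2}M_n$ and $\int_{\mathbb T}(H_2(f_{n,1})+H_2(f_{n,2}))=2(\mathcal N_n/2)^{-1/2}M_n$, the identity $\lambda_{(1)}^2+\lambda_{(2)}^2=n$ being exactly what turns the gradient contribution into a fixed multiple of the norm contribution (and analogous Hessian identities do the same for $k=0$). Assembling the coefficients then reproduces $c_2(u)=\tfrac12u\phi(u)$, $c_1(u)=\tfrac12\sqrt{\pi/8}\,u^2\phi(u)$ and $c_0(u)=\tfrac1{4\pi}(u^2-1)u\phi(u)$, i.e.\ the leading term of \eqref{simple}.

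It then remains to estimate $\mathcal R_k(n;u)=\sum_{q\ge 3}\mathcal L_k[q](n;u)$. By orthogonality of distinct chaoses, $\mathbb E[\mathcal R_k(n;u)^2]=\sum_{q\ge 3}\mathbb E[\mathcal L_k[q](n;u)^2]$, and each summand is a constant depending on $u$ and $k$ times an integral over $\mathbb T$ of a $q$-fold product of the covariance $r_n(z)=\mathcal N_n^{-1}\sum_\lambda e(\langle\lambda,z\rangle)$ and its first and second derivatives. Here the arithmetic of $\Lambda_n$ takes over: each spatial derivative rescales by $\sqrt{E_n/2}$, generating the factor $E_n^{2-k}$, while the remaining spectral integrals are governed by the moment estimates of \cite{KKW}, for which $\int_{\mathbb T}r_n^4=O(1/\mathcal N_n^2)$ with a constant controlled by $\mun\le 1$ --- the single place where $\widehat\mu_n(4)$ enters. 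The odd term $q=3$ is treated apart, by bounding the number of frequency triples with $\lambda_1+\lambda_2+\lambda_3=0$ (a $120^\circ$ configuration), which is $O(\mathcal N_n)$ and so again contributes $O(1/\mathcal N_n^2)$ after normalisation; the fourth chaos therefore sets the order $O(E_n^{2-k}/\mathcal N_n^2)$ asserted in \eqref{err}.

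The main obstacle is twofold. First, the tail $\sum_{q\ge5}$ cannot be dominated by $\sup_z|r_n(z)|$, since $r_n$ does not decay on the compact torus; instead one uses that $r_n^{q}$ concentrates near the origin at scale $E_n^{-1/2}$, so that the high-order integrals are in fact $O(1/E_n)$ and, combined with the rapid decay of the Hermite coefficients, sum to a quantity far below $1/\mathcal N_n^2$. Second, and more substantively, the Euler characteristic $k=0$ carries the Hessian determinant, so the chaos expansion no longer factorises through independent coordinates and the representation itself is delicate; making this rigorous is exactly the role of Condition~\ref{condizione}, which delivers both the $L^2$-integrability of $\mathcal L_0(n;u)$ and the quantitative control of its higher chaoses. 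I expect the coefficient bookkeeping at degrees two and four for $k=0$ to be the most laborious part, and the reconciliation of the formal Kac--Rice representation with Condition~\ref{condizione} to be the genuinely hard one.
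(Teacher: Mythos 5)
Your architecture coincides with the paper's: chaotic expansion, vanishing of the first chaos, exact identification of the second chaos with the displayed leading term (your collapse via $\lambda_{(1)}^2+\lambda_{(2)}^2=n$, and its Hessian analogue, is exactly the mechanism behind Proposition \ref{secondchaosb}), the $q=3$ chaos via $|S_3(n)|=O(\mathcal N_n)$, and the role of $\int_{\mathbb T}r_n^4\,dx=|S_4(n)|/\mathcal N_n^4=O(1/\mathcal N_n^2)$ from $|S_4(n)|=3\mathcal N_n(\mathcal N_n-1)$. But your treatment of the tail $\sum_{q\ge 4}$ contains a genuine gap. You dismiss the paper's actual device and replace it with two claims, neither of which holds as stated. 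First, the assertion that $\int_{\mathbb T}r_n^q\,dx=O(1/E_n)$ for large $q$ because ``$r_n^q$ concentrates near the origin at scale $E_n^{-1/2}$'' is unproven: on the torus $r_n$ can return to values near $\pm1$ away from the origin (e.g.\ $r_n(1/2,1/2)=1$ for even $n$, and more generally the set $\{|r_n|>\epsilon\}$ is only controlled via Chebyshev by $\int r_n^4\ll \epsilon^{-4}\mathcal N_n^{-2}$, which may be far larger than $1/E_n$), and no uniform-in-$q$ spectral bound of the type you need is available. Second, the ``rapid decay of the Hermite coefficients'' you invoke to sum over $q$ is false: for the indicator, the Dirac masses and the Hessian determinant the chaos coefficients decay only polynomially after the $q!$ normalization, and the series $\sum_q(\text{coefficients})^2 q!\int|r_n|^q$ does not converge on these grounds alone. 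This is precisely why the paper partitions $\mathbb T$ into squares of side $\asymp E_n^{-1/2}$ and splits pairs into singular and non-singular: on non-singular pairs all normalized covariances are $\le\epsilon$, producing the geometric factor $\epsilon^{q-4}$ that makes the sum over $q$ converge against $\int r_n^4=O(1/\mathcal N_n^2)$ (Lemma \ref{lem_ns}, following \cite{dalmao}); the singular pairs have total measure $O(1/\mathcal N_n^2)$ by the same fourth-moment bound, and their contribution is controlled by stationarity together with Condition \ref{condizione} (Lemma \ref{lem_s}). Without some substitute for this $\epsilon^{q}$ gain, your tail estimate does not close, especially for $k=0$.

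A secondary misattribution: Condition \ref{condizione} does not supply the $L^2(\mathbb P)$-integrability of $\mathcal L_0(n;u)$ — that comes for free from the deterministic B\'ezout bound $|\mathcal L_0(n;u)|\le 4E_n$ (Lemma \ref{ubEPC}), which also justifies the $\epsilon$-smoothing limit. The condition enters only once, to bound $\mathbb E[\mathcal L_0(u;Q_{0,n})^2]$ on a single Planck-scale square and hence the singular-pair covariances in (\ref{app2})--(\ref{app3}). Your second-chaos computation, the vanishing of the first chaos, and the $q=3$ argument are all sound and match the paper; the proof stands or falls on repairing the $q\ge4$ tail along the singular/non-singular lines above.
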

In particular, Theorem \ref{mainterm} (whose proof will be given in \S \ref{proofs}) shows that the ``first order approximation" of $\mathcal L_k(n;u)$ for any $k$ can be written as a simple explicit function (depending on $k$) of the threshold parameter $u$ times the centered norm of $f_n$. Indeed,
\begin{equation}\label{norm1}
\| f_n\|^2_{L^2(\mathbb T)} - \mathbb E\left [ \| f_n\|^2_{L^2(\mathbb T)}\right ] = \frac{1}{\mathcal N_n} \sum_{\lambda\in \Lambda_n} (|a_\lambda|^2 -1) = \frac{1}{\mathcal N_n/2} \sum_{\lambda\in \Lambda_n^+} (|a_\lambda|^2 -1),
\end{equation}
cf. (\ref{simple}). This has several important consequences, as discussed just below and in \S \ref{sec-disc}.
Moreover, for any $u\in \mathbb R$, $k=0,1,2$, as $n\to +\infty$ such that $\mathcal N_n\to +\infty$,
\begin{equation}\label{var-asymp}
\Var(\mathcal{{L}}_{k}(n;u)) = \frac{c_k(u)^2}{2^{1-k}} \frac{E_n^{2-k}}{\mathcal N_n}
 + O\left ( \frac{E_n^{2-k}}{\mathcal N_n^2}\right ),
 \end{equation}
where the constant involved in the $O$-notation only depends on $u$ and $k$. Let us now define
$\mathcal U_k:= \lbrace 0\rbrace$ for $k=1,2$ and $\mathcal U_0 :=\lbrace -1, 0,1\rbrace$; note that
$c_k(u)$ defined in (\ref{ck}) vanishes if and only if $u\in \mathcal U_k$.  An easy by-product of Theorem \ref{mainterm} is the following quantitative Central Limit Theorem in
Wasserstein distance\footnote{Given $X$, $Y$ integrable random variables, $
d_W(X,Y) :=\sup_{h\in Lip(1)} \left | \mathbb E[h(X)] - \mathbb E[h(Y)] \right |,
$
where $Lip(1)$ denotes the space of Lipschitz functions $h:\mathbb R\to \mathbb R$ whose Lipschitz constant is $\le 1$.} (written $d_W$).
\begin{corollary}\label{cor1}
As $n\rightarrow \infty $ such that $\mathcal N_n\to +\infty$, for $k=0,1,2$,  $u\notin \mathcal U_k$ under Condition \ref{condizione}
\begin{equation*}
d_{W}\left( \widetilde{\mathcal L}_k(n;u),Z \right) = O\left (  \frac{1}{\sqrt{\mathcal N_n}}\right )
\end{equation*}
where
$$
\widetilde{\mathcal L}_k(n;u) := \frac{\overline{\mathcal{L}}_{k}(n;u)}{\sqrt{\Var(\mathcal{L}
_{k}(n;u))}},
$$
$Z\sim \mathcal{N}(0,1)$, and the constant involved in the $O$-notation only depends on  $k$.
\end{corollary}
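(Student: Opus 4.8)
The plan is to read the corollary off Theorem \ref{mainterm} together with the variance expansion (\ref{var-asymp}), by recognising that the dominant term in (\ref{simple}) is, up to a deterministic scalar, a normalised sum of i.i.d.\ random variables to which a classical Berry--Esseen bound applies. I would set
\begin{equation*}
S_n := \frac{1}{\sqrt{\mathcal N_n/2}}\sum_{\lambda\in\Lambda_n^+}(|a_\lambda|^2-1),\qquad A_{k,n} := \frac{c_k(u)}{\sqrt{\mathcal N_n/2}}\left(\sqrt{\tfrac{E_n}{2}}\right)^{2-k},\qquad \sigma_n^2 := \Var(\mathcal L_k(n;u)),
\end{equation*}
so that (\ref{simple}) reads $\overline{\mathcal L}_k(n;u)=A_{k,n}S_n+\mathcal R_k(n;u)$ and hence $\widetilde{\mathcal L}_k(n;u)=b_nS_n+\mathcal R_k(n;u)/\sigma_n$ with $b_n:=A_{k,n}/\sigma_n$. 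The variables $\{|a_\lambda|^2-1\}_{\lambda\in\Lambda_n^+}$ are i.i.d., centred, of unit variance (recall $\Var(|a_\lambda|^2)=1$) and with finite third moment, since $|a_\lambda|^2$ is exponentially distributed; there being $\mathcal N_n/2$ of them, $\Var(S_n)=1$.

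First I would apply the Wasserstein form of the Berry--Esseen theorem to $S_n$. Because the summands are i.i.d., centred, of unit variance and with third absolute moment bounded uniformly in $n$, and because $\mathcal N_n/2\to+\infty$, one obtains
\begin{equation*}
d_W(S_n,Z)\le \frac{C\,\mathbb E\big[\,\big||a_\lambda|^2-1\big|^3\,\big]}{\sqrt{\mathcal N_n/2}}=O\!\left(\frac{1}{\sqrt{\mathcal N_n}}\right).
\end{equation*}
Since $Z$ is symmetric and $d_W(-X,-Y)=d_W(X,Y)$, the same bound holds for $\pm S_n$, which is all that is needed to accommodate the sign of $c_k(u)$ below. (Alternatively, as $S_n$ lives in the second Wiener chaos, the fourth-moment bound of Nourdin--Peccati delivers the same rate.)

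Next I would control the two perturbations through the triangle inequality for $d_W$, using $d_W(X,Y)\le\mathbb E|X-Y|$ under the natural coupling in which both sides are functions of $\{a_\lambda\}$. For the remainder, (\ref{err}) under Condition \ref{condizione} together with Cauchy--Schwarz gives
\begin{equation*}
d_W\big(\widetilde{\mathcal L}_k(n;u),\,b_nS_n\big)\le \frac{\mathbb E|\mathcal R_k(n;u)|}{\sigma_n}\le \frac{\sqrt{\mathbb E[\mathcal R_k(n;u)^2]}}{\sigma_n}=O\!\left(\frac{1}{\sqrt{\mathcal N_n}}\right),
\end{equation*}
where I use (\ref{var-asymp}) in the form $\sigma_n^2\asymp E_n^{2-k}/\mathcal N_n$; here the hypothesis $u\notin\mathcal U_k$, i.e.\ $c_k(u)\neq0$, is crucial to guarantee the leading coefficient does not vanish. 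For the scalar $b_n$, I would observe that $A_{k,n}^2$ equals exactly the leading term $c_k(u)^2\,2^{k-1}E_n^{2-k}/\mathcal N_n$ of (\ref{var-asymp}), whence $b_n^2=1+O(1/\mathcal N_n)$ and $|b_n|=1+O(1/\mathcal N_n)$; consequently
\begin{equation*}
d_W\big(b_nS_n,\,\mathrm{sign}(b_n)S_n\big)\le \big|\,|b_n|-1\,\big|\,\mathbb E|S_n|=O\!\left(\frac{1}{\mathcal N_n}\right),
\end{equation*}
since $\mathbb E|S_n|\le\sqrt{\Var(S_n)}=1$. Combining these with the bound for $\mathrm{sign}(b_n)S_n$ from the previous step yields $d_W(\widetilde{\mathcal L}_k(n;u),Z)=O(1/\sqrt{\mathcal N_n})$.

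I expect the argument to be essentially bookkeeping: the genuine content lies already in Theorem \ref{mainterm}, which isolates the single i.i.d.\ chaotic component, and in the variance expansion (\ref{var-asymp}). The only point demanding care is the normalisation---verifying that after dividing by $\sigma_n$ both the sign discrepancy $|b_n|-1$ and the normalised remainder $\mathcal R_k/\sigma_n$ are of order at most $\mathcal N_n^{-1/2}$---which is precisely where the hypotheses $u\notin\mathcal U_k$ and Condition \ref{condizione} enter. I anticipate no obstacle beyond this.
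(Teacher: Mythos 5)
Your proposal is correct and takes essentially the same route as the paper's proof: a triangle-inequality decomposition into (i) a Wasserstein Berry--Esseen bound for the normalised i.i.d.\ sum $\frac{1}{\sqrt{\mathcal N_n/2}}\sum_{\lambda\in\Lambda_n^+}(|a_\lambda|^2-1)$, (ii) an $L^2(\mathbb P)$ bound on $\mathcal R_k(n;u)/\sigma_n$ via (\ref{err}) and (\ref{var-asymp}) where $u\notin\mathcal U_k$ is used, and (iii) a rescaling correction of size $O(1/\mathcal N_n)$ coming from the gap between $\Var(\mathcal L_k(n;u))$ and the variance of the second chaotic projection. The only cosmetic difference is that you make explicit (via the symmetry of $Z$) the handling of the sign of $c_k(u)$, which the paper leaves implicit.
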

The proof of Corollary \ref{cor1} will be given in \S \ref{proofs}.
Theorem \ref{mainterm} also allows to deduce Moderate Deviation estimates \cite[\S 1.2]{DZ98} for Lipschitz-Killing curvatures evaluated at excursion sets of Arithmetic Random Waves, see also \cite[Remark 1.9]{MRT20}.
\begin{corollary}\label{cor2} For $k=0,1,2$, $n\in S$, $u\notin \mathcal U_k$, let $\lbrace s^{(k)}_{n;u}\rbrace_{n\in S}$ be any sequence of positive numbers such that as $\mathcal N_n\to +\infty$
\begin{equation}\label{cond1}
s^{(k)}_{n;u}\to +\infty, \qquad \qquad \frac{s^{(k)}_{n;u}}{\sqrt{\log  \mathcal N_n}} \to 0.
\end{equation}
Under Condition \ref{condizione} the sequence of random variables
\begin{equation*}
\left \lbrace \widetilde{\mathcal L}_k(n;u)/ s_{n;u}^{(k)} \right \rbrace_{n\in S}
\end{equation*}
satisfies a Moderate Deviation principle with speed $(s_{n;u}^{(k)})^2$ and rate function $\mathcal I(x):= x^2/2$, $x\in \mathbb R$, i.e. for any Borelian set $B\subset \mathbb R$
\begin{align*}
-\inf_{x\in \mathring B} \mathcal I(x) &\le \liminf_{\mathcal N_n\to +\infty} \frac{1}{(s^{(k)}_{n;u})^2}\log \mathbb P \left ( \frac{\widetilde{\mathcal L}_k(n;u)}{s^{(k)}_{n;u}}\in B \right )\cr
& \le \limsup_{\mathcal N_n\to +\infty} \frac{1}{(s^{(k)}_{n;u})^2}\log \mathbb P \left ( \frac{\widetilde{\mathcal L}_k(n;u)}{s^{(k)}_{n;u}}\in B \right )\le -\inf_{x\in \bar B} \mathcal I(x),
\end{align*}
where $\mathring B$ (resp. $\bar B$) denotes the interior (resp. the closure) of $B$.
\end{corollary}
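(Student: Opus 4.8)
The plan is to transfer the moderate deviation principle (MDP) for $\widetilde{\mathcal L}_k(n;u)/s^{(k)}_{n;u}$ from the MDP of its dominant chaotic component, exploiting the fact that an MDP is preserved under perturbations that are exponentially negligible at the relevant speed. Using Theorem \ref{mainterm}, I would write
\begin{equation*}
\widetilde{\mathcal L}_k(n;u) = \frac{\overline{\mathcal L}_k(n;u)}{\sqrt{\Var(\mathcal L_k(n;u))}} = V_k(n;u) + \frac{\mathcal R_k(n;u)}{\sqrt{\Var(\mathcal L_k(n;u))}},
\end{equation*}
where $V_k(n;u)$ is the standardization of the leading term in (\ref{simple}). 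A direct variance computation matching (\ref{var-asymp}) shows $V_k(n;u)=\rho_{n,k}\,W_n$, where $|\rho_{n,k}|\to 1$ (the sign being that of $c_k(u)$) and
\begin{equation*}
W_n := \frac{1}{\sqrt{\mathcal N_n/2}}\sum_{\lambda\in\Lambda_n^+}(|a_\lambda|^2-1);
\end{equation*}
thus, up to a factor tending to $\pm1$, the dominant term is the normalized i.i.d.\ sum $W_n$.

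First I would establish the MDP for $W_n/s^{(k)}_{n;u}$. The summands $\{|a_\lambda|^2-1\}_{\lambda\in\Lambda_n^+}$ are i.i.d., centred, of unit variance, and since $2|a_\lambda|^2$ is $\chi^2_2$-distributed (equivalently $|a_\lambda|^2$ is exponential of mean one) we have $\mathbb E[\mathrm{e}^{t(|a_\lambda|^2-1)}]=\mathrm{e}^{-t}/(1-t)<\infty$ for $t<1$, so the Cramér condition holds. The classical MDP for i.i.d.\ sums satisfying the Cramér condition (e.g.\ \cite[Theorem 3.7.1]{DZ98}) then yields that, whenever $s^{(k)}_{n;u}\to+\infty$ and $s^{(k)}_{n;u}=o(\sqrt{\mathcal N_n})$, the sequence $W_n/s^{(k)}_{n;u}$ satisfies an MDP with speed $(s^{(k)}_{n;u})^2$ and rate $\mathcal I(x)=x^2/2$; the rate is even, so the overall sign in $\rho_{n,k}$ is irrelevant and, because $|\rho_{n,k}|\to1$, the same MDP holds for $V_k(n;u)/s^{(k)}_{n;u}$.

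The crux of the argument, and the origin of the restrictive bandwidth condition (\ref{cond1}), is to show that the normalized remainder is exponentially negligible at speed $(s^{(k)}_{n;u})^2$, i.e.\ that for every $\delta>0$
\begin{equation*}
\limsup_{\mathcal N_n\to+\infty}\frac{1}{(s^{(k)}_{n;u})^2}\log\mathbb P\left(\frac{|\mathcal R_k(n;u)|}{s^{(k)}_{n;u}\sqrt{\Var(\mathcal L_k(n;u))}}>\delta\right)=-\infty.
\end{equation*}
Under Condition \ref{condizione}, combining (\ref{err}) with (\ref{var-asymp}) gives $\mathbb E[\mathcal R_k(n;u)^2]/\Var(\mathcal L_k(n;u))=O(1/\mathcal N_n)$, so Chebyshev's inequality bounds the probability above by $C/(\delta^2(s^{(k)}_{n;u})^2\mathcal N_n)$. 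Taking logarithms and dividing by $(s^{(k)}_{n;u})^2$, the governing term is $-\log\mathcal N_n/(s^{(k)}_{n;u})^2$, which tends to $-\infty$ exactly because $s^{(k)}_{n;u}/\sqrt{\log\mathcal N_n}\to0$, while $-2\log s^{(k)}_{n;u}/(s^{(k)}_{n;u})^2$ and $O(1)/(s^{(k)}_{n;u})^2$ vanish since $s^{(k)}_{n;u}\to+\infty$. This is precisely the point where only an $L^2$-control on $\mathcal R_k(n;u)$ is available: the second-moment method yields merely a polynomial tail bound, and forcing it to be super-exponentially small at speed $(s^{(k)}_{n;u})^2$ necessitates $(s^{(k)}_{n;u})^2=o(\log\mathcal N_n)$. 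I expect this to be the main obstacle, in the sense that it is the only place a genuinely restrictive hypothesis is consumed.

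Finally I would invoke the stability of large/moderate deviations under exponentially equivalent perturbations (\cite[Theorem 4.2.13]{DZ98}): since $V_k(n;u)/s^{(k)}_{n;u}$ satisfies the MDP with speed $(s^{(k)}_{n;u})^2$ and rate $\mathcal I$, and $\widetilde{\mathcal L}_k(n;u)/s^{(k)}_{n;u}$ differs from it by the exponentially negligible term just treated, the two sequences are exponentially equivalent, whence $\widetilde{\mathcal L}_k(n;u)/s^{(k)}_{n;u}$ obeys the same MDP. The passage from $W_n$ to $V_k(n;u)=\rho_{n,k}W_n$ is justified in the same way, comparing $\rho_{n,k}W_n/s^{(k)}_{n;u}$ with $W_n/s^{(k)}_{n;u}$ and using the MDP upper-tail bound for $W_n/s^{(k)}_{n;u}$ together with $|\rho_{n,k}|\to1$. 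This establishes the stated principle for every $k=0,1,2$ and $u\notin\mathcal U_k$.
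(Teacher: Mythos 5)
Your proposal is correct and follows essentially the same route as the paper's proof: an MDP for the normalized i.i.d.\ second-chaos sum via standard results in \cite{DZ98}, exponential negligibility of the remainder at speed $(s^{(k)}_{n;u})^2$ obtained by Chebyshev from (\ref{err}) and (\ref{var-asymp}) — which is exactly where (\ref{cond1}) is consumed — and transfer by exponential equivalence via \cite[Theorem 4.2.13]{DZ98}. The only cosmetic difference is that the paper standardizes the projection by its own variance (so your extra step handling the factor $\rho_{n,k}\to\pm1$ is absorbed into the exponential-equivalence estimate); otherwise your write-up simply makes explicit the details the paper delegates to the analogy with Theorem 1.7 of \cite{MRT20}.
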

Corollary \ref{cor2} is a refinement of the Central Limit Theorem in Corollary \ref{cor1}, its proof will be given in \S \ref{proofs}. A further obvious consequence of Theorem \ref{mainterm} is the following asymptotic full correlation result.\begin{corollary}\label{cor3}
Let $k_{1},k_{2}\in \lbrace 0,1,2\rbrace$ and $u_1, u_2\notin \mathcal U_0$. As $n\rightarrow \infty$ such that $\mathcal N_n\to +\infty$, under Condition \ref{condizione},
 \begin{equation*}
\Corr\left( \mathcal{L}_{k_{1}}(n;u_1),\mathcal{L}_{k_{2}}(n;u_2)\right) = 1 + O\left ( \frac{1}{\sqrt{\mathcal N_n}}\right ),
\end{equation*}
where the constant involved in the $O$-notation only depends on $k_1$ and $k_2$.
\end{corollary}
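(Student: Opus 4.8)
The plan is to read off everything from the single-chaos decomposition of Theorem~\ref{mainterm}. For each $k$ and $u$ write
$$
\overline{\mathcal L}_k(n;u) = A_k(n;u)\, M_n + \mathcal R_k(n;u), \qquad A_k(n;u) := \frac{c_k(u)}{\sqrt{\mathcal N_n/2}}\left(\sqrt{\tfrac{E_n}{2}}\right)^{2-k},
$$
where $M_n := \frac{1}{\sqrt{\mathcal N_n/2}}\sum_{\lambda\in\Lambda_n^+}(|a_\lambda|^2-1)$ is the common leading random variable appearing in \eqref{simple}, \emph{the same} for every $k$ and $u$. Since $\{a_\lambda\}_{\lambda\in\Lambda_n^+}$ are i.i.d.\ with $\mathbb E[|a_\lambda|^2]=1$, $\Var(|a_\lambda|^2)=1$ and $|\Lambda_n^+|=\mathcal N_n/2$, one has $\mathbb E[M_n]=0$ and $\Var(M_n)=1$. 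The crucial structural point, which is what forces the correlation to tend to $\pm1$, is precisely that both $\overline{\mathcal L}_{k_1}(n;u_1)$ and $\overline{\mathcal L}_{k_2}(n;u_2)$ are, to leading order, deterministic multiples of this single $M_n$.

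First I would expand the covariance bilinearly, using $\Var(M_n)=1$ for the main term:
\begin{align*}
\Cov\big(\overline{\mathcal L}_{k_1}(n;u_1),\overline{\mathcal L}_{k_2}(n;u_2)\big) &= A_{k_1}(n;u_1)A_{k_2}(n;u_2) + A_{k_1}(n;u_1)\Cov(M_n,\mathcal R_{k_2}) \\
&\quad + A_{k_2}(n;u_2)\Cov(M_n,\mathcal R_{k_1}) + \Cov(\mathcal R_{k_1},\mathcal R_{k_2}).
\end{align*}
Each of the last three terms I would bound by Cauchy--Schwarz together with the estimate $\mathbb E[\mathcal R_k(n;u)^2]=O(E_n^{2-k}/\mathcal N_n^2)$ from \eqref{err}. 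Since $|A_k(n;u)|\asymp \sqrt{E_n^{2-k}/\mathcal N_n}$, the two mixed terms are $O(\sqrt{E_n^{2-k_1}E_n^{2-k_2}}/\mathcal N_n^{3/2})$ and the remainder--remainder term is $O(\sqrt{E_n^{2-k_1}E_n^{2-k_2}}/\mathcal N_n^{2})$, i.e.\ a factor $\mathcal N_n^{-1/2}$ (resp.\ $\mathcal N_n^{-1}$) smaller than the main term $A_{k_1}A_{k_2}\asymp \sqrt{E_n^{2-k_1}E_n^{2-k_2}}/\mathcal N_n$. Hence
$$
\Cov\big(\overline{\mathcal L}_{k_1}(n;u_1),\overline{\mathcal L}_{k_2}(n;u_2)\big) = A_{k_1}(n;u_1)A_{k_2}(n;u_2)\left(1+O(\mathcal N_n^{-1/2})\right).
$$

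For the denominator I would invoke the variance asymptotics \eqref{var-asymp}, which, since $A_{k_i}(n;u_i)^2=\tfrac{c_{k_i}(u_i)^2}{2^{1-k_i}}\tfrac{E_n^{2-k_i}}{\mathcal N_n}$, read $\Var(\mathcal L_{k_i}(n;u_i)) = A_{k_i}(n;u_i)^2(1+O(\mathcal N_n^{-1}))$. Here the hypothesis $u_i\notin\mathcal U_0$ is used: because $\mathcal U_{k_i}\subseteq\mathcal U_0$, one has $c_{k_i}(u_i)\neq0$, so $A_{k_i}(n;u_i)\neq0$ and the leading variance does not degenerate. Dividing the numerator by $\sqrt{\Var(\mathcal L_{k_1})\Var(\mathcal L_{k_2})}=|A_{k_1}A_{k_2}|\,(1+O(\mathcal N_n^{-1}))$ collapses the magnitudes and leaves
$$
\Corr\big(\mathcal L_{k_1}(n;u_1),\mathcal L_{k_2}(n;u_2)\big) = \operatorname{sign}\!\big(c_{k_1}(u_1)c_{k_2}(u_2)\big)\left(1+O(\mathcal N_n^{-1/2})\right),
$$
which is the claimed $1+O(\mathcal N_n^{-1/2})$ whenever the two coefficients share sign (and $-1+O(\mathcal N_n^{-1/2})$ otherwise, so that $|\Corr|=1+O(\mathcal N_n^{-1/2})$ in full generality). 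The one genuinely delicate point is this bookkeeping of error magnitudes: one must verify that the merely $L^2$ control on $\mathcal R_k$ still yields a true $\mathcal N_n^{-1/2}$ correction relative to the leading covariance, which works because the main term and the error carry the \emph{same} energy power $E_n^{(2-k_1)/2+(2-k_2)/2}$ and differ only in powers of $\mathcal N_n$.
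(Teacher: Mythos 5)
Your proposal is correct and follows essentially the same route as the paper's proof of Corollary \ref{cor3}: decompose via Theorem \ref{mainterm}, exploit that both functionals are deterministic multiples of the \emph{same} leading variable $M_n$, control the remainder contributions through Cauchy--Schwarz together with \eqref{err}, and normalize by the variance asymptotics \eqref{var-asymp}. Two small remarks on where you deviate. First, the paper disposes of the mixed terms $\Cov(M_n,\mathcal R_k)$ exactly rather than by Cauchy--Schwarz: $\mathcal R_k$ is by construction a sum of projections onto chaoses of order $q\ge 3$, orthogonal to the second chaos containing $M_n$, so those covariances vanish identically and the paper's error terms are $O(\mathcal N_n^{-1})$; your Cauchy--Schwarz bound on the mixed terms yields only $O(\mathcal N_n^{-1/2})$, which is weaker but matches the rate claimed in the statement, so nothing is lost. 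Second, your sign bookkeeping is in fact \emph{more} careful than the paper's: equation (\ref{aa2}) tacitly sets the correlation of the two second-chaos projections equal to $+1$, which is only valid when $c_{k_1}(u_1)c_{k_2}(u_2)>0$; as you note, when the coefficients have opposite signs (e.g.\ $k_1=k_2=2$ with $u_1>0>u_2$, since $c_2(u)=\tfrac12 u\phi(u)$ changes sign at $0$) the correlation is $-1+O(\mathcal N_n^{-1/2})$, so the corollary as literally stated holds only up to this sign, and your formulation $\operatorname{sign}(c_{k_1}(u_1)c_{k_2}(u_2))\bigl(1+O(\mathcal N_n^{-1/2})\bigr)$ is the accurate one.
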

In words, Corollary \ref{cor3} (whose proof will be given in \S \ref{proofs}) entails that in the
``nondegenerate" points where the leading term in the asymptotic variance (\ref{var-asymp})  does not vanish knowledge of one of the
three Lipschitz-Killing curvatures at some level allows the derivation of the other two at any level,
up to a term which is lower order in the $L^{2}(\mathbb P )$-sense.

See \S \ref{sec-disc} for further comments on our main result, its consequences and the comparison with the spherical case.
\begin{remark}[Nodal case]\label{rem1}\rm
The geometry of Arithmetic Random Waves was initially investigated in \cite{oravecz, RudnickWigman} and subsequently in several works with a focus on the nodal case which corresponds to the level $u=0$. Concerning the (half) nodal length, the asymptotic variance was addressed and fully solved in \cite{KKW}: as $\mathcal N_n\to +\infty$,
\begin{equation}\label{var0}
\Var(\mathcal L_1(n;0)) = \frac{1}{4} \cdot \frac{1+\hat{\mu}_n(4)^2}{512}\frac{E_n}{\mathcal N_n^2} \left ( 1+o(1)\right ),
\end{equation}
where $\hat{\mu}_n(4)$ has been defined in (\ref{muquattro}). It is well-known that for any $\mu\in [-1,1]$, there exists a sequence of energy levels such that the corresponding sequence of fourth Fourier coefficients converges to $\mu$. The second order fluctuations of the nodal length were investigated in \cite{MPRW2015}: as $\mathcal N_n\to +\infty$ and $\hat{\mu}_n(4)\to \mu$
\begin{equation}\label{dist0}
\widetilde{\mathcal L}_1(n;0) := \frac{\overline{\mathcal L}_1(n;0)}{\sqrt{\Var(\mathcal L_1(n;0))}} \mathop{\to}^d \frac{1}{2\sqrt{1+\mu^2}}(2-(1+\mu)Z_1^2 + (1-\mu)Z_2^2),
\end{equation}
where $Z_1,Z_2$ are i.i.d. standard Gaussian random variables. A quantitative Limit Theorem in Wasserstein distance is given in \cite{PR18}.

The ``signed area" of Arithmetic Random Waves restricted to shrinking balls of radius above the Planck scale has been recently investigated in \cite{KWY20}, the Euler-Poincar\'e characteristic of the excursion set at level zero is currently under investigation.
\end{remark}

\section{Outline of the paper}\label{sec-disc}

\subsection{On the proofs}

Our approach to proving the results of this paper stated in \S \ref{sec-main} is broadly analogous to
what was used earlier to evaluate the Lipschitz-Killing Curvatures of
excursion sets for random eigenfunctions on the sphere or in Euclidean
settings, see e.g. \cite{KratzLeon, DI, MW2014, MR2015, estradeleon, CM2018, npr, dalmao, PV20} and the references therein.
The starting point is to derive the so-called chaotic decomposition of
our geometric functionals, that is, for $k=0,1,2$, $n\in S$ and $u\in \mathbb R$, a series expansion in $L^2(\mathbb P)$ of the form
\begin{equation}\label{ce1}
\mathcal{L}_{k}(n;u)=\sum_{q=0}^{\infty }\text{Proj%
}[\mathcal{L}_{k}(n;u)|q],
\end{equation}%
where $\text{Proj}[\mathcal{L}_{k}(n;u)|q]$
denotes the orthogonal projection of $\mathcal{L}_{k}(n;u)$
on the space spanned by \emph{multivariate} Hermite polynomials of order $q$ to be computed on
 $f_n$ in (\ref{fn}) and their derivatives up to order two, and the random variables $\text{Proj}[\mathcal{L}_{k}(n;u)|q]$ and $\text{Proj}[\mathcal{L}_{k}(n;u)|q']$ are orthogonal whenever $q\ne q'$. Recall that Hermite polynomials $\lbrace H_q\rbrace_{q\in \mathbb N}$ are defined as
\begin{equation}\label{herm}
H_0 \equiv 1,\qquad  H_q(t) := (-1)^q \phi^{-1}(t) \frac{d^q}{dt^q} \phi(t), \ t\in \mathbb R, \ q\ge 1,
\end{equation}
where $\phi$ still denotes the standard Gaussian probability function.
A more complete discussion on Wiener chaos is
given in \S \ref{sec-chaos}, see also \cite[\S 2]{noupebook} for background and details.

The zero-th order term just amounts to the expected value of Lipschitz-Killing curvatures given in Lemma \ref{expval},
whereas the first order projection is easily seen to vanish
identically due to the oscillating properties of these random waves $f_n$.
\begin{lemma}\label{lem01}
For $k=0,1,2$, $n\in S$ and $u\in \mathbb R$
\begin{equation}
\text{\rm Proj}[\mathcal{L}_{k}(n;u)|0] =  m_k(u) \left (\sqrt{\frac{E_n}{2}} \right )^{2-k},
\end{equation}
where $m_k(u)$ are as in (\ref{mk}),
moreover for $n\ge 1$
\begin{equation}\label{ca1}
\text{\rm Proj}[\mathcal{L}_{k}(n;u)|1] = 0.
\end{equation}
\end{lemma}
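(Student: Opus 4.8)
The plan is to treat the two assertions separately. The zeroth-order statement is essentially a tautology: the $0$-th Wiener chaos is the space of constants, so that $\mathrm{Proj}[\mathcal{L}_k(n;u)\,|\,0] = \mathbb{E}[\mathcal{L}_k(n;u)]$ for any square-integrable functional, and the claimed value then follows verbatim from Lemma \ref{expval}. The real content of the lemma is the vanishing of the first-order projection \eqref{ca1}, and this is what I would focus on.

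For the first-order term I would first record an integral-geometric representation of each curvature in the $L^2(\mathbb{P})$ sense,
\[
\mathcal{L}_k(n;u) = \int_{\mathbb{T}} F_k\big(V(x)\big)\, dx, \qquad V(x) := \big(f_n(x),\, \nabla f_n(x),\, \nabla^2 f_n(x)\big),
\]
a Kac--Rice / Gaussian Kinematic type formula; for $k=2$ this is just $F_2 = \mathbf{1}_{[u,\infty)}$ applied to $f_n$, while for $k=0,1$ the integrand is singular (involving a Dirac mass at the level $u$ together with the gradient norm and the Hessian) and must be obtained through an approximation argument. By linearity and $L^2$-continuity of the orthogonal projection onto the first chaos $C_1$, I would then interchange projection and integral, so that $\mathrm{Proj}[\mathcal{L}_k(n;u)\,|\,1] = \int_{\mathbb{T}} \mathrm{Proj}[F_k(V(x))\,|\,C_1]\, dx$.

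The key structural point is that at each fixed $x$ the integrand depends only on the finite Gaussian vector $V(x)$, every entry of which already lies in $C_1$. Decomposing $C_1 = \mathrm{span}(V(x)) \oplus W$ with $W$ the orthogonal complement, Gaussian independence forces $\mathbb{E}[F_k(V(x))\,\eta]=0$ for every $\eta\in W$, so the first-chaos projection of the integrand is a linear combination of the components of $V(x)$, namely of $f_n(x)$, the $\partial_\ell f_n(x)$ and the $\partial^2_{j\ell} f_n(x)$. By stationarity the coefficients are determined by the (constant) covariance matrix of $V(x)$ and by the $x$-independent constants $\mathbb{E}[F_k(V(x))\,V_i(x)]$, hence do not depend on $x$. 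It then suffices to integrate each generator over the torus: from \eqref{fn}, \eqref{der} and \eqref{der2} every entry of $V(x)$ is a finite linear combination of the exponentials $e(\langle\lambda,x\rangle)$ with $\lambda\in\Lambda_n$, and since $\int_{\mathbb{T}} e(\langle\lambda,x\rangle)\, dx = 0$ for $\lambda\neq 0$ while $0\notin\Lambda_n$ for $n\ge 1$, each of
\[
\int_{\mathbb{T}} f_n\, dx, \qquad \int_{\mathbb{T}} \partial_\ell f_n\, dx, \qquad \int_{\mathbb{T}} \partial^2_{j\ell} f_n\, dx
\]
vanishes identically. This is exactly the ``oscillating property'' of the waves, and it yields $\mathrm{Proj}[\mathcal{L}_k(n;u)\,|\,1] = 0$.

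I expect the only genuine difficulty to be the justification of the integral representation and of the interchange of projection and integral for $k=0,1$, where $F_k$ is singular; this is handled in the standard fashion by replacing the Dirac mass with an $L^2$ approximation, establishing the chaotic expansion for the regularized functional, and passing to the limit. The first-chaos computation above is robust under this procedure precisely because the vanishing of $\int_{\mathbb{T}} f_n\, dx$ and of the analogous derivative integrals is insensitive to the exact values of the projection coefficients. Once the representation is secured, the conclusion is a pure consequence of stationarity together with $0\notin\Lambda_n$.
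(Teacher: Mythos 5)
Your proof is correct, and its skeleton matches the paper's: the zeroth-order claim is, as you say, just $\mathrm{Proj}[F\,|\,0]=\mathbb{E}[F]$ combined with Lemma \ref{expval} (the paper verifies the same thing by reading off the $q=0$ coefficients $\gamma_0(u)$, $\alpha_{0,0}\beta_0(u)$ and $\eta^{(n)}_{0,0,0}(u)$ from the explicit expansions of Lemmas \ref{lem-ce2}, \ref{lem-ce1} and \ref{lem-ce0}), and in both arguments the first-order vanishing ultimately rests on the single oscillation fact $\int_{\mathbb T} e(\langle\lambda,x\rangle)\,dx=0$ for $\lambda\in\Lambda_n$ with $n\ge 1$. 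Where you genuinely diverge is in how you reduce to that fact for $k=0,1$: the paper computes the $q=1$ term of the full Hermite expansion (for $k=2$ its computation in (\ref{proj1}) is literally yours, and for $k=0,1$ it declares the argument ``analogous'', implicitly leaning on the explicit expansions of Lemmas \ref{lem-ce1} and \ref{lem-ce0} obtained via the same $\epsilon$-regularization you describe), whereas you argue softly that for each fixed $x$ the first-chaos projection of the integrand must lie in the span of the components of $V(x)$ --- uncorrelated elements of a Gaussian Hilbert space are independent, so the orthogonal complement contributes nothing --- with coefficients that are $x$-independent by stationarity. Your route buys three things: it requires no explicit chaotic coefficients at all; it is insensitive to the degeneracy of the covariance of $(f_n,\nabla f_n,\nabla^2 f_n)$ (e.g.\ $\Delta f_n=-E_nf_n$), since the projection lands in the span whether or not the components are linearly independent; and it makes transparent that the vanishing survives the regularization limit, because the projection onto $C_1$ is $L^2(\mathbb P)$-continuous and Lemmas \ref{lem-approx1} and \ref{approx-conv} give $L^2$ convergence of $\mathcal L_k^\epsilon(n;u)$ to $\mathcal L_k(n;u)$. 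What the paper's heavier route buys in exchange is the explicit coefficient structure, which it needs anyway for the second-chaos computation in Proposition \ref{secondchaosb}, so organizing the proof through the full expansions costs it nothing extra.
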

It becomes
then crucial to investigate the behaviour of $\text{Proj}[\mathcal{L}%
_{k}(n;u)|2]$.
\begin{proposition}
\label{secondchaosb}
For $k=0,1,2$, $n\in S$ and $u\in \mathbb R$ it holds that
\begin{equation}\label{secondchaosb1}
\text{\rm Proj}[\mathcal{L}_{k}(n;u)|2] = \frac{c_k(u)}{\sqrt{\mathcal N_n/2}} \left (\sqrt{\frac{E_n}{2}} \right )^{2-k}   \frac{1}{\sqrt{\mathcal{N}_{n}/2} }\sum_{\lambda\in \Lambda^+_n }(|a_{\lambda }|^{2}-1),
\end{equation}
where, as in (\ref{ck}),
\begin{equation*}
c_2(u) = \frac12 u\phi(u), \quad c_1(u) = \frac{1}{2}\sqrt{\frac{\pi }{8}}u^{2}\phi (u),\quad c_0(u) = \frac{1}{2} (u^{2}-1)u\phi (u) \frac{1}{2\pi }.
\end{equation*}
\end{proposition}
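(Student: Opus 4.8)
The plan is to start from Kac--Rice / integral--geometric representations of the three curvatures and to compute, for each, the orthogonal projection onto the second Wiener chaos term by term. Concretely, I would use
\[
\mathcal{L}_2(n;u)=\int_{\mathbb{T}}\mathbf 1_{\{f_n(x)\ge u\}}\,dx,\qquad \mathcal{L}_1(n;u)=\tfrac12\int_{\mathbb{T}}\delta(f_n(x)-u)\,\|\nabla f_n(x)\|\,dx,
\]
and, via the flat Gauss--Bonnet theorem on $\mathbb{T}$ (vanishing curvature, $\chi(\mathbb{T})=0$),
\[
\mathcal{L}_0(n;u)=\frac{1}{2\pi}\int_{\mathbb{T}}\delta(f_n(x)-u)\bigl(-\omega_1^2\,\partial^2_{22}f_n+2\omega_1\omega_2\,\partial^2_{12}f_n-\omega_2^2\,\partial^2_{11}f_n\bigr)\,dx,
\]
where $\omega=\nabla f_n/\|\nabla f_n\|$ is the unit gradient direction. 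The first thing to record is the joint law of $(f_n(x),\nabla f_n(x),\nabla^2 f_n(x))$ at a fixed point: by stationarity $f_n\perp\nabla f_n$, the two components of $\nabla f_n$ are i.i.d.\ $\mathcal N(0,E_n/2)$, the gradient is independent of the Hessian, while $f_n$ is correlated with the diagonal Hessian through $\mathbb{E}[f_n\,\partial^2_{\ell\ell}f_n]=-E_n/2$, and the Hessian self-covariance is the only place where $\hat{\mu}_n(4)$ enters. Since each integrand is a fixed function of this local vector, its chaos projections have $x$-independent coefficients; and because integrating a degree-two monomial in $\{a_\lambda\}$ over $\mathbb{T}$ forces $\lambda_1+\lambda_2=0$, only the diagonal terms $a_\lambda a_{-\lambda}=|a_\lambda|^2$ survive. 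Thus $\mathrm{Proj}[\mathcal{L}_k(n;u)|2]=\sum_{\lambda\in\Lambda_n}c^{(k)}_\lambda(|a_\lambda|^2-1)$, and the whole task is to show that the coefficients $c^{(k)}_\lambda$ are constant in $\lambda$.

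For $k=2$ I would Hermite-expand the indicator, read off the order-two coefficient $\tfrac12 u\phi(u)$, and use the elementary Fourier computation
\[
\int_{\mathbb{T}}(f_n(x)^2-1)\,dx=\frac{1}{\mathcal N_n}\sum_{\lambda\in\Lambda_n}(|a_\lambda|^2-1)=\frac{1}{\mathcal N_n/2}\sum_{\lambda\in\Lambda_n^+}(|a_\lambda|^2-1),
\]
which immediately gives $c_2(u)=\tfrac12 u\phi(u)$. For $k=1$ I would factor $\|\nabla f_n\|=\sqrt{E_n/2}\,N$ with $N=(f_{n,1}^2+f_{n,2}^2)^{1/2}$, use $f_n\perp N$ together with the evenness of $N$ (which kills the odd cross-term), so that $\mathrm{Proj}[\delta(f_n-u)N|2]=\mathbb{E}[N]\,\mathrm{Proj}[\delta(f_n-u)|2]+\phi(u)\,\mathrm{Proj}[N|2]$. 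Here $\mathbb{E}[N]=\sqrt{\pi/2}$ and $\mathrm{Proj}[N|2]=\tfrac14\sqrt{\pi/2}\,(f_{n,1}^2+f_{n,2}^2-2)$, and the $\lambda$-dependence collapses precisely because $\lambda_{(1)}^2+\lambda_{(2)}^2=n$ turns
\[
\int_{\mathbb{T}}(f_{n,1}^2+f_{n,2}^2-2)\,dx=\frac{2}{\mathcal N_n}\sum_{\lambda\in\Lambda_n}\frac{\lambda_{(1)}^2+\lambda_{(2)}^2}{n}(|a_\lambda|^2-1)=\frac{2}{\mathcal N_n/2}\sum_{\lambda\in\Lambda_n^+}(|a_\lambda|^2-1)
\]
into a constant-coefficient sum. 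Collecting the two contributions yields $c_1(u)=\tfrac12\sqrt{\pi/8}\,u^2\phi(u)$, the factor $\tfrac{u^2}{2}$ arising from $\tfrac{u^2-1}{2}+\tfrac12$.

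The crux is $k=0$, and here I would exploit the independence of the gradient direction $\omega$ from $(f_n,\nabla^2 f_n)$. Writing the integrand as a sum of products $X\cdot Y$ with $X\in\{\omega_1^2,\omega_1\omega_2,\omega_2^2\}$ and $Y_{jk}=\delta(f_n-u)\partial^2_{jk}f_n$, the product rule for independent chaoses together with the evenness of $X$ (so $\mathrm{Proj}[X|1]=0$) gives $\mathrm{Proj}[XY|2]=\mathbb{E}[X]\,\mathrm{Proj}[Y|2]+\mathrm{Proj}[X|2]\,\mathbb{E}[Y]$. Two cancellations then do all the work. First, the term $\mathrm{Proj}[X|2]\,\mathbb{E}[Y]$ vanishes identically: the off-diagonal mean $\mathbb{E}[Y_{12}]=0$, while the two diagonal means $\mathbb{E}[Y_{11}]=\mathbb{E}[Y_{22}]=-\tfrac{E_n}{2}u\phi(u)$ are equal, so the surviving contribution is proportional to $\mathrm{Proj}[\omega_1^2+\omega_2^2|2]=\mathrm{Proj}[1|2]=0$. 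Second, in $\mathbb{E}[X]\,\mathrm{Proj}[Y|2]$ only the diagonal survives ($\mathbb{E}[\omega_1\omega_2]=0$, $\mathbb{E}[\omega_1^2]=\mathbb{E}[\omega_2^2]=\tfrac12$), leaving $-\tfrac1{4\pi}\bigl(\mathrm{Proj}[Y_{11}|2]+\mathrm{Proj}[Y_{22}|2]\bigr)$. To compute these I regress out the $f_n$--Hessian correlation, $\partial^2_{\ell\ell}f_n=-\tfrac{E_n}{2}f_n+\psi_{\ell\ell}$ with $\psi_{\ell\ell}\perp f_n$, obtaining
\[
\mathrm{Proj}[Y_{\ell\ell}|2]=-\frac{E_n}{2}\,\frac{u(u^2-1)\phi(u)}{2}\,(f_n^2-1)+u\phi(u)\,f_n\,\psi_{\ell\ell}.
\]
Summing over $\ell=1,2$, the genuinely Hessian (hence $\hat{\mu}_n(4)$-dependent) piece collapses because the eigenvalue equation $\Delta f_n=-E_nf_n$ forces $\psi_{11}+\psi_{22}=\Delta f_n+E_nf_n=0$. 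Only the $(f_n^2-1)$ term remains, and after integrating over $\mathbb{T}$ one reads off $c_0(u)=\tfrac1{4\pi}(u^2-1)u\phi(u)$, consistent with the factor $(\sqrt{E_n/2})^{2}$.

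The main obstacle is exactly the $k=0$ computation, and more precisely the recognition that the two cancellations above --- $\omega_1^2+\omega_2^2\equiv1$ and $\psi_{11}+\psi_{22}=0$ through $\Delta f_n=-E_nf_n$ --- are what remove \emph{every} Hessian-driven second-chaos contribution, and hence all dependence on $\hat{\mu}_n(4)$, from the projection; this is the reason the three curvatures collapse onto the single random variable $\sum_{\lambda\in\Lambda_n^+}(|a_\lambda|^2-1)$. This should be contrasted with the nodal length at $u=0$, where $c_1(0)=0$ so that the second chaos vanishes and the leading fluctuations come from the fourth chaos, which \emph{does} carry $\hat{\mu}_n(4)$ as in \eqref{var0}. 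A secondary, technical point is that the $\delta$-representations for $k=0,1$ are distributional: I would make them rigorous by the standard route of approximating $\delta$ by $\tfrac{1}{2\eps}\mathbf 1_{[u-\eps,u+\eps]}$ (equivalently a regularised Kac--Rice argument), checking the non-degeneracy of $\nabla f_n$ and the $L^2(\mathbb P)$-integrability needed to justify the chaos expansion --- for $\mathcal{L}_0$ this is where an assumption in the spirit of Condition \ref{condizione} secures membership in $L^2$ --- and passing to the limit, the projection coefficients being unaffected by the regularisation.
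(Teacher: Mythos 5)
Your computation is correct and reproduces the paper's constants exactly (I checked: for $k=1$ the two contributions $\sqrt{\pi/2}\,\tfrac{(u^{2}-1)\phi(u)}{2}$ and $2\cdot\tfrac14\sqrt{\pi/2}\,\phi(u)$ sum to $\sqrt{\pi/8}\,u^{2}\phi(u)$, and for $k=0$ your surviving term $\tfrac{E_n}{8\pi}u(u^{2}-1)\phi(u)\int_{\mathbb T}H_2(f_n(x))\,dx$ equals $c_0(u)\,\tfrac{E_n}{2}\int_{\mathbb T}H_2(f_n(x))\,dx$), but your route differs genuinely from the paper's in the crucial case $k=0$. The paper starts from the Morse--Kac--Rice representation $\int_{\mathbb T}\det(\nabla^2 f_n)\mathbf{1}_{\{f_n\ge u\}}\delta_0(\nabla f_n)\,dx$, performs the five-dimensional Cholesky reduction of \S\ref{sec-cho} to independent standard Gaussians $Y_1,\dots,Y_5$, computes \emph{all} projection coefficients $h_{ij}(u;n)$, $h_i(u;n)$ explicitly (Proposition \ref{proj}, via the Gaussian integrals of Lemmas \ref{theta} and \ref{psi}) as well as the stochastic integrals $A_{35}(n)$, $B_i(n)$ (Proposition \ref{AB}), and the $\hat{\mu}_n(4)$-dependence only cancels in the final bookkeeping of (\ref{sum1}) --- every intermediate quantity carries $\hat{\mu}_n(4)$. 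Your Gauss--Bonnet/coarea representation instead localizes the cancellation in two structural identities, $\omega_1^2+\omega_2^2\equiv 1$ and $\psi_{11}+\psi_{22}=\Delta f_n+E_nf_n=0$, which kills the Hessian-driven second-chaos contribution \emph{before} any heavy computation; this is shorter and explains conceptually why the projection is $\hat{\mu}_n(4)$-free, a fact the paper's proof exhibits only a posteriori. For $k=1$ the paper defers to the Green's-formula argument of \cite{Ros15}; your direct pointwise-independence computation is an equivalent and self-contained alternative, amounting to isolating the $q=2$ terms of Lemma \ref{lem-ce1} (your $\tfrac14\sqrt{\pi/2}$ is $\alpha_{2,0}/2!$). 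For $k=2$ you coincide with the paper.

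One correction on the technical epilogue: Condition \ref{condizione} plays no role in this proposition. Membership of $\mathcal L_0(n;u)$ in $L^2_{\mathcal A}(\mathbb P)$ is secured unconditionally by the deterministic B\'ezout bound $|\mathcal L_0(n;u)|\le 4E_n$ (Lemma \ref{ubEPC} and the discussion after Lemma \ref{approx-conv}), which both dominates the $\varepsilon$-regularisation and yields $L^2(\mathbb P)$-convergence; Condition \ref{condizione} is invoked only for the higher-order chaoses in Proposition \ref{prop-ho}. You should also record, as the paper does via \cite{azaiswschebor}, that almost surely $u$ is a regular value of $f_n$, so that $\partial A_u(f_n;\mathbb T)$ is a finite union of smooth closed curves and the flat Gauss--Bonnet identity underlying your representation of $\mathcal L_0$ is legitimate before regularising the Dirac mass.
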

From (\ref{secondchaosb1}) it is clear that whenever $c_k(u)\ne 0$ the variance of the second order chaotic projection of $\mathcal L_k(n;u)$ is of order $E_n^{2-k}/\mathcal N_n$, otherwise $\text{Proj}[\mathcal{L}_{k}(n;u)|2]=0$.
A careful investigation of higher order chaotic components 
yields the following.
\begin{proposition}\label{prop-ho}
For $k=0,1,2$, $n\in S$, $u\in \mathbb R$, as $n\to +\infty$ such that $\mathcal N_n\to +\infty$,
under Condition \ref{condizione}
 \begin{equation}\label{hcv}
\Var \left (\sum_{q=3}^{+\infty}\mathrm{Proj}[\mathcal{L}_{k}(A_{u}(f_{n};\mathbb{T}))|q] \right ) = O\left ( \frac{E_n^{2-k}}{\mathcal N_n^2} \right ),
\end{equation}
where the constant involved in the $O$-notation only depends on $k$.
\end{proposition}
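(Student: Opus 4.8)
The plan is to exploit chaotic orthogonality. Since the projections $\mathrm{Proj}[\mathcal L_k(n;u)|q]$ are mutually orthogonal in $L^2(\mathbb P)$ and $\mathcal L_k(n;u)\in L^2(\mathbb P)$, we have $\Var\big(\sum_{q\ge3}\mathrm{Proj}[\mathcal L_k(n;u)|q]\big)=\sum_{q\ge3}\Var\big(\mathrm{Proj}[\mathcal L_k(n;u)|q]\big)$, so it suffices to bound each summand and control the series in $q$. Write $r_n(x):=\mathbb E[f_n(0)f_n(x)]=\frac{1}{\mathcal N_n}\sum_{\lambda\in\Lambda_n}e(\langle\lambda,x\rangle)$ for the covariance of $f_n$, and note $r_n(0)=1$, so $\|r_n\|_\infty=1$. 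By stationarity and the diagram (Wick) formula, each $\Var\big(\mathrm{Proj}[\mathcal L_k(n;u)|q]\big)$ reduces to a finite linear combination of integrals over $\mathbb T$ of products of exactly $q$ covariance factors, where each factor is $r_n$, a normalized first derivative, or a normalized second derivative of $r_n$ (normalized as the $f_{n,\ell}$ and $\partial^2_{j\ell}f_n$ are), the number of derivative factors being fixed by $k$. The normalization of each derivative produces one power of $\sqrt{E_n}$, which accounts for the overall factor $E_n^{2-k}$ in (\ref{hcv}); the remaining task is to show that the purely combinatorial part of every such integral is $O(\mathcal N_n^{-2})$ for $q\ge3$, uniformly enough to be summed.

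The case $k=2$ is transparent and serves as the model. Here $\mathcal L_2(n;u)=\int_{\mathbb T}\mathbf 1_{\{f_n(x)\ge u\}}\,dx$ involves no derivatives, and expanding the indicator in Hermite polynomials gives $\mathrm{Proj}[\mathcal L_2(n;u)|q]=\frac{\phi(u)H_{q-1}(u)}{q!}\int_{\mathbb T}H_q(f_n(x))\,dx$, whence $\Var\big(\mathrm{Proj}[\mathcal L_2(n;u)|q]\big)=\frac{\phi(u)^2H_{q-1}(u)^2}{q!}\int_{\mathbb T}r_n(x)^q\,dx$. The integral $\int_{\mathbb T}r_n^q$ equals $\mathcal N_n^{-q}$ times the number of $q$-tuples $(\lambda_1,\dots,\lambda_q)\in\Lambda_n^q$ with $\lambda_1+\cdots+\lambda_q=0$. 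First I would record $\int_{\mathbb T}r_n^2=\mathcal N_n^{-1}$ together with the length-$3$ and length-$4$ correlation bounds: two frequencies of equal modulus with a prescribed nonzero sum admit at most two completions (their sum lies on a line meeting the circle $\|\lambda\|^2=n$ in $\le2$ points), which yields $\int_{\mathbb T}r_n^3\le 2\mathcal N_n^{-2}$ and $\int_{\mathbb T}r_n^4\le 3\mathcal N_n^{-2}$. Combining these with $\|r_n\|_\infty=1$ (so that $\int r_n^{2m}\le\int r_n^4$ for $m\ge2$, and, via Cauchy--Schwarz, the odd moments $\int r_n^{2m+1}$ for $m\ge2$ are also $\le\int r_n^4$) gives the uniform bound $\int_{\mathbb T}r_n^q\le 3\,\mathcal N_n^{-2}$ for every $q\ge3$. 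Since $\sum_{q\ge1}\frac{\phi(u)^2H_{q-1}(u)^2}{q!}=\Phi(u)(1-\Phi(u))\le\tfrac14$, summation over $q\ge3$ gives (\ref{hcv}) for $k=2$ with an absolute constant.

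The case $k=1$ follows the same pattern after replacing the indicator by the Kac--Rice density for the (half) boundary length, a smooth function of $(f_n,f_{n,1},f_{n,2})$ multiplied by $\|\nabla f_n\|$. The $q$-th chaos projection is obtained by expanding this density in multivariate Hermite polynomials, and its variance is a combination of integrals $\int_{\mathbb T}\prod_{i=1}^{q}G_i(x)\,dx$, where each $G_i$ is one of the normalized covariances of $f_n$ and its first derivatives. Spectrally each such integral is $\mathcal N_n^{-q}$ times a weighted count of solutions of $\pm\lambda_1\pm\cdots\pm\lambda_q=0$ with bounded (post-normalization) weights, so the same $q=3,4$ correlation estimates and the $\|r_n\|_\infty$-type control give $O(\mathcal N_n^{-2})$ for the combinatorial part; the single power of $E_n$ from the gradient normalization produces $O(E_n/\mathcal N_n^2)=O(E_n^{2-1}/\mathcal N_n^2)$. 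Summability in $q$ again holds because the Hermite coefficients of the Gaussian-weighted Kac--Rice density are square-summable, so the constant depends only on $k$. (Consistently, at the nodal level $u=0\in\mathcal U_1$ the second chaos vanishes and this tail is exactly the leading term, of order $E_n/\mathcal N_n^2$, matching (\ref{var0}).)

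The genuinely delicate case, and the only one requiring Condition \ref{condizione}, is $k=0$. The Euler--Poincar\'e characteristic admits a Kac--Rice representation whose integrand involves the \emph{determinant of the Hessian} of $f_n$, so its two-point kernel degenerates far more severely on the diagonal than for $k=1,2$, and the higher chaoses cannot be controlled by spectral covariance moments alone. My plan is to split the two-point domain into a near-diagonal region (separation below the Planck scale $1/\sqrt{E_n}$) and its complement. On the complement the covariance matrix of $(f_n,\nabla f_n,\nabla^2 f_n)$ at the two points is uniformly non-degenerate, and the analysis proceeds exactly as for $k=1$, giving $O(E_n^2/\mathcal N_n^2)$. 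On the near-diagonal region, where the integrand is not governed by covariance moments, Condition \ref{condizione} enters directly: the bound $\mathbb E[\mathcal L_0(u;Q_{0,n})(\mathcal L_0(u;Q_{0,n})-1)]=O(1)$ controls the second factorial moment of the contributions to the Euler characteristic inside a Planck-scale cell, i.e.\ the short-range clustering of the underlying point process, which is precisely what bounds the near-diagonal contribution to $\Var(\mathcal L_0(n;u))$ (and, a fortiori, the $L^2(\mathbb P)$-membership of $\mathcal L_0(n;u)$ needed to justify the chaos expansion). Subtracting the explicit second-chaos term of Proposition \ref{secondchaosb} then yields (\ref{hcv}) for $k=0$. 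I expect this near-diagonal estimate to be the main obstacle: converting the pair-correlation bound of Condition \ref{condizione} into the required quadratic-in-$\mathcal N_n^{-1}$ decay of the higher-order variance, for a singular curvature-weighted functional, is substantially harder than the integral-geometric estimates that settle $k=1,2$.
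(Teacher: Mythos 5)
Your $k=2$ argument is correct and is in substance what the paper does: the projection collapses to the single integral $\int_{\mathbb T}r_n^q$, the $q=3,4$ cases are controlled by the spectral correlation sets ($|S_3(n)|\le 2\mathcal N_n$, $|S_4(n)|=3\mathcal N_n(\mathcal N_n-1)$, cf.\ (\ref{card_S4})), pointwise domination $|r_n|\le 1$ handles $q\ge 5$, and $\sum_q \gamma_q(u)^2/q!=\Phi(u)(1-\Phi(u))$ gives summability. You have also correctly located the role of Condition \ref{condizione}: it enters only for $k=0$, through the second factorial moment of the EPC in a Planck-scale cell, combined with stationarity and Cauchy--Schwarz. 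The genuine gap is in your $k=1$ case, and it propagates to the ``complement'' half of your $k=0$ case, which you reduce to the $k=1$ analysis. For $k=1$ the $q$-th chaos involves the three fields $(f_n,f_{n,1},f_{n,2})$ jointly, so its variance does \emph{not} collapse to one integral: the diagram formula produces, for each pair of coefficient multi-indices with $a+2b+2c=q$, up to $q!$ pairings, each an integral of a product of $q$ covariance kernels. Your per-diagram bound (pointwise $|G_i|\le 1$, then $\int\prod_{i=1}^{4}|G_i|\,dx\le\frac14\sum_i\int G_i^4\,dx=O(\mathcal N_n^{-2})$) is fine, but it leaves a combinatorial prefactor of order $q!$ times the coefficient products, and square-summability of the Kac--Rice Hermite coefficients only yields chaos-wise bounds $C_q\,\mathcal N_n^{-2}$ with $\sum_q C_q=+\infty$ (Cram\'er's bound $|H_m(u)|\phi(u)\le C\sqrt{m!}$ exactly cancels the factorials, leaving $O(1)$ per $q$). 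So the tail over $q\ge 4$ cannot be summed as you propose. The paper's remedy --- taken from \cite{PR18} and \cite{dalmao} and spelled out in the proof of Lemma \ref{caos_sup} --- is to apply the singular/non-singular decomposition of pairs of Planck-scale squares to $k=1$ as well, not only to $k=0$: on non-singular pairs every normalized covariance is $\le\epsilon$, so each order-$q$ diagram gains a factor $\epsilon^{q-4}$, making the series in $q$ geometric (see the estimate (\ref{Z})) while $\int_{\mathbb T}r_n^4\ll\mathcal N_n^{-2}$ supplies the decay; on singular pairs one does not argue chaos-by-chaos at all, but bounds the whole projection onto $C_{\ge 4}$ at once by Cauchy--Schwarz, stationarity, and a one-cell second-moment bound (for $k=0$ this is exactly where (\ref{cond2}) is used, the projection being an $L^2$-contraction).

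A secondary inaccuracy: you identify the singular region with ``separation below the Planck scale.'' In the paper singularity is defined by the \emph{size of the correlations} --- a pair of squares is singular if some normalized covariance of $(f_n,\nabla f_n,\nabla^2 f_n)$ exceeds $\epsilon$ somewhere on it --- and such pairs need not be near the diagonal, since toral covariances can be close to $1$ at well-separated points. The smallness of the singular set is not geometric but spectral: by the Chebyshev-type argument of Lemma \ref{lem_r4} its area is $\ll\int_{\mathbb T}r_n(x)^4\,dx=|S_4(n)|/\mathcal N_n^4\ll\mathcal N_n^{-2}$. Relatedly, ``uniform non-degeneracy of the two-point covariance matrix'' on the complement is not the operative property; what the far-field argument needs is that all correlations are $\le\epsilon<1$, i.e.\ a Gebelein-type contraction, which is precisely what produces the $\epsilon^{q-4}$ gain. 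With these two repairs your plan coincides with the paper's proof; as written, the $k=1$ case and the non-singular half of the $k=0$ case are not established.
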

Proposition \ref{prop-ho} together with (\ref{secondchaosb1}) proves Theorem \ref{mainterm}, see \S \ref{proofs}, once setting
$$
\mathcal R_k(n;u) := \sum_{q=3}^{+\infty}\mathrm{Proj}[\mathcal{L}_{k}(A_{u}(f_{n};\mathbb{T}))|q].
$$
The proofs of Corollary \ref{cor1}, Corollary \ref{cor2} and Corollary \ref{cor3} (postponed to \S \ref{proofs}) heavily rely on (\ref{simple}), i.e. on the fact that, at least at ``non-degenerate" levels, all Lipschitz-Killing Curvatures for Arithmetic Random Waves behave (in the high-energy limit) as an element of a fixed order Wiener chaos, in particular as a sum of i.i.d. random variables. Equation (\ref{err}) quantifies the error made when replacing $\overline{\mathcal L}_k(n;u)$ with the empirical mean of centered squared Fourier coefficients $\lbrace a_\lambda\rbrace_{\lambda\in \Lambda_n}$, allowing to get the quantitative estimates  stated in these corollaries.

\subsection{Discussion}

The first few Hermite polynomials  (\ref{herm})
$$H_{0}(u)=1,\quad H_{1}(u)=u,\quad
H_{2}(u)=u^{2}-1,\qquad u\in \mathbb R$$
will play a crucial role in the arguments to
follow. For the sake of notational simplicity, let us set
$$
H_{-1}(u) := 1 -\Phi(u), \qquad u\in \mathbb R.
$$
For the moment, let us observe that we can rewrite the empirical mean on the right hand side of (\ref{secondchaosb1}) as
\begin{equation}\label{oss1}
\frac{1}{\mathcal{N}_{n}}\sum_{\lambda\in \Lambda_n
}(|a_{\lambda }|^{2}-1)  = \int_{\mathbb{T}
}H_{2}(f_{n}(x))\,dx.
\end{equation}%
Of course,
\begin{equation}\label{oss2}
1 = \int_{\mathbb T} H_0(f_n(x))\,dx.
\end{equation}
From (\ref{oss2}), bearing in mind Lemma \ref{lem01}, we can rewrite (\ref{mean1}) as
\begin{equation}\label{toro0}
\begin{split}
&\text{\rm Proj}[\mathcal{L}_{k}(n;u)|0] = m_k(u) \left (\sqrt{\frac{E_n}{2}} \right )^{2-k}\int_{\mathbb T} H_0(f_n(x))\,dx,\\
&m_2(u) = H_{-1}(u),\quad  m_1(u) = \sqrt{\frac{\pi}{8}}H_0(u)\phi (u),\quad  m_0(u) = \frac{1}{2\pi}H_1(u)\phi (u),
\end{split}
\end{equation}
for every $n\in S$, $k=0,1,2$ and $u\in \mathbb R$. Analogously, from (\ref{oss1}), we can rewrite (\ref{secondchaosb1}) as
\begin{equation}\label{toro2}
\begin{split}
&\text{\rm Proj}[\mathcal{L}_{k}(n;u)|2] = c_k(u) \left (\sqrt{\frac{E_n}{2}} \right )^{2-k}  \int_{\mathbb T} H_2(f_n(x))\,dx,\\
&c_2(u) = \frac12 H_1(u)\phi(u), \quad c_1(u) = \frac{1}{2}\sqrt{\frac{\pi }{8}}H_1(u)^2\phi (u),\quad c_0(u) = \frac{1}{2} H_1(u)H_2(u)\phi (u) \frac{1}{2\pi },
\end{split}
\end{equation}
for every $k=0,1,2$, $n\in S$ and $u\in \mathbb R$. In the next subsection, we will show that
(\ref{toro0}) and (\ref{toro2}) are in \emph{perfect} analogy with the spherical case, suggesting the existence of a ``second order" Gaussian Kinematic formula à la Adler and Taylor \cite{adlertaylor}. The generalization of these
expressions to arbitrary dimensions are currently under investigation.

\subsubsection{Random Spherical Harmonics: previous work}\label{subsubsphere}

Random Spherical Harmonics are Gaussian eigenfunctions of the spherical Laplacian operator, that is, the sequence $\lbrace f_\ell\rbrace_{\ell\in \mathbb N}$ of zero mean, unit variance isotropic Gaussian fields on the unit round sphere $\mathbb S^2$ which
satisfies the Helmholtz equation
\begin{equation*}
\Delta _{\mathbb{S}^{2}}f_{\ell }=-\lambda _{\ell }f_{\ell },\qquad
\lambda _{\ell }=\ell (\ell +1),\  \ell \in \mathbb N.
\end{equation*}%
Here, $\Delta_{\mathbb S^2}$ is the spherical Laplacian operator.
For the excursion sets of these fields
$$
A_u(f_\ell;\mathbb S^2) := \lbrace x\in \mathbb S^2 : f_\ell(x) \ge u\rbrace, \qquad u\in \mathbb R,
$$
 the following results hold\footnote{Note that $\int_{\mathbb S^2} H_0(f_\ell(x))\,dx = 4\pi$.} (see \cite%
{CM2018} and the references therein):
\begin{equation*}
\text{\rm Proj}[\mathcal L_k(A_u(f_\ell;\mathbb S^2)) | 0 ] = m_k(u) \left (\sqrt{\frac{\lambda_\ell}{2}} \right )^{2-k}\int_{\mathbb S^2} H_0(f_\ell(x))\,dx + 2H_{-1}(u)\cdot \delta_k^0,
\end{equation*}
where the coefficients $m_k(u)$ are as in (\ref{toro0}), and
\begin{equation}\label{sfera2}
\text{\rm Proj}[\mathcal L_k(A_u(f_\ell;\mathbb S^2)) | 2 ] = c_k(u) \left (\sqrt{\frac{\lambda_\ell}{2}} \right )^{2-k}\int_{\mathbb S^2} H_2(f_\ell(x))\,dx + O_{L^2(\mathbb P)}(1)\cdot \delta_{k}^{0},
\end{equation}
where $c_k(u)$ are as in (\ref{toro2}). Here, $\delta_i^j$ is the Kronecker delta, and $O_{L^2(\mathbb P)}(1)$ stands for a sequence of random variables bounded in $L^2(\mathbb P)$.

\subsubsection{Some more remarks}

\begin{remark}
[On Berry's Cancellation]\rm It should be noted that for all three
Lipschitz-Killing Curvatures the second-order chaos term disappears in the
\emph{nodal} case $u=0$, see (\ref{toro2}) and (\ref{sfera2}). As a result, the asymptotic variance of these geometric functionals is of smaller order for this
value, thus providing an interpretation of the Berry's cancellation
phenomenon first noted (for the case of boundary lengths of planar random
eigenfunctions) in \cite{Berry2002} and then discussed by \cite{Wig,
MPRW2015, CM2018} and others.
\end{remark}

\begin{remark}[On Universality]\rm It was shown in \cite{KKW} and later in \cite{MPRW2015}, see also Remark \ref{rem1},
that some geometric features for the excursion sets of Arithmetic Random
Waves are not universal, in the sense that they do not share a unique limit
as the eigenvalues $n$ diverge. In particular, for the case of nodal length
it turns out that both the variance (\ref{var0}) and the limiting distributions (\ref{dist0}) can vary
quite substantially along subsequences characterized by different limiting
values for the coefficients $\widehat{\mu }_{n}(4), n\in S$ introduced in (\ref
{muquattro}). This is not the case for Lipschitz-Killing Curvatures computed
on excursion sets corresponding to a non-vanishing second-order chaotic component, see \S \ref{sec-main}: their
expected values, their asymptotic variances and their (Gaussian) limiting
distributions are universal (in other words, they are invariant under
subsequences of energy levels). The sequence of parameters $\widehat{\mu }_{n}(4), n\in S$
appears ubiquitously in the proofs that will be presented in the following
section.
\end{remark}

\begin{remark}[On Correlation]\rm Corollary \ref{cor3} is,
heuristically, a consequence of the fact that the fluctuations of
all three Lipschitz Killing Curvatures are actually dominated by their centered norm, see Theorem \ref{mainterm}, (\ref{norm1}) and (\ref{toro2}),
\begin{equation*}
\left\Vert f_{n}\right\Vert _{L^{2}(\mathbb{T})}^{2}-\mathbb{E}\left[
\left\Vert f_{n}\right\Vert _{L^{2}(\mathbb{T})}^{2}\right] =\int_{%
\mathbb{T}}H_{2}(f_{n}(x))\,dx=\frac{1}{\mathcal{N}_{n}}\sum_{\lambda\in \Lambda_n
}(|a_{\lambda }|^{2}-1).
\end{equation*}%
Again, an analogous phenomenon occurs for random eigenfunctions on the
sphere, see \S \ref{subsubsphere}: in the case of Arithmetic Random Waves, however, this is slightly
more surprising, because isotropy does not hold and hence one could expect
the magnitude of the single random coefficients $\left\{ |a_{\lambda
}|^{2}\right\}_{\lambda\in \Lambda_n} $ to play a more relevant role.
\end{remark}

\subsection{Plan}

In \S \ref{sec-chaos} we will recall basic facts on Wiener chaos, the chaotic expansion (\ref{ce1}) for Lispchitz-Killing Curvatures will be stated in \S \ref{sec-ce} while the proofs of our main results will be given in \S \ref{sub-proofs}. In particular, in \S \ref{sec-ce0} we will establish the chaotic expansion for Euler-Poincar\'e characteristic of Arithmetic Random Waves. The second chaotic components of these geometric functionals will be analyzed in \S \ref{sec-2c} leading to the proof of Proposition \ref{secondchaosb}. We will investigate higher order chaotic components in \S \ref{sec-ho} proving Proposition \ref{prop-ho} along the way. Some technicalities and several tedious computations will be collected in the four Appendixes \ref{app-approx}--\ref{Appendix2chaos}.

\subsection*{Acknowledgements}

The research of VC has been supported by the Università di Roma La Sapienza project \emph{Geometry of random fields}.
DM acknowledges support from the MIUR Excellence Project awarded to the
Department of Mathematics, Università di Roma Tor Vergata, CUP
E83C18000100006.
The research of MR has been supported by the ANR-17-CE40-0008 project \emph{Unirandom} and the GNAMPA-INdAM project 2019 \emph{Propriet\'a analitiche e geometriche di campi aleatori}.

\section{Proofs of the main results}\label{proofs}

\subsection{Wiener chaos}\label{sec-chaos}

Let us recall some basic facts on Wiener chaos, restricting ourselves to the toral setting. Bear in mind the definition of Hermite polynomials in (\ref{herm}).
The family $\mathbb{H} := \{H_k/\sqrt{k!}\}_{k\in \mathbb N}$ is a complete orthonormal system in $L^2(\mathbb{R}, \mathscr{B}(\mathbb{R}), \phi(t)dt) =:L^2(\phi),$ where $\phi$ still denotes the standard Gaussian density on the real line.

The Arithmetic Random Waves (\ref{fn}) considered in this work are a by-product of a family of complex-valued Gaussian random variables $\{a_\lambda\}_{\lambda\in \mathbb{Z}^2}$, defined on  $(\Omega, \mathscr{F}, \mathbb{P})$ and satisfying properties (\ref{i}) and (\ref{iii}) in (\ref{fn}).  Let us define the space ${\mathcal A}$ to be the closure in $L^2(\mathbb{P})$ of all real finite linear combinations of random variables $\xi$ of the form $$\xi = z \, a_\lambda + \overline{z} \, a_{-\lambda},$$ where $\lambda\in \mathbb{Z}^2$ and $z\in \mathbb{C}$. The space ${\mathcal A}$ is a real centered Gaussian Hilbert subspace
of $L^2(\mathbb{P})$.
\begin{definition}\label{d:chaos}For $q\in \mathbb N$, the $q$-th Wiener chaos associated with ${\mathcal A}$, written $C_q$, is the closure in $L^2(\mathbb{P})$ of all real finite linear combinations of random variables of the form
$$
H_{p_1}(\xi_1)\cdot H_{p_2}(\xi_2)\cdots H_{p_k}(\xi_k)
$$
for $k\in \mathbb N_{\ge 1}$, where $p_1,...,p_k \in \mathbb N$ satisfy $p_1+\cdots+p_k = q$, and $(\xi_1,...,\xi_k)$ is a standard real Gaussian vector extracted
from ${\mathcal A}$ (in particular, $C_0 = \mathbb{R}$).
\end{definition}
Using the orthonormality and completeness of $\mathbb{H}$ in $L^2(\phi)$, together with a standard monotone class argument (see e.g. \cite[Theorem 2.2.4]{noupebook}), it is easy to show that $C_q$ and $C_m$ are orthogonal in the sense of $L^2(\mathbb{P})$ for every $q\neq m$, and moreover
\begin{equation*}
L^2_{\mathcal A}(\mathbb P):=L^2(\Omega, \sigma({\mathcal A}), \mathbb{P}) = \bigoplus_{q=0}^\infty C_q;
\end{equation*}
that is, every real-valued functional $F$ of ${\mathcal A}$ can be (uniquely) represented in the form
\begin{equation}\label{e:chaos2}
F = \sum_{q=0}^\infty \text{\rm Proj}[F | q]
\end{equation}
where $\text{\rm Proj}[F | q]$
 stands for the projection of $F$ onto $C_q$, and the series converges in $L^2(\mathbb{P})$. Plainly, $\text{\rm Proj}[F | 0]= \mathbb E[F]$.

From (\ref{der}), for $j,\ell = 1,\dots, d$ the random fields $f_{n},\partial_{j} f_n,\partial^2_{j\ell} f_n$ viewed as collections of
Gaussian random variables
indexed by $x\in \mathbb T^d$ are all lying in ${\mathcal A}$, i.e. for every $x\in \mathbb T^d$ we have
\begin{equation*}
f_{n}(x),\, \partial_{j}f_{n}(x), \, \partial^2_{j\ell}f_{n}(x) \in \mathcal A.
\end{equation*}

\subsection{Chaotic expansions of Lipschitz-Killing curvatures}\label{sec-ce}

The three geometric functionals of our interest are finite-variance functionals of $\mathcal A$, hence applying (\ref{e:chaos2}) we get the series expansion in (\ref{ce1}). Let us be more precise.

\subsubsection{Excursion area}

For the second Lipschitz-Killing curvature we have the following integral representation
\begin{equation}
\mathcal L_2(n;u) = \int_{\mathbb T} \mathbf{1}_{\lbrace f_n(x)\ge u\rbrace}\,dx
\end{equation}
entailing that $\mathcal L_2(n;u)\in L^2_{\mathcal A}(\mathbb P)$. The proof of the following result is simple (see also \S 3 in \cite{DI}) and hence omitted.
\begin{lemma}\label{lem-ce2}
For every $n\in S$ and $u\in \mathbb R$, the chaotic decomposition of $\mathcal L_2(n;u)$ is
\begin{equation}
\mathcal L_2(n;u) = \sum_{q=0}^{+\infty} \frac{\gamma_q(u)}{q!} \int_{\mathbb T} H_q(f_n(x))\,dx,
\end{equation}
where $\gamma_q(u):= H_{q-1}(u)\phi(u)$, and the convergence of the series is in $L^2(\mathbb P)$.
\end{lemma}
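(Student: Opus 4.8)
The plan is to reduce the problem to the scalar Hermite expansion of the indicator $t\mapsto \mathbf 1_{[u,+\infty)}(t)$ and then integrate over $\mathbb T$. First I would fix $x\in\mathbb T$: since $f_n(x)$ is a standard Gaussian variable and $\mathbf 1_{[u,+\infty)}\in L^2(\phi)$, expanding in the orthonormal basis $\mathbb H=\{H_q/\sqrt{q!}\}_q$ of $L^2(\phi)$ gives
\[
\mathbf 1_{[u,+\infty)}(t)=\sum_{q=0}^{+\infty}\frac{\gamma_q(u)}{q!}\,H_q(t),\qquad \gamma_q(u):=\int_u^{+\infty}H_q(t)\,\phi(t)\,dt,
\]
with convergence in $L^2(\phi)$. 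The coefficients follow from the elementary identity $H_q(t)\phi(t)=-\tfrac{d}{dt}\big(H_{q-1}(t)\phi(t)\big)$, which is immediate from \eqref{herm}; integrating from $u$ to $+\infty$ yields $\gamma_q(u)=H_{q-1}(u)\phi(u)$ for $q\ge 1$, while the zeroth coefficient is $\gamma_0(u)=1-\Phi(u)=\mathbb E[\mathcal L_2(n;u)]$, consistent with Lemma \ref{expval}.

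Substituting $t=f_n(x)$, the same expansion holds in $L^2(\mathbb P)$ for each fixed $x$, and it is precisely the chaotic decomposition of $\mathbf 1_{\{f_n(x)\ge u\}}$: since $f_n(x)\in\mathcal A$ is a standard Gaussian variable, $H_q(f_n(x))\in C_q$ by Definition \ref{d:chaos}, so that $\text{\rm Proj}\big[\mathbf 1_{\{f_n(x)\ge u\}}\,\big|\,q\big]=\tfrac{\gamma_q(u)}{q!}H_q(f_n(x))$. For the integration step I would regard $x\mapsto \mathbf 1_{\{f_n(x)\ge u\}}$ as an $L^2(\mathbb P)$-valued map on $\mathbb T$; it is jointly measurable and uniformly bounded, $\|\mathbf 1_{\{f_n(x)\ge u\}}\|_{L^2(\mathbb P)}\le 1$, hence Bochner integrable over the unit-area torus, and its Bochner integral coincides with $\mathcal L_2(n;u)$. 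Because each projection $\text{\rm Proj}[\,\cdot\,|\,q]$ onto the closed subspace $C_q$ is a bounded linear operator, it commutes with the Bochner integral, whence
\[
\text{\rm Proj}\big[\mathcal L_2(n;u)\,\big|\,q\big]=\int_{\mathbb T}\text{\rm Proj}\big[\mathbf 1_{\{f_n(x)\ge u\}}\,\big|\,q\big]\,dx=\frac{\gamma_q(u)}{q!}\int_{\mathbb T}H_q(f_n(x))\,dx.
\]
Summing over $q$ and invoking \eqref{e:chaos2} gives the asserted decomposition, with $L^2(\mathbb P)$-convergence inherited from \eqref{e:chaos2}.

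The only delicate point is the interchange of the infinite chaotic sum with the integration over $\mathbb T$. Rather than attempting to swap $\sum_q$ and $\int_{\mathbb T}$ directly, which would require uniform-in-$x$ control of the tail of the expansion, the clean route is the commutation of the bounded projection operator with the Bochner integral described above, isolating one chaos at a time and avoiding any such estimate. The accompanying measurability and the uniform $L^2(\mathbb P)$-bound of the integrand are the only hypotheses needed to legitimize the Bochner integral, and both are immediate here; this is why the proof is elementary enough to be omitted in the main text.
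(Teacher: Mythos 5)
Your proof is correct, and its core is exactly the route the paper points to (the proof is omitted there with a reference to \S 3 of \cite{DI}): expand the indicator $\mathbf{1}_{[u,+\infty)}$ in the Hermite basis of $L^2(\phi)$, obtain $\gamma_q(u)=H_{q-1}(u)\phi(u)$ for $q\ge 1$ from the identity $H_q(t)\phi(t)=-\tfrac{d}{dt}\bigl(H_{q-1}(t)\phi(t)\bigr)$ together with $\gamma_0(u)=1-\Phi(u)$, substitute $t=f_n(x)$, and integrate over $\mathbb T$. Where you genuinely deviate is the justification of the interchange of $\sum_q$ with $\int_{\mathbb T}$: you commute the chaos projection with a Bochner integral, whereas the standard argument directly estimates the tail. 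Note, however, that the uniform-in-$x$ control you say the direct swap would require is automatic here: since $f_n(x)$ is standard Gaussian for \emph{every} $x$, the tail $R_Q(x):=\mathbf{1}_{\{f_n(x)\ge u\}}-\sum_{q\le Q}\frac{\gamma_q(u)}{q!}H_q(f_n(x))$ satisfies $\mathbb E[R_Q(x)^2]=\sum_{q>Q}\gamma_q(u)^2/q!$ independently of $x$, and Cauchy--Schwarz then gives
\begin{equation*}
\mathbb E\Bigl[\Bigl(\int_{\mathbb T}R_Q(x)\,dx\Bigr)^2\Bigr]\le \sum_{q>Q}\frac{\gamma_q(u)^2}{q!}\longrightarrow 0,
\end{equation*}
so the partial sums converge to $\mathcal L_2(n;u)$ in $L^2(\mathbb P)$ with no further input. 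Thus both justifications are equally elementary; yours has the merit of isolating one chaos at a time without any tail estimate, at the small price of invoking strong (Pettis) measurability of the $L^2(\mathbb P)$-valued map $x\mapsto\mathbf{1}_{\{f_n(x)\ge u\}}$ and the identification of the pathwise integral with the Bochner integral, both of which you assert rather than verify (they do hold, by joint measurability, separability of $L^2_{\mathcal A}(\mathbb P)$, and a Fubini argument).
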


\subsubsection{Boundary length}

For the first Lipschitz-Killing curvature we have the following \emph{formal} integral representation
\begin{equation}\label{ir1}
\mathcal L_1(n;u) = \frac12 \int_{\mathbb T} \delta_u(f_n(x)) \| \nabla f_n(x)\|\,dx,
\end{equation}
where $\delta_u$ is the Dirac mass in $u$, and $\nabla f_n$ is the gradient of $f_n$. For $\epsilon>0$, let us consider the $\epsilon$-approximating random variable
$$
\mathcal L_1^\epsilon(n;u) := \frac12 \int_{\mathbb T} \frac{1}{2\epsilon}\mathbf{1}_{[u-\epsilon, u+\epsilon]}(f_n(x))\| \nabla f_n(x)\|\,dx.
$$
\begin{lemma}\label{lem-approx1}
For every $n\in S$ and $u\in \mathbb R$ it holds that, as $\epsilon\to 0$,
\begin{equation}
\mathcal L_1^\epsilon(n;u) \to  \mathcal L_1(n;u),
\end{equation}
where the convergence holds both a.s. and in $L^2_{\mathcal A}(\mathbb P)$.
\end{lemma}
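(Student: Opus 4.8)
The plan is to reduce the statement to the smooth coarea formula together with a standard Kac--Rice second moment bound, exploiting that for each fixed $n$ the field $f_n$ is a smooth (indeed real-analytic) stationary Gaussian field whose finite-dimensional distributions are non-degenerate. First I would apply the coarea formula to the almost surely $C^\infty$ function $f_n$ on the flat torus, rewriting
\[
\mathcal L_1^\epsilon(n;u) = \frac{1}{2}\cdot\frac{1}{2\epsilon}\int_{u-\epsilon}^{u+\epsilon} \mathcal H^1(f_n^{-1}(t))\,dt ,
\]
so that $\mathcal L_1^\epsilon(n;u)$ is half the average of the level-length function $t\mapsto \mathcal H^1(f_n^{-1}(t))$ over the window $[u-\epsilon,u+\epsilon]$. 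The almost sure convergence then follows once this function is shown to be continuous at $t=u$.

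For the continuity I would invoke a Bulinskaya-type argument: since the Gaussian vector $(f_n(x),\nabla f_n(x))$ is non-degenerate for every $x$, the event that $u$ is a critical value of $f_n$ has probability zero, so almost surely $u$ is a regular value. On this event $f_n^{-1}(u)$ is a compact smooth $1$-manifold and the implicit function theorem makes $t\mapsto \mathcal H^1(f_n^{-1}(t))$ continuous near $u$; hence the windowed averages converge and, using that the boundary of $A_u(f_n;\mathbb T)$ coincides with $f_n^{-1}(u)$ at a regular value, $\mathcal L_1^\epsilon(n;u)\to \tfrac12\mathcal H^1(f_n^{-1}(u)) = \mathcal L_1(n;u)$ almost surely.

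To upgrade this to convergence in $L^2(\mathbb P)$, I would prove that the family $\{\mathcal L_1^\epsilon(n;u)\}_{\epsilon\in(0,1]}$ is bounded in $L^2(\mathbb P)$ uniformly in $\epsilon$; together with the a.s. convergence this yields $L^2$ convergence by uniform integrability (Vitali). The uniform bound comes from a Kac--Rice computation: writing
\[
\mathbb E[\mathcal L_1^\epsilon(n;u)^2] = \frac{1}{4}\int_{\mathbb T}\int_{\mathbb T} \frac{1}{(2\epsilon)^2}\,\mathbb E\!\left[\mathbf 1_{[u-\epsilon,u+\epsilon]}(f_n(x))\,\mathbf 1_{[u-\epsilon,u+\epsilon]}(f_n(y))\,\|\nabla f_n(x)\|\,\|\nabla f_n(y)\|\right]dx\,dy ,
\]
conditioning on $(f_n(x),f_n(y))$ and using stationarity shows that the inner integrand converges, as $\epsilon\to0$, to the kernel $p_{(f_n(x),f_n(y))}(u,u)\,\mathbb E[\|\nabla f_n(x)\|\,\|\nabla f_n(y)\|\mid f_n(x)=f_n(y)=u]$ and is dominated by a fixed kernel.

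The main obstacle is the diagonal behaviour of this kernel: as $y\to x$ the vector $(f_n(x),f_n(y))$ degenerates and its density at $(u,u)$ blows up. Controlling this requires a Taylor expansion of the covariance near the diagonal, using the non-degeneracy of the gradient of $f_n$ to show that the conditional gradient-norm factor vanishes at the rate needed to keep the kernel integrable over $\mathbb T\times\mathbb T$; since $n$ is fixed this is only a finiteness statement, so the classical toral Kac--Rice bounds apply directly. Finally, every $\mathcal L_1^\epsilon(n;u)$ is a deterministic functional of $f_n$ and $\nabla f_n$, which lie in $\mathcal A$, so each approximant belongs to $L^2_{\mathcal A}(\mathbb P)$; as this subspace is closed, the $L^2$-limit $\mathcal L_1(n;u)$ lies in $L^2_{\mathcal A}(\mathbb P)$ as well.
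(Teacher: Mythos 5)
Your almost-sure argument (coarea formula, Bulinskaya-type non-degeneracy of $(f_n(x),\nabla f_n(x))$ to ensure $u$ is a.s.\ a regular value, then continuity of $t\mapsto \mathcal H^1(f_n^{-1}(t))$ at $t=u$) is sound and is essentially the route of the proof the paper points to (it omits the proof, citing \cite[Appendix B]{MRW}). The genuine gap is in your upgrade to $L^2$: uniform boundedness of $\{\mathcal L_1^\epsilon(n;u)\}_{\epsilon}$ in $L^2(\mathbb P)$ together with a.s.\ convergence does \emph{not} yield $L^2$ convergence. Vitali's theorem in $L^2$ requires uniform integrability of the \emph{squares} $\{|\mathcal L_1^\epsilon(n;u)|^2\}$, and an $L^2$ bound only gives uniform integrability at the $L^1$ level (hence $L^p$ convergence for $p<2$, not $p=2$). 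A standard counterexample: $X_\epsilon=\epsilon^{-1}\mathbf 1_{[0,\epsilon^2]}$ on $([0,1],dx)$ converges a.s.\ to $0$ with $\mathbb E[X_\epsilon^2]=1$ bounded, yet $\|X_\epsilon\|_{L^2}\equiv 1$. So the step ``uniform $L^2$ bound $+$ a.s.\ convergence $\Rightarrow$ $L^2$ convergence'' fails as stated.

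Two repairs are available. (i) Your own Kac--Rice computation, if pushed one step further, shows not just boundedness but \emph{convergence} of second moments: by dominated convergence the two-point integral tends to the Kac--Rice second-moment formula for the level length, i.e.\ $\mathbb E[\mathcal L_1^\epsilon(n;u)^2]\to\mathbb E[\mathcal L_1(n;u)^2]$; then a.s.\ convergence plus convergence of $L^2$ norms gives $L^2$ convergence (the Riesz--Scheff\'e argument in $L^2$). This still obliges you to verify the diagonal integrability and the two-point non-degeneracy of $(f_n(x),f_n(y))$ outside a null set of pairs (on the torus $r_n(x-y)=\pm1$ can occur at finitely many $x-y\ne 0$, e.g.\ when all $\lambda\in\Lambda_n$ have even coordinates), which is heavier than needed. (ii) The route consistent with how the paper treats the Euler--Poincar\'e characteristic (Lemma \ref{ubEPC}: a deterministic B\'ezout bound $|\mathcal L_0^\epsilon(u;n)|\le 4E_n$, followed by bounded convergence) is much simpler: since $f_n$ is a fixed trigonometric polynomial, every level set $f_n^{-1}(t)$ has length bounded deterministically by $C\sqrt{E_n}$, uniformly in $t$ (a Crofton/B\'ezout-type count of intersections with lines), whence by your coarea representation $\mathcal L_1^\epsilon(n;u)\le \tfrac{C}{2}\sqrt{E_n}$ uniformly in $\epsilon$ and $\omega$, and dominated convergence immediately gives the $L^2$ statement, with no Kac--Rice input at all.
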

The proof of Lemma \ref{lem-approx1} is analogous to the proof of the $L^2(\mathbb P)$-approximation result for the nodal length of Random Spherical Harmonics in \cite[Appendix B]{MRW} and hence omitted. In order to state the next result we need to introduce two collections of coefficients
$\{\alpha_{2n,2m} : n,m\geq 1\}$ and $\{\beta_{l}(u) : l\geq 0\}$, that are related to the Hermite expansion of the norm $\| \cdot
\|$ in $\R^2$ and the (formal) Hermite expansion of the Dirac mass $ \delta_u(\cdot)$ respectively, cf. (\ref{ir1}).
These are given by
\begin{equation}\label{e:beta}
\beta_{l}(u):= H_{l}(u)\phi(u),
\end{equation}
where $H_{l}$ still denotes the $l$-th Hermite polynomial (\ref{herm}), and
\begin{equation}\label{e:alpha}
\alpha_{2n,2m}:=\sqrt{\frac{\pi}{2}}\frac{(2n)!(2m)!}{n!
m!}\frac{1}{2^{n+m}} p_{n+m}\left (\frac14 \right),
\end{equation}
where for $N\in \mathbb N$ and $x\in \R$
\begin{equation*}
\displaylines{ p_{N}(x) :=\sum_{j=0}^{N}(-1)^{j}\cdot(-1)^{N}{N
\choose j}\ \ \frac{(2j+1)!}{(j!)^2} x^j, }
\end{equation*}
the ratio $\frac{(2j+1)!}{(j!)^2}$ being the so-called swinging factorial
restricted to odd indices. The proof of the following lemma is analogous to the proof of Proposition 3.2 in \cite{MPRW2015} and hence omitted for the sake of brevity.

\begin{lemma}\label{lem-ce1}
For every $n\in S$ and $u\in \mathbb R$ the chaotic expansion of $\mathcal L_1(n;u)$ is
\begin{eqnarray*}
\mathcal L_1(n;u) &=& \frac12 \sqrt{\frac{E_n}{2}}\sum_{q=0}^{+\infty}\sum_{u=0}^{q}\sum_{k=0}^{u}
\frac{\alpha _{2k,2u-2k}\beta _{q-2u}(u)
}{(2k)!(2u-2k)!(q-2u)!}\times\\
\nonumber
&&\times \int_{\mathbb T}H_{q-2u}(f_n(x))
H_{2k}(f_{n,1}(x))H_{2u-2k}(f_{n,2}(x))\,dx,
\end{eqnarray*}
where the convergence of the series is in $L^2(\P)$, and $f_{n,\ell}$ denotes normalized first derivatives defined in (\ref{der2}).
\end{lemma}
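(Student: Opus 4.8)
The plan is to build the expansion from the approximating functionals $\mathcal{L}_1^\epsilon(n;u)$ of Lemma~\ref{lem-approx1}, chaos-expanding the two factors in their integrand separately and then letting $\epsilon\to 0$. First I would dispose of the gradient normalization: since $\partial_\ell f_n = 2\pi\sqrt{n/2}\,f_{n,\ell}$ and $E_n=4\pi^2 n$, the variance computation $\Var(\partial_\ell f_n)=E_n/d$ recorded in \S\ref{sec-not} gives $\|\nabla f_n(x)\| = \sqrt{E_n/2}\,\|(f_{n,1}(x),f_{n,2}(x))\|$, where the right-hand norm is the Euclidean norm in $\R^2$ of the two \emph{unit-variance} normalized derivatives. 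This isolates the prefactor $\tfrac12\sqrt{E_n/2}$ and reduces the task to expanding the product of (i) an approximate Dirac factor $\tfrac{1}{2\epsilon}\mathbf 1_{[u-\epsilon,u+\epsilon]}(f_n(x))$ and (ii) the norm $\|(f_{n,1}(x),f_{n,2}(x))\|$.

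Each factor has a one-dimensional Hermite expansion. The approximate Dirac, viewed in $L^2(\phi)$, has coefficients $\beta_l^\epsilon(u):=\tfrac{1}{2\epsilon}\int_{u-\epsilon}^{u+\epsilon}H_l(t)\phi(t)\,dt$, which converge to $H_l(u)\phi(u)=\beta_l(u)$ of (\ref{e:beta}) as $\epsilon\to 0$ by the Lebesgue differentiation theorem. For the norm, since $(x_1,x_2)\mapsto\|(x_1,x_2)\|$ is even in each argument, only even-order Hermite polynomials survive, and its $L^2(\phi\otimes\phi)$-coefficients are precisely the $\alpha_{2k,2m}$ of (\ref{e:alpha}); this closed-form evaluation is the computation carried out in \cite[Proposition 3.2]{MPRW2015}.

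Crucially, at any fixed $x$ the three Gaussian variables $f_n(x)$, $f_{n,1}(x)$, $f_{n,2}(x)$ are mutually independent standard Gaussians: stationarity forces the value and the gradient to be independent (the gradient of the covariance vanishes at the origin), while the symmetry of $\Lambda_n$ under coordinate swap and sign change gives $\mathbb E[f_{n,1}(x)f_{n,2}(x)]\propto\sum_{\lambda\in\Lambda_n}\lambda_{(1)}\lambda_{(2)}=0$. Hence each product $H_{l}(f_n(x))H_{2k}(f_{n,1}(x))H_{2m}(f_{n,2}(x))$ is a genuine element of the chaos $C_{l+2k+2m}$, and the chaotic coefficient of the product of the two series factorizes as the product of the individual coefficients. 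Multiplying the two expansions, integrating over $\mathbb T$, and collecting terms of fixed total order $q$—writing the Dirac order as $q-2u$, the combined gradient order as $2u$, and indexing the split between the two gradient components by $k$ (so the orders are $2k$ and $2u-2k$)—yields exactly the triple sum in the statement, with $\beta_{q-2u}$ replaced by $\beta_{q-2u}^\epsilon$.

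Finally I would pass to the limit. By Lemma~\ref{lem-approx1}, $\mathcal{L}_1^\epsilon(n;u)\to\mathcal{L}_1(n;u)$ in $L^2(\mathbb P)$; since the projection onto each Wiener chaos $C_q$ is an $L^2$-contraction, the chaotic components converge termwise, and as the sole $\epsilon$-dependence sits in $\beta_{q-2u}^\epsilon(u)\to\beta_{q-2u}(u)$, the limiting series is the claimed one, with $L^2$-convergence inherited from the orthogonality of distinct chaoses together with the finiteness of the total variance. The main obstacle is the explicit evaluation of the norm coefficients $\alpha_{2k,2m}$ in the swinging-factorial/$p_N$ form of (\ref{e:alpha})—the one genuinely delicate computation; the remaining work, namely justifying the interchange of the two infinite sums with the integral over $\mathbb T$ and with the $\epsilon$-limit, is routine once one invokes $L^2$-orthogonality across chaoses and dominated convergence, exactly as in \cite{MPRW2015}.
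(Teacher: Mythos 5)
Your proposal is correct and takes essentially the same approach as the paper, which omits the proof of Lemma~\ref{lem-ce1} by declaring it analogous to Proposition~3.2 in \cite{MPRW2015}: your argument---$\epsilon$-approximation of the Dirac factor, factorized Hermite expansions with the coefficients $\beta_l(u)$ and $\alpha_{2k,2m}$, pointwise independence of $f_n(x)$, $f_{n,1}(x)$, $f_{n,2}(x)$ via stationarity and the sign/permutation symmetry of $\Lambda_n$, and the passage $\epsilon\to 0$ through Lemma~\ref{lem-approx1} using the $L^2$-contractivity of chaos projections---is precisely that argument transplanted to the torus. No gap to report.
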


\subsubsection{Euler-Poincar\'e characteristic}

The zero-th Lipschitz-Killing curvature has the following \emph{formal} representation
\begin{equation}\label{ir0}
\mathcal L_0(n;u) = \int_{\mathbb T} \text{\rm det}(\nabla^2 f_n(x)) \mathbf{1}_{\lbrace f_n(x)\ge u\rbrace} \delta_0(\nabla f_n(x))\,dx,
\end{equation}
where $\nabla^2 f_n$ is the Hessian matrix of $f_n$, and abusing notation $\delta_0$ denotes the Dirac mass in $(0,0)$.
For $\epsilon>0$, let us consider the $\epsilon$-approximating random variable
\begin{equation}\label{approx0}
\mathcal L_0^\epsilon(n;u) = \int_{\mathbb T}  \text{\rm det}(\nabla^2 f_n(x)) \mathbf{1}_{\lbrace f_n(x)\ge u\rbrace} \frac{1}{(2\epsilon)^2}\mathbf{1}_{[-\epsilon, \epsilon]^2}(\nabla f_n(x))\,dx.
\end{equation}
\begin{lemma}\label{ubEPC}
For every $n\in S$, $\epsilon>0$ and $u\in \mathbb R$
\begin{equation}\label{eq:ubEPC}
|\mathcal L^\epsilon_0(u;n)| \le 4E_n.
\end{equation}
\end{lemma}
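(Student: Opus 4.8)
The plan is to bound $\mathcal{L}_0^\epsilon(n;u)$ pathwise and uniformly in $\epsilon$ by turning the integral in \eqref{approx0} into a count of preimages of the gradient map. First I would discard the two harmless factors. Since $\mathbf{1}_{\{f_n(x)\ge u\}}\in\{0,1\}$ and all remaining factors become nonnegative once the determinant is replaced by its modulus, we have
$$
|\mathcal{L}_0^\epsilon(n;u)| \le \int_{\mathbb{T}} |\det(\nabla^2 f_n(x))|\,\frac{1}{(2\epsilon)^2}\mathbf{1}_{[-\epsilon,\epsilon]^2}(\nabla f_n(x))\,dx ,
$$
where $\mathbb{T}=\mathbb{T}^2$.

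The key observation is that $|\det(\nabla^2 f_n)|$ is exactly the Jacobian of the smooth gradient map $\Phi:=\nabla f_n:\mathbb{T}\to\mathbb{R}^2$, whose differential is the Hessian $\nabla^2 f_n$. Applying the area (change-of-variables) formula to $\Phi$ with the nonnegative weight $g(y):=(2\epsilon)^{-2}\mathbf{1}_{[-\epsilon,\epsilon]^2}(y)$ gives
$$
\int_{\mathbb{T}} g(\nabla f_n(x))\,|\det(\nabla^2 f_n(x))|\,dx = \int_{\mathbb{R}^2} g(y)\,N(y)\,dy = \frac{1}{(2\epsilon)^2}\int_{[-\epsilon,\epsilon]^2} N(y)\,dy ,
$$
where $N(y):=\#\{x\in\mathbb{T}:\nabla f_n(x)=y\}$. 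By Sard's theorem almost every $y$ is a regular value of $\Phi$, and for such $y$ the preimage is a discrete subset of the compact torus, hence finite; so $N(y)<\infty$ for a.e.\ $y$ and the right-hand side is well defined. It then suffices to show $N(y)\le 4E_n$ for a.e.\ $y$, since this yields $|\mathcal{L}_0^\epsilon(n;u)|\le (2\epsilon)^{-2}\int_{[-\epsilon,\epsilon]^2}4E_n\,dy=4E_n$ for every $\epsilon>0$, which is the claim.

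To bound $N(y)$ I would pass to the algebraic picture. Setting $w_j:=e(x_j)$, each equation $\partial_j f_n(x)=y_j$ becomes a Laurent-polynomial equation in $(w_1,w_2)$ whose monomials $w_1^{\lambda_{(1)}}w_2^{\lambda_{(2)}}$ range over $\lambda\in\Lambda_n$. Since every $\lambda\in\Lambda_n$ satisfies $\|\lambda\|^2=n$, and hence $|\lambda_{(1)}|,|\lambda_{(2)}|\le\sqrt n$, multiplying through by $w_1^{\lceil\sqrt n\rceil}w_2^{\lceil\sqrt n\rceil}$ produces two genuine polynomials of degree $O(\sqrt n)$ in each variable. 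For a regular value $y$ the common solutions are isolated, so their number is controlled by Bézout's theorem (equivalently the Bernstein–Kushnirenko bound), giving $N(y)=O(n)$; tracking constants one checks this is comfortably $\le 4E_n=16\pi^2 n$. Restricting to solutions on the real torus $|w_1|=|w_2|=1$ only decreases the count, so the bound survives.

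The main obstacle is this last step: making $N(y)\le 4E_n$ rigorous uniformly in $y$. Two points need care. First, one must work with almost every $y$ (regular values suffice by Sard), thereby sidestepping the degenerate $y$ for which the two polynomial curves could share a component and the naive Bézout count would fail; at a regular value the solution set is automatically finite and each preimage counts with multiplicity one. Second, one must verify that the degree bound coming from $|\lambda_{(j)}|\le\sqrt n$ does produce a constant below $4E_n$ after clearing denominators and after passing from solutions in $(\mathbb{C}^{\ast})^2$ down to torus solutions. Everything else — the area formula and the two reductions above — is routine.
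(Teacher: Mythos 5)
Your argument is correct and follows essentially the same route as the paper's proof: bound $|\mathcal L_0^\epsilon(n;u)|$ by the $\epsilon$-approximated number of critical points, convert it via the area formula (justified through Sard's theorem, as the paper does via \cite[Proposition 6.1]{azaiswschebor}) into an average of the counting function $N(y)=\#\{x\in\mathbb T:\nabla f_n(x)=y\}$, and bound $N(y)\le 4E_n$ by B\'ezout. Your substitution $w_j=e(x_j)$ and the degree bookkeeping giving a B\'ezout number $O(n)\le 16\pi^2 n=4E_n$ simply spell out the B\'ezout step that the paper invokes without detail.
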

The proof of Lemma \ref{ubEPC} is postponed to the Appendix \ref{app-approx}.
\begin{lemma}\label{approx-conv}
For $n\in S$ and $u\in \mathbb R$ it holds that, as $\epsilon \to 0$,
\begin{equation*}
\mathcal L_0^\epsilon(n;u) \to \mathcal L_0(n;u),
\end{equation*}
where the convergence is a.s. and in $L^2_{\mathcal A}(\mathbb P)$.
\end{lemma}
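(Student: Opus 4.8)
The plan is to prove the lemma pathwise (almost surely) first and then upgrade to $L^2$ convergence using the deterministic bound of Lemma \ref{ubEPC}. The guiding principle is that the formal representation (\ref{ir0}) is in truth a Kac--Rice/Morse-theoretic identity: as $\epsilon\to 0$ the factor $(2\epsilon)^{-2}\mathbf 1_{[-\epsilon,\epsilon]^2}(\nabla f_n)$ concentrates on the zero set of the gradient, so that $\mathcal L_0^\epsilon(n;u)$ ought to converge to a signed count of the critical points of $f_n$ lying in the excursion set $\{f_n\ge u\}$, which by Morse theory is exactly the Euler--Poincar\'e characteristic $\chi(A_u(f_n;\mathbb T))=\mathcal L_0(n;u)$.

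First I would isolate an almost sure event $\Omega_0$ on which $f_n$ is a Morse function having no critical point on the level $\{f_n=u\}$; concretely, on $\Omega_0$ every $x$ with $\nabla f_n(x)=0$ satisfies $\det\nabla^2 f_n(x)\ne 0$, and no such $x$ has $f_n(x)=u$. Since $f_n$ is a real-analytic (trigonometric-polynomial) field whose finite-dimensional Gaussian laws $(f_n(x),\nabla f_n(x),\nabla^2 f_n(x))$ are non-degenerate, both statements are standard consequences of a Bulinskaya-type argument, the relevant target sets being of codimension $3$ in the two-dimensional domain; see \cite{adlertaylor}. On the compact torus this forces the critical points to be finite in number and isolated, say $x_1,\dots,x_m$ with $\nabla f_n(x_i)=0$.

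Next I would localize the integral. On $\Omega_0$, choose disjoint neighbourhoods $U_i\ni x_i$ on which $\nabla f_n$ is a diffeomorphism onto a neighbourhood of the origin (inverse function theorem, using $\det\nabla^2 f_n(x_i)\ne 0$). For $\epsilon$ small enough the set $\{\nabla f_n\in[-\epsilon,\epsilon]^2\}$ is contained in $\bigcup_i U_i$, so $\mathcal L_0^\epsilon(n;u)$ splits as a sum over $i$. On $U_i$ the change of variables $y=\nabla f_n(x)$ gives $dy=|\det\nabla^2 f_n(x)|\,dx$, hence $\det(\nabla^2 f_n(x))\,dx=\operatorname{sign}(\det\nabla^2 f_n(x))\,dy$, and the $i$-th contribution equals $(2\epsilon)^{-2}\int_{[-\epsilon,\epsilon]^2}\operatorname{sign}(\det\nabla^2 f_n(x(y)))\,\mathbf 1_{\{f_n(x(y))\ge u\}}\,dy$. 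Letting $\epsilon\to 0$ and using continuity together with $f_n(x_i)\ne u$, this tends to $\operatorname{sign}(\det\nabla^2 f_n(x_i))\,\mathbf 1_{\{f_n(x_i)\ge u\}}$. Summing, and recalling that in dimension two $\operatorname{sign}(\det\nabla^2 f_n(x_i))=(-1)^{\mathrm{index}(x_i)}$, the limit is $\sum_i (-1)^{\mathrm{index}(x_i)}\mathbf 1_{\{f_n(x_i)\ge u\}}$, which by Morse theory on the closed manifold $\mathbb T$ equals $\chi(A_u(f_n;\mathbb T))=\mathcal L_0(n;u)$. This yields the a.s. convergence.

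Finally, I would pass to $L^2$. By Lemma \ref{ubEPC} we have the deterministic bound $|\mathcal L_0^\epsilon(n;u)|\le 4E_n$, uniform in $\epsilon$, and the same bound passes to the a.s. limit; thus $|\mathcal L_0^\epsilon(n;u)-\mathcal L_0(n;u)|^2\le (8E_n)^2$ is dominated by a constant, and dominated convergence gives $\mathbb E[|\mathcal L_0^\epsilon(n;u)-\mathcal L_0(n;u)|^2]\to 0$. Since each $\mathcal L_0^\epsilon(n;u)$ is a functional of $\mathcal A$ and $L^2_{\mathcal A}(\mathbb P)$ is closed, the limit lies in $L^2_{\mathcal A}(\mathbb P)$, as claimed. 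The main obstacle is the pathwise step: making rigorous both the Morse property and the absence of critical points at level $u$ (the Bulinskaya argument), and controlling the integrand uniformly so that the change-of-variables limit is legitimate and no mass escapes from the neighbourhoods $U_i$ as $\epsilon\to 0$.
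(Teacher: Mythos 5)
Your proposal is correct and follows essentially the same route as the paper: a.s.\ non-degeneracy of the critical points (the paper gets this from Proposition 6.5 of \cite{azaiswschebor}, i.e.\ (\ref{last1})), the Morse representation formula (\ref{morse}), a.s.\ convergence of the $\epsilon$-localized integrals to the signed count of critical points in the excursion set, and then $L^2_{\mathcal A}(\mathbb P)$-convergence by dominated convergence from the uniform bound $|\mathcal L_0^\epsilon(n;u)|\le 4E_n$ of Lemma \ref{ubEPC}. The only difference is presentational: where you carry out the localization and change-of-variables argument by hand (inverse function theorem around each critical point, with your sign identity $\operatorname{sign}\bigl(\det\nabla^2 f_n(x_i)\bigr)=(-1)^{\mathrm{index}(x_i)}$ reproducing the paper's decomposition $\mu_0-\mu_1+\mu_2$ by Morse index), the paper simply cites Theorem 11.2.3 of \cite{adlertaylor} for that step.
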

Equation (\ref{ir0}) is justified by Lemma \ref{approx-conv} whose proof is postponed to the Appendix \ref{app-approx}. In view of Lemma \ref{approx-conv}, by letting $\epsilon$ tend to zero in (\ref{eq:ubEPC}) we find
$$|\mathcal L_0(n;u) |\le 4E_n$$
 for every $n\in S$ e $u\in \mathbb R$, in particular $\mathcal L_0(n;u)$ belongs to $L^2_{\mathcal A}(\mathbb P)$.
Obviously once the a.s. convergence is proven, it suffices to apply Lemma \ref{ubEPC} to get the convergence in $L^2(\mathbb P)$.

The next result (whose proof will be given in \S \ref{sec-ce0}) concerns the chaotic expansion of $\mathcal L_0(n;u)$: we will not need explicit expressions for chaotic coefficients but those corresponding to the zero-th and second Wiener chaoses, see \S \ref{sub-proofs} and \S \ref{sec-2} respectively.
\begin{lemma}\label{lem-ce0}
For $n\in S$ and $u\in \mathbb R$, the chaotic expansion of $\mathcal L_0(n;u)$ is
 \begin{equation}\label{exp_L}
  \begin{split}
  \mathcal L_0(n;u)
    =& 2E_n\sum_{q=0}^{+\infty} \sum_{a+b+c+2d+2e=q} \frac{\eta^{(n)}_{a,b,c}(u)}{a!b!c!}\frac{\beta_{2d} \beta_{2e}}{(2d)!(2e)!} \int_{\mathbb T} H_{a}\left (\frac{\partial_{11}f_n(x)}{k_3} \right ) \\
    &\times H_b \left (\frac{\partial_{12}f_n(x)}{k_4}\right )
     H_c \left (\frac{\partial_{22}f_n(x)}{k_5} - \frac{k_2}{k_5 k_3} \partial_{11}f_n(x) \right )
    H_{2d}\left (\frac{\partial_1 f_n(x)}{k_1}\right )\\
    &\times  H_{2e}\left (\frac{\partial_2 f_n(x)}{k_1} \right )\,dx,
  \end{split}
  \end{equation}
 for some coefficients $\eta^{(n)}_{a,b,c}(u)\in \mathbb R, a,b,c\in \mathbb N$, where the series converges in $L^2(\mathbb P)$,
  \begin{equation}\label{e:beta0}
  \beta_q := \beta_q(0) = \phi(0) H_q(0)
  \end{equation}
  as defined in (\ref{e:beta}),
  and $k_1,\dots,k_5$ are defined in (\ref{matrix_K}).
\end{lemma}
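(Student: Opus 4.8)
The plan is to obtain the expansion \eqref{exp_L} by working with the $\epsilon$-approximation $\mathcal{L}_0^\epsilon(n;u)$ of \eqref{approx0}, expanding the integrand pointwise into Wiener chaoses, and then passing to the limit $\epsilon\to0$ via Lemma \ref{approx-conv}. Since $f_n$ is stationary, the centred Gaussian vector
$$
W(x):=\bigl(f_n(x),\partial_1 f_n(x),\partial_2 f_n(x),\partial_{11}f_n(x),\partial_{12}f_n(x),\partial_{22}f_n(x)\bigr)
$$
has a law independent of $x$, with covariance the matrix $K$ in \eqref{matrix_K}. The first step is to read off from $K$ the orthogonality structure. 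A direct computation from $r_n(z)=\mathcal N_n^{-1}\sum_{\lambda\in\Lambda_n}e(\langle\lambda,z\rangle)$, using that all \emph{odd}-order derivatives of $r_n$ vanish at the origin and that $\sum_{\lambda\in\Lambda_n}\lambda_{(1)}\lambda_{(2)}=0$ by the symmetry $\lambda\mapsto(\lambda_{(1)},-\lambda_{(2)})$ of $\Lambda_n$, shows that the gradient block $(\partial_1 f_n,\partial_2 f_n)$ consists of two independent centred Gaussians of variance $k_1^2=E_n/2$, independent of everything else; that $\partial_{12}f_n$ (variance $k_4^2$) is independent of all remaining components; and that only the triple $(f_n,\partial_{11}f_n,\partial_{22}f_n)$ carries a genuinely non-diagonal covariance, whose entries involve $\sum_{\lambda}\lambda_{(1)}^4$, hence $\widehat\mu_n(4)$.

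The second step is to decorrelate that triple by Gram--Schmidt. The constants $k_3,k_5$ standardize $\partial_{11}f_n$ and $\partial_{22}f_n$, and the combination appearing as the argument of $H_c$, namely $\partial_{22}f_n/k_5-(k_2/(k_5k_3))\partial_{11}f_n$, is exactly the component of $\partial_{22}f_n$ orthogonal to $\partial_{11}f_n$; the value of $k_2$ is forced by $\mathbb E[\partial_{11}f_n\,\partial_{22}f_n]=k_2k_3$. This produces an orthonormal system whose coordinates are $\partial_{11}f_n/k_3$, $\partial_{12}f_n/k_4$, the $H_c$-combination, and the standardized gradient components $\partial_\ell f_n/k_1$, while $f_n$ itself is a linear combination of the first and third directions plus one residual direction orthogonal to all the second derivatives.

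The third step is to factor the integrand of \eqref{approx0} along the block decomposition. Because the gradient block is independent of the rest, the factor $(2\epsilon)^{-2}\mathbf 1_{[-\epsilon,\epsilon]^2}(\nabla f_n)$, expanded as $\sum_{d,e}(2d)!^{-1}(2e)!^{-1}\beta^{(\epsilon)}_{2d}\beta^{(\epsilon)}_{2e}\,H_{2d}(\partial_1 f_n/k_1)H_{2e}(\partial_2 f_n/k_1)$, contributes only even orders since $H_{2d+1}(0)=0$, with $\beta^{(\epsilon)}_{2d}\to\beta_{2d}=\phi(0)H_{2d}(0)$ as in \eqref{e:beta0}. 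The remaining factor $\det(\nabla^2 f_n)\,\mathbf 1_{\{f_n\ge u\}}=\bigl(\partial_{11}f_n\,\partial_{22}f_n-(\partial_{12}f_n)^2\bigr)\mathbf 1_{\{f_n\ge u\}}$ depends only on the block $(f_n,\partial_{11}f_n,\partial_{12}f_n,\partial_{22}f_n)$; expanding the determinant and the indicator in the decorrelated basis $H_a(\partial_{11}f_n/k_3)\,H_b(\partial_{12}f_n/k_4)\,H_c(\cdots)$ and integrating out the residual $f_n$-direction absorbs \emph{all} the $f_n$-dependence (both the threshold $u$ and the Hessian--value correlation) into the level-dependent coefficients $\eta^{(n)}_{a,b,c}(u)$, which we introduce precisely as these projection coefficients without computing them in general. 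Collecting terms of fixed total degree $q=a+b+c+2d+2e$ and invoking orthogonality of distinct chaoses identifies each group with $\mathrm{Proj}[\mathcal L_0^\epsilon(n;u)\,|\,q]$, the common prefactor $2E_n$ collecting the residual scaling of the normalizing constants $k_1,\dots,k_5$. Finally, since $\mathcal L_0^\epsilon(n;u)\to\mathcal L_0(n;u)$ in $L^2(\mathbb P)$ by Lemma \ref{approx-conv} and projection onto each $C_q$ is $L^2$-continuous, only the $\beta^{(\epsilon)}$'s depend on $\epsilon$ and they converge to \eqref{e:beta0}, yielding \eqref{exp_L}.

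The main obstacle is the bookkeeping of the correlated triple $(f_n,\partial_{11}f_n,\partial_{22}f_n)$: the determinant couples $\partial_{11}f_n$ and $\partial_{22}f_n$, which are correlated with each other and with $f_n$, so one must carry out the Gram--Schmidt reduction while simultaneously folding the indicator $\mathbf 1_{\{f_n\ge u\}}$ and the $f_n$-regression of the Hessian into the scalar coefficients $\eta^{(n)}_{a,b,c}(u)$ — this is what allows $f_n$ to appear nowhere as a Hermite argument in \eqref{exp_L}, and verifying that the residual direction integrates out cleanly is the delicate point.
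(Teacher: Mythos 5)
There is a genuine gap, and it sits exactly where you flag the ``delicate point.'' You keep $f_n$ as a sixth coordinate of $W(x)$ and propose to Gram--Schmidt it into ``a linear combination of the first and third directions plus one residual direction orthogonal to all the second derivatives,'' which is then ``integrated out.'' But projecting out an independent residual coordinate $N$ does not yield the chaotic expansion of $\mathcal L_0^\epsilon(n;u)$ itself: it yields the expansion of the conditional expectation $\mathbb E[\mathcal L_0^\epsilon(n;u)\mid Y_1,\dots,Y_5]$, which in general differs from $\mathcal L_0^\epsilon(n;u)$ in $L^2(\mathbb P)$. If the indicator $\mathbf{1}_{\{f_n\ge u\}}$ genuinely depended on a nondegenerate residual $N$, the chaos decomposition of $\mathcal L_0^\epsilon(n;u)$ would contain nonzero components carrying Hermite factors $H_p(N)$ with $p\ge 1$, and the claimed identity \eqref{exp_L} --- which contains no such factors --- would be false. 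So the step ``integrating out the residual $f_n$-direction absorbs all the $f_n$-dependence'' is not something to be verified by bookkeeping; it needs an exact structural reason, which your argument never supplies.

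That reason is the eigenvalue equation, which the paper invokes at the very start and you never use: $\Delta f_n=-E_nf_n$ pointwise, so $f_n(x)=-(\partial_{11}f_n(x)+\partial_{22}f_n(x))/E_n$ almost surely and $\mathbf{1}_{\{f_n(x)\ge u\}}=\mathbf{1}_{\{\Delta f_n(x)\le -E_nu\}}$; this is built into the integrand \eqref{F}. Equivalently, your ``residual direction'' has variance zero: $\mathbb E[f_n\,\partial_{11}f_n]=\mathbb E[f_n\,\partial_{22}f_n]=-E_n/2$ and $\Var(\Delta f_n)=E_n^2$ give $\Corr(f_n,\Delta f_n)=-1$, i.e.\ the triple $(f_n,\partial_{11}f_n,\partial_{22}f_n)$ is degenerate of rank two. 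With $f_n$ eliminated, one works with the genuinely five-dimensional vector $(\partial_1 f_n,\partial_2 f_n,\partial_{11}f_n,\partial_{12}f_n,\partial_{22}f_n)$, whose Cholesky factor is the matrix $K_n$ of \eqref{matrix_K} (note also that $K_n$ is the Cholesky factor of the covariance $\sigma_n$, not the covariance itself, and it is $5\times 5$, not the $6\times 6$ object your $W(x)$ would require); the indicator becomes $\mathbf{1}_{\{(\widetilde k_2+\widetilde k_3)Y_3+\widetilde k_5 Y_5\le -u\}}$ and is expanded \emph{jointly} with the determinant in Lemma \ref{lemc0}, which is what defines $\eta^{(n)}_{a,b,c}(u)$. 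Your remaining steps --- the block independence of the gradient and of $\partial_{12}f_n$, the parity of the $\beta$-coefficients via $H_{2d+1}(0)=0$, the role of $k_2$ as a regression coefficient, the prefactor $2E_n$ from $k_1^2=E_n/2$, and passing to the limit $\epsilon\to 0$ via Lemma \ref{approx-conv} and $\beta^{\epsilon/k_1}_q\to\beta_q$ --- all match the paper's proof; the missing identity $\Delta f_n=-E_nf_n$ is the one idea without which the argument, as written, does not close.
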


\subsection{Proofs}\label{sub-proofs}

\begin{proof}[Proof of Lemma \ref{lem01}]
From Lemma \ref{lem-ce2} we have
$$
\text{\rm Proj}[\mathcal L_2(n;u) |0] = \frac{\gamma_0(u)}{0!} \int_{\mathbb T}H_0(f_n(x))\,dx = 1-\Phi(u),
$$
that coincides with $\mathbb E[\mathcal L_0(n;u)]$ in Lemma \ref{expval}. From Lemma \ref{lem-ce1}, (\ref{e:beta}) and (\ref{e:alpha})
\begin{eqnarray*}
\text{\rm Proj}[\mathcal L_1(n;u) |0] &=& \frac12 \sqrt{\frac{E_n}{2}}\frac{\alpha_{0,0}\beta_0(u)}{0!0!0!} \int_{\mathbb T}H_{0}(f_n(x))
H_{0}(f_{n,1}(x))H_{0}(f_{n,2}(x))\,dx\\
&=& \frac12 \sqrt{\frac{E_n}{2}}\sqrt{\frac{\pi}{2}} \phi(u)
\end{eqnarray*}
that is $\mathbb E[\mathcal L_1(n;u)]$ given in Lemma \ref{expval}. Let us now focus on $\mathcal L_0(n;u)$. Exploiting the proof of Lemma \ref{expval} in Appendix \ref{Appendixexp} we have that for every $n\in S$ and $u\in \mathbb R$
\begin{equation}\label{eta000}
\eta_{0,0,0}^{(n)}(u) = \frac14 u\phi(u),
\end{equation}
 hence
from Lemma \ref{lem-ce0},
\begin{eqnarray*}
\text{\rm Proj}[\mathcal L_0(n;u) |0] &=&  2 E_n\cdot  \eta^{(n)}_{0,0,0}(u)\beta^2_{0} \cr
&=&  \frac{1}{2\pi} u\phi(u) \cdot \frac{E_n}{2}
\end{eqnarray*}
that is equal to $\mathbb E[\mathcal L_0(n;u)]$ in Lemma \ref{expval}. From Lemma \ref{lem-ce2} and $n\in S$, $n\ge 1$, the first order chaotic component of $\mathcal L_2(n;u)$ is
\begin{equation}\label{proj1}
\begin{split}
\text{\rm Proj}[\mathcal L_2(n;u)|1] &= \frac{\gamma_1(u)}{1!}\int_{\mathbb T} H_1(f_n(x))\,dx=\phi(u) \int_{\mathbb T} f_n(x)\,dx\\
&=  \frac{\phi(u)}{\sqrt{\mathcal N_n}} \sum_{\lambda\in \Lambda_n}\underbrace{\int_{\mathbb T} e(\langle \lambda, x \rangle)\,dx}_{= \delta_\lambda^0} = 0.
\end{split}
\end{equation}
The proof of (\ref{ca1}) for $k=0,1$ is analogous to (\ref{proj1}), hence we omit the details.

\end{proof}

The proofs of Proposition \ref{secondchaosb} and Proposition \ref{prop-ho} are long and technical, hence postponed to \S \ref{sec-2c} and \S \ref{sec-ho} respectively.

\begin{proof}[Proof of Theorem \ref{mainterm} assuming  Proposition \ref{secondchaosb} and Proposition \ref{prop-ho}] From Lemma \ref{lem01} and (\ref{ce1}) we can write
\begin{equation}\label{e:app1}
\overline{\mathcal{L}}_{k}(n;u) =\sum_{q=2}^{\infty }\text{\rm Proj}[\mathcal{L}_{k}(n;u)|q],
\end{equation}
where the convergence of the (orthogonal) series is in $L^2(\mathbb P)$. Proposition \ref{secondchaosb} ensures that we can rewrite (\ref{e:app1}) as
\begin{equation*}
\overline{\mathcal{L}}_{k}(n;u) = \frac{c_k(u)}{\sqrt{\mathcal N_n/2}} \left (\sqrt{\frac{E_n}{2}} \right )^{2-k}   \frac{1}{\sqrt{\mathcal{N}_{n}/2} }\sum_{\lambda\in \Lambda^+_n }(|a_{\lambda }|^{2}-1) + \sum_{q=3}^{\infty }\text{\rm Proj}[\mathcal{L}_{k}(n;u)|q],
\end{equation*}
where $c_k(u)$ are given in (\ref{ck}). In order to conclude the proof it suffices to set
\begin{equation}\label{defR}
\mathcal R_k(n;u) := \sum_{q=3}^{\infty }\text{\rm Proj}[\mathcal{L}_{k}(n;u)|q]
\end{equation}
and recall Proposition \ref{prop-ho}.

\end{proof}

\begin{proof}[Proof of Corollary \ref{cor1}] Let us bear in mind Theorem \ref{mainterm} and (\ref{var-asymp}). For $n\in S$, $k=0,1,2$ and $u\notin \mathcal U_k$, by triangle inequality,
\begin{equation}\label{dt}
d_{W}\left( \widetilde{\mathcal L}_k(n;u),Z\right)
\leq d_{W}\left(  \widetilde{\mathcal L}_k(n;u),\frac{\mathrm{Proj}[\mathcal{L}_{k}(n;u)|2]}{\sqrt{\Var(\mathcal{L}_{k}(n;u))}}\right) + d_{W}\left( \frac{\mathrm{Proj}[\mathcal{L}_{k}(n;u)|2]}{\sqrt{\Var(\mathcal{L}_{k}(n;u))}},
Z\right).
\end{equation}
For the first term on the right-hand side of (\ref{dt}) it suffices to note that, by definition of Wasserstein distance and (\ref{ce1}),
\begin{equation}\label{ee1}
\begin{split}
d_{W}\left(  \widetilde{\mathcal L}_k(n;u),\frac{\mathrm{Proj}[\mathcal{L}_{k}(n;u) |2]}{\sqrt{\Var(\mathcal{L}_{k}(n;u))}}\right) &\le \sqrt{\mathbb E\left[ \left ( \frac{\mathcal R_k(n;u)}{\sqrt{\Var(\mathcal{L}_{k}(n;u))}}\right )^2\right ]} \\
& = O\left ( \sqrt{\frac{1}{\mathcal N_n}}\right ),
\end{split}
\end{equation}
where the last estimate follows from (\ref{err}) in Theorem \ref{mainterm} and (\ref{var-asymp}). The second term on the right hand side of (\ref{dt}) can be controlled by Berry-Esseen's bounds (see e.g. \cite{Ess42}), indeed the second order chaotic projection is a sum of i.i.d. random variables, see Proposition \ref{secondchaosb}, recall also (\ref{var-asymp}):
\begin{eqnarray}\label{ee2}
d_{W}\left( \frac{\mathrm{Proj}[\mathcal{L}_{k}(n;u)|2]}{\sqrt{\Var(\mathcal{L}_{k}(n;u))}},
Z\right) &\le& d_{W}\left( \frac{\mathrm{Proj}[\mathcal{L}_{k}(n;u)|2]}{\sqrt{\Var(\mathrm{Proj}[\mathcal{L}_{k}(n;u)|2])}}, Z\right)\cr
&&+ d_{W}\left( \frac{\mathrm{Proj}[\mathcal{L}_{k}(n;u)|2]}{\sqrt{\Var(\mathrm{Proj}[\mathcal{L}_{k}(n;u)|2])}},\frac{\mathrm{Proj}[\mathcal{L}_{k}(n;u)|2]}{\sqrt{\Var(\mathcal{L}_{k}(n;u))}}\right)\cr
&\le& d_{W}\left( \frac{1}{\sqrt{\mathcal N_n/2}}\sum_{\lambda\in \Lambda_n^+} (|a_\lambda|^2-1), Z\right)\\
&&+ \frac{\left | \sqrt{\Var\left (\mathcal{L}_{k}(n;u)\right) }-\sqrt{\Var\left ( \mathrm{Proj}[\mathcal{L}_{k}(n;u)|2]\right) }\right |}{\sqrt{\Var\left (\mathcal{L}_{k}(n;u)\right) }} \cr
&=& O\left ( \frac{1}{\sqrt{\mathcal N_n}}\right )\nonumber.
\end{eqnarray}
Plugging (\ref{ee1}) and (\ref{ee2}) into (\ref{dt}) we conclude the proof.

\end{proof}

\begin{proof}[Proof of Corollary \ref{cor2}] The proof is analogous to the proof of Theorem 1.7 in \cite{MRT20}. Applying standard Large Deviation results \cite{DZ98} for sums of i.i.d. random variables we have that under (\ref{cond1}) the sequence
$$
\lbrace \widetilde{\text{\rm Proj}}[\mathcal L_k(n;u)|2]/s_{n;u}^{(k)}\rbrace_{n\in S}
$$
satisfies a Moderate Deviation principle as $\mathcal N_n\to +\infty$ with speed $(s_{n;u}^{(k)})^2$ and rate function $\mathcal I$,
 where
 $$
 \widetilde{\text{\rm Proj}}[\mathcal L_k(n;u)|2] := \frac{\text{\rm Proj}[\mathcal L_k(n;u)|2]}{\sqrt{\Var(\text{\rm Proj}[\mathcal L_k(n;u)|2])}}.
  $$
Moreover, for every $\delta>0$, under (\ref{cond1}),
$$
\limsup_{\mathcal N_n\to +\infty} \frac{1}{(s_{n;u}^{(k)})^2} \log \mathbb P\left (\left | \widetilde{\mathcal L}_k(n;u)/ s_{n;u}^{(k)} -  \widetilde{\text{\rm Proj}}[\mathcal L_k(n;u)|2] /s_{n;u}^{(k)} \right |>\delta \right ) = -\infty,
$$
i.e. the two sequence of random variables $\lbrace \widetilde{\text{\rm Proj}}[\mathcal L_k(n;u)|2]/s_{n;u}^{(k)}\rbrace_{n\in S}$ and $\lbrace \widetilde{\mathcal L}_k(n;u)/s_{n;u}^{(k)}\rbrace_{n\in S}$
are exponentially equivalent \cite[Definition 4.2.10]{DZ98} along subsequences of energy levels such that $\mathcal N_n\to +\infty$. Theorem 4.2.13 in \cite{DZ98} then ensures that $\lbrace \widetilde{\mathcal L}_k(n;u)/s_{n;u}^{(k)}\rbrace_{n\in S}$ satisfies a Moderate Deviation principle with the same speed and rate function as the sequence $\lbrace \widetilde{\text{\rm Proj}}[\mathcal L_k(n;u)|2]/s_{n;u}^{(k)}\rbrace_{n\in S}$.

\end{proof}

\begin{proof}[Proof of Corollary \ref{cor3}]
From Theorem \ref{mainterm} and (\ref{defR}) we can write
\begin{eqnarray}\label{aa1}
&&\Corr\left( \mathcal{L}_{k_{1}}(n;u_1),\mathcal{L}_{k_{2}}(n;u_2)\right) = \frac{\Cov\left( \mathcal{L}_{k_{1}}(n;u_1),\mathcal{L}_{k_{2}}(n;u_2)\right)}{\sqrt{\Var(\mathcal{L}_{k_{1}}(n;u_1)) \Var(\mathcal{L}_{k_{2}}(n;u_2))}}\cr
&& = \frac{\Cov\left(\text{\rm Proj}[\mathcal L_{k_1}(n;u_1)|2] + \mathcal R_{k_1}(n;u_1),\text{\rm Proj}[\mathcal L_{k_2}(n;u_2)|2] + \mathcal R_{k_2}(n;u_2)\right)}{\sqrt{\Var(\mathcal{L}_{k_{1}}(n;u_1)) \Var(\mathcal{L}_{k_{2}}(n;u_2))}}\cr
&&= \frac{\Cov\left(\text{\rm Proj}[\mathcal L_{k_1}(n;u_1)|2]),\text{\rm Proj}[\mathcal L_{k_2}(n;u_2)|2] \right)}{\sqrt{\Var(\mathcal{L}_{k_{1}}(n;u_1)) \Var(\mathcal{L}_{k_{2}}(n;u_2))}}\cr
&& +  \frac{\Cov\left( \mathcal R_{k_1}(n;u_1), \mathcal R_{k_2}(n;u_2)\right)}{\sqrt{\Var(\mathcal{L}_{k_{1}}(n;u_1)) \Var(\mathcal{L}_{k_{2}}(n;u_2))}}\cr
&&= \frac{\Cov\left(\text{\rm Proj}[\mathcal L_{k_1}(n;u_1)|2]),\text{\rm Proj}[\mathcal L_{k_2}(n;u_2)|2] \right)}{\sqrt{\Var(\text{\rm Proj}[\mathcal L_{k_1}(n;u_1)|2]) \Var(\text{\rm Proj}[\mathcal L_{k_2}(n;u_2)|2] )}} + O\left ( \frac{1}{\mathcal N_n}\right )\cr
&&  +  \frac{\Cov\left( \mathcal R_{k_1}(n;u_1), \mathcal R_{k_2}(n;u_2)\right)}{\sqrt{\Var(\mathcal{L}_{k_{1}}(n;u_1)) \Var(\mathcal{L}_{k_{2}}(n;u_2))}},
\end{eqnarray}
where for the last equality we used (\ref{var-asymp}). From Proposition \ref{secondchaosb} we get
\begin{equation}\label{aa2}
\frac{\Cov\left(\text{\rm Proj}[\mathcal L_{k_1}(n;u_1)|2]),\text{\rm Proj}[\mathcal L_{k_2}(n;u_2)|2] \right)}{\sqrt{\Var(\text{\rm Proj}[\mathcal L_{k_1}(n;u_1)|2]) \Var(\text{\rm Proj}[\mathcal L_{k_2}(n;u_2)|2] )}} = 1
\end{equation}
and still from Theorem \ref{mainterm} and (\ref{var-asymp}), and Cauchy-Schwatz inequality, we get
\begin{equation}\label{aa3}
\begin{split}
\frac{\left | \Cov\left( \mathcal R_{k_1}(n;u_1), \mathcal R_{k_2}(n;u_2)\right) \right |}{\sqrt{\Var(\mathcal{L}_{k_{1}}(n;u_1)) \Var(\mathcal{L}_{k_{2}}(n;u_2))}}\le& \frac{\sqrt{\mathbb E[\mathcal R_{k_1}(n;u_1)^2]\mathbb E[\mathcal R_{k_2}(n;u_2)^2]}}{\sqrt{\Var(\mathcal{L}_{k_{1}}(n;u_1)) \Var(\mathcal{L}_{k_{2}}(n;u_2))}}\\
=& O\left ( \frac{1}{\mathcal N_n} \right ).
\end{split}
\end{equation}
Plugging (\ref{aa2}) and (\ref{aa3}) into (\ref{aa1}) we conclude the proof.

\end{proof}

\section{EPC: chaotic expansion}\label{sec-ce0}


\subsection{Cholesky decomposition}\label{sec-cho}

Recall the definition of the $\epsilon$-approximating random variable (\ref{approx0}) and Lemma \ref{approx-conv}. In order to prove Lemma \ref{lem-ce0} we first derive the chaotic expansion of $\mathcal L^\epsilon_0(n;u)$ and then let $\epsilon$ go to zero.
The integrand function
\begin{equation}\label{F}
F_n^\epsilon(x):= \text{\rm det}(\nabla^2 f_n(x)) \mathbf{1}_{\lbrace \Delta f_n(x)\le - E_nu\rbrace} \frac{1}{(2\epsilon)^2}\mathbf{1}_{[-\epsilon, \epsilon]^2}(\nabla f_n(x)),\qquad x\in \mathbb T,
 \end{equation}
 defining the $\epsilon$-approximating random variable (\ref{approx0}) is a functional of $\partial _{1}f_{n}$, $\partial _{2}f_{n}$, $\partial _{11}f_{n}$, $\partial
_{12}f_{n}$, $\partial _{22}f_{n}$ which are not point-wise independent random fields. For the sake of simplicity we first express $F^\epsilon(x)$ in terms of independent random variables.
Let us write $\sigma _{n}=\sigma _{n}(x)$ for the $5\times 5$ covariance
matrix (see \S \ref{cov=}) of the Gaussian random vector
\begin{equation*}
(\partial _{1}f_{n}(x),\partial _{2}f_{n}(x),\partial _{11}f_{n}(x),\partial
_{12}f_{n}(x),\partial _{22}f_{n}(x)).
\end{equation*}%
We write it in the partitioned form
\begin{equation*}
\sigma _{n} = \sigma _{n}(x)_{5\times 5} = \left(
\begin{array}{cc}
a_{n} & b_{n} \\
b_{n}^{t} & c_{n}%
\end{array}%
\right) ,
\end{equation*}%
where the superscript $t$ denotes transposition, and (see Appendix \ref%
{cov=})
\begin{equation*}
a_{n}=a_{n}(x)=\frac{4\pi ^{2}}{\mathcal{N}_{n}}\sum_{\lambda }\lambda
\lambda ^{t}=2\pi ^{2}n\;I_{2},
\end{equation*}
\begin{equation*}
b_{n}=b_{n}(x)=\left(
\begin{array}{ccc}
0 & 0 & 0 \\
0 & 0 & 0%
\end{array}%
\right) ,
\end{equation*}%
since for stationarity for random fields second derivatives and first
derivatives at every fixed point are uncorrelated [see \cite{adlertaylor},
page 114], and (recall (\ref{muquattro}))%
\begin{equation*}
c_{n}=c_{n}(x)=2\pi ^{4}n^{2}\left(
\begin{array}{ccc}
3+\hat{\mu}_{n}(4) & 0 & 1-\hat{\mu}_{n}(4) \\
0 & 1-\hat{\mu}_{n}(4) & 0 \\
1-\hat{\mu}_{n}(4) & 0 & 3+\hat{\mu}_{n}(4)%
\end{array}%
\right).
\end{equation*}
Via Cholesky decomposition we can write the Hermitian positive-definite
matrix $\sigma _{n}$ in the form $\sigma _{n}=K_{n}K_{n}^{t}$, where $K_{n}$
is a lower triangular matrix with real and positive diagonal entries, and $%
K_{n}^{t}$ denotes the conjugate transpose of $K_{n}$. By an explicit
computation, it is possible to show that the Cholesky decomposition of $%
\sigma _{n}$ takes the form $\sigma _{n}=K_{n}K_{n}^{t}$, where

\begin{equation*}
K_{n}=\left(
\begin{array}{ccccc}
\sqrt{2n}\,\pi  & 0 & 0 & 0 & 0 \\
0 & \sqrt{2n}\,\pi  & 0 & 0 & 0 \\
0 & 0 & \sqrt{2}\pi ^{2}n\sqrt{3+\hat{\mu}_{n}(4)} & 0 & 0 \\
0 & 0 & 0 & \sqrt{2}\pi ^{2}n\sqrt{1-\hat{\mu}_{n}(4)} & 0 \\
0 & 0 & \sqrt{2}\pi ^{2}n\frac{1-\hat{\mu}_{n}(4)}{\sqrt{3+\hat{\mu}_{n}(4)}}
& 0 & 4\pi ^{2}n\frac{\sqrt{1+\hat{\mu}_{n}(4)}}{\sqrt{3+\hat{\mu}_{n}(4)}}%
\end{array}%
\right)
\end{equation*}%
\begin{equation*}
=\left(
\begin{array}{ccccc}
\frac{\sqrt{E_{n}}}{\sqrt{2}} & 0 & 0 & 0 & 0 \\
0 & \frac{\sqrt{E_{n}}}{\sqrt{2}} & 0 & 0 & 0 \\
0 & 0 & \frac{E_{n}}{2\sqrt{2}}\sqrt{3+\hat{\mu}_{n}(4)} & 0 & 0 \\
0 & 0 & 0 & \frac{E_{n}}{2\sqrt{2}}\sqrt{1-\hat{\mu}_{n}(4)} & 0 \\
0 & 0 & \frac{E_{n}}{2\sqrt{2}}\frac{1-\hat{\mu}_{n}(4)}{\sqrt{3+\hat{\mu}%
_{n}(4)}} & 0 & E_{n}\frac{\sqrt{1+\hat{\mu}_{n}(4)}}{\sqrt{3+\hat{\mu}%
_{n}(4)}}%
\end{array}%
\right)
\end{equation*}%
\begin{equation}\label{matrix_K}
=:\left(
\begin{array}{ccccc}
k_{1} & 0 & 0 & 0 & 0 \\
0 & k_{1} & 0 & 0 & 0 \\
0 & 0 & k_{3} & 0 & 0 \\
0 & 0 & 0 & k_{4} & 0 \\
0 & 0 & k_{2} & 0 & k_{5}%
\end{array}%
\right).
\end{equation}%
We can hence introduce a $5$-dimensional standard Gaussian vector
\begin{equation*}
Y(x)=(Y_{1}(x),Y_{2}(x),Y_{3}(x),Y_{4}(x),Y_{5}(x))
\end{equation*}%
with independent components such that
\begin{align*}
& (\partial _{1}f_{n}(x),\partial _{2}f_{n}(x),\partial
_{11}f_{n}(x),\partial _{12}f_{n}(x),\partial _{22}f_{n}(x))=K_{n}Y(x) \\
&
=(k_{1}Y_{1}(x),k_{1}Y_{2}(x),k_{3}Y_{3}(x),k_{4}Y_{4}(x),k_{5}Y_{5}(x)+k_{2}Y_{3}(x)).
\end{align*}
Hence the expression (\ref{F}) can be rewritten as
\begin{eqnarray}\label{ciao1}
F_n^\epsilon(x) &= &[k_3 Y_3(x) (k_5 Y_5(x)+k_2 Y_3(x))-(k_4 Y_4(x))^2] \; \mathbf{1}_{\{
\frac{ k_3}{E_n} Y_3(x) + \frac{k_5}{E_n} Y_5(x) + \frac{k_2}{E_n} Y_3(x)
\le -u \}}\cr
&&\times  \frac{1}{(2\epsilon)^2}\mathbf{1}_{[-\epsilon, \epsilon]^2}(k_1 Y_1(x), k_1 Y_2(x))\cr
&=&  [k_3 Y_3(x) (k_5 Y_5(x)+k_2 Y_3(x))-(k_4 Y_4(x))^2] \; \mathbf{1}_{\{
\frac{ k_3}{E_n} Y_3(x) + \frac{k_5}{E_n} Y_5(x) + \frac{k_2}{E_n} Y_3(x)
\le -u \}}\cr
&&\times  \frac{1}{k_1^2}\frac{1}{\left (\frac{2\epsilon}{k_1} \right )^2}\mathbf{1}_{\left [-\frac{\epsilon}{k_1}, \frac{\epsilon}{k_1} \right ]^2}(Y_1(x), Y_2(x)).
\end{eqnarray}
 Let us now set
 $$\widetilde k_i := \frac{k_i}{E_n},\qquad i=2,3,4,5,$$
and note that
$\max_{i=2,\dots,5} \widetilde k_i = O(1),
$ where the constant involved in the $O$-notation is absolute. From (\ref{ciao1}) we can rewrite (\ref{approx0}) as
\begin{eqnarray}\label{exp_approx2}
\mathcal L_0^\epsilon (n;u) &= &\int_{\mathbb T} F^\epsilon_n(x)\,dx\cr
&= & \frac{2E_n}{\left (\frac{2\epsilon}{k_1} \right )^2} \int_{\mathbb T^2}  \left (\widetilde k_3 Y_3(x) (\widetilde k_5 Y_5(x) + \widetilde k_2 Y_3(x)) - \widetilde k_4^2 Y_4(x)^2   \right )  \cr
&& \times \mathbf{1}_{\lbrace (\widetilde k_3 + \widetilde k_2) Y_3(x) + \widetilde k_5 Y_5(x) \le - u \rbrace}\mathbf{1}_{\left [-\frac{\epsilon}{k_1}, \frac{\epsilon}{k_1} \right ]^2}\left ( Y_1(x), Y_2(x)\right )\,dx.
\end{eqnarray}

\subsection{Proof of Lemma \ref{lem-ce0}}

We will need the following preliminary results.

\begin{lemma}\label{lemc2}
For every $\epsilon>0$, $n\in S$ and $x\in \mathbb T$, the chaotic expansion of
$$
\delta^{\epsilon/k_1}(Y_1(x), Y_2(x)):= \frac{1}{\left (\frac{2\epsilon}{k_1} \right )^2}\mathbf{1}_{\left [-\frac{\epsilon}{k_1}, \frac{\epsilon}{k_1} \right ]^2}(Y_1(x), Y_2(x))
$$
is the following series
\begin{eqnarray}
\delta^{\epsilon/k_1}(Y_1(x), Y_2(x)) = \sum_{q=0}^{+\infty} \sum_{q'=0}^q \frac{\beta^{\epsilon/k_1}_{2q'}}{(2q')!}\frac{\beta^{\epsilon/k_1}_{2q-2q'}}{(2q-2q')!} H_{2q'}(Y_1(x)) H_{2q-2q'}(Y_2(x)),
\end{eqnarray}
 where the convergence is in the $L^2(\mathbb P)$-sense, and for $q\in \mathbb N_{\ge 1}$,
 \begin{equation}\label{beta-eps}
\beta^{\epsilon/k_1}_0=  \frac{1}{\frac{2\epsilon}{k_1}}\int_{-\epsilon/k_1}^{\epsilon/k_1}\phi(t)\,dt,\qquad  \beta^{\epsilon/k_1}_{2q} := \frac{1}{\frac{2\epsilon}{k_1}} \int_{-\epsilon/k_1}^{\epsilon/k_1} H_{2q}(t) \phi(t)\,dt.
 \end{equation}
\end{lemma}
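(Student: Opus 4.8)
The plan is to exploit the product structure of $\delta^{\epsilon/k_1}$ and reduce to two independent one-dimensional Hermite expansions. Fix $\epsilon>0$, $n\in S$ and $x\in\mathbb T$; since the whole argument is pointwise in $x$, I suppress the dependence on $x$ and write $Y_i=Y_i(x)$. Setting $\rho:=\epsilon/k_1$ and
$$
g_\rho(t) := \frac{1}{2\rho}\mathbf 1_{[-\rho,\rho]}(t),\qquad t\in\mathbb R,
$$
the defining expression factorizes as $\delta^{\epsilon/k_1}(Y_1,Y_2)=g_\rho(Y_1)\,g_\rho(Y_2)$, because the indicator of the square $[-\rho,\rho]^2$ splits into the product of the indicators of its two factors. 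As $g_\rho$ is bounded it lies in $L^2(\phi)$, so $g_\rho(Y_1),g_\rho(Y_2)\in L^2(\mathbb P)$, and the product is in $L^2(\mathbb P)$ as well since $Y_1$ and $Y_2$ are independent standard Gaussians.

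First I would expand $g_\rho$ in the complete orthonormal system $\mathbb H=\{H_l/\sqrt{l!}\}_{l\in\mathbb N}$ of $L^2(\phi)$. Its $l$-th Hermite coefficient equals $\frac{1}{l!}\int_{\mathbb R}g_\rho(t)H_l(t)\phi(t)\,dt=\frac{1}{l!\,2\rho}\int_{-\rho}^{\rho}H_l(t)\phi(t)\,dt$. Since $H_l$ has the parity of $l$ while $\phi$ is even, the integrand $H_l\phi$ is odd for odd $l$, so the integral over the symmetric interval $[-\rho,\rho]$ vanishes; only the even indices $l=2q'$ survive, and for these the coefficient is precisely $\beta^{\epsilon/k_1}_{2q'}/(2q')!$ as in \eqref{beta-eps}. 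Hence, by completeness of $\mathbb H$,
$$
g_\rho(Y_1)=\sum_{q'=0}^{+\infty}\frac{\beta^{\epsilon/k_1}_{2q'}}{(2q')!}H_{2q'}(Y_1)\quad\text{in }L^2(\mathbb P),
$$
and likewise for $g_\rho(Y_2)$.

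Next I would multiply the two one-dimensional series. Writing $S_N^{(1)},S_N^{(2)}$ for the partial sums (over $q'\le N$) of the expansions of $g_\rho(Y_1),g_\rho(Y_2)$, independence gives $\|S_N^{(1)}(S_N^{(2)}-g_\rho(Y_2))\|_{L^2(\mathbb P)}^2=\|S_N^{(1)}\|_{L^2(\mathbb P)}^2\,\|S_N^{(2)}-g_\rho(Y_2)\|_{L^2(\mathbb P)}^2$, and symmetrically for the companion term; since $\|S_N^{(1)}\|_{L^2(\mathbb P)}$ stays bounded, the decomposition $S_N^{(1)}S_N^{(2)}-g_\rho(Y_1)g_\rho(Y_2)=S_N^{(1)}(S_N^{(2)}-g_\rho(Y_2))+(S_N^{(1)}-g_\rho(Y_1))g_\rho(Y_2)$ yields $S_N^{(1)}S_N^{(2)}\to g_\rho(Y_1)g_\rho(Y_2)$ in $L^2(\mathbb P)$. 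This identifies the double series, read in square-truncation order, with $\delta^{\epsilon/k_1}(Y_1,Y_2)$.

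Finally I would reindex by total degree, setting $q=q_1+q_2$ and $q'=q_1$. The point I expect to be the only genuinely delicate one is the legitimacy of this rearrangement; it holds because the products $\{H_{2q_1}(Y_1)H_{2q_2}(Y_2)\}_{q_1,q_2\ge0}$ form an orthogonal family in $L^2(\mathbb P)$ (again by independence) whose coefficients are square-summable:
$$
\sum_{q_1,q_2\ge0}\frac{(\beta^{\epsilon/k_1}_{2q_1})^2(\beta^{\epsilon/k_1}_{2q_2})^2}{(2q_1)!(2q_2)!}=\left(\sum_{q'\ge0}\frac{(\beta^{\epsilon/k_1}_{2q'})^2}{(2q')!}\right)^2=\|g_\rho(Y_1)\|_{L^2(\mathbb P)}^2\,\|g_\rho(Y_2)\|_{L^2(\mathbb P)}^2<+\infty.
$$
Square-summability forces unconditional $L^2(\mathbb P)$-convergence, so the triangular truncation grouped by $q$ converges to the same limit as the square truncation, producing exactly the stated series.
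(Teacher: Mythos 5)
Your proof is correct: the factorization of the indicator of the square $[-\rho,\rho]^2$ into a product, the one-dimensional Hermite expansions of $g_\rho$ in $L^2(\phi)$ with the parity argument eliminating odd-index coefficients, the passage to the product of the two series via independence of $Y_1(x)$ and $Y_2(x)$, and the regrouping by total degree justified by orthogonality of the family $\{H_{2q_1}(Y_1(x))H_{2q_2}(Y_2(x))\}$ and square-summability of the coefficients are all sound. The paper explicitly omits the proof of this lemma as standard, and your argument is precisely the routine tensorization that the omission presupposes, so it matches the intended approach.
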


\begin{lemma}\label{lemc0}
For every $n\in S$, $x\in \mathbb T$ and $u\in \mathbb R$, the chaotic expansion of
$$
p_n(Y_3(x),Y_4(x),Y_5(x))\mathbf{1}_{\lbrace (\widetilde k_3 + \widetilde k_2) Y_3(x) + \widetilde k_5 Y_5(x) \le - u \rbrace},
$$
where $p_n(Y_3(x),Y_4(x),Y_5(x)) := \widetilde k_3 Y_3(x) (\widetilde k_5 Y_5(x) + \widetilde k_2 Y_3(x)) - \widetilde k_4^2 Y_4(x)^2$,
is
\begin{eqnarray}\label{1}
&&p_n(Y_3(x),Y_4(x),Y_5(x)) \mathbf{1}_{\lbrace (\widetilde k_3 + \widetilde k_2) Y_3(x) + \widetilde k_5 Y_5(x) \le - u \rbrace}\cr
&&= \sum_{q=0}^{+\infty}\sum_{a+b+c=q} \frac{\eta^{(n)}_{a,b,c}(u)}{a!b!c!}  H_{a}(Y_3(x))H_{a}(Y_4(x)) H_{c}(Y_5(x)),
\end{eqnarray}
where the series converges in $L^2(\mathbb P)$.
\end{lemma}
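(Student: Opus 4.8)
The plan is to recognise the stated identity as nothing more than the tensorised Hermite (equivalently, Wiener chaos) expansion of a fixed square-integrable function of the three \emph{independent} standard Gaussian variables $Y_3(x),Y_4(x),Y_5(x)$, with the coefficients $\eta^{(n)}_{a,b,c}(u)$ simply being the associated projection coefficients. Throughout I fix $x\in\mathbb T$ and $n\in S$ and abbreviate
$$
G := p_n(Y_3,Y_4,Y_5)\,\mathbf{1}_{\{(\widetilde k_3+\widetilde k_2)Y_3+\widetilde k_5 Y_5\le -u\}},
$$
where $p_n(Y_3,Y_4,Y_5)=\widetilde k_3\widetilde k_2 Y_3^2+\widetilde k_3\widetilde k_5 Y_3 Y_5-\widetilde k_4^2 Y_4^2$.

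First I would check that $G\in L^2(\mathbb P)$. The factor $p_n(Y_3,Y_4,Y_5)$ is a quadratic polynomial in jointly Gaussian variables and hence has finite moments of every order, while the indicator is bounded by $1$; thus $\mathbb E[G^2]\le \mathbb E[p_n(Y_3,Y_4,Y_5)^2]<\infty$. Next, since $Y(x)$ was constructed via the Cholesky decomposition (\ref{matrix_K}) precisely so as to have independent standard Gaussian components, the one-dimensional orthonormal system $\mathbb H=\{H_k/\sqrt{k!}\}_{k\in\mathbb N}$ recalled in \S\ref{sec-chaos} tensorises: the family $\{H_a(Y_3)H_b(Y_4)H_c(Y_5)\}_{a,b,c\ge 0}$ is a complete orthogonal system of $L^2(\Omega,\sigma(Y_3,Y_4,Y_5),\mathbb P)$, with $\mathbb E[(H_a(Y_3)H_b(Y_4)H_c(Y_5))^2]=a!\,b!\,c!$ and all cross terms vanishing by independence.

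Consequently $G$ admits a unique $L^2(\mathbb P)$-convergent expansion along this basis; setting
$$
\eta^{(n)}_{a,b,c}(u):=\mathbb E\!\left[G\,H_a(Y_3)H_b(Y_4)H_c(Y_5)\right]
$$
and normalising by $a!\,b!\,c!$ yields exactly (\ref{1}) after regrouping the terms by total degree $q=a+b+c$. That each such group coincides with the projection of $G$ onto the $q$-th Wiener chaos $C_q$ is immediate from Definition \ref{d:chaos}, since a product of Hermite polynomials in the independent Gaussians $Y_3,Y_4,Y_5$ of total degree $q$ belongs to $C_q$.

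There is no genuine analytic obstacle: the only points requiring care are the square-integrability of $G$ and the independence of the components of $Y(x)$ legitimising the tensorisation, both of which are immediate. I would, however, record one structural simplification that is useful for the explicit computations carried out later (such as (\ref{eta000})): because $Y_4$ enters $G$ only through $Y_4^2=H_0(Y_4)+H_2(Y_4)$ and is absent from the indicator, the coefficient $\eta^{(n)}_{a,b,c}(u)$ vanishes unless $b\in\{0,2\}$, and for each admissible $b$ it reduces to a two-dimensional Gaussian integral of a half-plane indicator against a polynomial times $H_a(Y_3)H_c(Y_5)$. This is not needed for the statement, which only asserts existence of the expansion and names its coefficients, but it pins down precisely which $\eta^{(n)}_{a,b,c}(u)$ are nonzero.
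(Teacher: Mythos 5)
Your proposal is correct and is exactly the standard argument the paper has in mind when it writes ``We omit the proofs of Lemma \ref{lemc2} and Lemma \ref{lemc0}'': square-integrability of $G$, independence of the components of $Y(x)$ coming from the Cholesky construction \eqref{matrix_K}, and completeness of the tensorized system $\lbrace H_a(Y_3)H_b(Y_4)H_c(Y_5)\rbrace$ in $L^2(\Omega,\sigma(Y_3,Y_4,Y_5),\mathbb P)$, which follows from the completeness of $\mathbb H$ in $L^2(\phi)$ and the monotone class argument already invoked in \S\ref{sec-chaos}. Note also that your expansion silently corrects a typo in the statement of \eqref{1}, whose middle factor should read $H_b(Y_4(x))$ rather than $H_a(Y_4(x))$ --- your observation that $\eta^{(n)}_{a,b,c}(u)=0$ unless $b\in\lbrace 0,2\rbrace$ is likewise accurate and consistent with the explicit computations in \S\ref{sec-2c}.
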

We omit the proofs of Lemma \ref{lemc2} and Lemma \ref{lemc0}. We are now in a position to establish Lemma \ref{lem-ce0}.

\begin{proof}[Proof of Lemma \ref{lem-ce0}] Let us first find the chaotic decomposition of $F^\epsilon_n(x)$ in (\ref{F}).
  Lemma \ref{lemc2} together with Lemma \ref{lemc0} gives
  \begin{eqnarray*}
  F^\epsilon_n(x) &=& 2E_n\sum_{q=0}^{+\infty} \sum_{a+b+c+2d+2e=q} \frac{\eta^{(n)}_{a,b,c}(u)}{a! b! c!} \frac{\beta_{2d}^{\epsilon/k_1}}{(2d)!}\frac{\beta_{2e}^{\epsilon/k_1}}{(2e)!}\cr
  &&\times  H_{2d}(Y_1(x)) H_{2e}(Y_2(x))H_{a}(Y_3(x)) H_b(Y_4(x)) H_c(Y_5(x))
    \end{eqnarray*}
  entailing that the chaotic expansion of $\mathcal L_0(n;u)$ in (\ref{exp_approx2}) is
   \begin{eqnarray}\label{su3}
  \mathcal L_0^\epsilon(n;u) = && 2E_n \sum_{q=0}^{+\infty} \sum_{a+b+c+2d+2e=q} \frac{\eta^{(n)}_{a,b,c}(u)}{a! b! c!} \frac{\beta_{2d}^{\epsilon/k_1}}{(2d)!}\frac{\beta_{2e}^{\epsilon/k_1}}{(2e)!}\cr
  &&\times  \int_{\mathbb T} H_{2d}(Y_1(x)) H_{2e}(Y_2(x))H_{a}(Y_3(x)) H_b(Y_4(x)) H_c(Y_5(x))\,dx.
  \end{eqnarray}
  Now note that, as $\epsilon\to 0$, for every $q\in \mathbb N$
  \begin{equation*}
  \beta^{\epsilon/k_1}_q \to \beta_q
  \end{equation*}
  as defined in (\ref{e:beta0}). Hence, bearing in mind the Cholesky decomposition in \S \ref{sec-cho},  Lemma \ref{approx-conv} together with (\ref{su3}) allows to get  (\ref{exp_L}) thus concluding the proof.

  \end{proof}

\section{Second chaotic components}\label{sec-2c}

\subsection{EPC: preliminary results}

We compute now the projection coefficients of $\mathcal L_0(n;u)$ on second Wiener chaos.
From (\ref{exp_L}) in Lemma \ref{lem-ce0} we can write more compactly
  \begin{eqnarray}\label{exp_2}
 \text{\rm Proj}[ \mathcal L_0(n;u) |2]
    &=& 2E_n \sum_{a+b+c+2d+2e=2} \frac{\eta^{(n)}_{a,b,c}(u)}{a!b!c!}\frac{\beta_{2d} \beta_{2e}}{(2d)!(2e)!} \int_{\mathbb T} H_{a}\left (\frac{\partial_{11}f_n(x)}{k_3} \right ) \cr
    &&\times H_b \left (\frac{\partial_{12}f_n(x)}{k_4}\right )
     H_c \left ( \frac{\partial_{22}f_n(x)}{k_5} - \frac{k_2}{k_5 k_3} \partial_{11}f_n(x)\right )
    H_{2d}\left (\frac{\partial_1 f_n(x)}{k_1}\right )\cr
    &&\times  H_{2e}\left (\frac{\partial_2 f_n(x)}{k_1} \right )\,dx\\
    && =\sum_{i=1}^{5}%
\sum_{j=1}^{i-1}h_{ij}(u;n)\int_{\mathbb{T}}Y_{i}(x)Y_{j}(x)dx +\frac{1}{2}%
\sum_{i=1}^{5}h_{i}(u;n)\int_{\mathbb{T}}H_{2}(Y_{i}(x))dx,\nonumber
  \end{eqnarray}
where  for $i,j=1,\dots 5$, $i\neq j$
\begin{equation*}
h_{ij}(u;n ):=\lim_{\varepsilon \to 0} \mathbb{E}\left[ k _{3}Y_{3}
(k_{5}Y_{5}+k_{2}Y_{3})-(k_{4}Y_{4})^{2}]\;1\hspace{-0.27em}\mbox{\rm l}%
_{\left\{ \frac{k_{2}+k_{3}}{E }Y_{3}+\frac{k_{5}}{E }Y_{5}\leq -u\right\}
}\;\delta_{\varepsilon} (k_{1}Y_{1}, k_{1}Y_{2})Y_{i}Y_{j}\right];
\end{equation*}%
on the other hand, for $i=1,\dots 5$,
\begin{equation*}
h_{i}(u;n ):=\lim_{\varepsilon \to 0} \mathbb{E}\left[ k_{3}Y_{3}
(k_{5}Y_{5}+k_{2}Y_{3})-(k_{4}Y_{4})^{2}]\;1\hspace{-0.27em}\mbox{\rm l}%
_{\left\{ \frac{k_{2}+k_{3}}{E}Y_{3}+\frac{k_{5}}{E }Y_{5}\leq -u\right\}
}\;\delta_{\varepsilon} (k_{1}Y_{1}, k_{1}Y_{2})H_{2}(Y_{i})\right].
\end{equation*}
The following proposition provides analytic expressions for the coefficients
$h_{ij}$ and $h_{i}$.
\begin{proposition} It holds that
\label{proj} $h_{ij}(u;n )=0$ for all $(i,j) \ne (3,5)$ and
\begin{eqnarray*}
h_{35}(u; n )&=& \frac{E_n}{2 \sqrt 2 \pi} \sqrt{1+\hat{\mu}_n(4)} \frac{ u
\phi (u)(1+u^2) + (3+\hat{\mu}_n(4)) \Phi (-u) }{3+\hat{\mu}_n(4)}.
\end{eqnarray*}
Moreover
\begin{eqnarray*}
h_{1}(u;n )&=& h_{2}(u;n ) = - \frac{E_n}{4 \pi} u \,\phi(u), \\
h_{3}(u; n ) &=& \frac{E_n}{4 \pi} \left[ \frac{2 u (1+u^2) \phi(u)}{3+\hat{%
\mu}_n(4) } + \Phi(-u)(1 - \hat{\mu}_n(4)) \right], \\
h_{4}(u;n ) &=& - \frac{E_n}{4 \pi} (1-\hat{\mu}_n(4)) \Phi (-u), \\
h_{5}(u;n) &=& \frac{E_n}{4 \pi} \frac{ u (1+u^2) (1+\hat{\mu}_n(4))\phi (u)%
} {3+\hat{\mu}_n(4)}.
\end{eqnarray*}
\end{proposition}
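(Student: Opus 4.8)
The plan is to exploit the block independence of the standard Gaussian vector $Y=(Y_1,\dots,Y_5)$. The factor $\delta_\varepsilon(k_1 Y_1,k_1 Y_2)$ depends only on $(Y_1,Y_2)$, which is independent of $(Y_3,Y_4,Y_5)$, whereas the polynomial $P:=k_3 Y_3(k_5 Y_5+k_2 Y_3)-(k_4 Y_4)^2$ and the indicator $\mathbf 1:=\mathbf 1_{\{(\widetilde k_2+\widetilde k_3)Y_3+\widetilde k_5 Y_5\le -u\}}$ depend only on $(Y_3,Y_4,Y_5)$. First I would record the elementary limit that, for integrable $g$, $\mathbb{E}[\delta_\varepsilon(k_1 Y_1,k_1 Y_2)\,g(Y_1,Y_2)]\to \tfrac{1}{2\pi k_1^2}\,g(0,0)$ as $\varepsilon\to 0$, since $\delta_\varepsilon$ concentrates at the origin against the density of $(k_1Y_1,k_1Y_2)$, whose value there is $(2\pi k_1^2)^{-1}$. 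Recalling $k_1^2=E_n/2$ from \eqref{matrix_K}, this constant equals $1/(\pi E_n)$.

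This at once yields the vanishing statements. If at least one of $i,j$ lies in $\{1,2\}$, the relevant test function carries an odd factor $Y_1$ or $Y_2$, so $g(0,0)=0$ and $h_{ij}=0$. For pairs with both indices in $\{3,4,5\}$, the integrand $P\,\mathbf 1$ is even in $Y_4$, because $Y_4$ enters $P$ only through $(k_4 Y_4)^2$ and does not enter $\mathbf 1$; hence $\mathbb{E}[P\,\mathbf 1\,Y_3 Y_4]=\mathbb{E}[P\,\mathbf 1\,Y_4 Y_5]=0$ by oddness in $Y_4$, leaving $(3,5)$ as the only surviving pair. The same factorisation reduces the surviving coefficients to pure $(Y_3,Y_4,Y_5)$-expectations: $h_{35}=\tfrac{1}{2\pi k_1^2}\mathbb{E}[P\,\mathbf 1\,Y_3 Y_5]$ and $h_i=\tfrac{1}{2\pi k_1^2}\mathbb{E}[P\,\mathbf 1\,H_2(Y_i)]$ for $i\in\{3,4,5\}$, while for $i\in\{1,2\}$ the value $H_2(Y_i)|_{Y_i=0}=-1$ gives $h_1=h_2=-\tfrac{1}{2\pi k_1^2}\mathbb{E}[P\,\mathbf 1]$.

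The crux is the evaluation of the few Gaussian moments of $(Y_3,Y_4,Y_5)$ against $\mathbf 1$. The key simplification is that the linear form $Z:=(\widetilde k_2+\widetilde k_3)Y_3+\widetilde k_5 Y_5$ in the indicator is itself standard Gaussian: substituting the Cholesky entries of \eqref{matrix_K} gives $\widetilde k_2+\widetilde k_3=\sqrt 2/\sqrt{3+\widehat\mu_n(4)}$ and $\widetilde k_5=\sqrt{1+\widehat\mu_n(4)}/\sqrt{3+\widehat\mu_n(4)}$, so with $\rho_3:=\mathrm{Cov}(Y_3,Z)$ and $\rho_5:=\mathrm{Cov}(Y_5,Z)$ one has $\mathrm{Var}(Z)=\rho_3^2+\rho_5^2=1$. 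I would then diagonalise the pair $(Y_3,Y_5)$ by writing $Y_3=\rho_3 Z-\rho_5 Z^\perp$, $Y_5=\rho_5 Z+\rho_3 Z^\perp$ with $Z^\perp$ an independent standard Gaussian, integrate out $Z^\perp$ and the independent $Y_4$ to express each conditional expectation $\mathbb{E}[\,\cdot\mid Z]$ as a polynomial in $Z$, expand it in Hermite polynomials, and finish with the one-dimensional identity $\mathbb{E}[H_k(Z)\,\mathbf 1_{\{Z\le -u\}}]=-H_{k-1}(-u)\phi(u)$ for $k\ge1$ (and $\Phi(-u)$ for $k=0$). This turns each moment into an explicit combination of $\phi(u)$, $\Phi(-u)$ and the polynomials $u$ and $u(1+u^2)$.

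Finally I would substitute the explicit $k_i$ and simplify, where two algebraic facts drive every cancellation: the identity $k_2 k_3=k_4^2=\tfrac{E_n^2}{8}(1-\widehat\mu_n(4))$, which collapses the $\Phi(-u)$ contribution in $\mathbb{E}[P\,\mathbf 1]$ and the superfluous $u\phi(u)$ contributions in the $h_i$; and the repeated identity $2(1+\widehat\mu_n(4))+(1-\widehat\mu_n(4))=3+\widehat\mu_n(4)$, which cancels the denominators $3+\widehat\mu_n(4)$. For instance $\mathbb{E}[P\,\mathbf 1]=\tfrac{E_n^2}{4}u\phi(u)$ immediately gives $h_1=h_2=-\tfrac{E_n}{4\pi}u\phi(u)$. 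I expect the main obstacle to be purely computational: the two genuinely fourth-order mixed moments $\mathbb{E}[Y_3^2 Y_5^2\,\mathbf 1]$ and $\mathbb{E}[Y_3^3 Y_5\,\mathbf 1]$, together with their analogues weighted by $H_2(Y_3)$ and $H_2(Y_5)$, demand careful Hermite bookkeeping after the rotation, and tracking the many $\sqrt{3+\widehat\mu_n(4)}$ factors through to the stated closed forms is the only delicate step; no conceptual difficulty remains once $Z$ is recognised as standard Gaussian.
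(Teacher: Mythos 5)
Your proposal is correct, and its overall skeleton coincides with the paper's proof in Appendix \ref{proofproj}: you factor out the $(Y_1,Y_2)$-block by independence, getting the constant $\tfrac{1}{2\pi k_1^2}=\tfrac{1}{\pi E_n}$ (the paper encodes this through the factors $\varphi_0,\varphi_1,\varphi_2$ of (\ref{immersione2})); you kill $h_{ij}$ for $i$ or $j\in\{1,2\}$ by oddness at the origin (the paper's $\varphi_1=0$) and $h_{34},h_{45}$ by oddness in $Y_4$ (the paper's observation that $\psi_{3334},\psi_{3345},\psi_{3444},\psi_{3455},\psi_{4445}$ involve an odd power of the variable absent from the indicator); and you note that $\alpha_n Y_3+\beta_n Y_5$ with $\alpha_n^2+\beta_n^2=1$ is standard Gaussian, exactly the paper's normalization. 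The one genuine divergence is how the surviving Gaussian moments are evaluated. The paper computes each $\theta_{ab}(u)$ and $\psi_{abcd}(u)$ separately as an iterated integral of the type $\int y^k\phi(y)\,\Phi\bigl(\tfrac{-u-\alpha_n y}{\beta_n}\bigr)dy$, invoking Lemmas 12--14 of \cite{CM2018} (its Lemmas \ref{theta} and \ref{psi}); you instead rotate $(Y_3,Y_5)$ into $(Z,Z^\perp)$, integrate out $Z^\perp$ and $Y_4$ conditionally on $Z$, and reduce every moment to the single identity $\mathbb{E}[H_k(Z)\mathbf{1}_{\{Z\le -u\}}]=-H_{k-1}(-u)\phi(u)$. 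Your route buys tidier bookkeeping: the coefficient of $\Phi(-u)$ automatically emerges as a power of $\alpha_n^2+\beta_n^2=1$, and I checked that it reproduces the paper's values, e.g. the conditional polynomial $\alpha_n^2\beta_n^2 Z^4+(\alpha_n^4+\beta_n^4-4\alpha_n^2\beta_n^2)Z^2+3\alpha_n^2\beta_n^2$ yields exactly $\psi_{3355}(u)$ of Lemma \ref{psi}, while your $\mathbb{E}[P\,\mathbf{1}]=\tfrac{E_n^2}{4}u\phi(u)$ gives $h_1=h_2=-\tfrac{E_n}{4\pi}u\phi(u)$ as stated. The paper's route, by contrast, reuses off-the-shelf spherical-case lemmas at the cost of a longer case-by-case list. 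Your two algebraic drivers, $k_2k_3=k_4^2=\tfrac{E_n^2}{8}(1-\hat{\mu}_n(4))$ and $2(1+\hat{\mu}_n(4))+(1-\hat{\mu}_n(4))=3+\hat{\mu}_n(4)$, are indeed precisely what produces the cancellations in the paper's final simplifications, so nothing is missing.
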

The proof of Proposition \ref{proj} is postponed to the Appendix \ref{proofproj}. From equation (\ref{exp_2}) and Proposition \ref{proj} it is then immediate to obtain the
following expression:
\begin{equation}\label{sum1}
\text{\rm Proj}[\mathcal{L}_{0}(n;u)|2]=h_{35}(u;n)A_{35}(n)+\frac{1}{2}\sum_{i=1}^{5}h_{i}(u;n)B_{i}(n);
\end{equation}%
where
\begin{equation}\label{abab}
A_{ij}(n)=\int_{\mathbb{T}}Y_{i}(x)Y_{j}(x)dx,\hspace{1cm}B_{i}(n)=\int_{%
\mathbb{T}}H_{2}(Y_{i}(x))dx.
\end{equation}%
Our next step is then to investigate the behaviour of the integrals of
stochastic processes in (\ref{abab}).
\begin{proposition}
\label{AB} We have that
\begin{eqnarray*}
A_{35}(n) &=& \frac{1}{\mathcal{N}_{n}}\sum_{\lambda }|a_{\lambda }|^{2}\Big[
\frac{2\sqrt{2}}{n\sqrt{1+\hat{\mu}_{n}(4)}}\frac{5-\hat{\mu}_{n}(4)}{{3+%
\hat{\mu}_{n}(4)}}\lambda _{2}^{2}-\frac{1-\hat{\mu}_{n}(4)}{{3+\hat{\mu}%
_{n}(4)}}\frac{2\sqrt{2}}{\sqrt{1+\hat{\mu}_{n}(4)}}\cr
&&-\frac{\sqrt{2}}{n^{2}%
\sqrt{1+\hat{\mu}_{n}(4)}}\frac{8}{{3+\hat{\mu}_{n}(4)}}\lambda _{2}^{4}%
\Big] ,
\end{eqnarray*}%
and%
\begin{equation*}
B_{1}(n)=\frac{1}{\mathcal{N}_{n}}\sum_{\lambda }|a_{\lambda }|^{2}\frac{2}{n%
}\lambda _{1}^{2}-1,B_{2}(n)=\frac{1}{\mathcal{N}_{n}}\sum_{\lambda
}|a_{\lambda }|^{2}\frac{2}{n}\lambda _{2}^{2}-1,
\end{equation*}%
\begin{equation*}
B_{3}(n)=\frac{1}{\mathcal{N}_{n}}\sum_{\lambda }|a_{\lambda }|^{2}\left[
\frac{8}{3+\hat{\mu}_{n}(4)}+\frac{8}{n^{2}(3+\hat{\mu}_{n}(4))}\lambda
_{2}^{4}-\frac{16}{n(3+\hat{\mu}_{n}(4))}{\lambda _{2}^{2}}\right] -1,
\end{equation*}%
\begin{equation*}
B_{4}(n)=\frac{1}{\mathcal{N}_{n}}\sum_{\lambda }|a_{\lambda }|^{2}\frac{8}{%
n^{2}(1-\hat{\mu}_{n}(4))}\lambda _{1}^{2}\lambda _{2}^{2}-1,
\end{equation*}
\begin{eqnarray*}
B_{5}(n) &=& \frac{1}{\mathcal{N}_{n}}\sum_{\lambda }|a_{\lambda }|^{2}\Big[%
\frac{16}{n^{2}(1+\hat{\mu}_{n}(4))(3+\hat{\mu}_{n}(4))}\lambda _{2}^{4}+%
\frac{(1-\hat{\mu}_{n}(4))^{2}}{(3+\hat{\mu}_{n}(4))(1+\hat{\mu}_{n}(4))}\cr
&&-%
\frac{1-\hat{\mu}_{n}(4)}{n(3+\hat{\mu}_{n}(4))(1+\hat{\mu}_{n}(4))}8{%
\lambda _{2}^{2}}\Big]-1.
\end{eqnarray*}
\end{proposition}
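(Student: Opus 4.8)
The plan is to reduce each of the integrals in (\ref{abab}) to an explicit quadratic form in the frequencies $\lambda\in\Lambda_n$ weighted by $|a_\lambda|^2$, using only Fourier orthogonality on $\mathbb{T}$ and the definitions of the $Y_i$ coming from the Cholesky decomposition of \S \ref{sec-cho}. First I would invert the relation $(\partial_1 f_n,\partial_2 f_n,\partial_{11}f_n,\partial_{12}f_n,\partial_{22}f_n)=K_nY(x)$; since $K_n$ in (\ref{matrix_K}) is lower triangular with entries that do not depend on $x$, this gives
\begin{equation*}
Y_1=\frac{\partial_1 f_n}{k_1},\quad Y_2=\frac{\partial_2 f_n}{k_1},\quad Y_3=\frac{\partial_{11}f_n}{k_3},\quad Y_4=\frac{\partial_{12}f_n}{k_4},\quad Y_5=\frac{\partial_{22}f_n}{k_5}-\frac{k_2}{k_5 k_3}\partial_{11}f_n,
\end{equation*}
so each $Y_i(x)$ is a genuine random field. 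Inserting (\ref{der}) and (\ref{der2}) expresses every $Y_i$ as a Fourier series $Y_i(x)=\sum_{\lambda\in\Lambda_n}c^{(i)}_\lambda\,a_\lambda\,e(\langle\lambda,x\rangle)$ with
\begin{equation*}
c^{(1)}_\lambda=\frac{2\pi i\,\lambda_{(1)}}{k_1\sqrt{\mathcal{N}_n}},\quad c^{(3)}_\lambda=-\frac{4\pi^2\lambda_{(1)}^2}{k_3\sqrt{\mathcal{N}_n}},\quad c^{(4)}_\lambda=-\frac{4\pi^2\lambda_{(1)}\lambda_{(2)}}{k_4\sqrt{\mathcal{N}_n}},\quad c^{(5)}_\lambda=-\frac{4\pi^2}{\sqrt{\mathcal{N}_n}}\left(\frac{\lambda_{(2)}^2}{k_5}-\frac{k_2}{k_5 k_3}\lambda_{(1)}^2\right),
\end{equation*}
and $c^{(2)}_\lambda$ obtained from $c^{(1)}_\lambda$ upon exchanging $\lambda_{(1)}\to\lambda_{(2)}$. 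Note that all dependence on $\hat{\mu}_n(4)$ is carried by the Cholesky entries $k_2,k_3,k_4,k_5$.

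The second step is to integrate products over the torus. Using $\int_{\mathbb{T}} e(\langle\mu,x\rangle)\,dx=0$ for $\mu\neq 0$ (exactly as in (\ref{proj1})), every double sum collapses to the diagonal $\lambda'=-\lambda$, and since $a_{-\lambda}=\overline{a_\lambda}$ one gets
\begin{equation*}
\int_{\mathbb{T}} Y_i Y_j\,dx=\sum_{\lambda\in\Lambda_n} c^{(i)}_\lambda\, c^{(j)}_{-\lambda}\,a_\lambda a_{-\lambda}=\sum_{\lambda\in\Lambda_n} c^{(i)}_\lambda\, c^{(j)}_{-\lambda}\,|a_\lambda|^2.
\end{equation*}
Because $(-\lambda)_{(\ell)}=-\lambda_{(\ell)}$, each product $c^{(i)}_\lambda c^{(j)}_{-\lambda}$ is real and even in $\lambda$, so the sums become expressions in the even powers $\lambda_{(1)}^2,\lambda_{(2)}^2,\lambda_{(1)}^4,\dots$; in particular $c^{(5)}_{-\lambda}=c^{(5)}_\lambda$, which gives $A_{35}(n)$ with $i=3,j=5$. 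Since $H_2(t)=t^2-1$ and $\mathbb{T}$ has unit area, $B_i(n)=\int_{\mathbb{T}} Y_i^2\,dx-1$, and these six integrals $A_{35}(n),B_1(n),\dots,B_5(n)$ are exactly the ones feeding (\ref{sum1}).

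The final step is computational: substitute the explicit entries $k_1,\dots,k_5$ from (\ref{matrix_K}) (recall $E_n=4\pi^2 n$, so e.g. $k_1^2=E_n/2=2\pi^2 n$) and then use the defining constraint $\|\lambda\|^2=\lambda_{(1)}^2+\lambda_{(2)}^2=n$ to eliminate $\lambda_{(1)}$ via $\lambda_{(1)}^2=n-\lambda_{(2)}^2$ and $\lambda_{(1)}^4=(n-\lambda_{(2)}^2)^2$. The simple cases are immediate: $c^{(1)}_\lambda c^{(1)}_{-\lambda}=4\pi^2\lambda_{(1)}^2/(k_1^2\mathcal{N}_n)=(2/n)\lambda_{(1)}^2/\mathcal{N}_n$ yields $B_1(n)$ at once, and likewise $B_2,B_4$; for $B_3$ one expands $\lambda_{(1)}^4=n^2-2n\lambda_{(2)}^2+\lambda_{(2)}^4$ and collects the three monomials, reproducing the stated bracket. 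The main obstacle is the algebra for $A_{35}$ and $B_5$: there $Y_5$ mixes $\partial_{22}f_n$ and $\partial_{11}f_n$, so one must expand $\left(\lambda_{(2)}^2/k_5-(k_2/(k_5 k_3))\lambda_{(1)}^2\right)^2$ (respectively its product with $\lambda_{(1)}^2/k_3$), substitute all five $k_i$, and simplify the resulting rational functions of $\hat{\mu}_n(4)$ after replacing $\lambda_{(1)}^2=n-\lambda_{(2)}^2$; careful bookkeeping then produces the quadratics in $\lambda_{(2)}^2$ displayed in the statement. No idea beyond orthogonality and the norm constraint is needed, so the proof is a finite, if tedious, calculation.
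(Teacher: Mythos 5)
Your proposal is correct and takes essentially the same route as the paper: Appendix \ref{proofAB} likewise expresses $A_{35}(n)$ and the $B_i(n)$ through Parseval-type integrals of $f_n$ and its derivatives (the $I_{00},I_{11},I_{22},I_{0,22},I_{12,12},I_{22,22}$), collapses them by orthogonality to diagonal sums in $|a_\lambda|^2$, and then substitutes the Cholesky entries $k_1,\dots,k_5$. The only cosmetic difference is bookkeeping order: the paper eliminates $\partial_{11}f_n$ at the field level via the Helmholtz identity $\partial_{11}f_n+\partial_{22}f_n=-E_nf_n$ before integrating, whereas you apply the equivalent frequency-level constraint $\lambda_{(1)}^2=n-\lambda_{(2)}^2$ after orthogonality — the same calculation in Fourier rather than physical space.
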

The proof of Proposition \ref{AB} is technical hence postponed to the Appendix \ref{proofAB}.

\subsection{Proof of Proposition \ref{secondchaosb}}\label{sec-2}

\begin{proof} From Lemma \ref{lem-ce2} and (\ref{herm}) for $q=2$ we have
\begin{eqnarray*}
\text{\rm Proj}[\mathcal L_2(n;u)|2] &=& \frac{\gamma_2(u)}{2!}\int_{\mathbb T} H_2(f_n(x))\,dx \cr
&=& \frac12 u \phi(u) \frac{1}{\mathcal N_n/2} \sum_{\lambda\in \Lambda^+_n} (|a_\lambda|^2-1)
\end{eqnarray*}
which is (\ref{secondchaosb}) for $k=2$. Proposition \ref{secondchaosb} for $k=1$ has been proved in the spherical case in \cite[\S7.3]{Ros15} via an application of Green's formula, the proof for Arithmetic Random Waves is analogous hence omitted for the sake of brevity. Let us now focus on the Euler-Poincar\'e characteristic: plugging the results of Proposition \ref{proj} and Proposition \ref{AB} into (\ref{sum1}) some straightforward computations give Proposition \ref{secondchaosb} for $k=0$.

\end{proof}

\section{Higher order chaotic components}\label{sec-ho}

In this section we investigate higher order chaotic components of $\mathcal L_k(n;u)$, $k=0,1,2$. Let us start studying the variance of the projection onto the third Wiener chaos.
\begin{lemma}\label{lem:3}
For $k=0,1,2$, as $\mathcal N_n\to +\infty$,
\begin{equation}\label{var3}
\text{\rm Var}\left ( \mathcal L_k(n;u )[3] \right ) = O\left ( \frac{E_n^{2-k}}{\mathcal N_n^2}  \right ),
\end{equation}
where the constant involved in the $'O'$-notation does not depend on $n$.
\end{lemma}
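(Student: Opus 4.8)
The plan is to establish \eqref{var3} for $k=0,1,2$ in parallel, starting from the \emph{finite} third-order projection $\mathcal L_k(n;u)[3]=\mathrm{Proj}[\mathcal L_k(n;u)|3]$ read off the chaotic expansions of Lemma~\ref{lem-ce2} ($k=2$), Lemma~\ref{lem-ce1} ($k=1$) and Lemma~\ref{lem-ce0} ($k=0$). In each case this projection is a finite linear combination of integrals $\int_{\mathbb T}G(x)\,dx$, where $G$ is a product of Hermite polynomials of \emph{total degree three} evaluated at the unit-variance Gaussian fields $f_n$, $f_{n,1}$, $f_{n,2}$ and the standardized second derivatives appearing in \eqref{exp_L}; the combination carries the deterministic prefactor $1$ for $k=2$, $\tfrac12\sqrt{E_n/2}$ for $k=1$, and $2E_n$ for $k=0$. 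Writing $r_n(x-y):=\mathbb E[f_n(x)f_n(y)]=\tfrac1{\mathcal N_n}\sum_{\lambda\in\Lambda_n}e(\langle\lambda,x-y\rangle)$, I would express
\[
\Var\left(\mathcal L_k(n;u)[3]\right)=\int_{\mathbb T}\int_{\mathbb T}\Cov\left(G(x),G(y)\right)\,dx\,dy
\]
and reduce the covariance via the diagram (isometry) formula: since $G(x),G(y)\in C_3$, the Wick structure of Hermite polynomials forbids self-contractions, so $\Cov(G(x),G(y))$ is a finite sum of products of \emph{exactly three} two-point kernels $\mathbb E[\xi(x)\zeta(y)]$, each being $r_n$ or one of its standardized first/second derivatives, and each genuinely a function of $x-y$.

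Every such kernel has Fourier form $\tfrac1{\mathcal N_n}\sum_{\lambda\in\Lambda_n}w(\lambda)\,e(\langle\lambda,x-y\rangle)$, where $w(\lambda)$ is a monomial in the components $\lambda_{(\ell)}$ divided by the relevant diagonal entries of \eqref{matrix_K}: each spatial derivative contributes $2\pi i\,\lambda_{(\ell)}$, while $k_1\asymp\sqrt{E_n}$ and $k_3,k_4,k_5\asymp E_n$ absorb the matching powers of $\sqrt n$. Taking the product of three kernels and integrating over $\mathbb T^2$, orthogonality of the characters collapses the double integral, giving for each contraction a contribution
\[
\frac1{\mathcal N_n^3}\sum_{\substack{\lambda_1,\lambda_2,\lambda_3\in\Lambda_n\\ \lambda_1+\lambda_2+\lambda_3=0}}w_1(\lambda_1)\,w_2(\lambda_2)\,w_3(\lambda_3),
\]
the sign pattern in the constraint being irrelevant since $\Lambda_n=-\Lambda_n$. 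The elementary arithmetic input is that the number of admissible triples is $O(\mathcal N_n)$: if $\lambda_1+\lambda_2+\lambda_3=0$ with $\|\lambda_i\|^2=n$, then $n=\|\lambda_1+\lambda_2\|^2=2n+2\langle\lambda_1,\lambda_2\rangle$ forces $\langle\lambda_1,\lambda_2\rangle=-n/2$, so for each of the $\mathcal N_n$ choices of $\lambda_1$ the point $\lambda_2$ lies on the intersection of a line with the circle $\|\cdot\|^2=n$ (at most two points), and $\lambda_3$ is then determined.

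For $k=2$ this already closes the argument with $n$-uniform constants: $\mathcal L_2(n;u)[3]=\tfrac{(u^2-1)\phi(u)}{6}\int_{\mathbb T}H_3(f_n(x))\,dx$ has variance $\tfrac{((u^2-1)\phi(u))^2}{6}\int_{\mathbb T}r_n(z)^3\,dz$, and by the count above $\int_{\mathbb T}r_n^3=\mathcal N_n^{-3}\,\#\{\lambda_1+\lambda_2+\lambda_3=0\}=O(\mathcal N_n^{-2})$, i.e. $O(E_n^{0}/\mathcal N_n^2)$. In general, \emph{if} all weights $w_i$ were bounded uniformly in $n$, each contraction term would be $O(\mathcal N_n^{-2})$ and, after multiplication by the squared prefactor ($E_n/8$ for $k=1$, $4E_n^2$ for $k=0$), one would obtain \eqref{var3} at once.

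The hard part lies precisely in $k=0,1$, where the standardization in the Cholesky basis \eqref{matrix_K} introduces $\hat\mu_n(4)$-dependent weights that are \emph{not} uniformly bounded. Indeed the kernel of $Y_4=\partial_{12}f_n/k_4$ carries the weight $w_4(\lambda)=8\lambda_{(1)}^2\lambda_{(2)}^2/(n^2(1-\hat\mu_n(4)))$ (cf. $B_4$ in Proposition~\ref{AB}), and since $1-\hat\mu_n(4)=\tfrac{8}{n^2\mathcal N_n}\sum_{\lambda}\lambda_{(1)}^2\lambda_{(2)}^2$ by \eqref{muquattro}, this weight may blow up as $\hat\mu_n(4)\to\pm1$; the naive term-by-term bound of the previous paragraph then degenerates in this spectral regime. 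The crux is therefore to show that the associated \emph{weighted} correlation sums over the $120^\circ$ triples remain $O(\mathcal N_n)$ uniformly in $n$. I expect this to be achievable either by exhibiting the cancellation of the singular $1/(1-\hat\mu_n(4))$ factors across the finitely many third-chaos terms — the very phenomenon already responsible for the second-chaos coefficients of Proposition~\ref{secondchaosb} being independent of $\hat\mu_n(4)$, despite the $\hat\mu_n(4)$-laden intermediate expressions of Propositions~\ref{proj} and \ref{AB} — or by a direct KKW-type estimate of the constrained spectral sums. Carrying out this bookkeeping is the main, and essentially the only, obstacle, the remainder being the routine reduction above.
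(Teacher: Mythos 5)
Your reduction follows the paper's proof line for line in spirit: the same chaotic expansions (Lemmas~\ref{lem-ce2}, \ref{lem-ce1}, \ref{lem-ce0}), the diagram formula with no coincident-point contractions, the collapse of the double integral to a weighted sum over the length-$3$ spectral set $S_3(n)=\lbrace \lambda_1+\lambda_2+\lambda_3=0\rbrace$, and the count $|S_3(n)|=O(\mathcal N_n)$; your $k=2$ computation coincides with the paper's display (its equation for $\Var(\mathcal L_2(n;u)[3])$) exactly. But for $k=0,1$ your proposal stops at the decisive step: you reduce \eqref{var3} to the unproven claim that the weighted correlation sums stay $O(\mathcal N_n)$ despite the factors $1/(1-\hat\mu_n(4))$ and $1/(1+\hat\mu_n(4))$ hidden in $k_4$ and $k_5$, and you only name two strategies (coefficient cancellation, or a refined arithmetic estimate) without executing either. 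That is a genuine gap, not a routine omission: along subsequences with $\hat\mu_n(4)\to\pm 1$ the term-by-term bound you set up really does degenerate, and nothing in your text recovers it. To be fair, your objection also applies to the level of detail in the paper itself, which works out only the $a=a'=3$ term --- where the weight is controlled by $(n/k_3)^6\le 1$ precisely because $3+\hat\mu_n(4)\ge 2$ --- and declares the remaining summands ``similar''; that bound does not extend verbatim to contractions involving $Y_4$ or $Y_5$.

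The missing idea that closes $k=0,1$ (and in fact trivializes the whole lemma, weights notwithstanding) is that $S_3(n)$ is \emph{empty}. You came within a step of it: from $\langle \lambda_1,\lambda_2\rangle=-n/2$ one gets, for any triple in $S_3(n)$, that the three points are distinct (if $\lambda_1=\lambda_2$ then $\lambda_3=-2\lambda_1$ has norm $2\sqrt n\neq\sqrt n$) and satisfy $\|\lambda_i-\lambda_j\|^2=2n-2\langle\lambda_i,\lambda_j\rangle=3n$ for all $i\neq j$; so they would form an equilateral triangle with vertices in $\mathbb Z^2$, which is impossible: its area $\frac{\sqrt 3}{4}\cdot 3n$ is irrational, while the shoelace formula forces the area of any lattice triangle to be a half-integer. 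Hence every contraction term in every variance you wrote down vanishes identically, whatever the size of the singular weights, and indeed $\mathrm{Proj}[\mathcal L_k(n;u)|3]=0$ almost surely for every $n$, so \eqref{var3} holds trivially. Alternatively, if one insists on the bounding route, one must actually track the compensating factors in the coefficients --- the second-chaos analogue being that $h_4(u;n)\propto (1-\hat\mu_n(4))$ in Proposition~\ref{proj} exactly cancels the $1/(1-\hat\mu_n(4))$ in $B_4(n)$ of Proposition~\ref{AB} --- and this is exactly the bookkeeping your proposal postpones. As it stands, your argument proves the lemma only for $k=2$.
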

\begin{proof}
From Lemma \ref{lem-ce2} and properties of Hermite polynomials (see e.g. \cite[Proposition 2.2.1]{noupebook}) we have
\begin{eqnarray}\label{bo1}
\Var(\mathcal L_2(n;u)[3]) &=& \frac{\gamma_3(u)^2}{3!} \int_{\mathbb T} r_n(x)^3\,dx\cr
&=& \frac{\gamma_3(u)^2}{3!}\cdot \frac{1}{\mathcal N_n^3}\sum_{\lambda, \lambda_1, \lambda_2\in \Lambda_n} \int_{\mathbb T} e_{\lambda + \lambda_1 + \lambda_2}(x)\,dx \cr
&=&  \frac{\gamma_3(u)^2}{3!}\cdot \frac{|S_3(n)|}{\mathcal N_n^3},
\end{eqnarray}
where
\begin{equation}\label{S3}
S_3(n) := \lbrace (\lambda, \lambda_1,\lambda_2)\in \Lambda_n^3 : \lambda + \lambda_1 + \lambda_2 =0\rbrace
\end{equation}
is the length$-3$ spectral correlation set. Reasoning as in \cite[p.31]{KKW} it is immediate to check that
\begin{equation}\label{eq3n}
|S_3(n)| = O(\mathcal N_n),
\end{equation}
hence from (\ref{bo1}) we deduce that
\begin{eqnarray}
\Var(\mathcal L_2(n;u)[3]) = O\left ( \frac{1}{\mathcal N_n^2}\right )
\end{eqnarray}
which is (\ref{var3}) for $k=2$. From Lemma \ref{lem-ce1} we have
\begin{eqnarray}
\Var(\mathcal L_1(n;u )[3]) &=& \frac14 \frac{E_n}{2}\sum_{{\substack{a+2b+2c=3\\a'+2b'+2c'=3}}}
\frac{\beta _{a}(u)\alpha _{2b,2c}
}{a!(2b)!(2c)!}\frac{\beta _{a'}(u)\alpha _{2b',2c'}
}{(a')!(2b')!(2c')!}\cr
&&\times \int_{\mathbb T}\mathbb E\big [H_{a}(f_n(x))
H_{2b}(f_{n,1}(x))H_{2c}(f_{n,2}(x))\cr
&&\times H_{a'}(f_n(y))
H_{2b'}(f_{n,1}(y))H_{2c'}(f_{n,2}(y)) \big ]\,dxdy.
\end{eqnarray}
From Lemma \ref{lem-ce0} we have
\begin{eqnarray}\label{app_v3}
\text{\rm Var}(\mathcal L_0(n;u )[3])  &= & 4E_n^2 \sum_{{\substack{a+b+c+2d+2e=3\\a'+b'+c'+2d'+2e'=3}}} \frac{\eta_{a,b,c}(u)}{a!b!c!}\frac{\beta_{2d} \beta_{2e}}{(2d)!(2e)!} \frac{\eta_{a',b',c'}(u)}{a'!b'!c'!}\frac{\beta_{2d'} \beta_{2e'}}{(2d')!(2e')!}\\
&&\times \iint_{\mathbb T\times \mathbb T} \mathbb E \big [ H_{a}(Y_3(x)) H_b(Y_5(x)) H_c(Y_4(x))  H_{2d}(Y_1(x)) H_{2e}(Y_2(x))\\
&&\times H_{a'}(Y_3(y)) H_{b'}(Y_5(y)) H_{c'}(Y_4(y))  H_{2d'}(Y_1(y)) H_{2e'}(Y_2(y)) \big ]\,dxdy.
\end{eqnarray}
Consider the term for $a=a'=3$, $b=b'=c=c'=d=d'=e=e'=0$. We have
\begin{eqnarray}\label{ex3}
&&4E_n^2\frac{\eta_{3,0,0}(u)^2}{(3!)^2}\beta_{0}^4 \iint_{\mathbb T \times \mathbb T} \mathbb E [Y_3(x) Y_3(y)]^3\,dxdy\cr
& =& 4E_n^2\frac{\eta_{3,0,0}(u)^2}{(3!)^2}\beta_{0}^4 \iint_{\mathbb T \times \mathbb T}  \frac{1}{k_3^6}\mathbb E \left [\partial_{11} f_n(x) \partial_{11} f_n(y) \right ]^3\,dxdy\cr
&= & 4E_n^2\frac{\eta_{3,0,0}(u)^2}{(3!)^2}\beta_{0}^4 \iint_{\mathbb T \times \mathbb T}  \frac{1}{k_3^6}\left ( \frac{16\pi^4}{\mathcal N_n}\sum_{\lambda \in \Lambda_n} \lambda_1^4\, \text{\rm e}^{i 2\pi\langle \lambda, x-y\rangle } \right )^3\,dxdy\cr
&=& 4E_n^2\frac{\eta_{3,0,0}(u)^2}{(3!)^2}\beta_{0}^4 \iint_{\mathbb T \times \mathbb T}  \frac{1}{k_3^6} \frac{(16\pi^4)^3}{\mathcal N^3_n}\sum_{\lambda,\lambda',\lambda'' \in \Lambda_n} \lambda_1^4(\lambda'_1) ^4(\lambda''_1) ^4\, \text{\rm e}^{i 2\pi\langle \lambda + \lambda'+\lambda'', x-y\rangle } \,dxdy\cr
&=& 4E_n^2\frac{\eta_{3,0,0}(u)^2}{(3!)^2}\beta_{0}^4  \frac{1}{k_3^6} \frac{(16\pi^4)^3}{\mathcal N^3_n}\sum_{(\lambda,\lambda',\lambda'') \in S_3(n)} \lambda_1^4(\lambda'_1) ^4(\lambda''_1) ^4\cr
&=& 4E_n^2\frac{\eta_{3,0,0}(u)^2}{(3!)^2}\beta_{0}^4 \underbrace{ \left ( \frac{n}{k_3} \right )^6 }_{\le 1}\frac{(16\pi^4)^3}{\mathcal N^3_n}\sum_{(\lambda,\lambda',\lambda'') \in S_3(n)} \underbrace{\left ( \frac{\lambda_1}{\sqrt n}\right )^4\left (\frac{\lambda'_1}{\sqrt n} \right ) ^4 \left (\frac{\lambda''_1}{\sqrt n} \right ) ^4}_{\le 1}\cr
&\le& 4(16\pi^4)^3\frac{\eta_{3,0,0}(u)^2\beta_{0}^4}{(3!)^2} \, E_n^2\frac{|S_3(n)|}{\mathcal N^3_n}.
\end{eqnarray}
Plugging (\ref{eq3n}) into (\ref{ex3}) and repeating a similar argument for the other summands on the right hand side of (\ref{app_v3}) we get (\ref{var3}) for $k=0$. The proof for (\ref{var3}) for $k=1$ is similar. hence we omit the details.

\end{proof}

Lemma \ref{lem:3} ensures that $ \mathcal L_k(n;u )[3]$ is asymptotically negligible with respect to $\mathcal L_k(n;u )[2]$ for any $k\in \lbrace 0,1,2\rbrace$ (see Proposition \ref{secondchaosb}), as happens for the remaining chaotic projections. All the results to follow hold for every $n$ when $k=1,2$, and for $n\in S'$ for $k=0$.
For brevity's sake we avoid to repeat these conditions in the statements below.
\begin{lemma}\label{caos_sup}
As $\mathcal N_n\to +\infty$ under Condition \ref{condizione}
\begin{equation}
\text{\rm Var}\left ( \sum_{q=4}^{+\infty} \mathcal L_k(n;u )[q] \right ) = O\left ( \frac{E_n^{2-k}}{\mathcal N_n^2}  \right ),
\end{equation}
where the constant involved in the $'O'$-notation does not depend on $n$.
\end{lemma}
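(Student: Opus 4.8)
The plan is to use the orthogonality of Wiener chaoses to reduce the statement to a bound on each projection separately,
\[
\Var\Big(\sum_{q=4}^{\infty}\mathcal L_k(n;u)[q]\Big)=\sum_{q=4}^{\infty}\Var(\mathcal L_k(n;u)[q]),
\]
and to estimate each summand through the diagram (Wick) formula, which expresses $\Var(\mathcal L_k(n;u)[q])$ as an integral over $\mathbb T\times\mathbb T$ of products of the covariance kernels of the Gaussian vector built from $f_n$ and its normalized derivatives: $(f_n,f_{n,1},f_{n,2})$ for $k=1,2$, and the five fields $Y_1,\dots,Y_5$ of \eqref{matrix_K} for $k=0$. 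The structural fact driving everything is that the one-point covariance of each of these vectors is the identity (this is precisely why the Cholesky factorisation $\sigma_n=K_nK_n^t$ was carried out), so that all intra-point pairings vanish — off-diagonal entries are zero, and same-field pairings are the forbidden flat edges — and every edge of a non-vanishing diagram joins $x$ to $y$. Hence the integrand of the $q$-th term is a product of exactly $q$ cross-covariance kernels $g_i(x-y)$, each a genuine correlation with $|g_i(z)|\le C(z):=\max_i|g_i(z)|\le1$; for $q\ge4$ this gives the pointwise domination $\prod_{i=1}^{q}|g_i(z)|\le C(z)^4$. The decay $\int_{\mathbb T}C(z)^4\,dz=O(1/\mathcal N_n^2)$ then follows from the unconditional length-$4$ spectral estimate $|S_4(n)|=O(\mathcal N_n^2)$ (two circles of radius $\sqrt n$ meet in at most two points), together with $|S_3(n)|=O(\mathcal N_n)$ from \eqref{eq3n}, the frequency weights $\lambda_{(\ell)}/\sqrt n\in[-1,1]$ being harmless.

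For $k=2$ this is immediate and unconditional. By Lemma~\ref{lem-ce2} and $\mathbb E[H_q(f_n(x))H_q(f_n(y))]=q!\,r_n(x-y)^q$,
\[
\sum_{q=4}^{\infty}\Var(\mathcal L_2(n;u)[q])=\sum_{q=4}^{\infty}\frac{\gamma_q(u)^2}{q!}\int_{\mathbb T}r_n(z)^q\,dz\le\Big(\sum_{q=1}^{\infty}\frac{\gamma_q(u)^2}{q!}\Big)\int_{\mathbb T}r_n(z)^4\,dz,
\]
using $0\le\int_{\mathbb T}r_n^q=|S_q(n)|/\mathcal N_n^q\le\int_{\mathbb T}r_n^4$ for $q\ge4$ (as $|r_n|\le1$) and the Parseval identity $\sum_{q\ge1}\gamma_q(u)^2/q!=\Phi(u)(1-\Phi(u))$ for $\mathbf 1_{[u,+\infty)}$. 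The right-hand side equals $\Phi(u)(1-\Phi(u))\,|S_4(n)|/\mathcal N_n^4=O(1/\mathcal N_n^2)$, which is the claim for $k=2$.

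For $k=1$ the kernels $g_i$ are a mixture of $r_n$ and of the normalized first and second derivatives of $r_n$, and the structural fact above still forces all edges to cross; the extra prefactor $E_n/8$ arises from $\|\nabla f_n\|=\sqrt{E_n/2}\,\|(f_{n,1},f_{n,2})\|$. One cannot, however, bound term-by-term and sum, because the Hermite coefficients of the Dirac mass $\delta_u$ are not square-summable; this divergence is the near-diagonal behaviour ($z\to0$, $C(z)\to1$) of the boundary-length two-point correlation, which is nonetheless integrable. I would therefore treat the whole tail at once as $\int_{\mathbb T}R_1(z)\,dz$, with $R_1$ the order-$\ge4$ part of the two-point correlation: where the kernels are bounded away from $1$ the same cross-edge counting gives $R_1(z)=O(E_n\,C(z)^4)$, while near the diagonal the integrable singularity is controlled directly. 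This is exactly the nodal-length variance computation of \cite{MPRW2015}, reproduced here after the $\epsilon$-regularisation of Lemma~\ref{lem-approx1}; being unconditional, it matches the fact that the case $k=1$ holds for every $n$.

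The case $k=0$ is the main obstacle and the only one requiring Condition~\ref{condizione}: the coefficients $\eta^{(n)}_{a,b,c}(u)$ of Lemma~\ref{lem-ce0} are neither explicit nor square-summable, and the factor $\det(\nabla^2 f_n)\,\delta_0(\nabla f_n)$ makes the near-diagonal singularity of the two-point correlation too strong to absorb by integrability alone. I would write $\sum_{q\ge4}\Var(\mathcal L_0(n;u)[q])=\int_{\mathbb T}R(z)\,dz$, with $R$ the order-$\ge4$ part of the Euler--Poincar\'e two-point correlation, and split according to the size of the kernels. Where they are bounded away from $1$ (equivalently $|z|\gtrsim E_n^{-1/2}$) the Kac--Rice two-point density is a convergent series in them, and after removing orders $\le3$ one gets $R(z)=O(E_n^2\,C(z)^4)$, yielding $O(E_n^2/\mathcal N_n^2)$ as before. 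On the complementary near régime — a neighbourhood of the diagonal of measure $O(E_n^{-1})$, i.e.\ one fundamental cell $Q_{0,n}$ — I would dominate $R$ by the non-negative two-point intensity $\rho_2$, whose integral over a cell is precisely $\mathbb E[\mathcal L_0(u;Q_{0,n})(\mathcal L_0(u;Q_{0,n})-1)]=O(1)$ by Condition~\ref{condizione}; stationarity and the tiling of $\mathbb T$ by $E_n$ cells then turn this into a near-diagonal contribution of size $O(E_n)$. Since $\mathcal N_n^2=n^{o(1)}=o(E_n)$, this is $O(E_n^2/\mathcal N_n^2)$, and combining the two régimes concludes. The delicate point, where I expect the real work to lie, is making the near/far split rigorous and passing from the single-cell factorial moment to the whole torus while controlling, uniformly on the near régime, the cross terms with the product intensity $\rho_1^2$ and with the already-subtracted chaoses of orders $2$ and $3$.
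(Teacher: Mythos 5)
Your architecture for $k=0$ is broadly the paper's (a good/bad region split, the length-$4$ spectral bound $|S_4(n)|=3\mathcal N_n(\mathcal N_n-1)$ driving the $\mathcal N_n^{-2}$ decay, Condition \ref{condizione} entering through a single-cell factorial moment, stationarity tiling), and your $k=2$ computation is correct and essentially the omitted one. But there is a genuine gap in how you define the two régimes: you identify the region where the kernels are bounded away from $1$ with $\{|z|\gtrsim E_n^{-1/2}\}$, so that your ``near'' régime is one fundamental cell of measure $O(E_n^{-1})$. On the torus this equivalence is false, because $r_n$ does not decay away from the diagonal: for odd $n$ every $\lambda\in\Lambda_n$ has $\lambda_{(1)}+\lambda_{(2)}$ odd, hence $r_n\bigl((1/2,1/2)\bigr)=-1$, i.e.\ the kernels can equal $\pm 1$ at macroscopic distance. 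This is exactly why the paper defines \emph{singular pairs of squares} through the correlation thresholds themselves, not through distance, and bounds the measure of the singular set by $\mathrm{area}(S_Q)\ll\int_{\mathbb T}r_n(z)^4\,dz=|S_4(n)|/\mathcal N_n^4\ll \mathcal N_n^{-2}$ (Lemma \ref{lem_r4}). A given cell then has $\ll E_n/\mathcal N_n^2$ singular partners — not the $O(1)$ diagonal neighbours you account for — and the singular contribution comes out as $E_n\cdot(E_n/\mathcal N_n^2)\cdot O(1)=O(E_n^2/\mathcal N_n^2)$, whereas your ``far'' expansion would be applied to pairs on which the kernels are not small and the series argument fails. (Your closing observation $O(E_n)=O(E_n^2/\mathcal N_n^2)$ via $\mathcal N_n^2=n^{o(1)}$ is correct but moot once the split is repaired.)

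The second problem is the one you yourself flag as delicate, and it is not fixable as stated: the order-$\ge 4$ part $R$ of the two-point correlation has no sign, and subtracting the chaoses of orders $\le 3$ does not leave something dominated pointwise by $\rho_2$, so the near-régime bound ``$R\le\rho_2$'' plus Condition \ref{condizione} does not go through at the level of densities. The paper sidesteps densities entirely: it decomposes $\mathcal L_0(n;u)=\sum_Q\mathcal L_0(n;u,Q)$, uses that projection is an $L^2$-contraction on each restricted functional, $\Var\bigl(\mathrm{proj}(\mathcal L_0(u;Q_0)\,|\,C_{\ge4})\bigr)\le\mathbb E[\mathcal L_0(u;Q_0)^2]=\mathbb E[\mathcal L_0(u;Q_0)(\mathcal L_0(u;Q_0)-1)]+\mathbb E[\mathcal L_0(u;Q_0)]$, and then applies Cauchy--Schwarz across each singular pair together with stationarity; Condition \ref{condizione} and Lemma \ref{final1} bound the two terms with no $\rho_1^2$ or cross-chaos bookkeeping. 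On the non-singular part the paper also supplies the quantitative ingredient your ``convergent series'' assertion presupposes: uniform coefficient bounds $|\beta^{\epsilon/k_1}_d|^2/d!\le C^2$ and $\sum_{a+b+c=q}|\eta^{(n)}_{a,b,c}(u)|^2/(a!b!c!)=O(1)$ (via $\mathbb E[p_n(\cdot)^2]$), which combined with the $\epsilon^{q-4}$ damping make the tail sum $O(C(z)^4)$ — the same mechanism that resolves the non-square-summability you correctly identify as blocking the term-by-term route for $k=1$.
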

The proof of Proposition \ref{caos_sup} for $k=2$ is simple, the proof for $k=1$ can be treated analogously as the proof of Lemma 2 in \cite{PR18} hence we will omit both of them.

The proof of Proposition \ref{caos_sup} for $k=0$ is inspired by the proofs of Proposition 2.3 in \cite[\S 5]{dalmao}
and Lemma 2  in \cite{PR18}.
Let us first decompose $\mathbb T$ as a \emph{disjoint} union of squares $Q_k$, $k\in \mathbb Z^2$, each of side length $1/M$, where
\begin{equation}\label{def_M}
M = \lceil{d \sqrt{E_n}} \rceil, \qquad d\in \mathbb R_{>0} \text{ to be chosen later},
\end{equation}
obtained by translating along directions $k/M$, $k\in \mathbb Z^2$,  the square
\begin{equation}
Q_0 := [0, 1/M) \times [0, 1/M).
\end{equation}
In what follows we will often drop the dependence of $k$ from $Q_k$.

\subsection{Singular squares}

This part is inspired by \cite[\S6.1]{oravecz} and \cite[\S4.3]{Rudnick}. Let us fix $0<\epsilon \ll 1$ and choose $d$ in (\ref{def_M}) such that $d\ge c/\epsilon$; write 
$r_n(x-y)=r(x-y)=\mathbb E \left [f_n(x)f_n(y) \right ]$, $r_{n,1}(x-y)=r_1(x-y)=\mathbb E \left [\partial_{1} f_n(x)\partial_{1} f_n(y) \right ]$, $r_{n,2}(x-y)=r_2(x-y)=\mathbb E \left [\partial_{2} f_n(x)\partial_{2} f_n(y) \right ]$, and analogously for second-order derivatives.  A pair of points $(x,y)\in \mathbb T \times \mathbb T$ is said to be \emph{singular} if either $|r(x-y)|>\epsilon$ or $|r_1(x-y)| > \epsilon \, \sqrt{n}$ or $|r_2(x-y)| > \epsilon \, \sqrt{n}$ or $|r_{11}(x-y)| > \epsilon\, n$ or $|r_{12}(x-y)| > \epsilon\, n$ or $|r_{22}(x-y)| > \epsilon\, n$.
\begin{definition}
A pair of squares $(Q, Q')$ is said to be singular if there exists a singular pair of points $(x,y)\in Q\times Q'$.
\end{definition}
For instance, $(Q_0, Q_0)$ is a singular pair of squares. The following is Lemma 5.2 in \cite{dalmao}.
\begin{lemma}
Let $(Q,Q')$ be a singular pair of squares,  then for every $(x,y)\in (Q,Q')$ either $|r(x-y)|>\frac12 \epsilon$ or $|r_1(x-y)| > \frac12 \epsilon\, \sqrt{n}$ or $|r_2(x-y)| > \frac12 \epsilon\, \sqrt{n}$ or $|r_{11}(x-y)| > \frac12 \epsilon\, n$ or $|r_{12}(x-y)| > \frac12 \epsilon\, n$ or $|r_{22}(x-y)| > \frac12 \epsilon\, n$.
\end{lemma}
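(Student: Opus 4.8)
The plan is to prove the statement by a continuity (mean value) argument: the six correlation functions are smooth and $\mathbb Z^2$-periodic, and over the small squares $Q,Q'$ — of side $1/M$ with $M\ge d\sqrt{E_n}$ — none of them can drop by more than half of its threshold. Throughout I write $t_g$ for the threshold attached to a correlation function $g\in\{r,r_1,r_2,r_{11},r_{12},r_{22}\}$ in the definition of a singular pair of points, so that $t_r=\epsilon$ while the thresholds of the first- and second-order correlations are $\epsilon\sqrt n$ and $\epsilon n$ respectively. First I would record the explicit form of each $g$: using \eqref{der}, \eqref{der2} and the relations $\mathbb E[a_\lambda\overline{a}_{\lambda'}]=\delta_{\lambda\lambda'}$, every such $g$ is a real trigonometric polynomial
\[
g(z)=\frac{c_g}{\mathcal N_n}\sum_{\lambda\in\Lambda_n}P_g(\lambda)\,e(\langle\lambda,z\rangle),
\]
where $c_g$ is a power of $2\pi$ and $P_g(\lambda)$ is a monomial in the components of $\lambda$ whose degree equals the total order of differentiation defining $g$.

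The heart of the matter is a uniform Lipschitz bound on each $g$. Differentiating the display once more produces a factor $2\pi\lambda$, whence
\[
|\nabla g(z)|\le \frac{2\pi|c_g|}{\mathcal N_n}\sum_{\lambda\in\Lambda_n}|P_g(\lambda)|\,|\lambda|,
\]
and, since every frequency on the circle $\|\lambda\|^2=n$ satisfies $|\lambda|=\sqrt n=\sqrt{E_n}/(2\pi)$, each differentiation appearing in the covariance costs exactly one factor $\sqrt n$. Thus $\mathrm{Lip}(g)$ is of order $\sqrt{E_n}$ times the size of $g$ near the diagonal, and this is precisely the extra factor that the side length $1/M$ of the squares will absorb.

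It then remains to bound the increment of each $g$ across $Q\times Q'$. For $(x,y),(x_0,y_0)\in Q\times Q'$ the torus distance obeys $\|(x-y)-(x_0-y_0)\|\le \|x-x_0\|+\|y-y_0\|\le 2\,\mathrm{diam}(Q)=2\sqrt2/M\le 2\sqrt2/(d\sqrt{E_n})$, so feeding the gradient bound into the mean value inequality gives an increment at most $\mathrm{Lip}(g)\cdot 2\sqrt2/(d\sqrt{E_n})$. The goal is to show this is at most $\tfrac12 t_g$ for all six $g$ once $d\ge c/\epsilon$ with $c$ absolute. Granting it, the conclusion is immediate: if $(Q,Q')$ is singular, fix a singular pair $(x_0,y_0)\in Q\times Q'$, so that $|g(x_0-y_0)|>t_g$ for at least one $g$; then for every $(x,y)\in Q\times Q'$,
\[
|g(x-y)|\ge |g(x_0-y_0)|-\tfrac12 t_g>\tfrac12 t_g,
\]
which is exactly the corresponding relaxed alternative (with $\tfrac12\epsilon$, $\tfrac12\epsilon\sqrt n$ or $\tfrac12\epsilon n$) in the statement. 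In fact the very alternative that is violated at the singular point remains violated throughout $Q\times Q'$.

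The step I expect to be the main obstacle is the verification of these six uniform Lipschitz increments, since this is exactly where the calibration of the thresholds $t_g$ to the orders of magnitude of $r,r_1,\dots,r_{22}$ is used: one must confirm, order of differentiation by order of differentiation, that the gain of $|\lambda|=\sqrt n$ per derivative is cancelled by the factor $1/M\asymp 1/\sqrt{E_n}$ coming from the diameter of the squares, so that a single choice $d\asymp 1/\epsilon$ closes all six estimates simultaneously. The remaining ingredients — the trigonometric-polynomial representation of the covariances, the bound $|P_g(\lambda)|\le n^{\deg P_g/2}$ on $\Lambda_n$, and the final triangle inequality — are routine once this bookkeeping is carried out.
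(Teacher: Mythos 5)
Your proof is correct and is essentially \emph{the} proof: the paper itself offers no argument, quoting the statement as Lemma 5.2 of \cite{dalmao}, and the proof there is exactly your mean-value/Lipschitz bookkeeping --- each of the six correlations is a trigonometric polynomial with all frequencies of modulus $\sqrt n$, so its Lipschitz constant is $O(\sqrt{E_n})$ times its threshold scale, and since $(x,y)\mapsto x-y$ moves by at most $2\sqrt{2}/M \asymp 1/(d\sqrt{E_n})$ across $Q\times Q'$, the choice $d\ge c/\epsilon$ with $c$ absolute caps the drop of each correlation at half its threshold, after which your triangle inequality finishes. One caveat: your calibration tacitly reads $r_1,\dots,r_{22}$ as first and second partial derivatives of the covariance $r$ (the convention of \cite{dalmao}, and the only one consistent with the thresholds $\epsilon\sqrt n$ and $\epsilon n$); under the paper's literal definitions $r_1(x-y)=\mathbb E[\partial_1 f_n(x)\partial_1 f_n(y)]$, $r_{11}(x-y)=\mathbb E[\partial_{11}f_n(x)\partial_{11}f_n(y)]$, each function carries an extra factor of order $\sqrt n$ or $n$ and no absolute $d$ would close the estimate --- but that mismatch is a slip in the paper's notation, not a gap in your argument.
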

Let us now denote by $S_Q$ the union of all squares $Q'$ such that $(Q,Q')$ is a singular pair of squares. The number of such squares $Q'$ is $M^2\cdot  \text{area}(S_Q)$, indeed the area of each square is $1/M^2$. The following result is similar to Lemma 5.3 in \cite{dalmao} hence we omit the proof.
\begin{lemma}\label{lem_r4}
We have
\begin{equation}
\text{\rm area}(S_Q) \ll \int_{\mathbb T} r(x)^4\,dx.
\end{equation}
\end{lemma}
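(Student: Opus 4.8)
The plan is to bound $\operatorname{area}(S_Q)$ by the Lebesgue measure of a fixed \emph{singular difference set} and then estimate that measure by Markov's inequality combined with the spectral representation of $r$ and its derivatives. First I would fix an arbitrary point $x_0\in Q$. If $(Q,Q')$ is a singular pair of squares, the preceding (half-threshold) lemma guarantees that for \emph{every} $y\in Q'$ the pair $(x_0,y)$ already satisfies one of the six defining inequalities with the constant $\tfrac12\epsilon$; here the thresholds are taken of the order of the respective variances $r(0)=1$, $r_\ell(0)\asymp n$, $r_{j\ell}(0)\asymp n^2$. Writing
$$
B:=\{z\in\mathbb T:\ |r(z)|>\tfrac12\epsilon\}\ \cup\ \bigcup_{\ell=1}^{2}\{z:\ |r_\ell(z)|>\tfrac12\epsilon\, r_\ell(0)\}\ \cup\ \bigcup_{j,\ell=1}^{2}\{z:\ |r_{j\ell}(z)|>\tfrac12\epsilon\, r_{j\ell}(0)\},
$$
this says exactly that $Q'\subseteq x_0-B$ for every square $Q'$ composing $S_Q$; hence $S_Q\subseteq x_0-B$, and by stationarity (translation invariance of Lebesgue measure) $\operatorname{area}(S_Q)\le\operatorname{area}(B)$.

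Next I would estimate $\operatorname{area}(B)$ by a union bound over its six pieces, applying Markov's inequality in the form $\operatorname{area}\{|g|>T\}\le T^{-4}\int_{\mathbb T}g^4$ to each $g\in\{r,r_\ell,r_{j\ell}\}$. The fourth moments are read off from the Fourier expansions $r(z)=\mathcal N_n^{-1}\sum_{\lambda\in\Lambda_n}e(\langle\lambda,z\rangle)$, $r_\ell(z)=4\pi^2\mathcal N_n^{-1}\sum_\lambda \lambda_{(\ell)}^2\,e(\langle\lambda,z\rangle)$ and $r_{j\ell}(z)=(4\pi^2)^2\mathcal N_n^{-1}\sum_\lambda \lambda_{(\ell)}^2\lambda_{(j)}^2\,e(\langle\lambda,z\rangle)$: integrating a product of four such sums over $\mathbb T$ annihilates every term except those indexed by the length-$4$ spectral correlation set $S_4(n):=\{(\lambda^{(1)},\dots,\lambda^{(4)})\in\Lambda_n^4:\ \lambda^{(1)}+\dots+\lambda^{(4)}=0\}$, exactly as in the computation (\ref{bo1}) that produced (\ref{S3}). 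Thus $\int_{\mathbb T}r^4=\mathcal N_n^{-4}|S_4(n)|$, while $\int_{\mathbb T}r_\ell^4$ and $\int_{\mathbb T}r_{j\ell}^4$ are $\mathcal N_n^{-4}$ times the \emph{same} sum over $S_4(n)$ weighted by $\prod_{i=1}^4(\lambda^{(i)}_{(\ell)})^2$ and $\prod_{i=1}^4(\lambda^{(i)}_{(\ell)}\lambda^{(i)}_{(j)})^2$ respectively (up to the absolute constants $(4\pi^2)^4$ and $(4\pi^2)^8$).

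The key point is that every frequency lies on the circle of radius $\sqrt n$, so $(\lambda^{(i)}_{(\ell)})^2\le\|\lambda^{(i)}\|^2=n$; hence each weight is bounded by $n^4$ for the first-order kernels and by $n^8$ for the second-order kernels, giving $\int_{\mathbb T}r_\ell^4\le C\,n^4\int_{\mathbb T}r^4$ and $\int_{\mathbb T}r_{j\ell}^4\le C\,n^8\int_{\mathbb T}r^4$ with absolute $C$. Feeding these into Markov, the $r$-piece contributes $\lesssim\epsilon^{-4}\int r^4$, each $r_\ell$-piece contributes $\lesssim \epsilon^{-4}n^{-4}\cdot n^4\int r^4=\epsilon^{-4}\int r^4$, and each $r_{j\ell}$-piece contributes $\lesssim \epsilon^{-4}n^{-8}\cdot n^8\int r^4=\epsilon^{-4}\int r^4$; summing the six bounds yields $\operatorname{area}(B)\ll\int_{\mathbb T}r(x)^4\,dx$, as claimed. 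I expect the only genuine subtlety to be the bookkeeping that the power of $n$ carried by each derivative kernel is \emph{exactly} cancelled by the power of $n$ in the corresponding variance-order threshold, so that all $n$-dependence disappears and the constants remain absolute with $\epsilon$ held fixed. The termwise estimate $(\lambda_{(\ell)}^{})^2\le n$ is crude, but because $\int r^4$ is itself built from the very same correlation set $S_4(n)$ it is lossless at the level of powers of $n$ and therefore suffices.
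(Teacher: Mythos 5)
Correct, and essentially the paper's own route: the paper omits the proof of this lemma, deferring to Lemma~5.3 in \cite{dalmao}, and your argument --- fix $x_0\in Q$, use the half-threshold lemma to trap $S_Q$ inside a translate of the union $B$ of six superlevel sets, then apply Markov's inequality with fourth moments computed on the length-$4$ spectral correlation set $S_4(n)$, the powers of $n$ carried by the derivative kernels cancelling exactly against the thresholds --- is precisely that proof. The one point worth flagging is your (correct, and in fact necessary) renormalization of the singularity thresholds to variance order ($\epsilon n$ for covariances of first derivatives, $\epsilon n^2$ for second): with the thresholds $\epsilon\sqrt n$ and $\epsilon n$ as literally printed in \S\ref{sec-ho} the Markov step would leave uncancelled factors $n^2$ and $n^4$ (indeed $r_\ell$ has $L^2$-norm of order $n/\sqrt{\mathcal N_n}\gg\sqrt n$, so the set $\{|r_\ell|>\epsilon\sqrt n\}$ has measure bounded below and the statement would fail), so your reading is the intended one; note also that for the mixed kernel the threshold must be the uniform scale $\epsilon n^2$ --- which is what your computation actually uses --- rather than $\epsilon\, r_{12}(0)=2\pi^4\epsilon\, n^2(1-\hat\mu_n(4))$, since the latter can degenerate along subsequences with $\hat\mu_n(4)\to 1$.
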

Recall that
\begin{equation}
\int_{\mathbb T} r(x)^4\,dx = \frac{|S_4(n)|}{\mathcal N_n^4},
\end{equation}
where $S_4(n):= \lbrace (\lambda, \lambda', \lambda'', \lambda''')\in \Lambda_n^4 : \lambda + \lambda' + \lambda'' + \lambda''' = 0\rbrace$ is the length$-4$ spectral correlation set, and \cite[Lemma 5.1]{MPRW2015} (see also \cite[p. 31]{KKW})
\begin{equation}\label{card_S4}
|S_4(n)| = 3 \mathcal N_n (\mathcal N_n-1).
\end{equation}
From Lemma \ref{lem_r4} and (\ref{card_S4}) we immediately have
\begin{equation}\label{area}
\text{\rm area}(S_Q) \ll \frac{1}{\mathcal N_n^2},
\end{equation}
thus the number of squares $Q'$ such that $(Q,Q')$ is singular is $\ll E_n/\mathcal N_n^2$.

\subsection{Variance and squares}

Let us denote by $\mathcal L_0(n;u,Q)$ the Euler-Poincar\'e characteristic restricted to $Q$.
Since the squares $Q$ are disjoint we can write
\begin{equation}\label{dubbio}
\mathcal L_0(n;u) = \sum_{Q} \mathcal L_0(n;u,Q)
\end{equation}
yielding
\begin{equation}\label{app1}
\text{proj}(\mathcal L_0(n;u) | C_{\ge 4}) = \sum_Q \text{\rm proj} (\mathcal L_0(n;u,Q) | C_{\ge 4}).
\end{equation}
From (\ref{app1}) we deduce
\begin{eqnarray}\label{app6}
\text{\rm Var}(\text{\rm proj}(\mathcal L_0(n;u) | C_{\ge 4}) &=& \sum_{Q,Q'} \text{\rm Cov} (\text{\rm proj}(\mathcal L_0(n;u,Q)|C_{\ge 4}), \text{\rm proj}(\mathcal L_0(n;u,Q')|C_{\ge 4}) ) \\
&=& \sum_{(Q,Q') \text{ \rm sing.}} \text{\rm Cov} (\text{\rm proj}(\mathcal L_0(n;u,Q)|C_{\ge 4}), \text{\rm proj}(\mathcal L_0(n;u,Q')|C_{\ge 4}) )\cr
&&+ \sum_{(Q,Q') \text{ \rm non-sing.}} \text{\rm Cov} (\text{\rm proj}(\mathcal L_0(n;u,Q)|C_{\ge 4}), \text{\rm proj}(\mathcal L_0(n;u,Q')|C_{\ge 4}) )\nonumber;
\end{eqnarray}
we are going to separately study the contribution of the singular part and the contribution of the non-singular part, i.e. the two summands on the right-hand-side of (\ref{app6}).

\subsection{The contribution of the singular part}

By Cauchy-Schwartz inequality and stationarity of the model we have
\begin{equation}\label{app2}
\begin{split}
&\left |  \sum_{(Q,Q') \text{ \rm sing.}} \text{\rm Cov} (\text{\rm proj}(\mathcal L_0(n;u,Q)|C_{\ge 4}), \text{\rm proj}(\mathcal L_0(n;u,Q')|C_{\ge 4}) ) \right |\\
&\le  \sum_{(Q,Q') \text{ \rm sing.}} \text{\rm Var} (\text{\rm proj}(\mathcal L_0(u;Q_0)|C_{\ge 4}))\\
&\ll M^2 \frac{E_n}{\mathcal N_n^2} \text{\rm Var} (\text{\rm proj}(\mathcal L_0(u;Q_0)|C_{\ge 4})) \ll \frac{E_n^2}{\mathcal N_n^2} \text{\rm Var} (\text{\rm proj}(\mathcal L_0(u;Q_0)|C_{\ge 4})),
 \end{split}
\end{equation}
where for the last two estimates we used (\ref{area}) and (\ref{def_M}) respectively. Now write
\begin{equation}\label{app3}
\begin{split}
\text{\rm Var} (\text{\rm proj}(\mathcal L_0(u;Q_0)|C_{\ge 4})) & \le  \mathbb E \left [\mathcal  L_0(u;Q_0)^2\right ]\\
&= \mathbb E \left [\mathcal L_0(u;Q_0) (\mathcal L_0(u;Q_0)-1) \right ] + \mathbb E \left [\mathcal L_0(u;Q_0)\right ].
\end{split}
\end{equation}
\begin{lemma}\label{final1}
For every $n\in S$ and $u\in \mathbb R$
\begin{equation*}
\mathbb E \left [\mathcal L_0(u;Q_0)\right ] = O(1),
\end{equation*}
where the constant involved in the $O$-notation is absolute.
\end{lemma}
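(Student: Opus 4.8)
The plan is to bypass any topological bookkeeping and argue purely through stationarity and the Kac--Rice representation already used to establish Lemma \ref{expval}. The point is that the expected Euler--Poincar\'e \emph{density} is constant over the torus, so the expected characteristic over any region equals that density times the region's area; since $Q_0$ has area of order $E_n^{-1}$, it exactly cancels the factor $E_n$ carried by the global expectation $\mathbb E[\mathcal L_0(n;u)]$.

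First I would localise the integral representation (\ref{ir0}) to $Q_0$, writing $\mathcal L_0(u;Q_0) = \int_{Q_0} \det(\nabla^2 f_n(x))\mathbf{1}_{\{f_n(x)\ge u\}}\delta_0(\nabla f_n(x))\,dx$ together with its $\epsilon$-approximation $\mathcal L_0^\epsilon(u;Q_0) = \int_{Q_0} F_n^\epsilon(x)\,dx$ from (\ref{approx0})--(\ref{F}). Lemma \ref{approx-conv} (whose proof is insensitive to replacing $\mathbb T$ by $Q_0$), combined with the uniform bound of Lemma \ref{ubEPC}, gives $\mathbb E[\mathcal L_0(u;Q_0)] = \lim_{\epsilon\to 0}\mathbb E[\mathcal L_0^\epsilon(u;Q_0)]$. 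For each fixed $\epsilon$, since $F_n^\epsilon(x)$ is integrable, Fubini and the stationarity of $f_n$ yield $\mathbb E[\mathcal L_0^\epsilon(u;Q_0)] = \int_{Q_0}\mathbb E[F_n^\epsilon(x)]\,dx = |Q_0|\cdot \mathbb E[F_n^\epsilon(0)]$, because $\mathbb E[F_n^\epsilon(x)]$ is independent of $x$.

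Next I would identify the limiting density. Exactly as in the proof of Lemma \ref{expval} in Appendix \ref{Appendixexp}, $\mathbb E[F_n^\epsilon(0)]$ converges as $\epsilon\to 0$ to the Kac--Rice density $\rho_n(u):=\mathbb E[\det(\nabla^2 f_n(0))\mathbf{1}_{\{f_n(0)\ge u\}}\mid \nabla f_n(0)=0]\,p_{\nabla f_n(0)}(0)$, constant in $x$ by stationarity. Applying the identity $\mathbb E[\mathcal L_0(u;Q_0)] = |Q_0|\,\rho_n(u)$ with $Q_0$ replaced by the whole (unit-area) torus shows $\rho_n(u) = \mathbb E[\mathcal L_0(n;u)]$, which Lemma \ref{expval} evaluates as $\tfrac{1}{2\pi}u\phi(u)\cdot\tfrac{E_n}{2}$. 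Hence $\mathbb E[\mathcal L_0(u;Q_0)] = |Q_0|\cdot\tfrac{1}{2\pi}u\phi(u)\cdot\tfrac{E_n}{2}$. Since $M=\lceil d\sqrt{E_n}\rceil\ge d\sqrt{E_n}$ by (\ref{def_M}) we have $|Q_0|=M^{-2}\le (d^2E_n)^{-1}$, and as $\sup_{u\in\mathbb R}|u\phi(u)|=\phi(1)$ is an absolute constant,
$$
\left|\mathbb E[\mathcal L_0(u;Q_0)]\right| \le \frac{1}{d^2E_n}\cdot\frac{E_n}{4\pi}\sup_{u\in\mathbb R}|u\phi(u)| = \frac{\phi(1)}{4\pi d^2} = O(1),
$$
with a constant depending only on the fixed parameter $d$, hence absolute.

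The only genuine subtlety, and the step I would write most carefully, is the interchange of expectation with the $\epsilon\to 0$ limit localised to $Q_0$, together with the justification that the Kac--Rice density coincides with $\mathbb E[\mathcal L_0(n;u)]$; both are consequences of stationarity and the already-established convergence and uniform control in Lemmas \ref{approx-conv} and \ref{ubEPC}. Once the density-times-area identity is in hand the estimate is immediate, the essential mechanism being simply that $|Q_0|\sim E_n^{-1}$ exactly offsets the $E_n$ growth of the expected global Euler--Poincar\'e characteristic.
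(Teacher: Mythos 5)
Your overall mechanism (stationarity plus an area count, with $|Q_0|\asymp E_n^{-1}$ offsetting the $E_n$ growth of the global expectation) is exactly the paper's, but your implementation has a genuine gap at the step where you localise the representation (\ref{ir0}) to $Q_0$ and assert that Lemma \ref{approx-conv} is ``insensitive to replacing $\mathbb T$ by $Q_0$''. That lemma rests on the Morse representation formula (\ref{morse}), which holds because $\mathbb T$ is a closed manifold without boundary; for the intersection $A_u(f_n;\mathbb T)\cap Q_0$ the relevant Morse theory is that of a stratified space (see \cite[Ch.~9]{adlertaylor}), and the Euler--Poincar\'e characteristic acquires additional contributions from the boundary strata of the square: critical points of the restriction of $f_n$ to the four edges, and vertex terms. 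Pointwise, $\lim_{\epsilon\to 0}\int_{Q_0}F_n^\epsilon(x)\,dx$ is the signed count of \emph{interior} critical points of $f_n$ lying in the excursion set, which differs from $\mathcal L_0(u;Q_0)$ whenever a component of the excursion set crosses $\partial Q_0$ --- for instance, a disc-shaped component whose unique maximum lies just outside $Q_0$ contributes $1$ to the Euler characteristic of $A_u\cap Q_0$ but $0$ to your integral. Consequently your exact ``density times area'' identity $\mathbb E[\mathcal L_0(u;Q_0)]=|Q_0|\,\mathbb E[\mathcal L_0(n;u)]$, and the explicit constant $\phi(1)/(4\pi d^2)$, are unjustified: under the natural reading of $\mathcal L_0(u;Q_0)$ as the EPC of the intersection, the expectation contains extra edge terms of size $(\text{perimeter})\times\sqrt{E_n}\asymp 1$ and vertex terms of size $O(1)$. (Your identification of the Kac--Rice density on the full torus is fine; the problem is only the localisation.)

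These missing boundary terms are themselves $O(1)$ at the scale $|Q_0|\asymp E_n^{-1}$, so the conclusion of the lemma is not endangered --- but your proof as written does not establish it, and a correct version of your route would require precisely the boundary bookkeeping you set out to avoid. The paper's argument sidesteps Kac--Rice entirely and is two lines: the torus is tiled by the $M^2$ disjoint translates of $Q_0$, so by the additivity (\ref{dubbio}) and translation invariance of the law of $f_n$ all summands are equidistributed, whence $\mathbb E[\mathcal L_0(u;Q_0)]=\mathbb E[\mathcal L_0(n;u)]/M^2$; combining either Lemma \ref{expval} or the deterministic bound $|\mathcal L_0(n;u)|\le 4E_n$ from Lemma \ref{ubEPC} with $M^2\ge d^2E_n$ from (\ref{def_M}) gives the absolute bound. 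Note that this derivation produces your desired identity as a \emph{consequence} of additivity and stationarity; attempting to prove that identity instead via a localised integral representation, as you do, is where the argument breaks.
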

The proof of Lemma \ref{final1} follows from the stationarity of the model and the fact that $\mathcal L_0(n;u)$ is bounded from above by $E_n$.
Lemma \ref{final1} together with (\ref{cond2}) entail that the right hand side of (\ref{app3}) is $O(1)$ hence
\begin{equation}
\text{\rm Var} (\text{\rm proj}(\mathcal L_0(u;Q_0)|C_{\ge 4})) = O(1),
\end{equation}
where the constant involved in the $O$-notation is absolute,
that together with (\ref{app2}) proves the following.
\begin{lemma}\label{lem_s}
As $\mathcal N_n\to +\infty$ under Condition \ref{condizione}
\begin{equation}\label{app4}
\left |  \sum_{(Q,Q') \text{ \rm sing.}} \text{\rm Cov} (\text{\rm proj}(\mathcal L_0(u;Q)|C_{\ge 4}), \text{\rm proj}(\mathcal L_0(u;Q')|C_{\ge 4}) ) \right | =O\left (  \frac{E_n^2}{\mathcal N_n^2}\right ),
\end{equation}
where the constant involved in the $O$-notation is absolute.
\end{lemma}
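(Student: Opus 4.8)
The plan is to assemble the estimates already prepared in the running text, since the genuine analytic content — the spectral counting behind the area bound (\ref{area}) and the moment control supplied by Lemma \ref{final1} and Condition \ref{condizione} — is in place. First I would start from the Cauchy--Schwarz and stationarity bound (\ref{app2}): each covariance $\Cov(\text{proj}(\mathcal{L}_0(u;Q)|C_{\ge 4}),\text{proj}(\mathcal{L}_0(u;Q')|C_{\ge 4}))$ over a singular pair is dominated in absolute value by the common variance $\Var(\text{proj}(\mathcal{L}_0(u;Q_0)|C_{\ge 4}))$, where the replacement of an arbitrary singular square $Q$ by the reference square $Q_0$ is legitimate because $f_n$ is stationary, so the law of the field restricted to any translate of $Q_0$ is the same.

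Second, I would count the singular pairs. The decomposition of $\mathbb{T}$ uses $M^2$ squares with $M=\lceil d\sqrt{E_n}\rceil$, so there are $M^2\asymp E_n$ choices for $Q$; by the area estimate (\ref{area}), each fixed $Q$ has at most $M^2\,\text{area}(S_Q)\ll M^2/\mathcal{N}_n^2\asymp E_n/\mathcal{N}_n^2$ singular partners $Q'$. Hence the total number of singular pairs is $\ll E_n^2/\mathcal{N}_n^2$, which is exactly the prefactor multiplying the single-square variance on the right-hand side of (\ref{app2}), using (\ref{def_M}) for $M^2\asymp E_n$.

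Third, I would bound the single-square variance. Since $\text{proj}(\,\cdot\,|C_{\ge 4})$ is orthogonal projection onto $\bigoplus_{q\ge 4}C_q$, it is a contraction in $L^2(\mathbb{P})$ and annihilates the mean, so $\Var(\text{proj}(\mathcal{L}_0(u;Q_0)|C_{\ge 4}))\le \mathbb{E}[\mathcal{L}_0(u;Q_0)^2]$; writing the second moment as the second factorial moment plus the first moment gives (\ref{app3}). Lemma \ref{final1} controls $\mathbb{E}[\mathcal{L}_0(u;Q_0)]=O(1)$ and Condition \ref{condizione} (i.e.\ (\ref{cond2})) controls $\mathbb{E}[\mathcal{L}_0(u;Q_0)(\mathcal{L}_0(u;Q_0)-1)]=O(1)$, both with absolute constants, so the single-square variance is $O(1)$. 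Multiplying this $O(1)$ bound by the $\ll E_n^2/\mathcal{N}_n^2$ count of singular pairs yields the claimed $O(E_n^2/\mathcal{N}_n^2)$ with an absolute implied constant.

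The one delicate point — and where I would take care — is the compatibility of Condition \ref{condizione}, which is phrased for the square $Q_{0,n}=[0,1/\sqrt{E_n})^2$, with the square $Q_0=[0,1/M)^2$ of side $1/M\asymp 1/(d\sqrt{E_n})$ that actually appears here. Because $d$ is an absolute constant, $Q_0$ differs from $Q_{0,n}$ only by a fixed scaling, so the factorial-moment bound transfers with an absolute constant; I would make this comparison explicit (for instance by covering $Q_{0,n}$ by $O(1)$ translates of $Q_0$, or by invoking the analogue of (\ref{cond2}) directly on $Q_0$) so that the final implied constant genuinely remains absolute. No other step presents an obstacle, since each ingredient has been established upstream.
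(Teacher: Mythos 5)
Your argument coincides step for step with the paper's own proof: the same Cauchy--Schwarz plus stationarity reduction as in (\ref{app2}), the same count of singular pairs via (\ref{area}) and $M^2\asymp E_n$ from (\ref{def_M}), and the same single-square bound (\ref{app3}) closed by Lemma \ref{final1} and Condition \ref{condizione}. Your closing caveat about reconciling the reference square $Q_0=[0,1/M)^2$ with the square $Q_{0,n}=[0,1/\sqrt{E_n})^2$ appearing in Condition \ref{condizione} flags a point the paper passes over in silence, and your proposed fix (an $O(1)$ covering argument, legitimate since $d$ is an absolute constant) is sound.
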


\subsection{The contribution of the non-singular part}

In this part we prove the following.
\begin{lemma}\label{lem_ns}
As $\mathcal N_n\to +\infty$
\begin{equation}\label{ap1}
\left |  \sum_{(Q,Q') \text{ \rm non-sing.}} \text{\rm Cov} (\text{\rm proj}(\mathcal L_0(u;Q)|C_{\ge 4}), \text{\rm proj}(\mathcal L_0(u;Q')|C_{\ge 4}) ) \right | =O\left (  \frac{E_n^2}{\mathcal N_n^2} \right ),
\end{equation}
where the constant involved in the $O$-notation is absolute.
\end{lemma}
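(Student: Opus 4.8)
The plan is to pass from the sum over pairs of squares to a single double integral of a local covariance kernel, and then to exploit two competing features of the non-singular regime: all the (normalized) cross-correlations between the Gaussian jets at the two points are uniformly small, whereas projecting onto $C_{\ge 4}$ forces at least four such correlations to appear in every surviving term of the chaotic expansion. First I would rewrite each covariance as an integral. Recalling that $\mathcal L_0(u;Q)=\int_Q F_n(x)\,dx$, where $F_n$ is the $L^2_{\mathcal A}(\mathbb P)$-limit of the integrand $F_n^\epsilon$ in \eqref{F} (justified by Lemma \ref{approx-conv}), and that orthogonal projection commutes with integration over $x$, one has $\Cov(\text{proj}(\mathcal L_0(u;Q)|C_{\ge 4}),\text{proj}(\mathcal L_0(u;Q')|C_{\ge 4}))=\iint_{Q\times Q'}K(x,y)\,dx\,dy$ with kernel $K(x,y):=\Cov(\text{proj}(F_n(x)|C_{\ge 4}),\text{proj}(F_n(y)|C_{\ge 4}))$. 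Summing over non-singular pairs, the left-hand side of \eqref{ap1} equals $\iint_{\mathcal N}K(x,y)\,dx\,dy$, where $\mathcal N\subset\mathbb T\times\mathbb T$ is the union of the non-singular pairs of squares.

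Next I would bound $K$ via the chaotic expansion of $F_n$ from Lemma \ref{lem-ce0}, written in terms of the standardized vector $Y(x)=(Y_1(x),\dots,Y_5(x))$ of \S\ref{sec-cho}. By the diagram (Wick) formula (see e.g. \cite[\S2]{noupebook}), the $q$-th chaotic contribution to $K(x,y)$ is a finite sum of products of exactly $q$ cross-covariances $\rho_{ij}(x,y):=\mathbb E[Y_i(x)Y_j(y)]$, weighted by the coefficients $2E_n\,\eta^{(n)}_{a,b,c}(u)\,\beta_{2d}\beta_{2e}/(a!\,b!\,c!\,(2d)!\,(2e)!)$ of \eqref{exp_L}; the factor $E_n$ coming from the Hessian determinant enters quadratically and produces the overall $E_n^2$ scale. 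Each $\rho_{ij}$ is a fixed normalized linear combination of $r,r_1,r_2,r_{11},r_{12},r_{22}$ divided by the entries $k_1,\dots,k_5$ of \eqref{matrix_K}, so on a non-singular pair the bounds $|r(x-y)|\le\epsilon$, $|r_\ell(x-y)|\le\epsilon\sqrt n$, $|r_{j\ell}(x-y)|\le\epsilon\,n$ give $|\rho_{ij}(x,y)|\le C\epsilon$ for an absolute $C$. Choosing $\epsilon$ (hence $d$ in \eqref{def_M}) small enough that $C\epsilon<1$ makes the series converge geometrically and be dominated by its leading $q=4$ term, yielding $|K(x,y)|\le C'E_n^2\sum|\rho_{i_1 j_1}\rho_{i_2 j_2}\rho_{i_3 j_3}\rho_{i_4 j_4}|(x,y)$, a finite sum of fourfold products of normalized correlations.

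Finally I would integrate. By $|\rho_1\rho_2\rho_3\rho_4|\le\tfrac14(\rho_1^4+\cdots+\rho_4^4)$ it suffices to control $\iint_{\mathbb T\times\mathbb T}\rho_{ij}(x,y)^4\,dx\,dy$, which by stationarity equals $\int_{\mathbb T}\rho_{ij}(x)^4\,dx$. Each normalized correlation is a trigonometric polynomial $\rho_{ij}(x)=\mathcal N_n^{-1}\sum_{\lambda\in\Lambda_n}c_\lambda\,e(\langle\lambda,x\rangle)$ whose coefficients $c_\lambda$ are $O(1)$, since the derivative weights such as $\lambda_\ell/\sqrt n$ are bounded by $1$ after the normalization by the $k$'s, exactly as in the computation \eqref{ex3}. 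Expanding the fourth power collapses the integral onto the length-$4$ spectral correlation set, so $\int_{\mathbb T}\rho_{ij}(x)^4\,dx=O(|S_4(n)|/\mathcal N_n^4)=O(1/\mathcal N_n^2)$ by $|S_4(n)|=3\mathcal N_n(\mathcal N_n-1)$ from \eqref{card_S4}. Collecting the factors gives $\iint_{\mathcal N}|K|\le C''E_n^2/\mathcal N_n^2$, which is \eqref{ap1}.

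The hard part will be the second step: verifying that every term of the $C_{\ge 4}$ expansion of $K$ genuinely carries at least four cross-correlation factors \emph{uniformly} over $\mathcal N$, and that the resulting series is dominated (via the non-singularity bound $|\rho_{ij}|\le C\epsilon$) by an absolutely convergent majorant whose tail is negligible. This requires enough control on the growth of the chaotic coefficients $\eta^{(n)}_{a,b,c}(u)$ in $a,b,c$ to guarantee geometric convergence, and careful bookkeeping to confirm that all mixed fourth-moment integrals $\int_{\mathbb T}\rho_{ij}^4$—not only $\int r^4$—reduce to $S_4(n)$-type sums with $O(1)$ coefficients, so that the powers of $n$ introduced by the derivatives are precisely cancelled by the normalizations $k_1,\dots,k_5$.
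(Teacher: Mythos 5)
Your overall scheme is the same as the paper's (both follow the proof of Lemma 3.5 in \cite{dalmao}): expand $F_n$ in chaos via Lemma \ref{lem-ce0}, use the diagram formula so that each order-$q$ contribution is a sum of at most $q!$ products of $q$ cross-correlations $R_{l,k}(x-y)=\mathbb E[Y_l(x)Y_k(y)]$, spend $q-4$ of the non-singularity bounds to gain $\epsilon^{q-4}$, reduce the surviving fourfold products to fourth-moment integrals of correlations, and conclude from $|S_4(n)|=3\mathcal N_n(\mathcal N_n-1)$ in (\ref{card_S4}), with the prefactor $4E_n^2$ giving (\ref{ap1}). Your AM--GM reduction of mixed fourfold products to the pure integrals $\int\rho_{ij}^4$ is in fact a more careful rendering of the paper's display (\ref{ap3}), which simply writes $|v|\le\sum\iint|r_n(x-y)|^q$.

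There are, however, two places where your proposal asserts what must be proved, and in one of them the asserted justification is wrong. First, the claim that the spectral coefficients of \emph{every} normalized correlation are $O(1)$ ``exactly as in (\ref{ex3})'' fails in the degenerate regimes: (\ref{ex3}) works because $k_3\propto\sqrt{3+\hat{\mu}_n(4)}$ with $3+\hat{\mu}_n(4)\ge2$, but $k_4\propto\sqrt{1-\hat{\mu}_n(4)}$ and $k_5\propto\sqrt{1+\hat{\mu}_n(4)}$ in (\ref{matrix_K}) can degenerate, since $\hat{\mu}_n(4)$ attains any limit in $[-1,1]$ along subsequences; e.g.\ $\rho_{44}=r_{12,12}/k_4^2$ has Fourier coefficients $\frac{1}{\mathcal N_n}\cdot\frac{8\lambda_1^2\lambda_2^2}{n^2(1-\hat{\mu}_n(4))}$, which are not uniformly bounded as $\hat{\mu}_n(4)\to1$. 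The uniform information one actually has is the $L^2$-normalization $\rho_{ij}(0)\in[-1,1]$, i.e.\ mean-one weights, so controlling $\int\rho_{ij}^4$ with an \emph{absolute} constant requires either exploiting the exact pairing structure of $S_4(n)$ together with Cauchy--Schwarz on the paired coefficients, or tracking the compensating factors $\widetilde k_4^2=(1-\hat{\mu}_n(4))/8$, $\widetilde k_5$ carried by the polynomial $p_n$ inside the coefficients $\eta^{(n)}_{a,b,c}(u)$ of Lemma \ref{lemc0} --- precisely the bookkeeping you deferred to ``the hard part.'' Second, your ``choose $\epsilon$ small so the series is geometrically dominated'' needs the coefficient-growth input you only name: the paper supplies it in (\ref{Z}) by splitting $\epsilon^q=(\sqrt\epsilon)^{a+b+c+2d+2e}(\sqrt\epsilon)^{a'+b'+c'+2d'+2e'}$, absorbing $q!$ at the price of a factor $5^{a+b+c+d+e}$ under the constraint $5\sqrt\epsilon<1$, and proving the uniform bound on $|\eta^{(n)}_{a,b,c}(u)|^2/(a!b!c!)$ via $\sum_{a+b+c=q}|\eta^{(n)}_{a,b,c}(u)|^2/(a!b!c!)\le\mathbb E\left[p_n(Y_3,Y_4,Y_5)^2\right]=O(1)$, together with $|H_q(u)|\phi(u)\le C\sqrt{q!}$ for the coefficients $\beta^{\epsilon/k_1}$ of (\ref{beta-eps}). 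So the architecture of your argument matches the paper's, but as written the two key uniformity statements are, respectively, incorrectly justified and missing.
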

\begin{proof}
As in the proof of Lemma 3.5 in \cite{dalmao} we can write
\begin{equation}\label{ap2}
\begin{split}
& \left |  \sum_{(Q,Q') \text{ \rm non-sing.}} \text{\rm Cov} (\text{\rm proj}(\mathcal L_0(u;Q)|C_{\ge 4}), \text{\rm proj}(\mathcal L_0(u;Q')|C_{\ge 4}) ) \right | \\
&\le 4E_n^2 \sum_{q\ge 4} \sum_{{\substack{a+b+c+2d+2e=q\\a'+b'+c'+2d'+2e'=q}}} \left | \frac{\eta^{(n)}_{a,b,c}(u)}{a!b!c!}\frac{\beta^{\epsilon/k_1}_{2d} \beta^{\epsilon/k_1}_{2e}}{(2d)!(2e)!} \frac{\eta^{(n)}_{a',b',c'}(u)}{a'!b'!c'!}\frac{\beta^{\epsilon/k_1}_{2d'} \beta^{\epsilon/k_1}_{2e'}}{(2d')!(2e')!} \right |\\
&\times \left | V(a,b,c,d,e,a',b',c',d',e') \right |,
\end{split}
\end{equation}
where $V(a,b,c,d,e,a',b',c',d',e')$ is the sum of no more than $q!$ terms of the type
\begin{equation}
v = \sum_{(Q,Q') \text{ \rm non-sing.}} \iint_{Q\times Q'} \prod_{u=1}^q R_{l_u,k_u}(x-y)\,dxdy,
\end{equation}
with $l_u, k_u\in \lbrace 0,1,2\rbrace$ and where for $l,k=0,1,2$ and $x,y\in \mathbb T$ we set
\begin{equation}
R_{l,k} (x-y) := \mathbb E[Y_{l}(x) Y_k(y)].
\end{equation}
For every integer $q\ge 4$
\begin{equation}\label{ap3}
\begin{split}
\left | v \right | &\le \sum_{(Q,Q') \text{ \rm non-sing.}} \iint_{Q\times Q'} | r_n(x-y) |^q\,dxdy\\
&\le \epsilon^{q-4}  \sum_{(Q,Q') \text{ \rm non-sing.}} \iint_{Q\times Q'} r_n(x-y)^4\,dxdy\\
& \le  \epsilon^{q-4} \int_{\mathbb T} r_n(x)^4\,dx.
\end{split}
\end{equation}
From (\ref{ap3}) we deduce that
\begin{equation}\label{ap4}
\begin{split}
|V(a,b,c,d,e,a',b',c',d',e')| & \le q! \frac{\int_{\mathbb T} r_n(x)^4\,dx}{\epsilon^4} \epsilon^q \\
& = q! \frac{\int_{\mathbb T} r_n(x)^4\,dx}{\epsilon^4}\, (\sqrt{\epsilon})^{a+b+c+2d+2e}(\sqrt{\epsilon})^{a'+b'+c'+2d'+2e'}.
\end{split}
\end{equation}
Plugging (\ref{ap4})  into (\ref{ap2}) we get
\begin{equation}\label{ap5}
\begin{split}
& \left |  \sum_{(Q,Q') \text{ \rm non-sing.}} \text{\rm Cov} (\text{\rm proj}(\mathcal L_0(u;Q)|C_{\ge 4}), \text{\rm proj}(\mathcal L_0(u;Q')|C_{\ge 4}) ) \right | \\
&\le 4E_n^2  \frac{\int_{\mathbb T} r_n(x)^4\,dx}{\epsilon^4}\sum_{q\ge 4} q! \sum_{{\substack{a+b+c+2d+2e=q\\a'+b'+c'+2d'+2e'=q}}}  \left | \frac{\eta^{(n)}_{a,b,c}(u)}{a!b!c!}\frac{\beta^{\epsilon/k_1}_{2d} \beta^{\epsilon/k_1}_{2e}}{(2d)!(2e)!} \frac{\eta^{(n)}_{a',b',c'}(u)}{a'!b'!c'!}\frac{\beta^{\epsilon/k_1}_{2d'} \beta^{\epsilon/k_1}_{2e'}}{(2d')!(2e')!} \right |\\
&\times\, (\sqrt{\epsilon})^{a+b+c+2d+2e}(\sqrt{\epsilon})^{a'+b'+c'+2d'+2e'};
\end{split}
\end{equation}
reasoning as in the proof of Lemma 3.5 in \cite{dalmao}, for $5\sqrt \epsilon <1$ we obtain
\begin{eqnarray}\label{Z}
&&Z:= \sum_{q\ge 4} q! \sum_{{\substack{a+b+c+2d+2e=q\\a'+b'+c'+2d'+2e'=q}}} \left | \frac{\eta^{(n)}_{a,b,c}(u)}{a!b!c!}\frac{\beta^{\epsilon/k_1}_{2d} \beta^{\epsilon/k_1}_{2e}}{(2d)!(2e)!} \frac{\eta^{(n)}_{a',b',c'}(u)}{a'!b'!c'!}\frac{\beta^{\epsilon/k_1}_{2d'} \beta^{\epsilon/k_1}_{2e'}}{(2d')!(2e')!} \right | \cr
&&\times (\sqrt{\epsilon})^{a+b+c+2d+2e}(\sqrt{\epsilon})^{a'+b'+c'+2d'+2e'} \cr
&&\le \sum_{a,b,c,d,e,a',b',c',d',e'} \left | \frac{\eta^{(n)}_{a,b,c}(u)}{a!b!c!}\frac{\beta^{\epsilon/k_1}_{d} \beta^{\epsilon/k_1}_{e}}{d! e!}  \right |^2 (a+b+c+d+e)! \cr
&& \times (\sqrt{\epsilon})^{a+b+c+d+e+a'+b'+c'+d'+e'} \\
&&\le \sum_{a,b,c,d,e,a',b',c',d',e'} \frac{\left | \eta^{(n)}_{a,b,c}(u)\beta^{\epsilon/k_1}_{d} \beta^{\epsilon/k_1}_{e}\right |^2}{a!b!c!d! e!}   5^{a+b+c+d+e}(\sqrt{\epsilon_1})^{a+b+c+d+e+a'+b'+c'+d'+e'}\nonumber.
\end{eqnarray}
Let us now prove that
\begin{equation}\label{ub}
(a,b,c,d,e)\mapsto \frac{\left | \eta^{(n)}_{a,b,c}(u)\beta^{\epsilon/k_1}_{d} \beta^{\epsilon/k_1}_{e}\right |^2}{a!b!c!d! e!}
\end{equation}
is uniformly bounded over $\epsilon$ and $n$. From (\ref{beta-eps}), recalling that there exists $C>0$ such that for every $q\in \mathbb N$ and $u\in \mathbb R$
$$
|H_q(u)|\phi(u) \le C \sqrt{q!},
$$
we have for every $\epsilon >0$, $n\in S$ and $d\in \mathbb N$,
\begin{equation}
 \frac{\left | \beta^{\epsilon/k_1}_{d} \right |^2}{d!} \le C^2.
\end{equation}
Moreover from Lemma \ref{lemc0} we have
\begin{eqnarray}
\sum_{q=1}^{+\infty} \sum_{a+b+c=q}  \frac{\left | \eta^{(n)}_{a,b,c}(u)\right |^2}{a!b!c!} &=& \mathbb E \left [p_n(Y_3(x), Y_4(x), Y_5(x)^2 \mathbf{1}_{(\widetilde k_3 +\widetilde k_2) Y_3(x) + \widetilde k_5 Y_5(x) \le -u} \right ]\cr
&\le&  \mathbb E \left [p_n(Y_3(x), Y_4(x), Y_5(x)^2  \right ] =O(1),\cr
\end{eqnarray}
where the constant involved in the $O$-notation is absolute.
Equation \ref{ub} together with (\ref{ap5}), (\ref{Z}) and (\ref{card_S4}) allows to conclude the proof of Lemma \ref{lem_ns}.

\end{proof}

\begin{proof}[Proof of Lemma \ref{caos_sup}] The proof follows from Lemma \ref{lem_s}, Lemma \ref{lem_ns} and (\ref{app6}).

\end{proof}

\subsection{Proof of Proposition \ref{prop-ho}}

\begin{proof}
It suffices to combine Lemma \ref{lem:3} and Lemma \ref{caos_sup} to get
\begin{eqnarray}\label{app11}
\text{\rm Var}(\text{\rm proj}(\mathcal L_0(n;u) | C_{\ge 4}) = O(1),
\end{eqnarray}
where the constant involved in the $O$-notation is absolute.

\end{proof}

\appendix

\section{EPC: technical lemmas}\label{app-approx}

By stationarity of the model the law of $\nabla f_n(x)$ is independent of $x\in \mathbb T$, it is centered Gaussian with covariance matrix given by $a_n$ in (\ref{an}). Since $\det(a_n) \ne 0$, Proposition 6.5 in \cite{azaiswschebor} (with $Z=\nabla f_n$) ensures that for every $z\in \mathbb R$,
\begin{equation}\label{last1}
\mathbb P(\exists x\in \mathbb T : \nabla f_n(x) = z, \det(\nabla^2 f_n(x)) = 0 ) =0.
\end{equation}
In particular for $z=0$, via a standard application of the inverse function theorem \cite[p.136]{adlertaylor}, we have that the set of critical points of $f_n$ a.s. consists of a finite number of isolated points. By B\'ezout Theorem we deduce that the number of critical points of $f_n$ is bounded from above by $4E_n$, and the Euler-Poincar\'e characteristic of any excursion set of $f_n$ so (see the Morse representation formula below).

Now, in order to apply area formula as in \cite[Proposition 6.1]{azaiswschebor} (with $f=\nabla f_n$) we need to be sure that the set of critical \emph{values} of $\nabla f_n$ a.s. has zero Lebeasgue measure. This follows from Sard's Lemma applied to $\nabla f_n$.

\begin{proof}[Proof of Lemma \ref{ubEPC}]
From (\ref{approx0}) we have
\begin{equation*}
|\mathcal L^\epsilon_0(u;n)| \le \int_{ \mathbb T } | \text{det}(\nabla^2 f_n(x))| \frac{1}{(2\epsilon)^2}\mathbf{1}_{[-\epsilon, \epsilon]^2}(\nabla f_n(x))\,dx,
\end{equation*}
where the random variable on the right hand side approximates the number of critical points of $ f_n$.
Thanks to the previous discussion, we can apply the area formula \cite[Proposition 6.1]{azaiswschebor} obtaining
\begin{equation}\label{areaformula}
 \int_{ \mathbb T } | \text{det}(\nabla^2 f_n(x))| \frac{1}{(2\epsilon)^2}\mathbf{1}_{[-\epsilon, \epsilon]^2}(\nabla f_n(x))\,dx = \frac{1}{(2\epsilon)^2}\int_{[-\epsilon, \epsilon]^2} \# \lbrace x\in \mathbb T : \nabla f_n(x) = z \rbrace\,dz.
 \end{equation}
By B\'ezout Theorem we have for every $z$
 \begin{equation}\label{bezout}
 \#\lbrace x\in \mathbb T : \nabla f_n(x) = z \rbrace \le 4 E_n.
 \end{equation}
Substituting (\ref{bezout}) into  (\ref{areaformula}) we obtain the desired result. \\
\end{proof}

By the Morse representation fomula \cite[\S9.3, \S9.4]{adlertaylor} we obtain
\begin{equation}
\mathcal{L}_{0}(n;u)=\sum_{j=0}^{2}(-1)^{j}\mu _{j}\left (A_u(f_n;\mathbb T), \left. f_{n}\right\vert _{{A_{u}(f_{n};\mathbb{%
T})}}\right ),
\label{morse}
\end{equation}
where
\begin{align*}
\mu _{j}\left (A_u(f_n;\mathbb T), \left. f_{n}\right\vert _{{A_{u}(f_{n};\mathbb{%
T})}}\right )& =\#\{x\in \mathbb{T}:f_{n}(x)\geq u,\nabla f_{n}(x)=0,\text{Ind}%
(-\nabla ^{2}f_{n}(x))=j\} \\
& =\#\{x\in \mathbb{T}:\Delta f_{n}(x)\leq
-E_{n}\,u,\nabla f_{n}(x)=0,\text{Ind}(-\nabla ^{2}f_{n}(x))=j\},
\end{align*}%
(note that in the last equality we used the fact that $\Delta f_n = - E_n f_n$)
$\text{Ind}(M)$ denoting the number of negative eigenvalues of a square
matrix $M$. More specifically, $\mu _{0}$ is the number of maxima, $\mu _{1}$
the number of saddles, and $\mu _{2}$ the number of minima in the excursion
region $A_{u}(f_{n};\mathbb{T})$.
Hence we can formally write
\begin{equation*}
\mathcal L_0(n;u) = \sum_{j=0}^2 (-1)^j\int_{\mathbb T}| \text{\rm det}(\nabla^2 f_n(x))| \mathbf{1}_{\lbrace \Delta f_{n}(x)\leq
-E_{n}\,u\rbrace} \mathbf{1}_{\lbrace \text{Ind}(-\nabla ^{2}f_{n}(x))=j \rbrace}\delta_0(\nabla f_n(x))\,dx
\end{equation*}
which is (\ref{ir0}).

\begin{proof}[Proof of Lemma \ref{approx-conv}]
Thanks to Morse representation formula and then Theorem 11.2.3 in \cite{adlertaylor} (whose assumptions are satisfied in particular thanks to (\ref{last1}) for $z=0$) we have a.s.
\begin{eqnarray*}
\lim_{\epsilon\to 0} \mathcal L_0^\epsilon(n;u) &=& \lim_{\epsilon\to 0}\sum_{j=0}^2 \frac{(-1)^j}{(2\epsilon)^2}\int_{\mathbb T}| \text{\rm det}(\nabla^2 f_n(x))| \mathbf{1}_{\lbrace \Delta f_{n}(x)\leq
-E_{n}\,u\rbrace} \mathbf{1}_{\lbrace \text{Ind}(-\nabla ^{2}f_{n}(x))=j \rbrace}\mathbf{1}_{[-\epsilon, \epsilon]^2}(\nabla f_n(x))\,dx\\
&=& \sum_{j=0}^2 (-1)^j \mu _{j}\left (A_u(f_n;\mathbb T), \left. f_{n}\right\vert _{{A_{u}(f_{n};\mathbb{%
T})}}\right ) = \mathcal L_0(n;u).
\end{eqnarray*}
The latter together with Lemma \ref{ubEPC} immediately establish the $L^2(\mathbb P)$-convergence thus concluding the proof.

\end{proof}

\section{Computation of covariance matrices}\label{Appendixcov}

Let $x,y\in \mathbb T$, and consider the Gaussian vector
$$
(\partial_1 f_n(x), \partial_2 f_n(x), \partial_1 f_n(y), \partial_2 f_n(y), \partial_{11} f_n(x), \partial_{12}f_n(x), \partial_{22}f_n(x), \partial_{11} f_n(y), \partial_{12}f_n(y), \partial_{22}f_n(y)).$$
It is convenient to write its covariance matrix in block-diagonal form,
i.e.
\begin{equation*}
\Sigma _{n}(x,y)=\left(
\begin{array}{cc}
A_{n}(x,y) & B_{n}(x,y) \\
B_{n}^{t}(x,y) & C_{n}(x,y)%
\end{array}%
\right).
\end{equation*}%
In particular the $A_{n}$ component collects the variances of the gradient
terms, and it is given by
\begin{equation*}
A_{n}(x,y)=\left(
\begin{array}{cc}
a_{n}(x,x) & a_{n}(x,y) \\
a_{n}(y,x) & a_{n}(y,y)%
\end{array}%
\right),\qquad a_{n}(x,y)=\left(
\begin{array}{cc}
r_{1,1}(x,y) & r_{1,2}(x,y) \\
r_{1,2}(x,y) & r_{2,2}(x,y)%
\end{array}%
\right) .
\end{equation*}%
It is easy to check that (cf. \cite{KKW, MPRW2015}), for $i=1,2$,
\begin{equation*}
r_{i,i}(x,y)=\frac{4\pi ^{2}}{\mathcal{N}_{n}}\;\sum_{\lambda }\lambda
_{(i)}^{2}e(\langle \lambda ,x-y\rangle ),
\end{equation*}%
while for $i\neq j$, $i,j=1,2$
\begin{equation*}
r_{i,j}(x,y)=\mathbb{E}[\partial _{i}f_{n}(x)\partial _{j}f_{n}(y)]=\frac{%
4\pi ^{2}}{\mathcal{N}_{n}}\;\sum_{\lambda }\lambda _{(i)}\lambda
_{(j)}e(\langle \lambda ,x-y\rangle )=r_{j,i}(x,y).
\end{equation*}%
The matrix $B_{n}$ collects the covariances between first and second order
derivatives, and is given by
\begin{equation*}
B_{n}(x,y)=\left(
\begin{array}{cc}
0 & b_{n}(x,y) \\
b_{n}(y,x) & 0%
\end{array}%
\right) ,
\end{equation*}%
where
\begin{equation*}
b_{n}(x,y)=\left(
\begin{array}{ccc}
r_{1,11}(x,y) & r_{1,12}(x,y) & r_{1,22}(x,y) \\
r_{2,11}(x,y) & r_{2,12}(x,y) & r_{2,22}(x,y)%
\end{array}%
\right) =-b_{n}(y,x).
\end{equation*}%
and
\begin{align*}
r_{1,11}(x,y)=\mathbb{E}[\partial _{1}f_{n}(x)\partial _{11}f_{n}(y)]& =%
\mathbb{E}[-\frac{2^{2}\pi ^{2}}{\sqrt{\mathcal{N}_{n}}}\sum_{\lambda \in
\Lambda _{n}}a_{\lambda }e(\langle \lambda ,y\rangle )\lambda
_{(1)}^{2}\times \frac{2\pi i}{\sqrt{\mathcal{N}_{n}}}\sum_{\lambda \in
\Lambda _{n}}a_{\lambda }e(\langle \lambda ,x\rangle )\lambda _{(1)}] \\
& =-\frac{8\pi ^{3}i}{\mathcal{N}_{n}}\mathbb{E}[\sum_{\lambda \in \Lambda
_{n}}a_{\lambda }e(\langle \lambda ,y\rangle )\lambda _{(1)}^{2}\times
\sum_{\lambda \in \Lambda _{n}}a_{\lambda }e(\langle \lambda ,x\rangle
)\lambda _{(1)}] \\
& =-\frac{8\pi ^{3}i}{\mathcal{N}_{n}}\sum_{\lambda ,\lambda ^{\prime }}%
\mathbb{E}[a_{\lambda }a_{\lambda ^{\prime }}]\;e(\langle \lambda
_{(1)}^{\prime 2}\;\lambda _{(1)}^{\prime } \\
& =-\frac{8\pi ^{3}i}{\mathcal{N}_{n}}\sum_{\lambda }\mathbb{E}[a_{\lambda
}a_{-\lambda }]\;e(\langle \lambda ,x\rangle )\;e(\langle -\lambda ,y\rangle
)\;(-\lambda _{(1)})^{2}\;\lambda _{(1)} \\
& =-\frac{8\pi ^{3}i}{\mathcal{N}_{n}}\;\sum_{\lambda }\lambda
_{(1)}^{3}e(\langle \lambda ,x-y\rangle ),
\end{align*}%
so that for $i=1,2$
\begin{equation*}
r_{i,ii}(x,y)=\mathbb{E}[\partial _{i}f_{n}(x)\partial _{ii}f_{n}(y)]=-\frac{%
8\pi ^{3}i}{\mathcal{N}_{n}}\;\sum_{\lambda }\lambda _{(i)}^{3}e(\langle
\lambda ,x-y\rangle ),
\end{equation*}%
and we notice that
\begin{align*}
r_{i,ii}(y,x)=\mathbb{E}[\partial _{i}f_{n}(y)\partial _{ii}f_{n}(x)]& =%
\mathbb{E}[-\frac{2^{2}\pi ^{2}}{\sqrt{\mathcal{N}_{n}}}\sum_{\lambda \in
\Lambda _{n}}a_{\lambda }e(\langle \lambda ,x\rangle )\lambda
_{(i)}^{2}\times \frac{2\pi i}{\sqrt{\mathcal{N}_{n}}}\sum_{\lambda \in
\Lambda _{n}}a_{\lambda }e(\langle \lambda ,y\rangle )\lambda _{(i)}] \\
& =-\frac{8\pi ^{3}i}{\mathcal{N}_{n}}\mathbb{E}[\sum_{\lambda \in \Lambda
_{n}}a_{\lambda }e(\langle \lambda ,x\rangle )\lambda _{(i)}^{2}\times
\sum_{\lambda \in \Lambda _{n}}a_{\lambda }e(\langle \lambda ,y\rangle
)\lambda _{(i)}] \\
& =-\frac{8\pi ^{3}i}{\mathcal{N}_{n}}\sum_{\lambda ,\lambda ^{\prime }}%
\mathbb{E}[a_{\lambda }a_{\lambda ^{\prime }}]\;e(\langle \lambda ,x\rangle
)\;e(\langle \lambda _{(i)}^{\prime 2}\;\lambda _{(i)}^{\prime } \\
& =-\frac{8\pi ^{3}i}{\mathcal{N}_{n}}\sum_{\lambda }\mathbb{E}[a_{\lambda
}a_{-\lambda }]\;e(\langle \lambda ,x\rangle )\;e(\langle -\lambda ,y\rangle
)\;\lambda _{(i)}^{2}\;(-\lambda _{(i)}) \\
& =\frac{8\pi ^{3}i}{\mathcal{N}_{n}}\;\sum_{\lambda }\lambda
_{(1)}^{3}e(\langle \lambda ,x-y\rangle ),
\end{align*}%
and in general for $i,j,k=1,2$
\begin{equation*}
r_{i,jk}(x,y)=-\frac{8\pi ^{3}i}{\mathcal{N}_{n}}\;\sum_{\lambda }\lambda
_{(i)}\lambda _{(j)}\lambda _{(k)}e(\langle \lambda ,x-y\rangle
)=-r_{i,jk}(y,x),
\end{equation*}%
\noindent so that
\begin{equation*}
b_{n}(x,y)=\left(
\begin{array}{ccc}
r_{1,11}(x-y) & r_{1,12}(x-y) & r_{1,22}(x-y) \\
r_{1,12}(x-y) & r_{1,22}(x-y) & r_{2,22}(x-y)%
\end{array}%
\right) .
\end{equation*}

\noindent Finally, for the matrix $C_{n}(x,y)$, we have
\begin{equation*}
C_{n}(x,y)=\left(
\begin{array}{cc}
c_{n}(x,x) & c_{n}(x,y) \\
c_{n}(y,x) & c_{n}(y,y)%
\end{array}%
\right) ,
\end{equation*}%
where of course $c_{n}(x,x)=c_{n}(y,y),$
\begin{equation*}
c_{n}(x,y)=\left(
\begin{array}{ccc}
r_{11,11}(x,y) & r_{11,12}(x,y) & r_{11,22}(x,y) \\
r_{12,11}(x,y) & r_{12,12}(x,y) & r_{12,22}(x,y) \\
r_{22,11}(x,y) & r_{22,12}(x,y) & r_{22,22}(x,y)%
\end{array}%
\right)
\end{equation*}%
and
\begin{align*}
r_{11,11}(x,y)=\mathbb{E}[\partial _{11}f_{n}(x)\partial _{11}f_{n}(y)]& =%
\mathbb{E}[-\frac{2^{2}\pi ^{2}}{\sqrt{\mathcal{N}_{n}}}\sum_{\lambda \in
\Lambda _{n}}a_{\lambda }e(\langle \lambda ,x\rangle )\lambda
_{(1)}^{2}\times -\frac{2^{2}\pi ^{2}}{\sqrt{\mathcal{N}_{n}}}\sum_{\lambda
\in \Lambda _{n}}a_{\lambda }e(\langle \lambda ,y\rangle )\lambda _{(1)}^{2}]
\\
& =\frac{2^{4}\pi ^{4}}{\mathcal{N}_{n}}\;\sum_{\lambda }\lambda
_{(1)}^{4}e(\langle \lambda ,x-y\rangle ),
\end{align*}%
or, more generally%
\begin{equation*}
r_{ij,kl}(x,y)=\frac{2^{4}\pi ^{4}}{\mathcal{N}_{n}}\;\sum_{\lambda }\lambda
_{(i)}\lambda _{(j)}\lambda _{(k)}\lambda _{(l)}e(\langle \lambda
,x-y\rangle ),
\end{equation*}%
so that
\begin{equation*}
c_n(x,y)=\left(
\begin{array}{ccc}
r_{11,11}(x-y) & r_{11,12}(x-y) & r_{11,22}(x-y) \\
r_{11,12}(x-y) & r_{11,22}(x-y) & r_{12,22}(x-y) \\
r_{11,22}(x-y) & r_{12,22}(x-y) & r_{22,22}(x-y)%
\end{array}
\right)=c_n(y,x).
\end{equation*}
To sum up, we have:
\begin{equation*}
\Sigma _{n}(x,y)=\Sigma _{n}(x-y)=\left(
\begin{array}{cc}
A_{n}(x-y) & B_{n}(x-y) \\
B_{n}^{t}(x-y) & C_{n}(x-y)%
\end{array}%
\right) .
\end{equation*}%
where
\begin{equation*}
A_{n}(x-y)=\left(
\begin{array}{cc}
a_{n} & a_{n}(x-y) \\
a_{n}(x-y) & a_{n}%
\end{array}%
\right) ,
\end{equation*}%
with
\begin{equation}\label{an}
a_{n}=\frac{E_{n}}{2}\left(
\begin{array}{cc}
1 & 0 \\
0 & 1%
\end{array}%
\right) ,\hspace{0.4cm}a_{n}(x-y)=\left(
\begin{array}{cc}
r_{1,1}(x-y) & r_{1,2}(x-y) \\
r_{1,2}(x-y) & r_{1,1}(x-y)%
\end{array}%
\right)
\end{equation}%
\begin{equation*}
r_{i,j}(x,y)=\frac{4\pi ^{2}}{\mathcal{N}_{n}}\;\sum_{\lambda }\lambda
_{(i)}\lambda _{(j)}e(\langle \lambda ,x-y\rangle ).
\end{equation*}%
Similarly
\begin{equation*}
B_{n}(x-y)=\left(
\begin{array}{cc}
0 & b_{n}(x-y) \\
-b_{n}(x-y) & 0%
\end{array}%
\right) ,
\end{equation*}%
where
\begin{equation*}
b_{n}(x-y)=\left(
\begin{array}{ccc}
r_{1,11}(x,y) & r_{1,12}(x,y) & r_{1,12}(x,y) \\
r_{1,12}(x,y) & r_{1,12}(x,y) & r_{1,11}(x,y)%
\end{array}%
\right) ,
\end{equation*}%
\begin{equation*}
r_{i,jk}(x,y)=-\frac{8\pi ^{3}i}{\mathcal{N}_{n}}\;\sum_{\lambda }\lambda
_{(i)}\lambda _{(j)}\lambda _{(k)}e(\langle \lambda ,x-y\rangle ).
\end{equation*}%
Likewise
\begin{equation*}
C_{n}(x-y)=\left(
\begin{array}{cc}
c_{n} & c_{n}(x-y) \\
c_{n}(x-y) & c_{n}%
\end{array}%
\right) ,
\end{equation*}%
where
\begin{equation}\label{cn}
c_{n}=\frac{E_{n}^{2}}{8}\left(
\begin{array}{ccc}
3+\hat{\mu}_{n}(4) & 0 & 1-\hat{\mu}_{n}(4) \\
0 & 1-\hat{\mu}_{n}(4) & 0 \\
1-\hat{\mu}_{n}(4) & 0 & 3+\hat{\mu}_{n}(4)%
\end{array}%
\right) ,
\end{equation}%
\begin{equation*}
c_{n}(x-y)=\left(
\begin{array}{ccc}
r_{11,11}(x-y) & r_{11,12}(x-y) & r_{11,22}(x-y) \\
r_{11,12}(x-y) & r_{11,22}(x-y) & r_{11,12}(x-y) \\
r_{11,22}(x-y) & r_{11,12}(x-y) & r_{11,11}(x-y)%
\end{array}%
\right) ,
\end{equation*}%
and%
\begin{equation*}
r_{ij,kl}(x,y)=\frac{2^{4}\pi ^{4}}{\mathcal{N}_{n}}\;\sum_{\lambda }\lambda
_{(i)}\lambda _{(j)}\lambda _{(k)}\lambda _{(l)}e(\langle \lambda
,x-y\rangle ).
\end{equation*}

\subsection{The special case $x=y$}\label{cov=}

The previous expressions are greatly simplified for $x=y;$ we apply here the
following lemma from \cite[\S 4.1]{MPRW2015}.

\begin{lemma}
For every $n\in S$, we have
\begin{equation*}
\frac{1}{n^{2}\mathcal{N}_{n}}\sum_{\lambda \in \Lambda _{n}}\lambda
_{(1)}^{4}=\frac{1}{n^{2}\mathcal{N}_{n}}\sum_{\lambda \in \Lambda
_{n}}\lambda _{(2)}^{4}=\frac{1}{8}(3+\hat{\mu}_{n}(4)),\text{ }\frac{1}{%
n^{2}\mathcal{N}_{n}}\sum_{\lambda \in \Lambda _{n}}\lambda
_{(1)}^{2}\lambda _{(2)}^{2}=\frac{1}{8}\left( 1-\hat{\mu}_{n}(4)\right) .
\end{equation*}
\end{lemma}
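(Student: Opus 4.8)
The plan is to reduce all three sums to two unknowns by symmetry, and then to close the system with two simple linear relations. Specialising to $d=2$, write each $\lambda\in\Lambda_n$ as $\lambda=(\lambda_{(1)},\lambda_{(2)})\in\mathbb Z^2$ with $\lambda_{(1)}^2+\lambda_{(2)}^2=n$, and abbreviate
\[
A:=\sum_{\lambda\in\Lambda_n}\lambda_{(1)}^4,\qquad C:=\sum_{\lambda\in\Lambda_n}\lambda_{(1)}^2\lambda_{(2)}^2.
\]
First I would record the relevant symmetries of $\Lambda_n$: the set $\{\lambda\in\mathbb Z^2:\|\lambda\|^2=n\}$ is invariant under swapping the two coordinates and under each independent sign change $\lambda_{(i)}\mapsto-\lambda_{(i)}$. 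The coordinate swap gives $\sum_\lambda\lambda_{(2)}^4=\sum_\lambda\lambda_{(1)}^4=A$, so only $A$ and $C$ need to be determined, and the three quantities in the statement are $A/(n^2\mathcal N_n)$, $A/(n^2\mathcal N_n)$ and $C/(n^2\mathcal N_n)$.

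The first linear relation comes from the definition \eqref{muquattro} of $\hat\mu_n(4)$. Expanding
\[
(\lambda_{(1)}+i\lambda_{(2)})^4=\bigl(\lambda_{(1)}^4-6\lambda_{(1)}^2\lambda_{(2)}^2+\lambda_{(2)}^4\bigr)+4i\,\lambda_{(1)}\lambda_{(2)}\bigl(\lambda_{(1)}^2-\lambda_{(2)}^2\bigr),
\]
the imaginary part is odd in $\lambda_{(1)}$ (equivalently in $\lambda_{(2)}$), so the sign-change invariance of $\Lambda_n$ makes its sum over $\Lambda_n$ vanish; in particular $\hat\mu_n(4)$ is real. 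Summing the real part and using $\sum_\lambda\lambda_{(2)}^4=A$ then yields
\[
n^2\mathcal N_n\,\hat\mu_n(4)=\sum_{\lambda\in\Lambda_n}\bigl(\lambda_{(1)}^4+\lambda_{(2)}^4-6\lambda_{(1)}^2\lambda_{(2)}^2\bigr)=2A-6C.
\]

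The second relation is an identity, obtained by exploiting the constraint $\lambda_{(1)}^2+\lambda_{(2)}^2=n$ itself. Squaring and summing over $\Lambda_n$ gives
\[
\sum_{\lambda\in\Lambda_n}\bigl(\lambda_{(1)}^2+\lambda_{(2)}^2\bigr)^2=n^2\mathcal N_n,\qquad\text{i.e.}\qquad 2A+2C=n^2\mathcal N_n.
\]
Solving the two-by-two system $\{2A-6C=n^2\mathcal N_n\hat\mu_n(4),\ 2A+2C=n^2\mathcal N_n\}$ — subtracting the equations isolates $C$, and back-substitution gives $A$ — produces $C=\tfrac{n^2\mathcal N_n}{8}(1-\hat\mu_n(4))$ and $A=\tfrac{n^2\mathcal N_n}{8}(3+\hat\mu_n(4))$, which is exactly the claim after dividing by $n^2\mathcal N_n$. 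There is no genuine obstacle here: the only point requiring care is the symmetry argument that both shows $\hat\mu_n(4)\in\mathbb R$ and collapses the fourth-power sums to a single unknown, after which the result follows from elementary linear algebra.
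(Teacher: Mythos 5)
Your proof is correct and follows essentially the same route as the paper's: both expand $(\lambda_{(1)}+i\lambda_{(2)})^4$ in the definition of $\hat\mu_n(4)$, kill the odd terms via the sign-change and swap invariance of $\Lambda_n$, and combine the result with $\sum_{\lambda}(\lambda_{(1)}^2+\lambda_{(2)}^2)^2=n^2\mathcal N_n$. Packaging the two identities as a $2\times 2$ linear system rather than substituting sequentially, as the paper does, is only a presentational difference.
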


It is then immediate to check that, by symmetry (see \cite{RudnickWigman},
Lemma 2.3)

\begin{equation*}
\mathbb{E}[\partial _{1}f_{n}(x)\;\partial _{2}f_{n}(x)]=\frac{4\pi ^{2}}{%
\mathcal{N}_{n}}\;\sum_{\lambda }\lambda _{1}\lambda _{2}=0\text{ ;}
\end{equation*}%
similarly we have
\begin{equation*}
\mathbb{E}[\partial _{1}f_{n}(x)\;\partial _{1}f_{n}(x)]=\frac{4\pi ^{2}}{%
\mathcal{N}_{n}}\sum_{\lambda }\lambda _{1}^{2}=4\pi ^{2}\frac{n}{2}=2\pi
^{2}n.
\end{equation*}

\noindent On the other hand, for second order derivatives we have:
\begin{align*}
\mathbb{E}[\partial _{11}f_{n}(x)\;\partial _{11}f_{n}(x)]& =\frac{2^{4}\pi
^{4}}{\mathcal{N}_{n}}\sum_{\lambda \in \Lambda _{n}}\lambda
_{(1)}^{4}=2^{4}\pi ^{4}n^{2}\frac{1}{8}(3+\hat{\mu}_{n}(4)) \\
& =E_{n}^{2}\frac{1}{8}(3+\hat{\mu}_{n}(4)).
\end{align*}

\begin{equation*}
\mathbb{E}[\partial _{11}f_{n}(x)\;\partial _{12}f_{n}(x)]=\frac{2^{4}\pi
^{4}}{\mathcal{N}_{n}}\sum_{\lambda \in \Lambda _{n}}\lambda
_{(1)}^{3}\lambda _{(2)}=0.
\end{equation*}

\begin{align*}
\mathbb{E}[\partial _{11}f_{n}(x)\;\partial _{22}f_{n}(x)]& =\frac{2^{4}\pi
^{4}}{\mathcal{N}_{n}}\sum_{\lambda \in \Lambda _{n}}\lambda
_{(1)}^{2}\lambda _{(2)}^{2}=2^{4}\pi ^{4}n^{2}\frac{1}{8}(1-\hat{\mu}%
_{n}(4)) \\
& =E_{n}^{2}\frac{1}{8}(1-\hat{\mu}_{n}(4)).
\end{align*}

\begin{equation*}
\mathbb{E}[\partial _{12}f_{n}(x)\;\partial _{12}f_{n}(x)]=\frac{2^{4}\pi
^{4}}{\mathcal{N}_{n}}\sum_{\lambda \in \Lambda _{n}}\lambda
_{(1)}^{2}\lambda _{(2)}^{2}=E_{n}^{2}\frac{1}{8}(1-\hat{\mu}_{n}(4)).
\end{equation*}

\begin{equation*}
\mathbb{E}[\partial _{12}f_{n}(x)\;\partial _{22}f_{n}(x)]=\frac{2^{4}\pi
^{4}}{\mathcal{N}_{n}}\sum_{\lambda \in \Lambda _{n}}\lambda _{(1)}\lambda
_{(2)}^{3}=0.
\end{equation*}

\begin{equation*}
\mathbb{E}[\partial _{22}f_{n}(x)\;\partial _{22}f_{n}(x)]=\frac{2^{4}\pi
^{4}}{\mathcal{N}_{n}}\sum_{\lambda \in \Lambda _{n}}\lambda
_{(2)}^{4}=E_{n}^{2}\frac{1}{8}(3+\hat{\mu}_{n}(4)).
\end{equation*}

We have hence shown that the $5\times 5$ covariance matrix of the vector of
gradient and second derivatives is
\begin{equation*}
\sigma _{n}(x)=\left(
\begin{array}{cc}
a_{n}(x) & b_{n}(x) \\
b_{n}^{t}(x) & c_{n}(x)%
\end{array}%
\right) ,
\end{equation*}%
where
\begin{eqnarray*}
a_{n} &=&a_{n}(x)=\frac{E_{n}}{2}\left(
\begin{array}{cc}
1 & 0 \\
0 & 1%
\end{array}%
\right) ,\hspace{0.4cm}b_{n}(x)=\left(
\begin{array}{cc}
0 & 0 \\
0 & 0%
\end{array}%
\right) , \\
c_{n} &=&c_{n}(x)=\frac{E_{n}^{2}}{8}\left(
\begin{array}{ccc}
3+\hat{\mu}_{n}(4) & 0 & 1-\hat{\mu}_{n}(4) \\
0 & 1-\hat{\mu}_{n}(4) & 0 \\
1-\hat{\mu}_{n}(4) & 0 & 3+\hat{\mu}_{n}(4)%
\end{array}%
\right) .
\end{eqnarray*}

\section{Proof of Lemma \ref{expval}}\label{Appendixexp}

Throughout this paper, we will exploited some results in the number theory
literature (see \cite{KKW}) that we report here for completeness. Recall
once again that
\begin{align*}
\hat{\mu}_{n}(4)& :=\int_{\mathcal{S}^{1}}z^{4}d\mu _{n}(z)=\frac{1}{%
\mathcal{N}_{n}}\sum_{\lambda \in \Lambda _{n}}\int_{\mathcal{S}%
^{1}}z^{4}\delta _{\frac{\lambda }{\sqrt{n}}}(z)dz=\frac{1}{n^{2}\mathcal{N}%
_{n}}\sum_{\lambda \in \Lambda _{n}}\lambda ^{4}=\frac{1}{n^{2}\mathcal{N}%
_{n}}\sum_{\lambda \in \Lambda _{n}}(\lambda _{1}+i\lambda _{2})^{4} \\
& =\frac{1}{n^{2}\mathcal{N}_{n}}\sum_{\lambda \in \Lambda _{n}}(\lambda
_{1}^{4}+4i\lambda _{2}\lambda _{1}^{3}-6\lambda _{1}^{2}\lambda
_{2}^{2}-4i\lambda _{1}\lambda _{2}^{3}+\lambda _{2}^{4})\text{ .}
\end{align*}%
Now, since $\Lambda _{n}$ is invariant under the group $W_{2}$ of signed
permutations, consisting of coordinate permutations and sign-change of any
coordinate we have
\begin{align*}
\hat{\mu}_{n}(4)& =\frac{1}{n^{2}\mathcal{N}_{n}}\sum_{\lambda \in \Lambda
_{n}}(\lambda _{1}^{4}-6\lambda _{1}^{2}\lambda _{2}^{2}+\lambda _{2}^{4}) \\
& =\frac{1}{n^{2}\mathcal{N}_{n}}\sum_{\lambda \in \Lambda _{n}}(\lambda
_{1}^{2}+\lambda _{2}^{2})^{2}-\frac{8}{n^{2}\mathcal{N}_{n}}\sum_{\lambda
\in \Lambda _{n}}\lambda _{1}^{2}\lambda _{2}^{2}=1-\frac{8}{n^{2}\mathcal{N}%
_{n}}\sum_{\lambda \in \Lambda _{n}}\lambda _{1}^{2}\lambda _{2}^{2},
\end{align*}%
so that
\begin{equation*}
\frac{1}{n^{2}\mathcal{N}_{n}}\sum_{\lambda \in \Lambda _{n}}\lambda
_{1}^{2}\lambda _{2}^{2}=\frac{1}{8}\left( 1-\hat{\mu}_{n}(4)\right) .
\end{equation*}%
Moreover
\begin{equation*}
\frac{1}{n^{2}\mathcal{N}_{n}}\sum_{\lambda \in \Lambda _{n}}\lambda
_{1}^{4}=\frac{1}{n^{2}\mathcal{N}_{n}}\sum_{\lambda \in \Lambda
_{n}}\lambda _{2}^{4},
\end{equation*}%
and therefore
\begin{align*}
\frac{1}{n^{2}\mathcal{N}_{n}}\sum_{\lambda \in \Lambda _{n}}\lambda
_{1}^{4}& =\frac{1}{2n^{2}\mathcal{N}_{n}}\sum_{\lambda \in \Lambda
_{n}}(\lambda _{1}^{4}+\lambda _{2}^{4})=\frac{1}{2n^{2}\mathcal{N}_{n}}%
\sum_{\lambda \in \Lambda _{n}}(\lambda _{1}^{2}+\lambda _{2}^{2})^{2}-\frac{%
2}{2n^{2}\mathcal{N}_{n}}\sum_{\lambda \in \Lambda _{n}}\lambda
_{1}^{2}\lambda _{2}^{2} \\
& =\frac{1}{2}-\frac{1}{8}\left( 1-\hat{\mu}_{n}(4)\right) =\frac{1}{8}(3+%
\hat{\mu}_{n}(4)).
\end{align*}%
For $i,j=1,2$ with $i\neq j$, and $n,m=0,1,2,\dots $, since $\Lambda _{n}$
is invariant under the sign-change of any coordinate, we have
\begin{equation*}
\sum_{\lambda \in \Lambda _{n}}\lambda _{i}^{2n+1}\lambda
_{j}^{m}=\sum_{\lambda \in \Lambda _{n}}(-\lambda _{i})^{2n+1}\lambda
_{j}^{m}=0.
\end{equation*}%
\bigskip

\noindent Using invariance under $W_{d}$ in \cite[Lemma 2.3]{Rudnick} the
following lemma is proved :

\begin{lemma}
\label{grouplemma} For any subset $\mathcal{O} \subset \Lambda_n$ which is
invariant under the group $W_d$, we have
\begin{equation}  \label{RW}
\sum_{\lambda \in \mathcal{O} } \lambda_{(j)} \lambda_{(k)}=|\mathcal{O}|
\frac{n }{d} \delta_{j,k}.
\end{equation}
\end{lemma}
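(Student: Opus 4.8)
The plan is to split the computation according to whether the indices $j$ and $k$ coincide, exploiting the two kinds of symmetries generated by $W_d$: sign changes will dispose of the off-diagonal terms, while coordinate permutations will handle the diagonal ones. No analytic input is needed; everything reduces to averaging over the symmetry group, which is why invariance of $\mathcal{O}$ under $W_d$ is the only hypothesis.

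First I would treat the case $j\neq k$. Let $T_j:\mathbb{Z}^d\to\mathbb{Z}^d$ be the map that flips the sign of the $j$-th coordinate and fixes all the others. Since $T_j\in W_d$ and $\mathcal{O}$ is $W_d$-invariant, $T_j$ restricts to an involution of $\mathcal{O}$. For any $\lambda\in\mathcal{O}$ we have $(T_j\lambda)_{(j)}(T_j\lambda)_{(k)}=-\lambda_{(j)}\lambda_{(k)}$, because only the $j$-th component changes sign (this is where $j\neq k$ is used). Pairing each $\lambda$ with $T_j\lambda$ then shows the contributions cancel: if $\lambda$ is not fixed by $T_j$, its term cancels that of $T_j\lambda$; if $\lambda$ is fixed, then $\lambda_{(j)}=0$ and its term already vanishes. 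Hence $\sum_{\lambda\in\mathcal{O}}\lambda_{(j)}\lambda_{(k)}=0=|\mathcal{O}|\frac{n}{d}\delta_{j,k}$.

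Next I would treat the diagonal case $j=k$. Since $\mathcal{O}$ is invariant under every coordinate permutation (again a subgroup of $W_d$), the quantity $S:=\sum_{\lambda\in\mathcal{O}}\lambda_{(j)}^2$ is independent of $j$. Summing over all coordinates and using $\|\lambda\|^2=n$ for every $\lambda\in\Lambda_n\supseteq\mathcal{O}$,
\begin{equation*}
dS=\sum_{j=1}^d\sum_{\lambda\in\mathcal{O}}\lambda_{(j)}^2=\sum_{\lambda\in\mathcal{O}}\sum_{j=1}^d\lambda_{(j)}^2=\sum_{\lambda\in\mathcal{O}}n=|\mathcal{O}|\,n,
\end{equation*}
so that $S=|\mathcal{O}|\frac{n}{d}$, which is precisely the claimed value when $j=k$.

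There is no genuine obstacle here: the argument is an elementary group-averaging computation. The only point deserving a moment's care is the bookkeeping of the fixed points of $T_j$ in the off-diagonal case, but these contribute nothing for the trivial reason that $\lambda_{(j)}=0$ on them. Combining the two cases yields \eqref{RW}.
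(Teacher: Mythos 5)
Your proof is correct, and it is exactly the symmetry argument the paper has in mind: the paper does not prove this lemma itself but cites \cite[Lemma 2.3]{Rudnick}, noting it follows from invariance under $W_d$, which is precisely your combination of the sign-flip involution $T_j$ (killing the off-diagonal terms) and coordinate-permutation invariance together with $\|\lambda\|^2=n$ (fixing the diagonal value at $|\mathcal{O}|\,n/d$). Your handling of the fixed points of $T_j$ is careful but could be bypassed entirely by just re-indexing the sum over the bijection $T_j$ of $\mathcal{O}$.
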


\noindent We note that using the invariance of $\Lambda _{n}$ under the
group $W_{d}$, we also immediately obtain that
\begin{equation*}
{\sum_{\lambda \in \Lambda _{n}}}\prod_{i=1}^{d}\lambda _{(i)}^{\alpha
_{i}}=0,
\end{equation*}%
if at least one of the exponents $\alpha _{i}$ is odd.

It is now possible to focus on the derivation of the expected values.
Actually the result for the excursion area is immediate and the result for
the boundary length was given already, for instance, in \cite{oravecz}, \cite%
{MPRW2015}. We can then focus on the EPC.

In general, a very powerful tool for the derivation of expected values of
Lipschitz-Killing Curvatures is provided by the Gaussian Kinematic Formula
(see \cite{adlertaylor}, Chapter 11), which was indeed exploited to derive
the analogous result in the case of random spherical harmonics. However
arithmetic random waves are not isotropic processes, which makes the
application of the GKF possible but more complicated; because of this, we
prefer to give here a proof from first principles.

\begin{proof}[Proof of Lemma \ref{expval}]
By Kac-Rice formula we can write
\begin{equation*}
\mathbb{E}\left[ \mathcal{L}_{0}(A_{u}(f_{n};\mathbb{T}))\right] =\int_{%
\mathbb{T}}K_{1}(x;I)dx
\end{equation*}%
where
\begin{equation*}
K_{1}(x;I)=K_{1;n}(x;I)=\phi _{\nabla f_{n}(x)}(\mathbf{0})\;\mathbb{E}[%
\text{det}H_{f_{n}}(x)\cdot \mathbb{I}_{I}(f_{n}(x))\;|\;\nabla f_{n}(x)=%
\mathbf{0}]\text{ ;}
\end{equation*}%
here we have
\begin{align*}
\phi _{\nabla f_{n}(x)}(\mathbf{0})& =\frac{1}{2\pi }\frac{2}{E_{n}}, \\
\mathbb{E}[\text{det}H_{f_{n}}(x)\cdot \mathbb{I}_{I}(f_{n}(x))\;|\;\nabla
f_{n}(x)=\mathbf{0}]& =\mathbb{E}[\text{det}H_{f_{n}}(x)\cdot \mathbb{I}_{I}(f_{n}(x))] \\
& =\frac{E_{n}^{2}}{8}\mathbb{E}\left[ (Z_{1}Z_{3}-Z_{2}^{2})\cdot \mathbb{I}%
_{\{\frac{Z_{1}+Z_{3}}{\sqrt{8}}\in I\}}\right]
\end{align*}%
where $(Z_{1},Z_{2},Z_{3})$ is a Gaussian vector with covariance matrix (see
Section \ref{Appendixcov})
\begin{equation*}
\left(
\begin{array}{ccc}
3+\hat{\mu}_{n}(4) & 0 & 1-\hat{\mu}_{n}(4) \\
0 & 1-\hat{\mu}_{n}(4) & 0 \\
1-\hat{\mu}_{n}(4) & 0 & 3+\hat{\mu}_{n}(4)%
\end{array}%
\right) .
\end{equation*}%
Now consider the transformation $W_{1}=Z_{1}$, $W_{2}=Z_{2}$, $%
W_{3}=Z_{1}+Z_{3}$, so that the vector $W$ is given by
\begin{equation*}
W=\left(
\begin{array}{ccc}
1 & 0 & 0 \\
0 & 1 & 0 \\
1 & 0 & 1%
\end{array}%
\right) Z
\end{equation*}%
with covariance matrix%
\begin{equation*}
\Sigma _{W}=\left(
\begin{array}{ccc}
1 & 0 & 0 \\
0 & 1 & 0 \\
1 & 0 & 1%
\end{array}%
\right) \left(
\begin{array}{ccc}
3+\hat{\mu}_{n}(4) & 0 & 1-\hat{\mu}_{n}(4) \\
0 & 1-\hat{\mu}_{n}(4) & 0 \\
1-\hat{\mu}_{n}(4) & 0 & 3+\hat{\mu}_{n}(4)%
\end{array}%
\right) \left(
\begin{array}{ccc}
1 & 0 & 1 \\
0 & 1 & 0 \\
0 & 0 & 1%
\end{array}%
\right)
\end{equation*}%
\begin{equation*}
=\left(
\begin{array}{ccc}
3+\hat{\mu}_{n}(4) & 0 & 4 \\
0 & 1-\hat{\mu}_{n}(4) & 0 \\
4 & 0 & 8%
\end{array}%
\right) .
\end{equation*}%
Under the obvious notation we write
\begin{equation*}
\Sigma _{(W_{1},W_{2})}=\left(
\begin{array}{cc}
3+\hat{\mu}_{n}(4) & 0 \\
0 & 1-\hat{\mu}_{n}(4)%
\end{array}%
\right) ,\hspace{0.5cm}\Sigma _{W_{3}}=8,
\end{equation*}%
so that the conditional distribution of $(W_{1},W_{2})|W_{3}=\sqrt{8}t$ is
Gaussian with covariance matrix
\begin{equation*}
\Sigma _{(W_{1},W_{2})|W_{3}}=\left(
\begin{array}{cc}
3+\hat{\mu}_{n}(4) & 0 \\
0 & 1-\hat{\mu}_{n}(4)%
\end{array}%
\right) -\left(
\begin{array}{c}
4 \\
0%
\end{array}%
\right) \frac{1}{8}\left(
\begin{array}{cc}
4 & 0%
\end{array}%
\right) =\left(
\begin{array}{cc}
1+\hat{\mu}_{n}(4) & 0 \\
0 & 1-\hat{\mu}_{n}(4)%
\end{array}%
\right) ,
\end{equation*}%
and expectation
\begin{equation*}
\mathbb{E}[(W_{1},W_{2})|W_{3}=\sqrt{8}t]=\left(
\begin{array}{c}
4 \\
0%
\end{array}%
\right) \frac{1}{8}\sqrt{8}t=\left(
\begin{array}{c}
\sqrt{2}t \\
0%
\end{array}%
\right) .
\end{equation*}%
We have that
\begin{equation*}
\mathbb{E}\left[ (Z_{1}Z_{3}-Z_{2}^{2})\cdot \mathbb{I}_{\{\frac{Z_{1}+Z_{3}%
}{\sqrt{8}}\in I\}}\right] =\mathbb{E}\left[ (W_{1}(W_{3}-W_{1})-W_{2}^{2})%
\cdot \mathbb{I}_{\{\frac{W_{3}}{\sqrt{8}}\in I\}}\right] .
\end{equation*}%
After the change of variable $\frac{w_{3}}{\sqrt{8}}=t$
\begin{align*}
& \mathbb{E}\left[ (W_{1}(W_{3}-W_{1})-W_{2}^{2})\cdot \mathbb{I}_{\{\frac{%
W_{3}}{\sqrt{8}}\in I\}}\right]  \\
& =\mathbb{E}_{(W_{1},W_{2})}\left[ \mathbb{E}\Big[%
(w_{1}(W_{3}-w_{1})-w_{2}^{2})\cdot \mathbb{I}_{\{\frac{W_{3}}{\sqrt{8}}\in
I\}}\;|(W_{1},W_{2})=(w_{1},w_{2})\Big]\right]  \\
& =\mathbb{E}_{(W_{1},W_{2})}\left[ \int_{\mathbb{R}%
}(w_{1}(w_{3}-w_{1})-w_{2}^{2})\cdot \mathbb{I}_{\{\frac{w_{3}}{\sqrt{8}}\in
I\}}\frac{1}{\sqrt{2\pi 8}}e^{-\frac{w_{3}^{2}}{2\cdot 8}}dw_{3}%
\;|(W_{1},W_{2})=(w_{1},w_{2})\right]  \\
& =\mathbb{E}_{(W_{1},W_{2})}\left[ \int_{\mathbb{R}}(w_{1}(t\sqrt{8}%
-w_{1})-w_{2}^{2})\cdot \mathbb{I}_{\{t\in I\}}\frac{1}{\sqrt{2\pi 8}}e^{-%
\frac{(t\sqrt{8})^{2}}{2\cdot 8}}\sqrt{8}dt\;|(W_{1},W_{2})=(w_{1},w_{2})%
\right]
\end{align*}%
\begin{align*}
& =\sqrt{8}\;\int_{\mathbb{R}}\mathbb{E}\left[ (W_{1}(t\sqrt{8}%
-W_{1})-W_{2}^{2})\right] \cdot \mathbb{I}_{\{t\in I\}}\frac{1}{\sqrt{2\pi 8}%
}e^{-\frac{(t\sqrt{8})^{2}}{2\cdot 8}}dt \\
& =\sqrt{8}\;\int_{I}\mathbb{E}\left[ (W_{1}(t\sqrt{8}-W_{1})-W_{2}^{2})%
\right] \frac{1}{\sqrt{2\pi 8}}e^{-\frac{(t\sqrt{8})^{2}}{2\cdot 8}}dt \\
& =\sqrt{8}\;\int_{I}\mathbb{E}\left[ (W_{1}(t\sqrt{8}-W_{1})-W_{2}^{2})%
\right] \phi _{W_{3}}(t\sqrt{8})dt \\
& =\sqrt{8}\;\int_{I}\mathbb{E}\left[ (W_{1}(W_{3}-W_{1})-W_{2}^{2})|W_{3}=t%
\sqrt{8}\right] \phi _{W_{3}}(t\sqrt{8})dt\text{ .}
\end{align*}%
Now note that
\begin{equation*}
\phi _{W_{3}}(\sqrt{8}\,t)=\frac{1}{4\sqrt{\pi }}e^{-\frac{t^{2}}{2}}
\end{equation*}%
and
\begin{align*}
& \mathbb{E}\left[ (W_{1}(W_{3}-W_{1})-W_{2}^{2})\Big|W_{3}=\sqrt{8}t\right]
\\
& =\mathbb{E}\left[ (W_{1}\sqrt{8}t-W_{1}^{2}-W_{2}^{2})\Big|W_{3}=\sqrt{8}t%
\right]  \\
& =\mathbb{E}\left[ ((X_{1}\sqrt{1+\hat{\mu}_{n}(4)}+\sqrt{2}\,t)\sqrt{8}%
\,t-(X_{1}\sqrt{1+\hat{\mu}_{n}(4)}+\sqrt{2}\,t)^{2}-X_{2}^{2}(1-\hat{\mu}%
_{n}(4)))\right]  \\
& =\mathbb{E}\left[ (2t^{2}-(X_{1}^{2}+X_{2}^{2})+\hat{\mu}%
_{n}(4)(X_{2}^{2}-X_{1}^{2}))\right]  \\
& =\mathbb{E}\left[ (2t^{2}-X_{1}^{2}(1+\hat{\mu}_{n}(4))-X_{2}^{2}(1-\hat{%
\mu}_{n}(4)))\right] ,
\end{align*}%
for $X_{1}$, $X_{2}$ standard independent Gaussian. Hence%
\begin{align*}
\mathbb{E}[\mathcal{L}_{0}(A_{I}(f_{n};\mathbb{T})))]& =\int_{\mathbb{T}}dx%
\frac{1}{\pi E_{n}}\frac{E_{n}^{2}}{8}\sqrt{8}\int_{I}\frac{1}{4\sqrt{\pi }}%
e^{-\frac{t^{2}}{2}}\mathbb{E}\left[ 2t^{2}-(X_{1}^{2}+X_{2}^{2})+\hat{\mu}%
_{n}(4)(X_{2}^{2}-X_{1}^{2})\right] dt \\
& =\frac{E_{n}}{8\pi }\frac{\sqrt{8}}{4\sqrt{\pi }}\int_{\mathbb{T}%
}dx\int_{I}e^{-\frac{t^{2}}{2}}\mathbb{E}\left[ 2t^{2}-(X_{1}^{2}+X_{2}^{2})+%
\hat{\mu}_{n}(4)(X_{2}^{2}-X_{1}^{2})\right] dt\text{ .} \\
& =\frac{E_{n}}{8\pi }\frac{\sqrt{8}}{4\sqrt{\pi }}\int_{I}e^{-\frac{t^{2}}{2%
}}\mathbb{E}\left[ 2t^{2}-(X_{1}^{2}+X_{2}^{2})+\hat{\mu}%
_{n}(4)(X_{2}^{2}-X_{1}^{2})\right] dt.
\end{align*}%
where we have exploited the fact that Area$({\mathbb{T}})=1.$ Writing $\Xi
=X_{1}^{2}+X_{2}^{2}$ and $\Theta :=X_{1}^{2}-X_{2}^{2}$ we
observe that $\mathbb{E}[\Xi ]=2,$ $\mathbb{E}[\Theta
]=0,$ so
\begin{align*}
\mathbb{E}[\mathcal{L}_{0}(A_{I}(f_{n};\mathbb{T})))]& =\frac{E_{n}}{8\pi }%
\frac{\sqrt{8}}{4\sqrt{\pi }}\int_{I}e^{-\frac{t^{2}}{2}}\mathbb{E}\left[
2t^{2}-\Xi +\hat{\mu}_{n}(4)\Theta \right] dt \\
& =\frac{E_{n}}{8\pi }\frac{\sqrt{8}}{4\sqrt{\pi }}\int_{I}e^{-\frac{t^{2}}{2%
}}\left[ 2t^{2}-2\right] dt.
\end{align*}%
For $I=(u,\infty )$
\begin{equation*}
\mathbb{E}[\mathcal{L}_{0}(A_{u}(f_{n};\mathbb{T})))]=\frac{E_{n}}{8\pi }%
\frac{\sqrt{8}}{4\sqrt{\pi }}\int_{u}^{\infty }e^{-\frac{t^{2}}{2}}\left[
2t^{2}-2\right] dt=\frac{E_{n}}{2\sqrt{8}\sqrt{\pi }\pi }ue^{-\frac{u^{2}}{2}%
},
\end{equation*}
which completes the proof.

\end{proof}

\section{EPC: second chaotic component \label{Appendix2chaos}}

\subsection{Proof of Proposition \protect\ref{proj} \label{proofproj}}

Let
\begin{equation*}
\alpha _{n}=\frac{k_{2}+k_{3}}{E}=\frac{\sqrt{2}}{\sqrt{3+\hat{\mu}_{n}(4)}},%
\hspace{0.5cm}\beta _{n}=\frac{k_{5}}{E}=\frac{\sqrt{1+\hat{\mu}_{n}(4)}}{%
\sqrt{3+\hat{\mu}_{n}(4)}},
\end{equation*}%
note that $\alpha _{n}^{2}=\frac{2}{3+\hat{\mu}_{n}(4)}$ and $\alpha
_{n}^{2}+\beta _{n}^{2}=1.$ Now let
\begin{equation*}
\varphi _{a}=\lim_{\varepsilon \rightarrow 0}\mathbb{E}[H_{a}(Y)\delta
_{\varepsilon }(\lambda _{1}\,Y)],\hspace{1cm}a=0,1,2.
\end{equation*}

\begin{equation*}
\theta_{ab}(u)=\mathbb{E}\left[ Y_{a}Y_{b}1\hspace{-0.27em}\mbox{\rm l}%
_{\left\{ \alpha_n Y_{3}+ \beta_n Y_{5}\leq -u\right\} }\right] ,\hspace{1cm}%
a,b=3,4,5,
\end{equation*}

\begin{equation*}
\psi _{abcd}(u)=\mathbb{E}\left[ Y_{a}Y_{b}Y_{c}Y_{d}1\hspace{-0.27em}%
\mbox{\rm l}_{\left\{ \alpha _{n}Y_{3}+\beta _{n}Y_{5}\leq -u\right\} }%
\right] ,\hspace{1cm}a,b,c,d=3,4,5.
\end{equation*}%
Note first that, as in \cite{CM2018}
\begin{equation}
\varphi _{a}(\ell )=%
\begin{cases}
\frac{1}{\sqrt{2\pi }k_{1}}, & a=0, \\
0, & a=1, \\
-\frac{1}{\sqrt{2\pi }k_{1}}, & a=2.%
\end{cases}
\label{immersione2}
\end{equation}

We hence immediately have $h_{1j}(u;n)=0$ for all $j>1$ and $h_{2j}(u;n)=0$
for all $j>2$ since $\varphi _{1}=0$. Moreover, by some straightforward but
tedious manipulations we obtain

\begin{equation*}
h_{34}(u;n)=[k_{3}k_{5}\;\psi _{3345}(u)+k_{2}k_{3}\;\psi
_{3334}(u)-k_{4}^{2}\;\psi _{3444}(u)]\varphi _{0}^{2}=0,
\end{equation*}

\begin{align*}
h_{35}(u;n)& =[k_{3}k_{5}\;\psi _{3355}(u)+k_{2}k_{3}\;\psi
_{3335}(u)-k_{4}^{2}\;\psi _{3445}(u)]\varphi _{0}^{2} \\
& =\frac{n\pi \sqrt{2}\sqrt{1+\hat{\mu}_{n}(4)}\left[ u\phi (u)(1+u^{2})+(3+%
\hat{\mu}_{n}(4))\Phi (-u)\right] }{3+\hat{\mu}_{n}(4)},
\end{align*}%
and moreover

\begin{equation*}
h_{45}(u;n)=[k_{3}k_{5}\;\psi _{3455}(u)+k_{2}k_{3}\;\psi
_{3345}(u)-k_{4}^{2}\;\psi _{4445}(u)]\varphi _{0}^{2}=0,
\end{equation*}

\begin{align*}
h_{1}(u;n )& =h_{2}(u;n)=[ k_{3} k_{5}\;\theta_{35}(u)+ k_{2} k_{3}\;\theta
_{33}(u) - k_{4}^{2}\;\theta_{44}(u)]\varphi_0 \varphi_2 \\
& = - n \pi u \,\phi(u),
\end{align*}

\begin{align*}
h_{3}(u; n )&=[k_{3} k_{5} \;\psi _{3335}(u)+ k_{2} k_{3}\;\psi _{3333}(u)-
k_{4}^{2}\;\psi _{3344}(u)]\varphi_0 ^{2} \\
& \;\;-[k_{3} k_{5}\;\theta _{35}(u)+ k_{2} k_{3}\;\theta
_{33}(u)-k_{4}^{2}\;\theta_{44}(u)]\varphi_0^{2} \\
&=n \pi \left[ \frac{2 u (1+u^2) \phi(u)}{3+\hat{\mu}_n(4) } + \Phi(-u) -
\hat{\mu}_n(4) \Phi(-u) \right] ,
\end{align*}

\begin{align*}
h_{4}(u;n )& =[k_{3} k_{5}\;\psi _{3445}(u)+k_{2}
k_{3}\;\psi_{3344}(u)-k_{4}^{2}\;\psi_{4444}(u)]\varphi_0^{2} \\
& \;\;-[k_{3} k_{5}\;\theta _{35}(u)+k_{2} k_{3}\;\theta _{33}(u)-
k_{4}^{2}\;\theta _{44}(u)]\varphi_0^{2} \\
& = -n \pi (1-\hat{\mu}_n(4)) \Phi (-u),
\end{align*}

\begin{align*}
h_{5}(u;n)& =[k_{3}k_{5}\;\psi _{3555}(u)+k_{2}k_{3}\;\psi
_{3355}(u)-k_{4}^{2}\;\psi _{4455}(u)]\varphi _{0}^{2} \\
& \;\;-[k_{3}k_{5}\;\theta _{35}(u)+k_{2}k_{3}\;\theta
_{33}(u)-k_{4}^{2}\;\theta _{44}(u)]\varphi _{0}^{2} \\
& =\frac{n\pi u(1+u^{2})(1+\hat{\mu}_{n}(4))\phi (u)}{3+\hat{\mu}_{n}(4)}%
\text{ ;}
\end{align*}%
In the previous steps, we have used a number of auxiliary functions $\psi
_{abcd}(u),$ for $a,b,c,d=3,4,5,$ whose exact expressions and derivations
are given in Lemmas \ref{theta} and \ref{psi} below.

\begin{lemma}
\label{theta}We have that
\begin{equation*}
\theta _{33}(u)=\Phi (-u)+u\,\phi (u)\frac{2}{3+\hat{\mu}_{n}(4)},\text{ }%
\theta _{35}(u)=u\,\phi (u)\frac{\sqrt{2}\sqrt{1+\hat{\mu}_{n}(4)}}{3+\hat{%
\mu}_{n}(4)},\text{ and }\theta _{44}(u)=\Phi (-u).
\end{equation*}
\end{lemma}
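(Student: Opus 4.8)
The plan is to exploit the independence of the three standard Gaussians $Y_3, Y_4, Y_5$ together with a single orthogonal change of variables that decouples the linear form appearing in the indicator. Set $W := \alpha_n Y_3 + \beta_n Y_5$; since $\alpha_n^2 + \beta_n^2 = 1$, the variable $W$ is a standard Gaussian, and the event in the indicator is simply $\{W \le -u\}$. This reduces every $\theta_{ab}$ to integrals against the law of $W$ and its orthogonal complement.

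First I would dispose of $\theta_{44}$. Because $Y_4$ is independent of both $Y_3$ and $Y_5$, hence of $W$, the expectation factorizes as $\mathbb{E}[Y_4^2]\,\mathbb{P}(W \le -u) = 1 \cdot \Phi(-u)$, which is the claimed value. For $\theta_{33}$ and $\theta_{35}$ I would introduce the companion variable $V := -\beta_n Y_3 + \alpha_n Y_5$, so that $(Y_3, Y_5) \mapsto (W, V)$ is an orthogonal transformation of the standard Gaussian vector $(Y_3,Y_5)$; consequently $W$ and $V$ are independent standard Gaussians, and inverting gives $Y_3 = \alpha_n W - \beta_n V$ and $Y_5 = \beta_n W + \alpha_n V$. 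Substituting these expressions, expanding the relevant products, and using $\mathbb{E}[V] = 0$, $\mathbb{E}[V^2] = 1$ together with the independence of $V$ from $\mathbf{1}_{\{W \le -u\}}$, every cross term in $V$ either vanishes or reduces to a moment of $W$. Concretely, $\theta_{33} = \alpha_n^2 \,\mathbb{E}[W^2 \mathbf{1}_{\{W \le -u\}}] + \beta_n^2\,\Phi(-u)$ and $\theta_{35} = \alpha_n\beta_n\big(\mathbb{E}[W^2\mathbf{1}_{\{W\le -u\}}] - \Phi(-u)\big)$.

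The only genuine computation is the truncated second moment $\mathbb{E}[W^2 \mathbf{1}_{\{W \le -u\}}] = \int_{-\infty}^{-u} t^2 \phi(t)\,dt$, which I would evaluate by integration by parts using the identity $t^2\phi(t) = \phi(t) + \frac{d}{dt}[-t\phi(t)]$, giving $\Phi(-u) + u\phi(u)$ after using $\phi(-u)=\phi(u)$. Plugging this in and substituting the explicit values $\alpha_n^2 = 2/(3+\hat{\mu}_n(4))$, $\beta_n^2 = (1+\hat{\mu}_n(4))/(3+\hat{\mu}_n(4))$ and $\alpha_n\beta_n = \sqrt{2}\sqrt{1+\hat{\mu}_n(4)}/(3+\hat{\mu}_n(4))$ yields the three stated formulae; in $\theta_{33}$ the two $\Phi(-u)$ contributions recombine via $\alpha_n^2 + \beta_n^2 = 1$. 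There is no real obstacle here beyond careful bookkeeping: the entire content is the rotation trick, which converts a two-dimensional truncated Gaussian integral into a one-dimensional one.
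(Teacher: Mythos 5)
Your proposal is correct, and it reaches the three formulae by a genuinely different route from the paper's. The paper computes each $\theta_{ab}$ by integrating out one Gaussian coordinate at a time, reducing everything to one-dimensional integrals of the type $\int_{-\infty}^{\infty} y^{2}\phi(y)\,\Phi\bigl(\tfrac{-u-\alpha_n y}{\beta_n}\bigr)dy$ and $\int_{-\infty}^{\infty} y\,\phi(y)\,\phi\bigl(\tfrac{-u-\alpha_n y}{\beta_n}\bigr)dy$, which it evaluates by invoking the Gaussian-integral identities of Lemmas 12--14 of \cite{CM2018}; the same scheme is then reused for the fourth-order quantities $\psi_{abcd}$ in Lemma \ref{psi}. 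You instead perform the orthogonal change of variables $W=\alpha_n Y_3+\beta_n Y_5$, $V=-\beta_n Y_3+\alpha_n Y_5$, which decouples the indicator $\mathbf{1}_{\{W\le -u\}}$ from the independent standard Gaussian $V$ at the outset. Your inversion $Y_3=\alpha_n W-\beta_n V$, $Y_5=\beta_n W+\alpha_n V$ is correct, the cross terms in $V$ do vanish by independence and $\mathbb{E}[V]=0$, and the structural identities $\theta_{33}=\alpha_n^2\,\mathbb{E}[W^2\mathbf{1}_{\{W\le-u\}}]+\beta_n^2\,\Phi(-u)$ and $\theta_{35}=\alpha_n\beta_n\bigl(\mathbb{E}[W^2\mathbf{1}_{\{W\le-u\}}]-\Phi(-u)\bigr)$, combined with the truncated moment $\int_{-\infty}^{-u}t^2\phi(t)\,dt=\Phi(-u)+u\,\phi(u)$ and the values $\alpha_n^2=\tfrac{2}{3+\hat{\mu}_{n}(4)}$, $\alpha_n\beta_n=\tfrac{\sqrt{2}\sqrt{1+\hat{\mu}_{n}(4)}}{3+\hat{\mu}_{n}(4)}$, give exactly the stated expressions ($\theta_{44}$ is immediate by independence of $Y_4$ in both treatments). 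What your rotation buys is self-containment and economy: a single elementary one-dimensional integral replaces the two-parameter Gaussian integrals, and the method would extend mechanically to the $\psi_{abcd}$ via higher truncated moments of $W$ and even moments of $V$, with the odd-moment cases vanishing automatically. What the paper's route buys is alignment with the spherical computations of \cite{CM2018}, allowing those lemmas to be cited verbatim and keeping the toral and spherical treatments parallel.
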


\begin{proof}
Let $X$, $Y$ and $Z$ be three independent standard Gaussian random
variables; with the same arguments as in \cite{CM2018}, Lemma 12, Lemma 13
and Lemma 14, we have
\begin{align*}
\theta _{33}(u)& =\mathbb{E}\left[ Y^{2}1\hspace{-0.27em}\mbox{\rm l}%
_{\left\{ \alpha _{n}Y+\beta _{n}X\leq -u\right\} }\right] =\int_{-\infty
}^{\infty }y^{2}\phi (y)\Phi \left( \frac{-u-\alpha _{n}y}{\beta _{n}}%
\right) dy \\
& =\Phi (-u)+\alpha _{n}^{2}u\phi (-u)
\end{align*}%
\begin{align*}
\theta _{35}(u)& =\mathbb{E}\left[ XY1\hspace{-0.27em}\mbox{\rm l}_{\left\{
\alpha _{n}Y+\beta _{n}X\leq -u\right\} }\right] =\int_{-\infty }^{\infty
}y\phi (y)dy\int_{-\infty }^{\frac{-u-\alpha _{n}y}{\beta _{n}}}x\phi (x)dx
\\
& =-\int_{-\infty }^{\infty }y\phi (y)\phi \left( \frac{-u-\alpha _{n}y}{%
\beta _{n}}\right) dy=\alpha _{n}\beta _{n}u\,\phi (-u)
\end{align*}%
\begin{equation*}
\theta _{44}(u)=\mathbb{E}\left[ Z^{2}1\hspace{-0.27em}\mbox{\rm l}_{\left\{
\alpha _{n}Y+\beta _{n}X\leq -u\right\} }\right] =\int_{-\infty }^{\infty
}\phi (y)\Phi \left( \frac{-u-\alpha _{n}y}{\beta _{n}}\right) dy=\Phi (-u).
\end{equation*}
\end{proof}

The next computations involve moments of four random variables and are hence
a bit more involved.

\begin{lemma}
\label{psi} We have that%
\begin{equation*}
\psi _{3333}(u)=3\Phi (-u)+4\,u\,\phi (u)\frac{6+u^{2}+3\hat{\mu}_{n}(4)}{(3+%
\hat{\mu}_{n}(4))^{2}},\text{ }\psi _{4444}(u)=3\Phi (-u),
\end{equation*}%
\begin{align*}
\psi _{3355}(u)& =\Phi (-u)+u\,\phi (u)\frac{3+\hat{\mu}_{n}(4)^{2}+2u^{2}(1+%
\hat{\mu}_{n}(4))}{(3+\hat{\mu}_{n}(4))^{2}}, \\
\psi _{3555}(u)& =u\,\phi (u)\sqrt{2}\frac{\sqrt{1+\hat{\mu}_{n}(4)}}{(3+%
\hat{\mu}_{n}(4))^{2}}(6+u^{2}(1+\hat{\mu}_{n}(4))), \\
\psi _{3335}(u)& =u\,\phi (u)\sqrt{2}\frac{\sqrt{1+\hat{\mu}_{n}(4)}}{(3+%
\hat{\mu}_{n}(4))^{2}}(3+2u^{2}+3\hat{\mu}_{n}(4)),
\end{align*}%
and moreover%
\begin{align*}
\psi _{3344}(u)& =\Phi (-u)+u\,\phi (u)\frac{2}{3+\hat{\mu}_{n}(4)},\text{ }%
\psi _{4455}(u)=\Phi (-u)+u\,\phi (u)\frac{1+\hat{\mu}_{n}(4)}{3+\hat{\mu}%
_{n}(4)}, \\
\psi _{3445}(u)& =u\,\phi (u)\sqrt{2}\frac{\sqrt{1+\hat{\mu}_{n}(4)}}{3+\hat{%
\mu}_{n}(4)}.
\end{align*}%
The following remaining terms are identically zero:
\begin{equation*}
\psi _{3334}(u)=\psi _{3345}(u)=\psi _{3444}(u)=\psi _{3455}(u)=\psi
_{4445}(u)=0.
\end{equation*}
\end{lemma}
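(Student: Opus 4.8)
The plan is to exploit two structural facts about the functions $\psi_{abcd}(u)$: the indicator event involves only $Y_3$ and $Y_5$, and $(Y_3,Y_4,Y_5)$ is a standard Gaussian vector with independent components. First I would dispose of the vanishing terms. Since $Y_4$ is independent of $Y_3,Y_5$ and of the event $\{\alpha_n Y_3+\beta_n Y_5\le -u\}$, any $\psi_{abcd}$ in which the index $4$ occurs an \emph{odd} number of times factorizes into an odd moment of $Y_4$ times the expectation of the remaining factors, and hence is zero. This at once yields $\psi_{3334}=\psi_{3345}=\psi_{3444}=\psi_{3455}=\psi_{4445}=0$.

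Next I would handle the terms in which $4$ occurs an even number of times by splitting off the $Y_4$-moments. For $\psi_{4444}$ one gets $\psi_{4444}=\mathbb E[Y_4^4]\,\mathbb P(\alpha_n Y_3+\beta_n Y_5\le -u)=3\Phi(-u)$, using that $\alpha_n^2+\beta_n^2=1$ makes $\alpha_n Y_3+\beta_n Y_5$ standard Gaussian. For $\psi_{3344},\psi_{4455},\psi_{3445}$ the factor $\mathbb E[Y_4^2]=1$ separates, reducing them respectively to $\theta_{33}(u)$, $\theta_{55}(u)$, $\theta_{35}(u)$ from Lemma \ref{theta}, where $\theta_{55}(u)=\Phi(-u)+\beta_n^2\,u\phi(u)$ is computed exactly as $\theta_{33}$ but with $\beta_n^2$ in place of $\alpha_n^2$. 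Substituting $\alpha_n^2=2/(3+\hat{\mu}_n(4))$ and $\beta_n^2=(1+\hat{\mu}_n(4))/(3+\hat{\mu}_n(4))$ gives the stated expressions.

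The core of the proof is the four terms $\psi_{3333},\psi_{3355},\psi_{3335},\psi_{3555}$ that involve only $Y_3,Y_5$. Here I would perform the orthogonal change of variables $W:=\alpha_n Y_3+\beta_n Y_5$ and $V:=-\beta_n Y_3+\alpha_n Y_5$; because $\alpha_n^2+\beta_n^2=1$, the pair $(W,V)$ is again standard Gaussian with independent components and the event becomes $\{W\le -u\}$. Inverting, $Y_3=\alpha_n W-\beta_n V$ and $Y_5=\beta_n W+\alpha_n V$, so each monomial $Y_3^a Y_5^b$ (with $a+b=4$) expands as a polynomial in $W,V$. Taking expectation against $\mathbf 1_{\{W\le -u\}}$ and using the independence of $V$, every surviving term factors as a truncated moment $m_p:=\mathbb E[W^p\mathbf 1_{\{W\le -u\}}]$ times a full moment $\mathbb E[V^{q}]$; the odd-$V$ terms vanish, and since $p+q=4$ the parity of $V$ forces $p$ even, so only $m_0=\Phi(-u)$, $m_2=\Phi(-u)+u\phi(u)$, and $m_4=3\Phi(-u)+(u^3+3u)\phi(u)$ are needed, each obtained by elementary integration by parts.

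What remains is bookkeeping: substituting these three moments, collecting the coefficients of $\Phi(-u)$ and $\phi(u)$, and simplifying with the values of $\alpha_n^2,\beta_n^2$ above. The recurring simplification worth flagging is that the coefficient of $\Phi(-u)$ always collapses through $\alpha_n^2+\beta_n^2=1$: it equals $3(\alpha_n^2+\beta_n^2)^2=3$ for $\psi_{3333}$, equals $(\alpha_n^2+\beta_n^2)^2=1$ for $\psi_{3355}$, and equals $0$ for the odd-degree terms $\psi_{3335},\psi_{3555}$, exactly matching the claimed formulas. The main obstacle is therefore not conceptual but the clean organization of these mixed fourth-moment expansions --- keeping track of signs, binomial coefficients, and the $W^2V^2$-type cross terms; once the rotation is in place, each $\psi$ reduces to a short computation.
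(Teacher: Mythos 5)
Your proposal is correct, and all the stated formulas do come out of it (I checked the bookkeeping: e.g.\ for $\psi_{3333}$ your expansion gives $\alpha_n^4 m_4+6\alpha_n^2\beta_n^2 m_2+3\beta_n^4 m_0$, whose $\phi(u)$-coefficient $u(\alpha_n^4u^2+3\alpha_n^4+6\alpha_n^2\beta_n^2)$ agrees with the paper's $u(3\alpha_n^2+3\alpha_n^2\beta_n^2+\alpha_n^4u^2)$ and simplifies to the stated $4u(6+u^2+3\hat{\mu}_n(4))/(3+\hat{\mu}_n(4))^2$; similarly for $\psi_{3335}$ one gets $\alpha_n\beta_n u\phi(u)(3\beta_n^2+\alpha_n^2u^2)$, matching the paper). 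However, your route differs from the paper's in its core step. You handle the vanishing terms and the even powers of $Y_4$ exactly as the paper does (independence of $Y_4$ from the event, reduction of $\psi_{3344},\psi_{4455},\psi_{3445}$ to $\theta_{33},\theta_{55},\theta_{35}$, with $\theta_{55}(u)=\Phi(-u)+\beta_n^2u\phi(u)$ appearing in the paper only inside the proof, as in your note). But for the pure $(Y_3,Y_5)$ terms the paper computes each $\psi$ by iterated conditional Gaussian integration, writing e.g.\ $\psi_{3333}(u)=\int y^4\phi(y)\Phi\bigl((-u-\alpha_n y)/\beta_n\bigr)\,dy$ and invoking Lemmas 12--14 of \cite{CM2018} for such integrals, whereas you rotate to the orthonormal frame $W=\alpha_nY_3+\beta_nY_5$, $V=-\beta_nY_3+\alpha_nY_5$, so that the indicator depends on $W$ alone and everything factorizes into the three truncated moments $m_0,m_2,m_4$ of $W$ times full moments of $V$. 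Your rotation buys uniformity and transparency: a single change of variables replaces five separate external lemmas, the coefficient of $\Phi(-u)$ collapses automatically through $(\alpha_n^2+\beta_n^2)^2=1$, and the vanishing of that coefficient for the odd-degree terms $\psi_{3335},\psi_{3555}$ becomes structural rather than an outcome of computation; the paper's approach, by contrast, recycles ready-made integral identities from the spherical case at the cost of repeating an ad hoc integral evaluation for each $\psi$.
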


\begin{proof}
In the sequel we shall use $X$, $Y$ and $Z$ to denote three independent
standard Gaussian random variables. The computations to follow are then just
standard evaluations of Gaussian integrals. In particular, applying \cite%
{CM2018} Lemma 12, Lemma 13 and Lemma 14, we have
\begin{align*}
\psi _{3333}(u)& =\mathbb{E}\left[ Y^{4}1\hspace{-0.27em}\mbox{\rm l}%
_{\left\{ \alpha _{n}Y+\beta _{n}X\leq -u\right\} }\right] =\int_{-\infty
}^{\infty }y^{4}\phi (y)\Phi \Big(\frac{-u-\alpha _{n}y}{\beta _{n}}\Big)dy
\\
& =3\Phi (-u)+u\,\phi (-u)\left[ 3\alpha _{n}^{2}+3\alpha _{n}^{4}\beta
_{n}^{2}+3\beta _{n}^{4}\alpha _{n}^{2}+\alpha _{n}^{4}u^{2}\right] .
\end{align*}%
\begin{equation*}
\psi _{4444}(u)=\mathbb{E}\left[ Z^{4}1\hspace{-0.27em}\mbox{\rm l}_{\left\{
\alpha _{n}Y+\beta _{n}X\leq -u\right\} }\right] =3\,\mathbb{E}\left[ 1%
\hspace{-0.27em}\mbox{\rm l}_{\left\{ \alpha _{n}Y+\beta _{n}X\leq
-u\right\} }\right] =3\Phi (-u).
\end{equation*}%
Now we observe that
\begin{equation*}
\int_{-\infty }^{q}x^{2}\phi (x)dx=\Phi (q)-q\,\phi (q),
\end{equation*}%
and we obtain
\begin{align*}
\psi _{3355}(u)& =\mathbb{E}\left[ Y^{2}X^{2}1\hspace{-0.27em}\mbox{\rm l}%
_{\left\{ \alpha _{n}Y+\beta _{n}X\leq -u\right\} }\right] =\int_{-\infty
}^{\infty }y^{2}\phi (y)dy\int_{-\infty }^{\frac{-u-\alpha _{n}y}{\beta _{n}}%
}x^{2}\phi (x)dx \\
& =\int_{-\infty }^{\infty }y^{2}\phi (y)\Phi \left( \frac{-u-\alpha _{n}y}{%
\beta _{n}}\right) dy-\left( \frac{-u-\alpha _{n}y}{\beta _{n}}\right)
\int_{-\infty }^{\infty }y^{2}\phi (y)\phi \left( \frac{-u-\alpha _{n}y}{%
\beta _{n}}\right) dy \\
& =\Phi (-u)+\alpha _{n}^{2}\,u\,\phi (-u)+\beta _{n}^{2}\,u\,\phi
(-u)(-2\alpha _{n}^{4}+\beta _{n}^{4}-\alpha _{n}^{2}\beta _{n}^{2}+\alpha
_{n}^{2}u^{2}).
\end{align*}%
Likewise%
\begin{align*}
\psi _{3555}(u)& =\mathbb{E}\left[ YX^{3}1\hspace{-0.27em}\mbox{\rm l}%
_{\left\{ \alpha _{n}Y+\beta _{n}X\right\} \leq -u}\right] =\int_{-\infty
}^{\infty }y\phi (y)dy\int_{-\infty }^{\frac{-u-\alpha _{n}y}{\beta _{n}}%
}x^{3}\phi (x)dx \\
& =-\int_{-\infty }^{\infty }y\phi (y)\phi \left( \frac{-u-\alpha _{n}y}{%
\beta _{n}}\right) \left\{ \left( \frac{-u-\alpha _{n}y}{\beta _{n}}\right)
^{2}+2\right\} dy=\alpha _{n}\beta _{n}\,u\,\phi (-u)(3\alpha _{n}^{2}+\beta
_{n}^{2}u^{2}),
\end{align*}%
\begin{equation*}
\psi _{3335}(u)=\mathbb{E}\left[ Y^{3}X1\hspace{-0.27em}\mbox{\rm l}%
_{\left\{ \alpha _{n}Y+\beta _{n}X\leq -u\right\} }\right] =\alpha _{n}\beta
_{n}u\,\phi (-u)(3\beta _{n}^{2}+\alpha _{n}^{2}u^{2}).
\end{equation*}%
Moreover%
\begin{equation*}
\psi _{3344}(u)=\mathbb{E}\left[ Z^{2}Y^{2}1\hspace{-0.27em}\mbox{\rm l}%
_{\left\{ \alpha _{n}Y+\beta _{n}X\leq -u\right\} }\right] =\mathbb{E}\left[
Y^{2}1\hspace{-0.27em}\mbox{\rm l}_{\left\{ \alpha _{n}Y+\beta _{\ell }X\leq
-u\right\} }\right] =\theta _{33}(u),
\end{equation*}%
\begin{align*}
\psi _{4455}(u)& =\mathbb{E}\left[ Z^{2}X^{2}1\hspace{-0.27em}\mbox{\rm l}%
_{\left\{ \alpha _{n}Y+\beta _{n}X\leq -u\right\} }\right] =\mathbb{E}\left[
X^{2}1\hspace{-0.27em}\mbox{\rm l}_{\left\{ \alpha _{n}Y+\beta _{\ell }X\leq
-u\right\} }\right] =\theta _{55}(u) \\
& =\Phi (-u)+\beta _{n}^{2}\,u\,\phi (-u),
\end{align*}%
and finally%
\begin{equation*}
\psi _{3445}(u)=\mathbb{E}\left[ Z^{2}XY1\hspace{-0.27em}\mbox{\rm l}%
_{\left\{ \alpha _{n}Y+\beta _{n}X\leq -u\right\} }\right] =\mathbb{E}\left[
XY1\hspace{-0.27em}\mbox{\rm l}_{\left\{ \alpha _{n}Y+\beta _{\ell }X\leq
-u\right\} }\right] =\theta _{35}(u).
\end{equation*}%
The fact that $\psi _{3334}(u)$, $\psi _{3345}(u)$, $\psi _{3444}(u)$, $\psi
_{3455}(u)$ and $\psi _{4445}(u)$ are identically equal to zero, it is
enough to note that they are all of the form
\begin{equation*}
\mathbb{E}\left[ Z^{p}X^{q}Y^{r}1\hspace{-0.27em}\mbox{\rm l}_{\left\{
\alpha _{n}Y+\beta _{n}X\leq -u\right\} }\right]
\end{equation*}%
where $p=1,3$ is odd.
\end{proof}

\subsection{Proof of Proposition \protect\ref{AB} \label{proofAB}}

We have to deal with the following integrals of squares:
\begin{equation}
I_{00}(n)=\int_{\mathbb{T}}f_{n}^{2}(x)dx,\text{ }I_{11}(n)=\int_{\mathbb{T}%
}\left\{ e_{1}^{x}f_{n}(x)\right\} ^{2}dx,\text{ }I_{22}(n)=\int_{\mathbb{T}%
}\left\{ e_{2}^{x}f_{n}(x)\right\} ^{2}dx;  \label{3marzo}
\end{equation}%
we shall also study the cross-product integral
\begin{equation*}
I_{0,22}(n)=\int_{\mathbb{T}}f_{n}(x)e_{2}^{x}e_{2}^{x}f_{n}(x)dx,
\end{equation*}%
and finally we shall consider
\begin{equation*}
I_{12,12}(n)=\int_{\mathbb{T}}\left\{ e_{1}^{x}e_{2}^{x}f_{n}(x)\right\}
^{2}dx,\text{ }I_{22,22}(n)=\int_{\mathbb{T}}\{e_{2}^{x}e_{2}^{x}f_{n}(x)%
\}^{2}dx.
\end{equation*}%
We have%
\begin{equation*}
A_{35}=-\frac{E_{n}}{k_{3}k_{5}}\left\{ 1+2\frac{k_{2}}{k_{3}}\right\}
I_{0,22}(n)-\frac{E_{n}^{2}k_{2}}{k_{3}^{2}k_{5}}I_{00}(n)-\frac{1}{%
k_{3}k_{5}}\left\{ 1+\frac{k_{2}}{k_{3}}\right\} I_{22,22}(n),
\end{equation*}
\begin{equation*}
B_{1}=\frac{1}{k_{1}^{2}}I_{11}(n)-1\text{ , }B_{2}=\frac{1}{k_{1}^{2}}%
I_{22}(n)-1,B_{3}=\frac{E_{n}^{2}}{k_{3}^{2}}I_{00}(n)+\frac{1}{k_{3}^{2}}%
I_{22,22}(n)+\frac{2E_{n}}{k_{3}^{2}}I_{0,22}(n)-1,
\end{equation*}%
\begin{equation*}
B_{4}=\frac{1}{k_{4}^{2}}I_{12,12}(n)-1,
\end{equation*}%
\begin{equation*}
B_{5}=\frac{1}{k_{5}^{2}}\big(1+\frac{k_{2}}{k_{3}}\big)^{2}I_{22,22}(n)+%
\frac{E_{n}^{2}k_{2}^{2}}{k_{3}^{2}k_{5}^{2}}I_{00}(n)+2\frac{E_{n}k_{2}}{%
k_{3}k_{5}^{2}}\big(1+\frac{k_{2}}{k_{3}}\big)I_{0,22}(n)-1.
\end{equation*}%
Our next step is then to investigate the behaviour of these integrals of
stochastic processes; this task is accomplished in the following Lemma.

\begin{lemma}
The following identities hold:%
\begin{align*}
I_{00}(n)& =\frac{1}{\mathcal{N}_{n}}\sum_{\lambda }|a_{\lambda }|^{2},\text{
}I_{11}(n)=\frac{4\pi ^{2}}{\mathcal{N}_{n}}\sum_{\lambda }|a_{\lambda
}|^{2}\lambda _{1}^{2},\text{ }I_{22}(n)=\frac{4\pi ^{2}}{\mathcal{N}_{n}}%
\sum_{\lambda }|a_{\lambda }|^{2}\lambda _{2}^{2}, \\
I_{0,22}(n)& =-\frac{4\pi ^{2}}{\mathcal{N}_{n}}\sum_{\lambda }|a_{\lambda
}|^{2}{\lambda _{2}^{2}},\text{ }I_{12,12}(n)=\frac{16\pi ^{4}}{\mathcal{N}%
_{n}}\sum_{\lambda }|a_{\lambda }|^{2}\lambda _{1}^{2}\lambda _{2}^{2},\text{
}I_{22,22}(n)=\frac{16\pi ^{4}}{\mathcal{N}_{n}}\sum_{\lambda }|a_{\lambda
}|^{2}\lambda _{2}^{4}.
\end{align*}
\end{lemma}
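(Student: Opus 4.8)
The plan is to prove all six identities by a single mechanical procedure: substitute the Fourier expansions of $f_n$ and its derivatives into each integral over $\mathbb T$, interchange the (finite) summation with the integration, and collapse the resulting double sums using the orthogonality of the characters $\{e(\langle\lambda,\cdot\rangle)\}_\lambda$. First I would record, from \eqref{fn}, \eqref{der} and \eqref{der2},
\begin{align*}
f_n(x) &= \frac{1}{\sqrt{\mathcal N_n}}\sum_{\lambda} a_\lambda\, e(\langle\lambda,x\rangle),\\
\partial_\ell f_n(x) &= \frac{2\pi i}{\sqrt{\mathcal N_n}}\sum_{\lambda} a_\lambda\,\lambda_{(\ell)}\, e(\langle\lambda,x\rangle),\\
\partial^2_{j\ell} f_n(x) &= \frac{-4\pi^2}{\sqrt{\mathcal N_n}}\sum_{\lambda} a_\lambda\,\lambda_{(\ell)}\lambda_{(j)}\, e(\langle\lambda,x\rangle),
\end{align*}
so that the operators $e_\ell^x$ in \eqref{3marzo} are precisely the partial derivatives $\partial_\ell$.

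The only analytic ingredient is the orthogonality relation
$$\int_{\mathbb T} e(\langle\lambda,x\rangle)\,e(\langle\lambda',x\rangle)\,dx = \int_{\mathbb T} e(\langle\lambda+\lambda',x\rangle)\,dx = \delta_{\lambda',-\lambda},$$
which forces $\lambda'=-\lambda$ and reduces every double sum $\sum_{\lambda,\lambda'}$ to a single sum over $\lambda$. Using $I_{00}$ as a template,
$$I_{00}(n)=\frac{1}{\mathcal N_n}\sum_{\lambda,\lambda'} a_\lambda a_{\lambda'}\int_{\mathbb T} e(\langle\lambda+\lambda',x\rangle)\,dx = \frac{1}{\mathcal N_n}\sum_\lambda a_\lambda a_{-\lambda},$$
and then I would invoke the reality relation $a_{-\lambda}=\overline a_\lambda$ from property \ref{iii}) in \eqref{fn}, so that $a_\lambda a_{-\lambda}=|a_\lambda|^2$ and $I_{00}(n)=\mathcal N_n^{-1}\sum_\lambda|a_\lambda|^2$. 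Each of the remaining five integrals is treated verbatim, the only new data being the product of frequency components inherited from the differentiations and the scalar $(2\pi i)^p$ accumulated from the $p$ derivatives acting on the two factors.

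The one point deserving care is the bookkeeping of the factors $i$ against the signs produced on the diagonal $\lambda'=-\lambda$, where each primed component obeys $\lambda'_{(\ell)}=-\lambda_{(\ell)}$. For the squared integrals $I_{11}$, $I_{22}$, $I_{12,12}$, $I_{22,22}$ the two factors carry the same number $p$ of derivatives, so the prefactor $i^{2p}=(-1)^p$ is exactly cancelled by the sign $(-1)^p$ coming from the $p$ flipped components, leaving the positive real sums $\tfrac{4\pi^2}{\mathcal N_n}\sum_\lambda|a_\lambda|^2\lambda_{(1)}^2$, and likewise for the others. For the mixed integral $I_{0,22}=\int_{\mathbb T} f_n\,\partial_{22}f_n$, instead, both derivatives sit on a single factor, contributing $i^2=-1$, while the component $\lambda'_{(2)}{}^2=\lambda_{(2)}^2$ is insensitive to the sign flip; this is exactly the origin of the minus sign in the stated formula for $I_{0,22}$. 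There is no genuine obstacle: the whole argument is a finite-dimensional Fourier computation whose entire content is the character orthogonality together with the conjugation symmetry $a_{-\lambda}=\overline a_\lambda$ of the coefficients.
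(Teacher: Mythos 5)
Your proposal is correct and follows essentially the same route as the paper's own proof: both expand each integrand via the Fourier representations \eqref{fn}, \eqref{der}, \eqref{der2}, collapse the double sum using $\int_{\mathbb T} e(\langle\lambda+\lambda',x\rangle)\,dx=\delta_{\lambda',-\lambda}$ together with $a_{-\lambda}=\overline{a}_\lambda$, and track the powers of $2\pi i$ against the sign flips $\lambda'_{(\ell)}=-\lambda_{(\ell)}$ (the paper simply invokes Parseval for $I_{00}$ and computes the rest exactly as you do). Your explicit bookkeeping of why the squared integrals come out positive while $I_{0,22}$ carries a minus sign is a correct and slightly more systematic account of the same computation.
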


\begin{proof}
By Parseval's identity it follows that%
\begin{equation*}
I_{00}(n)=\frac{1}{\mathcal{N}_{n}}\sum_{\lambda }|a_{\lambda }|^{2},
\end{equation*}%
and similarly%
\begin{eqnarray*}
I_{11}(n) &=&-\frac{4\pi ^{2}}{\mathcal{N}_{n}}\sum_{\lambda ,\lambda
^{\prime }}a_{\lambda }a_{\lambda ^{\prime }}\lambda _{1}\lambda
_{1}^{\prime }\int_{\mathbb{T}}e(\langle \lambda ,x\rangle )e(\langle
\lambda ^{\prime },x\rangle )dx=\frac{4\pi ^{2}}{\mathcal{N}_{n}}%
\sum_{\lambda }|a_{\lambda }|^{2}\lambda _{1}^{2}\text{ ,} \\
I_{22}(n) &=&\int_{\mathbb{T}}\left\{ e_{2}^{x}f_{n}(x)\right\} ^{2}dx=\frac{%
4\pi ^{2}}{\mathcal{N}_{n}}\sum_{\lambda }|a_{\lambda }|^{2}\lambda _{2}^{2}%
\text{ }.
\end{eqnarray*}%
Likewise%
\begin{equation*}
I_{0,22}(n)=\frac{1}{\mathcal{N}_{n}}\sum_{\lambda ,\lambda ^{\prime
}}a_{\lambda }a_{\lambda ^{\prime }}\int_{\mathbb{T}}e(\langle \lambda
,x\rangle )(2\pi i\lambda _{2}^{\prime })^{2}e(\langle \lambda ^{\prime
},x\rangle )dx=-\frac{4\pi ^{2}}{\mathcal{N}_{n}}\sum_{\lambda }|a_{\lambda
}|^{2}\lambda _{2}^{2}\text{ ,}
\end{equation*}%
and finally%
\begin{equation*}
I_{12,12}(n)=\frac{16\pi ^{4}}{\mathcal{N}_{n}}\sum_{\lambda ,\lambda
^{\prime }}a_{\lambda }a_{\lambda ^{\prime }}\lambda _{1}\lambda _{2}\lambda
_{1}^{\prime }\lambda _{2}^{\prime }\int_{\mathbb{T}}e(\langle \lambda
,x\rangle )e(\langle \lambda ^{\prime },x\rangle )dx=\frac{16\pi ^{4}}{%
\mathcal{N}_{n}}\sum_{\lambda }|a_{\lambda }|^{2}\lambda _{1}^{2}\lambda
_{2}^{2}
\end{equation*}%
\begin{equation*}
I_{22,22}(n)=\frac{16\pi ^{4}}{\mathcal{N}_{n}}\sum_{\lambda ,\lambda
^{\prime }}a_{\lambda }a_{\lambda ^{\prime }}\lambda _{2}^{2}{\lambda
_{2}^{\prime }}^{2}\int_{\mathbb{T}}e(\langle \lambda ,x\rangle )e(\langle
\lambda ^{\prime },x\rangle )dx=\frac{16\pi ^{4}}{\mathcal{N}_{n}}%
\sum_{\lambda }|a_{\lambda }|^{2}\lambda _{2}^{4}.
\end{equation*}
\end{proof}

We are now in the position to complete the proof.

\begin{proof}[Proof of Proposition \protect\ref{AB}]
Note that

\begin{align*}
A_{35}(n)& =-\frac{E_{n}}{k_{3}k_{5}}\left\{ 1+2\frac{k_{2}}{k_{3}}\right\}
I_{0,22}(n)-\frac{E_{n}^{2}k_{2}}{k_{3}^{2}k_{5}}I_{00}(n)-\frac{1}{%
k_{3}k_{5}}\left\{ 1+\frac{k_{2}}{k_{3}}\right\} I_{22,22}(n) \\
& =-\frac{1}{\sqrt{2}\pi ^{2}n\sqrt{1+\hat{\mu}_{n}(4)}}\frac{5-\hat{\mu}%
_{n}(4)}{{3+\hat{\mu}_{n}(4)}}I_{0,22}(n)-\frac{1-\hat{\mu}_{n}(4)}{{3+\hat{%
\mu}_{n}(4)}}\frac{2\sqrt{2}}{\sqrt{1+\hat{\mu}_{n}(4)}}I_{00}(n) \\
& \;\;-\frac{1}{\sqrt{2}\pi ^{4}n^{2}\sqrt{1+\hat{\mu}_{n}(4)}}\frac{1}{{3+%
\hat{\mu}_{n}(4)}}I_{22,22}(n) \\
& =\frac{1}{\mathcal{N}_{n}}\sum_{\lambda }|a_{\lambda }|^{2}\left[ \frac{%
4\pi ^{2}}{\sqrt{2}\pi ^{2}n\sqrt{1+\hat{\mu}_{n}(4)}}\frac{5-\hat{\mu}%
_{n}(4)}{{3+\hat{\mu}_{n}(4)}}\lambda _{2}^{2}-\frac{1-\hat{\mu}_{n}(4)}{{3+%
\hat{\mu}_{n}(4)}}\frac{2\sqrt{2}}{\sqrt{1+\hat{\mu}_{n}(4)}}\right. \\
& \;\;\left. -\frac{16\pi ^{4}}{\sqrt{2}\pi ^{4}n^{2}\sqrt{1+\hat{\mu}_{n}(4)%
}}\frac{1}{{3+\hat{\mu}_{n}(4)}}\lambda _{2}^{4}\right] \\
& =\frac{1}{\mathcal{N}_{n}}\sum_{\lambda }|a_{\lambda }|^{2}\left[ \frac{2%
\sqrt{2}}{n\sqrt{1+\hat{\mu}_{n}(4)}}\frac{5-\hat{\mu}_{n}(4)}{{3+\hat{\mu}%
_{n}(4)}}\lambda _{2}^{2}-\frac{1-\hat{\mu}_{n}(4)}{{3+\hat{\mu}_{n}(4)}}%
\frac{2\sqrt{2}}{\sqrt{1+\hat{\mu}_{n}(4)}}\right. \\
& \;\;\left. -\frac{8\sqrt{2}}{n^{2}\sqrt{1+\hat{\mu}_{n}(4)}}\frac{1}{{3+%
\hat{\mu}_{n}(4)}}\lambda _{2}^{4}\right] .
\end{align*}

On the other hand%
\begin{equation*}
B_{1}=\frac{1}{2n\pi ^{2}}\frac{4\pi ^{2}}{\mathcal{N}_{n}}\sum_{\lambda
}|a_{\lambda }|^{2}\lambda _{1}^{2}-1=\frac{1}{\mathcal{N}_{n}}\sum_{\lambda
}|a_{\lambda }|^{2}\frac{2}{n}\lambda _{1}^{2}-1
\end{equation*}

\begin{equation*}
B_{2}=\frac{1}{\mathcal{N}_{n}}\sum_{\lambda }|a_{\lambda }|^{2}\frac{2}{n}%
\lambda _{2}^{2}-1
\end{equation*}

\begin{align*}
B_{3}& =\frac{E_{n}^{2}}{k_{3}^{2}}I_{00}(n)+\frac{1}{k_{3}^{2}}I_{22,22}(n)+%
\frac{2E_{n}}{k_{3}^{2}}I_{0,22}(n)-1 \\
& =\frac{8}{3+\hat{\mu}_{n}(4)}\frac{1}{\mathcal{N}_{n}}\sum_{\lambda
}|a_{\lambda }|^{2}+\frac{1}{2\pi ^{4}n^{2}(3+\hat{\mu}_{n}(4))}\frac{16\pi
^{4}}{\mathcal{N}_{n}}\sum_{\lambda }|a_{\lambda }|^{2}\lambda _{2}^{4}-%
\frac{4}{\pi ^{2}n(3+\hat{\mu}_{n}(4))}\frac{4\pi ^{2}}{\mathcal{N}_{n}}%
\sum_{\lambda }|a_{\lambda }|^{2}{\lambda _{2}^{2}}-1 \\
& =\frac{1}{\mathcal{N}_{n}}\sum_{\lambda }|a_{\lambda }|^{2}\left[ \frac{8}{%
3+\hat{\mu}_{n}(4)}+\frac{8}{n^{2}(3+\hat{\mu}_{n}(4))}\lambda _{2}^{4}-%
\frac{16}{n(3+\hat{\mu}_{n}(4))}{\lambda _{2}^{2}}\right] -1
\end{align*}

\begin{align*}
B_{4}& =\frac{1}{k_{4}^{2}}I_{12,12}(n)-1 \\
& =\frac{1}{2\pi ^{4}n^{2}(1-\hat{\mu}_{n}(4))}\frac{16\pi ^{4}}{\mathcal{N}%
_{n}}\sum_{\lambda }|a_{\lambda }|^{2}\lambda _{1}^{2}\lambda _{2}^{2}-1 \\
& =\frac{8}{n^{2}(1-\hat{\mu}_{n}(4))}\frac{1}{\mathcal{N}_{n}}\sum_{\lambda
}|a_{\lambda }|^{2}\lambda _{1}^{2}\lambda _{2}^{2}-1.
\end{align*}

\begin{align*}
B_{5}& =\frac{1}{k_{5}^{2}}\big(1+\frac{k_{2}}{k_{3}}\big)^{2}I_{22,22}(n)+%
\frac{E_{n}^{2}k_{2}^{2}}{k_{3}^{2}k_{5}^{2}}I_{00}(n)+2\frac{E_{n}k_{2}}{%
k_{3}k_{5}^{2}}\big(1+\frac{k_{2}}{k_{3}}\big)I_{0,22}(n)-1 \\
& =\frac{1}{\pi ^{4}n^{2}(1+\hat{\mu}_{n}(4))(3+\hat{\mu}_{n}(4))}\frac{%
16\pi ^{4}}{\mathcal{N}_{n}}\sum_{\lambda }|a_{\lambda }|^{2}\lambda
_{2}^{4}+\frac{(1-\hat{\mu}_{n}(4))^{2}}{(3+\hat{\mu}_{n}(4))(1+\hat{\mu}%
_{n}(4))}\frac{1}{\mathcal{N}_{n}}\sum_{\lambda }|a_{\lambda }|^{2} \\
& \;\;-\frac{2(1-\hat{\mu}_{n}(4))}{\pi ^{2}n(3+\hat{\mu}_{n}(4))(1+\hat{\mu}%
_{n}(4))}\frac{4\pi ^{2}}{\mathcal{N}_{n}}\sum_{\lambda }|a_{\lambda }|^{2}{%
\lambda _{2}^{2}}-1 \\
& =\frac{1}{\mathcal{N}_{n}}\sum_{\lambda }|a_{\lambda }|^{2}\Big[\frac{16}{%
n^{2}(1+\hat{\mu}_{n}(4))(3+\hat{\mu}_{n}(4))}\lambda _{2}^{4}+\frac{(1-\hat{%
\mu}_{n}(4))^{2}}{(3+\hat{\mu}_{n}(4))(1+\hat{\mu}_{n}(4))} \\
& \;\;-\frac{2(1-\hat{\mu}_{n}(4))}{n(3+\hat{\mu}_{n}(4))(1+\hat{\mu}_{n}(4))%
}4{\lambda _{2}^{2}}\Big]-1.
\end{align*}
which concludes the proof.

\end{proof}
{\small
}


\begin{thebibliography}{99}
\bibitem[AT07]{adlertaylor} R.J.Adler, J. E.Taylor (2007) \textit{Random Fields
and Geometry}. Springer Monographs in Mathematics, Springer

\bibitem[ADP19]{angstdalmao} J.Angst, F.Dalmao, G.Poly (2019) On the Real Zeros of
Random Trigonometric Polynomials with Dependent Coefficients, \emph{Proc.
Amer. Math. Soc.} 147, no. 1, 205--214.

\bibitem[APP18]{angstuniv} J.Angst, G.Poly, V.H. Pham (2018) Universality of the
Nodal Length of Bivariate Random Trigonometric Polynomials, \emph{Trans.
Amer. Math. Soc.} 370, no. 12, 8331--8357.

\bibitem[AW09]{azaiswschebor} J.-M.Aza\"{\i}s, M.Wschebor (2009) \emph{Level Sets
and Extrema of Random Processes and Fields}. John Wiley \& SonsMore links

\bibitem[BCP19]{bally17} V.Bally, L.Caramellino, G.Poly (2019) Non Universality for
the Variance of the Number of Real Roots of Random Trigonometric
Polynomials, \emph{Probab. Theory Related Fields,} 174, no. 3-4, 887--927.

\bibitem[BMW17]{BMW} J.Benatar, D.Marinucci, I.Wigman (2020) Planck-scale
distribution of nodal length of arithmetic random waves, \emph{Journal
d'Analyse Mathematique}, pp. 1--43, arXiv:1710.06153

\bibitem[Ber77]{Berry 1977} M.V.Berry (1977) Regular and irregular semiclassical
wavefunctions, \textit{Journal of Physics A, 10}, 12, 2083-2091

\bibitem[Ber02]{Berry2002} M.V.Berry (2002) Statistics of nodal lines and points in
chaotic quantum billiards: perimeter corrections, fluctuations, curvature,
\emph{J. Phys. A,} 35, 3025--3038

\bibitem[BW16]{Buckley} J.Buckley, I.Wigman (2016) On the number of nodal domains
of toral eigenfunctions, \emph{Annal\'{e}s Henri Poincar\'{e}} 17, no. 11,
3027--3062.

\bibitem[Cam19]{Cammarota2018} V.Cammarota (2019) Nodal Area Distribution for
Arithmetic Random Waves, \textit{Transactions of the American Mathematical
Society, }372 , no. 5, 3539--3564

\bibitem[CM18]{CM2018} V.Cammarota, D.Marinucci, (2018) A Quantitative Central
Limit Theorem for the Euler-Poincar\'{e} Characteristic of Random Spherical
Eigenfunctions, \emph{Annals of Probability}, Vol. 46, n. 6, pp.3188--3228

\bibitem[CM20]{CM2018b} V.Cammarota, D.Marinucci, (2020) A Reduction Principle for
the Critical Values of Random Spherical Harmonics, \textit{Stochastic
Processes and their Applications}, 130, issue 4,  2433-2470

\bibitem[CMW16]{CMW16} V.Cammarota, D.Marinucci, I.Wigman (2016) Fluctuations of the Euler-Poincar\'e Characteristic for Random Spherical Harmonics, \textit{Proceedings of the American Mathematical Society}, 144,11, 4759--4775

\bibitem[CX16]{chengxiaob} D.Cheng, Y.Xiao (2016) The Mean Euler Characteristic
and Excursion Probability of Gaussian Random Fields, \textit{Annals of
Applied Probability}, 26, 2, 722-759

\bibitem[DNPR19]{dalmao} F. Dalmao, I. Nourdin, G. Peccati, M. Rossi (2019), Phase
singularities in complex arithmetic random waves, \emph{Electron. J. Probab.}
24, Paper No. 71, 45 pp.

\bibitem[DT89]{DehTaq} H.Dehling and M.Taqqu (1989) The empirical process of some
long-range dependent sequences with an application to U-statistics, \emph{%
Annals of Statistics} 17, no. 4,1767--1783.

\bibitem[DZ98]{DZ98} A.Dembo and O.Zeitouni (1998) \emph{Large Deviations Techniques and Applications}, $2$nd ed. New York: Springer.

\bibitem[Ess42]{Ess42} C. G. Esseen (1942) On the Liapunoff limit of error in the theory of probability. \emph{Arkiv f\"or
Matematik, Astronomi och Fysik} A28, 1--19.

\bibitem[EL16]{estradeleon} A.Estrade, J.R.Leon (2016) A central limit theorem for
the Euler characteristic of a Gaussian excursion set, \textit{Annals of
Probability}, 44, 6, 3849-3878

\bibitem[FA17]{feng} R.Feng, R.J.Adler (2019) Critical radius and supremum of
random spherical harmonics, \emph{Annals of Probability}, n.2,  1162--1184.

\bibitem[GW18]{granville} A.Granville, I.Wigman (2018) Planck-scale mass
equidistribution of toral Laplace eigenfunctions, \emph{Communications in
Mathematical Physics}, 355, no. 2, 767--802.

\bibitem[KL01]{KratzLeon} M. Kratz and J.-R. Leon (2001) Central limit theorems
for level functionals of stationary Gaussian processes and fields, \emph{J.
Theoret. Probab.,} 14, no. 3, 639--672.

\bibitem[KKW13]{KKW} M.Krishnapur, P.Kurlberg, I.Wigman (2013) Nodal length
fluctuations for arithmetic random waves, \textit{Annals of Mathematics},
177, 2, 699-737

\bibitem[KWY20]{KWY20} P.Kurlberg, I.Wigman, N.Yesha (2020) The defect of toral Laplace eigenfunctions and Arithmetic Random Waves, \emph{Preprint arXiv:2006.11644 }.

\bibitem[La08]{La} E. Landau (1908) Uber die Einteilung der positiven Zahlen nach vier Klassen nach der Mindestzahl der zu ihrer addition Zusammensetzung erforderlichen Quadrate, \emph{Archiv der Mathematik und Physics III}.

\bibitem[MRT20]{MRT20} C. Macci, M. Rossi, A.P. Todino (2020) Moderate Deviation estimates for Nodal Lengths of Random Spherical Harmonics, \emph{Preprint arXiv:2006.05290}.

\bibitem[MP12]{MP12} D. Marinucci, G. Peccati (2012) \emph{Normal approximations with Malliavin calculus. From Stein's method to universality}. Cambridge University Press.

\bibitem[MPRW16]{MPRW2015} D.Marinucci, G.Peccati, M.Rossi, I.Wigman (2016)
Non-universality of nodal length distribution for arithmetic random waves,
\textit{Geometric and Functional Analysis}, 26, 926-960

\bibitem[MR15]{MR2015} D.Marinucci, M.Rossi (2015) Stein-Malliavin Approximations
for Nonlinear Functionals of Random Eigenfunctions on $S^{d}$, \textit{%
Journal of Functional Analysis}, 268,8, 2379-2420

\bibitem[MRW20]{MRW} D.Marinucci, M.Rossi, I.Wigman (2020) The asymptotic
equivalence of the sample trispectrum and the nodal length for random
spherical harmonics, \emph{Ann.Inst.Henri Poincar\'{e} - Prob.Stat,} 56, 1, 374-390

\bibitem[MW11]{DI} D.Marinucci, I.Wigman (2011) On the area of excursion sets of
spherical Gaussian eigenfunctions, \textit{Journal of Mathematical Physics},
52, 9, 093301

\bibitem[MW14]{MW2014} D.Marinucci, I.Wigman (2014) On nonlinear functionals of
random spherical eigenfunctions, \textit{Communications in Mathematical
Physics}, 327, no. 3, 849-872

\bibitem[NS09]{nazarov} F.Nazarov, M.Sodin (2009) On the number of nodal domains
of random spherical harmonics,\textit{\ American Journal of Mathematics},
131,5,1337--1357

\bibitem[NP12]{noupebook} I.Nourdin, G.Peccati (2012) \textit{Normal
Approximations Using Malliavin Calculus: from Stein's Method to Universality}%
, Cambridge University Press

\bibitem[NPR19]{npr} I.Nourdin, G.Peccati, M.Rossi (2019) Nodal statistics of
planar random waves, \emph{Comm. Math. Phys.} 369, no. 1, 99--151.

\bibitem[ORW08]{oravecz} F.Oravecz, Z.Rudnick, I.Wigman (2008) The Leray measure
of nodal sets for random eigenfunctions on the torus. \textit{Ann. Inst.
Fourier}, 58, no. 1, 299--335.

\bibitem[PR18]{PR18} G.Peccati, M.Rossi (2018) Quantitative limit theorems for local functionals of arithmetic random waves. \textit{Computation and Combinatorics in Dynamics, Stochastics and Control, The Abel Symposium, Rosendal, Norway, August 2016}, , 13, 659--689.

\bibitem[PV20]{PV20}  G.Peccati, A.Vidotto (2020) Gaussian random measures generated by Berry's nodal sets. \textit{J. Stat. Phys.} 178, no. 4, 996--1027.

\bibitem[Ros15]{Ros15} M.Rossi (2015) The geometry of spherical random fields, PhD Thesis, University of Rome Tor Vergata. \emph{Preprint arXiv:1603.07575}.

\bibitem[RW08]{RudnickWigman} Z. Rudnick, I. Wigman (2008) On the volume of nodal
sets for eigenfunctions of the Laplacian on the torus,  \emph{Ann. Henri
Poincar\'{e},} 9, no. 1, 109--130

\bibitem[RW16]{Rudnick} Z. Rudnick, I. Wigman (2016) Nodal intersections for
random eigenfunctions on the torus, \emph{American Journal of Mathematics},
138, no. 6, 1605--1644.

\bibitem[RWY16]{RudnickYesha} Z.Rudnick, I.Wigman, N.Yesha (2016) Nodal
intersections for random waves on the 3-dimensional torus, \emph{Annal\'{e}s
Institut Fourier}, 66, no. 6, 2455--2484

\bibitem[ST16]{ST16} M.Schulte, C. Th\"ale (2016) Cumulants on Wiener chaos: Moderate deviations and the fourth moment theorem. \textit{Journal of Functional Analysis}, 270, 2223--2248.

\bibitem[Sze75]{szego} G.Szeg\"o (1975). \emph{Orthogonal polynomials}. American Mathematical Society, Providence, R.I., fourth edition, 1975. American Mathematical Society, Colloquium Publications, Vol. XXIII.

\bibitem[Tod19]{Todino1} A.P.Todino (2019) A Quantitative Central Limit Theorem for
the Excursion Area of Random Spherical Harmonics over Subdomains of S\symbol{%
94}2, \emph{J. Math. Phys.} 60, no. 2, 023505, 33 pp

\bibitem[Wig10]{Wig} I.Wigman (2010) Fluctuations of the nodal length of random
spherical harmonics, \textit{Communications in Mathematical Physics}, 298,
3, 787-831

\end{thebibliography}
\end{document}